\title{\bf One-dimensional wave equation with set-valued boundary damping: well-posedness, asymptotic stability, and decay rates}
\author{Yacine Chitour$^{1}$, Swann Marx$^{2}$, Guilherme Mazanti$^{1, 3}$}
\theoremstyle{plain}
\newtheorem{theorem}{Theorem}
\newtheorem{lemma}[theorem]{Lemma}
\newtheorem{proposition}[theorem]{Proposition}
\newtheorem{corollary}[theorem]{Corollary}
\theoremstyle{definition}
\newtheorem{definition}[theorem]{Definition}
\newtheorem{hypotheses}[theorem]{Hypotheses}
\newenvironment{remark}
  {\pushQED{\qed}\remarkx}
  {\popQED\endremarkx}
\newenvironment{example}
  {\pushQED{\qed}\examplex}
  {\popQED\endexamplex}
\DeclarePairedDelimiter{\norm}{\lVert}{\rVert}
\DeclarePairedDelimiter{\abs}{\lvert}{\rvert}
\DeclarePairedDelimiter{\floor}{\lfloor}{\rfloor}
\DeclarePairedDelimiter{\ceil}{\lceil}{\rceil}
\DeclareMathOperator{\loc}{loc}
\DeclareMathOperator{\sign}{sgn}
\DeclareMathOperator{\diff}{d\!}
\DeclareMathOperator{\id}{Id}
\DeclareMathOperator{\artanh}{artanh}
\DeclareMathOperator*{\esssup}{ess\,sup}
\DeclareMathOperator*{\essinf}{ess\,inf}
\DeclareMathOperator{\Seq}{Seq}
\newcommand{\suchthat}{\ifnum\currentgrouptype=16 \mathrel{}\middle|\mathrel{}\else\mid\fi}
\def\R{{\mathbb R}}
\newcommand{\customlabel}[2]{%
   \protected@write \@auxout {}{\string \newlabel {#1}{{#2}{\thepage}{#2}{#1}{}} }%
   \hypertarget{#1}{#2}
}
\begin{document}

\maketitle

\newcommand\blfootnote[1]{%
  \begingroup
  \renewcommand\thefootnote{}\footnote{#1}%
  \addtocounter{footnote}{-1}%
  \endgroup
}

\blfootnote{\textbf{Keywords.} Wave equation, set-valued boundary condition, saturation, well-posedness, stability, asymptotic behavior.}
\blfootnote{\textbf{2020 Mathematics Subject Classification.} 35L05, 35B40, 35R70, 39A60, 93D20.}

\footnotetext[1]{Universit\'e Paris-Saclay, CNRS, CentraleSup\'elec, Laboratoire des signaux et syst\`emes, 91190, Gif-sur-Yvette, France.}
\footnotetext[2]{LS2N, \'Ecole Centrale de Nantes \& CNRS UMR 6004, F-44000 Nantes, France.}
\footnotetext[3]{Inria, DISCO Team, Saclay--\^Ile-de-France Research Center, France.}

\setlist[enumerate]{label={(\alph*)}}
\setlist[enumerate, 2]{label={(\alph{enumi}-\roman*)}}
\newlist{listhypo}{enumerate}{1}
\setlist[listhypo]{label={\textup{(H\arabic*)}}, ref={\textup{(H\arabic*)}}}

\abstract{This paper is concerned with the analysis of a one dimensional wave equation $z_{tt}-z_{xx}=0$ on $[0,1]$ with a Dirichlet condition at $x=0$ and a damping acting at $x=1$ which takes the form $(z_t(t,1),-z_x(t,1))\in\Sigma$ for every $t\geq 0$, where $\Sigma$ is a given subset of $\mathbb R^2$. The study is performed within an $L^p$ functional framework, $p\in [1, +\infty]$. We aim at determining conditions on $\Sigma$ ensuring existence and uniqueness of solutions of that wave equation as well as strong stability and uniform global asymptotic stability of its solutions. In the latter case, we also study the decay rates of the solutions and their optimality. We first establish a one-to-one correspondence between the solutions of that wave equation and the iterated sequences of a discrete-time dynamical system in terms of which we investigate the above mentioned issues. This enables us to provide a simple necessary and sufficient condition on $\Sigma$ ensuring existence and uniqueness of solutions of the wave equation as well as an efficient strategy for determining optimal decay rates when $\Sigma$ verifies a generalized sector condition. As an application, we solve two conjectures stated in the literature, the first one seeking a specific optimal decay rate and the second one associated with a saturation type of damping. In case the boundary damping is subject to perturbations, we derive sharp results regarding asymptotic perturbation rejection and input-to-state issues.}

\tableofcontents

\section{Introduction}

In this paper, we focus on the following wave equation
\begin{equation}\label{eq:wave}
\left\{
\begin{aligned}
& z_{tt}(t, x) = z_{xx}(t, x), & \qquad & (t, x) \in \mathbb R_+ \times [0, 1], \\
& z(t, 0) = 0, & & t \in \mathbb R_+, \\
& (z_t(t, 1), -z_x(t, 1)) \in \Sigma, & & t \in \mathbb R_+, \\
& z(0, x) = z_0(x), & & x \in [0, 1], \\
& z_t(0, x) = z_1(x), & & x \in [0, 1],
\end{aligned}
\right.
\end{equation}
where $\Sigma \subset \mathbb R^2$. The immense majority of works on that subject (see for instance \cite{Alabau2012Recent,V-Martinez2000} for an overview of the subject) assumes that $\Sigma$
is the graph of a function $\sigma:\mathbb R\to \mathbb R$ and hence the corresponding condition on $z_x(t,1)$ and  $z_t(t,1)$ reduces to 
\begin{equation}\label{eq:feedback}
z_x(t,1) = -\sigma(z_t(t,1)),\qquad \forall t\geq 0,
\end{equation}
which can be interpreted as a feedback law prescribing $z_x(t,1)$
in terms on $z_t(t,1)$ at the boundary $x=1$ for every $t\geq 0$. Note that considering $\Sigma$ more general that the mere graph of a function is a possible alternative to model the fact that the function $\sigma$ is subject to uncertainties or discontinuities, as in the case, for instance, where $\Sigma$ is equal to the graph of the sign set-valued map $\sign:\mathbb R \rightrightarrows \mathbb R$ defined by $\sign(s)=\{s/\abs{s}\}$ for nonzero $s$ and $\sign(0)=[-1,1]$, cf.\ \cite{Xu2019Saturated}.

The use of a set $\Sigma$ in \eqref{eq:wave} can also model switching boundary conditions, such as those considered in \cite{Gugat2010Stars, Chitour2016Stability, Amin2012Exponential, Lamare2015Switching}. In this setting, the boundary condition is usually written as
\begin{equation}
\label{eq:boundary-switch}
z_x(t, 1) = -\sigma_{a(t)}(z_t(t, 1)),
\end{equation}
where $a(\cdot)$ is piecewise constant and takes values in a given (possibly infinite) index set $\mathcal I$ and $\sigma_i: \mathbb R \to \mathbb R$ for $i \in \mathcal I$. Defining $\Sigma$ as the set of pairs $(x, y) \in \mathbb R^2$ such that $y \in \sigma_i(x)$ for some $i \in \mathcal I$, one obtains that any solution of the wave equation with the boundary condition \eqref{eq:boundary-switch} is a solution of \eqref{eq:wave}. This construction is the analogue of the classical transformation of finite-dimensional switched systems into differential inclusions (see, e.g., \cite{Filippov1988Differential, Aubin1984Differential, Liberzon2003Switching}).

\subsection{Existing results}

The standard issues addressed for solutions of \eqref{eq:wave} 
(either with \eqref{eq:feedback} or $\Sigma$ equal to the graph of $\sign$) may be divided into three main questions: find conditions on $\sigma$ so that $(Q1)$ for every initial condition, there exists a global and possibly unique solution of \eqref{eq:wave}, $(Q2)$ in case solutions of \eqref{eq:wave} tend to zero as the time $t$ tends to infinity, one can characterize their decay rates, and $(Q3)$ one can try to establish optimality of these decay rates, where optimality is defined more precisely below as in \cite{V-Martinez2000}.

Question $(Q1)$ is usually addressed within a Hilbertian framework, i.e., (weak) solutions of \eqref{eq:wave} belong to $\mathsf X_2=W^{1,2}_\ast(0,1)\times L^2(0,1)$ where $W^{1, 2}_\ast(0, 1)$ is the Hilbert space made of the absolutely continuous functions $u:[0,1]\to\mathbb{R}$ so that $u^\prime \in L^2(0, 1)$ and $u(0) = 0$. Functional analysis arguments are then used, typically by considering appropriate unbounded operators and their associated $\mathcal C^0$ semigroups. Most of the time, the function $\sigma$ is assumed to be locally Lipschitz, nondecreasing and subject to the classical \emph{damping} condition, i.e., $s\sigma(s)\geq 0$ for every $s\in\mathbb R$, which allows one to get a maximal monotone operator, and hence bringing a positive answer to $(Q1)$.

Let us emphasize on the damping condition, since not only it helps to address $(Q1)$ but it is also a first step to handle $(Q2)$. Indeed, such a condition on $\sigma$ makes the natural energy $E(t)$ of \eqref{eq:wave} defined by $E(t)=\int_0^1\left(\abs{z_x(t,x)}^2+ \abs{z_t(t,x)}^2\right)\diff x$ nonincreasing along trajectories of \eqref{eq:wave}. This is why one usually refers to such a function $\sigma$ as a \emph{damping} function. Note that \cite{pierre2000strong} (as other few earlier works mentioned in that reference) assumes $\sigma$ to be a damping function not necessarily monotone. Addressing $(Q1)$ in that case relies instead on the d'Alembert decomposition of solutions of \eqref{eq:wave}. 

Other functional frameworks have been considered recently \cite{haraux1D, chitour2019p, amadori2019decay}, where the functional spaces are of $L^p$-type, $p\in [1,+\infty]$, but these works consider $1$D wave equations with localized distributed damping, i.e., $z_{tt}-z_{xx}=-a(x)\sigma(z_t)$. Recall that the semigroup generated by the D'Alembertian $\square z:=z_{tt}-\Delta z$ with  Dirichlet boundary conditions on an open bounded subset in $\mathbb{R}^n$, $n\geq 2$, is not defined for any suitable extension of the Hilbertian framework to $L^p$-type spaces for $p\neq 2$, as explained in \cite{peral1980lp}. This is why the study of issues of asymptotic behavior involving $L^p$ spaces with $p\neq 2$ makes sense only in the one-dimensional case.

Regarding more precisely results obtained for $(Q2)$, there exist two main concepts of convergence of solutions of \eqref{eq:wave} to zero: the basic one referred to as \emph{strong stability}, which says that the $\mathsf X_2$-norm of every solution of \eqref{eq:wave} tends to zero as $t$ tends to infinity and a stronger notion, that of \emph{uniform globally asymptotic stability} (UGAS for short), which says that there exists a $\mathcal{KL}$-function $\beta:\mathbb R_+\times  \mathbb R_+\to \mathbb R$ such that $\norm{z(t)}_{\mathsf X_2}\leq\beta(\norm{z(0)}_{\mathsf X_2},t)$ for every $t\geq 0$ and solution $z(\cdot)$ of \eqref{eq:wave}. Recall that a $\mathcal{KL}$-function (or a function of class $\mathcal{KL}$) $\beta$ is continuous with  $\beta(0,\cdot)\equiv 0$, increasing with respect to its first argument and, for every $s\geq 0$,  $t\mapsto \beta(s,t)$ is decreasing and tends to zero as $t$ tends to infinity. One may interpret the function $\beta$ as a generalized rate of convergence to zero of solutions of \eqref{eq:wave} and one may even ask what could be the ``best'' $\mathcal{KL}$ function $\beta$ for which UGAS holds true. After \cite{V-Martinez2000} a reasonable definition that we will adopt in the paper goes as follows: we say that a $\mathcal{KL}$ function $\beta$ is \emph{optimal} for \eqref{eq:wave} if the latter is UGAS with rate $\beta$ and there exists an initial condition in $\mathsf X_2$ yielding a nontrivial solution $z$ of \eqref{eq:wave} so that $\norm{z(t)}_{\mathsf X_2}\geq \varepsilon \beta(\norm{z(0)}_{\mathsf X_2},t)$ for some positive constant $\varepsilon$ and every $t\geq 0$.

With the exception of \cite{pierre2000strong}, results on strong stability rely on a LaSalle argument and assume that $\sigma$ is nondecreasing and locally Lipschitz: strong stability is first established for a dense and compactly embedded subset of $\mathsf X_2$ made of regular solutions of \eqref{eq:wave} and then it is extended to the full $\mathsf X_2$ by a density argument, cf.\ \cite{Alabau2012Recent}. As regards UGAS, it can be shown that, under the damping assumption and a linear cone condition (i.e., there exist positive $a,b$ such that $a s^2\leq s\sigma(s)\leq b s^2$ for every $s\in\mathbb R$), the stability is exponential, i.e., one can choose $\beta(s,t)=C s e^{-\mu t}$ for some positive constants $C,\mu$, cf.\ \cite{V-Martinez2000} for instance. If the linear cone condition only holds in a neighborhood of zero then exponential stability cannot hold in general, as shown in \cite{V-Martinez2000} where $\sigma$ is chosen as a saturation function, i.e., such as $\sigma(s)=\arctan(s)$. Besides the linear cone condition, several results establishing UGAS have been obtained, cf.\ \cite{Lasiecka-Tataru, Martinez1999New, Liu-Zuazua,Xu2019Saturated} where $\sigma$ verifies a linear cone condition for large $s$ and is either of polynomial type or weaker than any polynomial in a neighborhood of the origin, see also \cite{V-Martinez2000} for a extensive list of references. It has to be noticed that many of these studies deal with wave equations in dimension not necessarily equal to one and, for all of them, the estimates are obtained by refined arguments based on the multiplier method or highly nontrivial Lyapunov functionals. 

Finally, for results handling $(Q3)$, most of the existing results are gathered in \cite{V-Martinez2000} and \cite{Alabau2012Recent} (cf.\ Theorems~$1.7.12$, $1.7.15$ and $1.7.16$ in the last reference)
where several of the above mentioned upper estimates (of UGAS type) are shown to be optimal in the sense defined previously and more particularly in the case where the damping function $\sigma$ in \eqref{eq:DTDS}
is of class $\mathcal C^1$ and verifies $\sigma'(0)=0$. In particular, a list of examples for which optimality is shown is provided in \cite[Theorem~1.7.12]{Alabau2012Recent} while an example is also given (Example~$5$ in that list) for which only an upper estimate is given and it is stated in \cite{Alabau2012Recent} that the general case (even under the condition $\sigma'(0)=0$) is still open. 
In these references, the upper estimates are derived by delicate manipulations of Lyapunov functions relying on the multiplier method and lower estimates results are obtained with appropriate solutions of \eqref{eq:wave} with piecewise constant Riemann invariants, where the computations are actually similar in spirit to those of \cite{pierre2000strong}. 

\subsection{Discrete-time dynamical system}

The approach proposed in this paper is inspired by \cite{pierre2000strong}, since, as in that reference, we focus on the Riemann invariants of a solution of \eqref{eq:wave}. To describe our basic finding on the matter of interest, let us consider the 
discrete-time dynamical system $\mathcal{S}$ defined on the Hilbert space $\mathsf Y_2=L^2(-1,1)$ which associates with every $h\in \mathsf Y_2$ the subset $\mathcal{S}(h)$ of $\mathsf Y_2$ made of the functions $j$ so that 
\begin{equation}\label{eq:DTDS}
\mathcal{S}: \quad \quad (h(s),j(s))\in R\Sigma,\quad \hbox{for a.e.\ } s\in [-1,1],
\end{equation}
where $R$ is the planar rotation of angle $-\pi/4$. One should notice that a related discrete-time dynamical system has been first characterized in \cite{pierre2000strong} (see also Remark~\ref{RemkImplicitDescriptionOfS} below).

We show that there exists an isometry of Hilbert spaces $\mathfrak I:\mathsf X_2\to \mathsf Y_2$ such that, for every $(z_0,z_1)\in \mathsf X_2$, \eqref{eq:wave} admits a (global in time) weak solution $z$ in $\mathsf X_2$ starting at $(z_0,z_1)$ if and only if there exists a sequence $(g_n)_{n\in\mathbb N}$ of elements in $\mathsf Y_2$ with $g_0=\mathfrak I(z_0,z_1)$ such that $g_{n+1}\in \mathcal{S}(g_n)$ for $n\in\mathbb N$. The concatenation of the $g_n$'s, which yields an element of $L^2_{\loc}(-1,+\infty)$,
is exactly the Riemann invariant $\frac{z_t - z_x}{\sqrt{2}}$ associated with $z$. It is immediate that the previously described correspondence between \eqref{eq:wave} and \eqref{eq:DTDS} can be adapted for any $p\in[1,+\infty]$ after replacing $\mathsf Y_2$ by $\mathsf Y_p=L^p(-1,1)$ and $\mathsf X_2$ by $\mathsf X_p$ defined as
\begin{equation}
\mathsf X_p:=W^{1,p}_\ast(0,1)\times L^p(0,1),
\end{equation}
where $W^{1, p}_\ast(0, 1) = \{u \in L^p(0, 1) \mid u^\prime \in L^p(0, 1) \text{ and } u(0) = 0\}$, and equipped with the norms
\begin{equation}\label{eq:norms}
\begin{aligned}
\norm*{(u,v)}^p_{\mathsf X_p} & :=\frac{1}{2^{\frac{p}2}}\int_0^1 \left(\abs*{u^\prime+v}^p+\abs*{u^\prime-v}^p\right) \diff x, & \quad p\in [1,+\infty),\\
\norm*{(u,v)}_{\mathsf X_\infty} & := \frac{1}{\sqrt{2}}\max\left(\norm{u^\prime+v}_{L^\infty(0,1)},\norm{u^\prime-v}_{L^\infty(0,1)}\right).
\end{aligned}
\end{equation}
The norm $\norm{\cdot}_{\mathsf X_p}$, previously used, for instance, in \cite{haraux1D}, is equivalent to the standard norm in $\mathsf X_p$, but it has the advantage of being well-adapted to the analysis of wave equations, since it is expressed in terms of Riemann invariants and it is nonincreasing as soon as $\Sigma$ satisfies a damping condition (see Proposition~\ref{PropDampingNonStrict}). In addition, with this choice of norm, the mapping (still denoted by) $\mathfrak I:\mathsf X_p\to \mathsf Y_p$ becomes an isometry between Banach spaces.

One can reformulate appropriately the definition of $\mathcal S$ by considering the set-valued map $S:\mathbb R\rightrightarrows \mathbb R$ whose graph is $R\Sigma\subset \mathbb R^2$. For instance \eqref{eq:DTDS} reads $j(s)\in S(h(s))$ for a.e.\ $s\in [-1,1]$. A first interesting application of this formalism concerns the case where $\Sigma$ is equal to the graph of the sign set-valued map. Then the set-valued map $S$ becomes single-valued and equal to the odd function defined on $\mathbb R_+$ by $S(s)=s$ for $s\in [0,1/\sqrt{2}]$ and $S(s)=\sqrt{2}-s$ for $s\geq 1/\sqrt{2}$. We also show how one can easily adapt the previously described approach to the case where the Dirichlet boundary condition at $x=0$ in \eqref{eq:wave} is replaced by the more general boundary condition $(z_t(t, 0), z_x(t, 0)) \in \Sigma'$, $t\geq 0$, where $\Sigma'\subset \mathbb R^2$.

By using the one-to-one correspondence between (global in time) solutions of \eqref{eq:wave} and iterated sequences of \eqref{eq:DTDS}, we can translate the previously described issues of $(Q1)$--$(Q3)$ associated with the original $1$D wave equation \eqref{eq:wave} into the study of equivalent questions in terms of $\mathcal{S}$. Question $(Q1)$ of existence and uniqueness of solutions of \eqref{eq:wave} in $\mathsf X_p$ for every initial condition is equivalently restated  as follows: determine conditions on $\Sigma$ so that, for every $h \in \mathsf Y_p$, there exists some (possibly unique) $j \in \mathsf Y_p$ such that $j \in \mathcal{S}(h)$. Similarly, questions regarding the decay rates, i.e.\ $(Q2)$ and $(Q3)$, can be equivalently simply expressed as questions regarding the asymptotic behavior of iterated sequences $(g_n)_{n\in \mathbb N}$ associated with $\mathcal{S}$. 

\subsection{Main results}
Concerning the question of existence of (global in time) solutions of \eqref{eq:wave} for every initial condition in $\mathsf X_p$, we provide a sufficient condition in terms of $\Sigma$ which turns out to be also necessary in case $S$ is single-valued. The condition reads differently whether $p$ is finite or not but, in both cases, $S$ must contain the graph of a \emph{universally measurable} function, cf.\ \cite{Bertsekas1978Stochastic, Cohn2013Measure, Nishiura2008Absolute} and Definition~\ref{DefiUniversally} in Appendix~\ref{AppUniversally} for a definition of the latter concept.

As regards the asymptotic behavior of solutions of \eqref{eq:wave} we always work under the assumption that $\Sigma$ is a damping set, i.e., for every $(x,y)\in \Sigma$, it holds $xy\geq 0$. We also refer to a \emph{strict} damping set in case the previous inequality is strict for $(x,y)\in \Sigma\setminus\{(0,0)\}$. These conditions generalize the case where $\Sigma$ is the graph of a damping function $\sigma: \mathbb R\to \mathbb R$. Note that the damping assumption on $\Sigma$ translates to $\abs{y}\leq \abs{x}$ for $(x,y)\in R\Sigma$ and to the corresponding strict inequality for $(x,y) \in R\Sigma \setminus \{(0,0)\}$.

Our first main result says that the asymptotic behavior of solutions of \eqref{eq:wave} in $\mathsf X_p$ (and hence of iterated sequences $(g_n)_{n\in\mathbb N}$ for $\mathcal{S}$ in $\mathsf Y_p$) is governed by the asymptotic behavior of real iterated sequences $(x_n)_{n\in\mathbb N}$ for $S$, i.e., real sequences such that $x_{n+1}\in S(x_n)$, for every $n\in\mathbb N$. In particular, strong stability in $\mathsf{X}_p$ for $p$ finite is equivalent to the fact that every real iterated sequence $(x_n)_{n\in\mathbb N}$ for $S$ converges to zero, while for $p=+\infty$, strong stability and UGAS are equivalent, themselves holding true if and only if real iterated sequences $(x_n)_{n\in\mathbb N}$ for $S$ converge to zero, uniformly with respect to compact sets of initial conditions $x_0$. Moreover, if the set-valued map $S\circ S$ has a closed graph, then strong stability holds true in $\mathsf{X}_p$, $p\in [1,+\infty]$, if and only if $S\circ S$ is a strict damping. This greatly generalizes a result of \cite{pierre2000strong} where strong stability in $\mathsf{X}_2$ has been established in the case where $S$ is a (single-valued) continuous function on $\mathbb R$. For UGAS with $p\in [1,+\infty)$, we characterize two conditions on $\Sigma$ ensuring, for the first one (see \ref{HypoSigma-NoDampingAtInfty} below), that UGAS does not hold true (cf.\ Proposition~\ref{PropPFiniteNotUGAS}) while, on the opposite, the second one (see \ref{HypoSigma-SectorInfty} below) combined with UGAS in $\mathsf X_{\infty}$ is sufficient for UGAS in $\mathsf X_p$ to hold true (see Theorem~\ref{thm-ugas}). 

As far as decay rates of solutions of \eqref{eq:wave} are concerned, we consider damping sets $\Sigma$ subject to two generalized sector conditions describing the behavior of the set $R \Sigma$ in some neighborhood of the origin as follows: for points $(x, y)$ in that neighborhood, the first condition (Hypothesis \ref{HypoSigma-gUpperTilde} below) assumes that $\abs{y} \leq Q(\abs{x})$, whereas the second one (see Hypothesis \ref{HypoSigma-gLowerTilde} below) assumes that $\abs{y} \geq Q(\abs{x})$, where $Q \in \mathcal C^1(\mathbb R_+, \mathbb R_+)$ with $Q(0) = 0$, $0 < Q(x) < x$, and $Q^\prime(x) > 0$ for every $x>0$. These two conditions are inspired from \cite{V-Martinez2000} where they are only expressed in terms of $\Sigma$ as the graph of a continuous function (see also Hypotheses \ref{HypoSigma-gUpper} and \ref{HypoSigma-gLower} below). Thanks to our approach, the decay rate issue amounts to study real iterated sequences $(x_n)_{n\in \mathbb N}$ verifying either $\abs{x_{n+1}}\leq Q(\abs{x_n})$ when \ref{HypoSigma-gUpperTilde} holds or 
$\abs{x_{n+1}}\geq Q(\abs{x_n})$ when Hypothesis \ref{HypoSigma-gLowerTilde}
holds.

As for the optimality issue, it is now reduced to the determination of equivalents in terms of a function of $n\in \mathbb N$, as it tends to infinity, for the real iterated sequences $(x_n)_{n\in \mathbb N}$ verifying $\abs{x_{n+1}}=Q(\abs{x_n})$ for $n\geq 0$. We provide precise asymptotic results on the decay of solutions of \eqref{eq:wave} depending on the value of $Q'(0)\in [0,1]$. First of all, we are able to recover all the cases listed in \cite[Theorem~1.7.12]{Alabau2012Recent}, even for Example~$5$, which was left open, all of them corresponding  to situations where $Q'(0)=1$. Note that we obtain the previously known cases with simpler arguments and we also characterize the largest possible sets of initial conditions admitting optimal decay rates. If $Q'(0)\in (0,1)$, one has (local) exponential stability, a decay rate which is optimal. The handling of this case is rather elementary and is known in the literature, as least in the Hilbertian case $p=2$. In the particular case where $Q(s)=\mu s$ for every $s\geq 0$ for some $\mu\in (0,1)$, we provide alternative arguments for exponential stability and also give a necessary and sufficient and condition in terms of $S$ only for exponential stability to hold true. Our results concerning the case $Q'(0)=0$ seem to be new and exhibit convergence rates faster than any exponential ones.

We close the set of our findings on decay rates with the solution of a conjecture formulated in \cite{V-Martinez2000} asking whether arbitrary slow convergence is possible in case $\Sigma$ is a damping set of saturation type, i.e., the values of $\abs{y}$ for $(x,y)\in\Sigma$ and $\abs{x}$ large remain bounded by a given positive constant. We bring a positive answer for the possible occurrence of such an arbitrarily slow convergence in any space $\mathsf{X}_p$, $p\in [1,+\infty)$. 

We also have sharp results when the wave equation \eqref{eq:wave} is subject to boundary perturbations, i.e., the condition $(z_t(t, 1), -z_x(t, 1))\in\Sigma$ for $t \in \mathbb R_+$ is replaced by $(z_t(t, 1), -z_x(t, 1))\in\Sigma+d(t)$ for $t \in \mathbb R_+$, where
$d: \mathbb R_+ \to \mathbb R^2$ is a measurable function representing the perturbation. We provide two sets of results, the first one dealing with asymptotic perturbation rejection (i.e., conditions on $d$ and $\Sigma$ so that solutions of \eqref{eq:wave} converge to zero despite the presence of $d$) and another set of results proposing sufficient conditions on $\Sigma$ ensuring input-to-state stability for the perturbed wave equation (cf.\ \cite{Mironchenko2020Input} for a definition of input-to state-stability).

Finally, we revisit the case where $\Sigma$ is equal to the graph of the sign set-valued map $\sign$ and extend all the results obtained in \cite{Xu2019Saturated} regarding this question. In particular, we provide optimal results for existence and uniqueness of solutions of \eqref{eq:wave} in any $\mathsf{X}_p$, $p\in [1,+\infty]$ without relying on semigroup theory and we characterize the $\omega$-limit set of every solution of \eqref{eq:wave} in an explicit manner in terms of the initial condition. 

\subsection{Structure of the paper}

The paper is organized in six sections and four appendices. After the present introduction describing the contents of our paper and gathering the main notations, Section~\ref{sec:description} is devoted to the description of the precise correspondence between the wave equation described by \eqref{eq:wave} and the discrete-time dynamical system given in \eqref{eq:DTDS}, as well as the list of meaningful hypotheses one can assume on $\Sigma$ and auxiliary results on the set-valued map $S$. Section~\ref{SecExistUnique} provides results on the existence and uniqueness of solutions of \eqref{eq:wave}. Section~\ref{SecAsymptotic} gathers results dealing with stability concepts, asymptotic behavior, decay rates, and their optimality for solutions of both iterated sequences of $\mathcal{S}$ and solutions of \eqref{eq:wave}. Section~\ref{SecISS} treats the case of boundary perturbations. Section~\ref{SecSign} addresses the situation where $\Sigma$ is equal to the graph of the sign set-valued map $\sign$. The four appendices collect lemmas and technical arguments used in the core of the text.

\subsection{Notations}

The sets of integers, nonnegative integers, real numbers, nonnegative real numbers, and nonpositive real numbers are denoted in this paper respectively by $\mathbb Z$, $\mathbb N$, $\mathbb R$, $\mathbb R_+$, and $\mathbb R_-$. For $\mathbb A \in \{\mathbb Z, \mathbb N, \mathbb R, \mathbb R_+, \mathbb R_-\}$, we use $\mathbb A^\ast$ to denote the set $\mathbb A \setminus \{0\}$.

For $x \in \mathbb{R}$, we use $\floor{x}$ and $\ceil{x}$ to denote, respectively, the greatest integer less than or equal to $x$ and the smallest integer greater than or equal to $x$. The set $\mathbb R^d$, $d \in \mathbb N^\ast$, is assumed to be endowed with its usual Euclidean norm, denoted by $\abs{\cdot}$, and, given $M > 0$, $B(0,M)$ denotes the ball of $\mathbb R^d$ centered at zero and of radius $M$. If $A \subset \mathbb R$, we define $\norm{A} = \sup_{a \in A} \abs{a}$. All along the paper, we use the letter $R$ to denote the matrix corresponding to the plane rotation of angle $-\frac{\pi}4$, i.e.,
\[
R = \begin{pmatrix}
\frac{1}{\sqrt{2}} & \frac{1}{\sqrt{2}} \\
-\frac{1}{\sqrt{2}} & \frac{1}{\sqrt{2}} \\
\end{pmatrix}.
\]
The identity function of $\mathbb R$ is denoted by $\id$ and, for $\Sigma \subset \mathbb R^2$ and $d \in \mathbb R^2$, we define the sum $\Sigma + d$ as the set $\{x + d \suchthat x \in \Sigma\}$.

A \emph{set-valued} map $F: \mathbb R \rightrightarrows \mathbb R$ is a function that, with each $x \in \mathbb R$, associates some (possibly empty) $F(x) \subset \mathbb R$, and its graph is the set $\{(x, y) \in \mathbb R^2 \suchthat y \in F(x)\}$. A set-valued map $F$ is said to be \emph{multi-valued} if $F(x) \neq \emptyset$ for every $x \in \mathbb R$, i.e., the graph of $F$ contains the graph of a function $\varphi: \mathbb R \to \mathbb R$. It is said to be \emph{single-valued} when $F(x)$ is a singleton for every $x \in \mathbb R$, i.e., the graph of $F$ is the graph of a function $\varphi: \mathbb R \to \mathbb R$. In that case, we usually make the slight abuse of notation of considering $F = \varphi$.

The composition of two set-valued maps $S$ and $T$ is the set-valued map $S \circ T: \mathbb R \rightrightarrows \mathbb R$ which, to each $x \in \mathbb R$, associates the set of points $z\in \mathbb R$ such that there exists $y\in \mathbb R$ for which $z\in S(y)$ and $y\in T(x)$.

Consider a function $f:\mathbb{R}\to\mathbb{R}$. Then, for $n \in \mathbb N$, the $n$-th iterate of $f$, i.e., the composition of $f$ with itself $n$ times, is denoted by $f^{[n]}$, with the convention that $f^{[0]}=\id$. This notation is extended in a straightforward manner to set-valued functions $F: \mathbb R \rightrightarrows \mathbb R$: $y \in F^{[n]}(x)$ if and only if there exists $x_0, \dotsc, x_n \in \mathbb R$ such that $x_0 = x$, $x_n = y$, and $x_{i+1} \in F(x_i)$ for every $i \in \{0, \dotsc, n-1\}$.

Several notions of measurability are used in some parts of the paper (see Appendix~\ref{AppUniversally}). Unless otherwise specified, the word ``measurable'' means ``Lebesgue measurable''. For an interval $I \subset \mathbb R$, $d \in \mathbb N^\ast$, and $p \in [1, +\infty]$, the space $L^p(I, \mathbb R^d)$ is endowed with the norm defined by
\begin{align*}
\norm{u}_{L^p(I, \mathbb R^d)}^p & = \int_I \abs{u(t)}^p \diff t, & \qquad & \text{if } p < +\infty,\\
\norm{u}_{L^\infty(I, \mathbb R^d)} & = \esssup_{t \in I} \abs{u(t)}.
\end{align*}
The space $\mathbb R^d$ is omitted from the notation when $d = 1$. We use $\mathsf Y_p$ to denote the space $L^p(-1, 1)$ and we write its norm simply by $\norm{\cdot}_{p}$.

A function $\gamma: \mathbb R_+ \to \mathbb R_+$ is said to be of \emph{class $\mathcal K$} if $\gamma$ is continuous, increasing, and $\gamma(0) = 0$. If moreover $\lim_{x \to +\infty} \gamma(x) = +\infty$, we say that $\gamma$ is of \emph{class $\mathcal K_\infty$}.

A function $\beta: \mathbb R_+ \times \mathbb R_+ \to \mathbb R_+$ is said to be of \emph{class $\mathcal{KL}$} if it is continuous, $\beta(\cdot, t)$ is of class $\mathcal K$ for every $t \in \mathbb R_+$, and, for every $x \in \mathbb R_+$, $\beta(x, \cdot)$ is decreasing and $\lim_{t \to +\infty} \beta(x, t) = 0$.

\section{Description of the model}
\label{sec:description}

\subsection{Equivalent discrete-time dynamical system}

In order to introduce a notion of weak solution of \eqref{eq:wave} adapted both to $L^p$ spaces and to the one-dimensional case, let us recall the following classical result on regular solutions to the one-dimensional wave equation, which corresponds to its d'Alembert decomposition into traveling waves (see, e.g., \cite[Section~2.4.1.a]{Evans2010Partial}).

\begin{proposition}
\label{PropDAlembert}
Let $z \in \mathcal C^2(\mathbb R_+ \times [0, 1])$. Then $z$ satisfies $z_{xx} = z_{tt}$ in $\mathbb R_+ \times [0, 1]$ if and only if there exist functions $f \in \mathcal C^1([0, +\infty))$ and $g \in \mathcal C^1([-1, +\infty))$ such that
\begin{equation}
\label{EqWeakWave}
z(t, x) = z(0, 0) + \frac{1}{\sqrt{2}} \int_0^{t+x} f(s) \diff s + \frac{1}{\sqrt{2}} \int_0^{t-x} g(s) \diff s.
\end{equation}
\end{proposition}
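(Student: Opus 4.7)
The proof splits naturally into two directions.

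\textbf{The \emph{if} direction} is a direct computation. Given $f \in \mathcal C^1([0,+\infty))$ and $g \in \mathcal C^1([-1,+\infty))$, I would differentiate \eqref{EqWeakWave} under the integral sign, using the fundamental theorem of calculus, to obtain
$z_t(t,x) = \tfrac{1}{\sqrt 2}\bigl(f(t+x) + g(t-x)\bigr)$ and $z_x(t,x) = \tfrac{1}{\sqrt 2}\bigl(f(t+x) - g(t-x)\bigr)$, and then differentiate once more to check that both $z_{tt}$ and $z_{xx}$ equal $\tfrac{1}{\sqrt 2}\bigl(f'(t+x) + g'(t-x)\bigr)$. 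The $\mathcal C^2$-regularity of $z$ follows from the $\mathcal C^1$-regularity of $f$ and $g$.

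\textbf{The \emph{only if} direction} is the classical d'Alembert argument via the characteristic change of variables. I would set $\xi = t+x$, $\eta = t-x$, and define $\tilde z(\xi,\eta) = z\bigl(\tfrac{\xi+\eta}{2}, \tfrac{\xi-\eta}{2}\bigr)$ on the image $\Omega$ of $\mathbb R_+ \times [0,1]$ under this map, which is the wedge $\{(\xi,\eta) \suchthat \eta \geq -1,\ \eta \leq \xi \leq \eta + 2\}$. A direct computation yields $4 \tilde z_{\xi\eta} = z_{tt} - z_{xx} = 0$ on $\Omega$. Since $\Omega$ is connected and the horizontal and vertical sections through any point stay inside $\Omega$, integrating $\tilde z_{\xi\eta} = 0$ first in $\eta$ shows that $\tilde z_\xi$ depends only on $\xi$, and a second integration gives
\[
\tilde z(\xi,\eta) = \Phi(\xi) + G(\eta)
\]
for some $\Phi \in \mathcal C^2([0,+\infty))$ (since $\xi$ ranges over $[0,+\infty)$) and $G \in \mathcal C^2([-1,+\infty))$ (since $\eta$ ranges over $[-1,+\infty)$), both defined up to an additive constant that can be fixed by the normalization $\Phi(0) + G(0) = \tilde z(0,0) = z(0,0)$.

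\textbf{Matching the formula.} Setting $f := \sqrt 2\, \Phi' \in \mathcal C^1([0,+\infty))$ and $g := \sqrt 2\, G' \in \mathcal C^1([-1,+\infty))$, the fundamental theorem of calculus gives $\tfrac{1}{\sqrt 2} \int_0^{t+x} f(s) \diff s = \Phi(t+x) - \Phi(0)$ and $\tfrac{1}{\sqrt 2}\int_0^{t-x} g(s) \diff s = G(t-x) - G(0)$ (the second identity holds for $t - x \geq -1$, which is always the case, with the usual convention $\int_0^a = -\int_a^0$ when $a < 0$). Adding $z(0,0) = \Phi(0) + G(0)$ to these two expressions yields $\Phi(t+x) + G(t-x) = \tilde z(t+x,t-x) = z(t,x)$, which is \eqref{EqWeakWave}.

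The only slightly delicate point is justifying that the solution to $\tilde z_{\xi\eta}=0$ on $\Omega$ splits as a sum of a function of $\xi$ alone and a function of $\eta$ alone; this is where the geometry of $\Omega$ matters. Since $\Omega$ is the intersection of two half-planes and contains, with any two of its points, a rectilinear path made of one horizontal and one vertical segment, the standard two-step integration goes through without obstruction, so no global obstruction appears. Everything else is bookkeeping of constants of integration.
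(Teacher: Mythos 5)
Your proof is correct in substance, but it follows a different (equally classical) route from the paper. The paper never passes to characteristic coordinates: it applies the rotation $R$ to $(z_t,-z_x)$ to form the Riemann invariants $u=\frac{z_t+z_x}{\sqrt2}$, $v=\frac{z_t-z_x}{\sqrt2}$, observes that these satisfy the first-order transport equations $u_t=u_x$, $v_t=-v_x$ (so they are constant along the characteristic lines), defines $f$ and $g$ by restricting $u$ and $v$ to the boundary $x=0$ and the initial line $t=0$, and then recovers \eqref{EqWeakWave} by integrating $z_x(0,\cdot)$ and $z_t(\cdot,x)$ along an explicit path from $(0,0)$ to $(t,x)$. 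Your argument instead works at the level of $z$ itself: change of variables $\xi=t+x$, $\eta=t-x$, the equation $\tilde z_{\xi\eta}=0$, and a global splitting $\tilde z=\Phi(\xi)+G(\eta)$, after which $f=\sqrt2\,\Phi'$, $g=\sqrt2\,G'$ reproduce exactly the same Riemann invariants. What the paper's route buys is that it sidesteps the only delicate point in yours, namely justifying the additive splitting on a non-rectangular domain, and it directly produces the objects ($f$, $g$, and relation \eqref{eq:der-sol}) used in the rest of the paper; your route is the textbook d'Alembert argument and is arguably more self-contained for a reader who just wants the representation formula.

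Two inaccuracies in your write-up should be fixed, though neither is fatal. First, the image of $\mathbb R_+\times[0,1]$ under $(t,x)\mapsto(\xi,\eta)$ is $\{(\xi,\eta)\suchthat \xi+\eta\geq 0,\ 0\leq\xi-\eta\leq 2\}$, not the larger wedge $\{\eta\geq-1,\ \eta\leq\xi\leq\eta+2\}$ you state (your set contains points with $t=(\xi+\eta)/2<0$, where $\tilde z$ is not defined). Second, the claim that any two points of $\Omega$ are joined by one horizontal plus one vertical segment inside $\Omega$ is false (try $(0,0)$ and $(10,10)$); fortunately it is also not what the two-step integration needs. What is needed, and what does hold for the correct image, is that every vertical section $\{\eta\suchthat(\xi,\eta)\in\Omega\}$ and every horizontal section $\{\xi\suchthat(\xi,\eta)\in\Omega\}$ is an interval: then $\tilde z_{\xi\eta}=0$ gives $\tilde z_\xi(\xi,\eta)=h(\xi)$ with $h(\xi)=\tilde z_\xi(\xi,\xi)$ of class $\mathcal C^1$, and setting $\Phi(\xi)=\int_0^\xi h$, the function $\tilde z-\Phi(\xi)$ has vanishing $\xi$-derivative on each horizontal section, hence equals some $G(\eta)$, with $G(\eta)=\tilde z(\eta+2,\eta)-\Phi(\eta+2)$ of class $\mathcal C^2$ on $[-1,+\infty)$. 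With that correction your bookkeeping of constants and the verification of \eqref{EqWeakWave}, as well as the easy converse direction, are fine.
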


\begin{proof}
Assume that $z$ satisfies $z_{xx} = z_{tt}$ in $\mathbb R_+ \times [0, 1]$ and let $u, v: \mathbb R_+ \times [0, 1] \to \mathbb R$ satisfy
\begin{equation}
\label{dAlembert}
\begin{pmatrix}
v(t, x) \\
-u(t, x) \\
\end{pmatrix} = R \begin{pmatrix}
z_t(t, x) \\
-z_x(t, x) \\
\end{pmatrix}
\end{equation}
Then $u, v \in \mathcal C^1(\mathbb R_+ \times [0, 1])$ and $u_t = u_x$, $v_t = -v_x$ in $\mathbb R_+ \times [0, 1]$. One immediately verifies that, for every $(t, x) \in \mathbb R_+ \times [0, 1]$, the functions $h \mapsto u(t+h, x-h)$ and $h \mapsto v(t+h, x+h)$ are constant in their domains. Letting $f: [0, +\infty) \to \mathbb R$ and $g: [-1, +\infty) \to \mathbb R$ being defined by
\begin{equation}\label{eq:IR}
f(s) = u(s, 0), \qquad g(s) = \begin{dcases*}
v(s, 0) & if $s \geq 0$, \\
v(0, -s) & if $-1 \leq s < 0$,
\end{dcases*}
\end{equation}
one can easily check that $f \in \mathcal C^1([0, +\infty))$, $g \in \mathcal C^1([-1, +\infty))$, and $u(t, x) = f(t+x)$ and $v(t, x) = g(t-x)$ for every $(t, x) \in \mathbb R_+ \times [0, 1]$. In particular, it follows from \eqref{dAlembert} that
\begin{equation}
\label{eq:der-sol}
\begin{pmatrix}
z_t(t, x) \\
-z_x(t, x) \\
\end{pmatrix} = R^{-1} \begin{pmatrix}
g(t - x) \\
-f(t + x) \\
\end{pmatrix}.
\end{equation}
Hence
\[
\begin{split}
z(t, x) & = z(0, 0) + \int_0^x z_x(0, s) \diff s + \int_0^t z_t(s, x) \diff s \\
& = z(0, 0) + \frac{1}{\sqrt{2}} \int_0^x f(s) \diff s - \frac{1}{\sqrt{2}} \int_0^x g(-s) \diff s \\
& \hphantom{{} = z(0, 0) } + \frac{1}{\sqrt{2}} \int_0^t f(s+x) \diff s + \frac{1}{\sqrt{2}} \int_0^t g(s-x) \diff s \\
& = z(0, 0) + \frac{1}{\sqrt{2}} \int_0^{t+x} f(s) \diff s + \frac{1}{\sqrt{2}} \int_0^{t-x} g(s) \diff s,
\end{split}
\]
as required. Conversely, if $z$ is given by \eqref{EqWeakWave}, it is easy to see that $z_{tt} = z_{xx}$.
\end{proof}

The functions $f$ and $g$ from \eqref{eq:IR} are called \emph{Riemann invariants} in the classical literature of hyperbolic PDEs (see, for instance, \cite{bastin2016stability}). Proposition~\ref{PropDAlembert} motivates the following definition of weak solution to \eqref{eq:wave}.

\begin{definition}
\label{DefWeakSolution}
Let $(z_0, z_1) \in \mathsf X_p$. We say that $z: \mathbb R_+ \times [0, 1] \to \mathbb R$ is a \emph{weak global (in time) solution} of \eqref{eq:wave} in $\mathsf X_p$ with initial condition $(z_0, z_1)$ if there exist $f \in L^p_{\loc}(0, +\infty)$ and $g \in L^p_{\loc}(-1, +\infty)$ such that
\begin{equation}
\label{eq:defWeakSol}
\left\{
\begin{aligned}
& z(t, x) = \frac{1}{\sqrt{2}} \int_{0}^{t+x} f(s) \diff s + \frac{1}{\sqrt{2}} \int_0^{t-x} g(s) \diff s & \quad & \text{for all } (t, x) \in \mathbb R_+ \times [0, 1], \\
& z(t,0) = 0,\: (z_t(t, 1), -z_x(t, 1)) \in \Sigma, & & \text{for a.e.\ } t \in \mathbb{R}_+,\\
&z(0,x) = z_{0}(x),\: z_t(0,x) = z_1(x), & & \text{for a.e.\ } x \in [0,1].
\end{aligned}
\right.
\end{equation}
In that case, we use $e_p(z)(t)$ to denote the $\mathsf X_p$ norm of the weak global solution $z$ of \eqref{eq:wave} at time $t \in \mathbb R_+$, defined by
\[
e_p(z)(t) = \norm{(z(t, \cdot), z_t(t, \cdot))}_{\mathsf X_p}.
\]
\end{definition}

Note that, if $z$ is a weak global solution of \eqref{eq:wave} in $\mathsf X_p$, then $(z(t, \cdot), z_t(t, \cdot)) \in \mathsf X_p$ for every $t \in \mathbb R_+$ and $z_{tt} = z_{xx}$ is satisfied in $\mathbb R_+^\ast \times (0, 1)$ in the sense of distributions. In the sequel, we refer to weak global solutions of \eqref{eq:wave} simply as \emph{solutions} or \emph{trajectories} of \eqref{eq:wave} and, by a slight abuse of expression, we refer to $e_p(z)(t)$ as the \emph{energy} of $z$ at time $t$.

By rewriting the boundary and initial conditions of \eqref{eq:wave} in terms of 
the functions $f$ and $g$ from Definition~\ref{DefWeakSolution}, one obtains at once the following characterization of solutions of 
\eqref{eq:wave}, when they exist.

\begin{proposition}\label{prop:charac1-sol}
Let $z: \mathbb R_+ \times [0, 1] \to \mathbb R$ be a solution of \eqref{eq:wave} with initial condition $(z_0, z_1) \in \mathsf X_p$ and $f \in L^p_{\loc}(0, +\infty)$ and $g \in L^p_{\loc}(-1, +\infty)$ be the corresponding functions from Definition~\ref{DefWeakSolution}. Then $f$ and $g$ satisfy
\begin{subequations}
\label{BoundaryInitialFG}
\begin{align}
& \begin{pmatrix}
g(-s) \\
-f(s) \\
\end{pmatrix} = R \begin{pmatrix}
z_1(s) \\
-z_0^\prime(s) \\
\end{pmatrix}, & & \text{for a.e.\ } s \in [0, 1], \label{first-interval} \\
& f(s) = - g(s), & & \text{for a.e.\ } s \geq 0, \label{relation-f-g} \\
& (g(s - 2), g(s)) \in R \Sigma, & & \text{for a.e.\ } s \geq 1. \label{recurrence-g}
\end{align}
\end{subequations}

Conversely, consider $g \in L^p_{\loc}(-1, +\infty)$ verifying \eqref{recurrence-g} and let $f \in L^p_{\loc}(0, +\infty)$ be given by \eqref{relation-f-g}. Then the function $z: \mathbb R_+ \times [0, 1] \to \mathbb R$ defined by the integral formula from \eqref{eq:defWeakSol} is a solution of \eqref{eq:wave} whose initial condition $(z_0, z_1) \in \mathsf X_p$ is the unique couple of functions satisfying \eqref{first-interval}.
\end{proposition}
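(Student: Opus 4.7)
The overall plan is straightforward verification: write everything in terms of $f$ and $g$, and translate each of the three conditions in \eqref{BoundaryInitialFG} into the corresponding piece of \eqref{eq:wave} (initial data, Dirichlet at $x=0$, damping at $x=1$). The only real work is to make sure the computations are done carefully enough to cover the low-regularity setting $f, g \in L^p_{\loc}$.

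First, I compute from the integral formula in \eqref{eq:defWeakSol} the partial derivatives, understood in the sense of distributions: $z_t(t,x) = \frac{1}{\sqrt{2}}\bigl(f(t+x) + g(t-x)\bigr)$ and $z_x(t,x) = \frac{1}{\sqrt{2}}\bigl(f(t+x) - g(t-x)\bigr)$, so that $(z_t,-z_x)$ coincides with $R^{-1}(g(t-x),-f(t+x))^{T}$ just as in \eqref{eq:der-sol}. This is the key identity from which every assertion will flow.

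For the direct part, I treat the three conditions in turn. Specializing the derivative formulas at $t = 0$ and equating with $(z_0', z_1)$ gives a $2 \times 2$ linear system whose matrix is exactly $R^{-1}$; multiplying by $R$ yields \eqref{first-interval}. Next, the Dirichlet condition $z(t,0) = 0$ reads $\int_0^t (f(s) + g(s))\,\diff s = 0$ for all $t \geq 0$, and differentiating gives \eqref{relation-f-g} a.e.\ on $\mathbb R_+$. For the boundary condition at $x = 1$, I apply $R$ to $(z_t(t,1),-z_x(t,1))^{T}$; using the derivative formulas the components simplify to $g(t-1)$ and $-f(t+1)$, and then \eqref{relation-f-g} (valid for $t+1 \geq 0$) turns $-f(t+1)$ into $g(t+1)$. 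Setting $s = t+1$ yields \eqref{recurrence-g} for a.e.\ $s \geq 1$.

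For the converse, given $g \in L^p_{\loc}(-1,+\infty)$ satisfying \eqref{recurrence-g} and $f$ defined on $\mathbb R_+$ by \eqref{relation-f-g}, I define $z$ by the integral formula in \eqref{eq:defWeakSol} and run the same chain of equivalences backwards: the Dirichlet condition is immediate from \eqref{relation-f-g}, the boundary condition at $x=1$ comes from applying $R^{-1}$ to \eqref{recurrence-g} and recognizing the result as $(z_t(t,1),-z_x(t,1))$, and the initial data are recovered by specializing at $t=0$. The only thing that needs a small check is that the couple $(z_0, z_1)$ defined by \eqref{first-interval} belongs to $\mathsf X_p$; but \eqref{first-interval} expresses $(z_1, -z_0')$ on $[0,1]$ as $R^{-1}$ applied to $(g(-\cdot), -f)|_{[0,1]} \in L^p(0,1)^2$, and the Dirichlet normalization $z_0(0) = 0$ is built into the integral formula, so $(z_0, z_1) \in \mathsf X_p$ and it is characterized uniquely by \eqref{first-interval}.

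I do not expect a genuine obstacle here: the statement is essentially bookkeeping around Proposition~\ref{PropDAlembert}. The only mildly delicate point is to keep track of the a.e.\ statements and to verify that all identities used (e.g.\ differentiation of $\int_0^t (f+g)$) are legitimate for $L^p_{\loc}$ functions, which is standard.
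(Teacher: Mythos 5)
Your proposal is correct and follows exactly the route the paper intends: the paper gives no detailed proof, asserting that the characterization follows ``at once'' from rewriting the boundary and initial conditions of \eqref{eq:wave} in terms of $f$ and $g$ via the identity \eqref{eq:der-sol}, which is precisely the bookkeeping you carry out. Your explicit verification of \eqref{first-interval}, \eqref{relation-f-g}, and \eqref{recurrence-g}, and of the converse with $(z_0,z_1)\in\mathsf X_p$, matches the paper's argument in substance and level of rigor.
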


The main technique underlying all the results of our paper consists in establishing links between trajectories of \eqref{eq:wave} and trajectories of discrete-time dynamical systems which are defined next. 

\begin{definition}\label{def:iterated}
Let $S: \mathbb R \rightrightarrows \mathbb R$ be a set-valued map. We refer to the inclusion
\[
x_{n+1} \in S(x_n), \qquad n \in \mathbb N,\; x_n \in \mathbb R,
\]
as the \emph{discrete-time dynamical system associated with $S$ on $\mathbb R$} and its corresponding trajectories $(x_n)_{n \in \mathbb N}$ are called \emph{real iterated sequences for $S$}.

Similarly, we refer to the inclusion
\begin{equation}
\label{MainEquationOnGn}
g_{n+1}(s) \in S(g_n(s)), \qquad n \in \mathbb N,\; \text{a.e.\ }s \in [-1, 1],
\end{equation}
as the \emph{discrete-time dynamical system associated with $S$ on the space of real-valued measurable functions defined on $[-1, 1]$} and its corresponding trajectories $(g_n)_{n \in \mathbb N}$ are called \emph{iterated sequences} for $S$. For $p \in [1, +\infty]$, the \emph{discrete-time dynamical system associated with $S$ on $\mathsf Y_p$} is defined as the restriction of the above dynamical system to sequences $(g_n)_{n \in \mathbb N}$ in $\mathsf Y_p$.
\end{definition}

In the sequel of the paper, we will connect solutions of \eqref{eq:wave} and solutions of \eqref{MainEquationOnGn} with $S$ being the set-valued map whose graph is $R \Sigma$, in which case \eqref{MainEquationOnGn} is another way of writing \eqref{recurrence-g}. For that purpose, we need to introduce some additional notations.
In the following definition, we use $\mathsf Y_p^{\mathbb{N}}$ to denote the set of sequences taking values in $\mathsf Y_p$.

\begin{definition}
\label{DefiSeqI}
Let $p \in [1, +\infty]$.

\begin{enumerate}
\item We use $\Seq: L^p_{\loc}(-1, +\infty) \to \mathsf Y_p^{\mathbb N}$ to denote the bijection which associates, with each $g \in L^p_{\loc}(-1,\allowbreak +\infty)$, the sequence $(g_n)_{n \in \mathbb N}$ in $\mathsf Y_p$ defined by $g_n(s) = g(s + 2n)$ for $n \in \mathbb N$ and a.e.\ $s \in [-1, 1]$.

\item We use $\mathfrak I: \mathsf X_p \to \mathsf Y_p$ to denote the isometry which associates, with each $(z_0, z_1) \in \mathsf X_p$, the element $g_0 \in \mathsf Y_p$ defined by
\[
\begin{pmatrix}
g_0(-s) \\
g_0(s) \\
\end{pmatrix} = R \begin{pmatrix}
z_1(s) \\
-z_0^\prime(s) \\
\end{pmatrix}, \qquad \text{for a.e.\ } s \in [0, 1].
\]
\end{enumerate}
\end{definition}

As a consequence of Proposition~\ref{prop:charac1-sol} and the above definitions, one immediately obtains the following one-to-one correspondence between solutions of \eqref{eq:wave} in $\mathsf X_p$ and trajectories of the dynamical system \eqref{MainEquationOnGn} in $\mathsf Y_p$.

\begin{proposition}
\label{PropGn}
Let $p \in [1, +\infty]$, $\Sigma \subset \mathbb R^2$, and $S: \mathbb R \rightrightarrows \mathbb R$ be the set-valued map whose graph is $R \Sigma$.
\begin{enumerate}
\item\label{ItemEquivSols1} Let $z$ be a solution of \eqref{eq:wave} with initial condition $(z_0, z_1) \in \mathsf X_p$, $g \in L_{\loc}^p(-1, +\infty)$ be the corresponding function from Definition~\ref{DefWeakSolution}, and $(g_n)_{n \in \mathbb N} = \Seq(g)$. Then $(g_n)_{n \in \mathbb N}$ is an iterated sequence for $S$ on $\mathsf Y_p$ starting at $g_0 = \mathfrak I(z_0, z_1)$.

\item\label{ItemEquivSols2} Conversely, let $(g_n)_{n \in \mathbb N}$ be an iterated sequence for $S$ on $\mathsf Y_p$ starting at some $g_0 \in \mathsf Y_p$. Let $g = \Seq^{-1}((g_n)_{n \in \mathbb N})$, $f \in L^p_{\loc}(0, +\infty)$ be given by $f(s) = -g(s)$ for a.e.\ $s \geq 0$, and $z$ be defined from $f$ and $g$ as in the first equation of \eqref{eq:defWeakSol}. Then $z$ is a solution of \eqref{eq:wave} in $\mathsf X_p$ with initial condition $(z_0, z_1) = \mathfrak I^{-1}(g_0)$.

\item Let $z$, $g$, and $(g_n)_{n \in \mathbb N}$ be as in \ref{ItemEquivSols1} or \ref{ItemEquivSols2}. Then
\begin{equation}
\label{eq:norm}
e_p(z)(t) = \norm{g(t + \cdot)}_p, \qquad \text{ for all } t \in \mathbb R_+
\end{equation}
and, in particular,
\begin{equation}
\label{eq:normGn}
e_p(z)(2n) = \norm{g_n}_p, \qquad \text{ for all } n \in \mathbb N.
\end{equation}
\end{enumerate}
\end{proposition}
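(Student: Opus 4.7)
My plan is to deduce all three items by translating the conclusions of Proposition~\ref{prop:charac1-sol} directly through the definitions of $\Seq$ and $\mathfrak I$, together with a short d'Alembert-type computation for the energy identity. None of the steps is deep; the main effort is bookkeeping, and the only point that genuinely requires care is the energy identity in item~(3), where the $\sqrt{2}$ factors arising from the rotation $R$ in \eqref{eq:der-sol} must be reconciled exactly with the $2^{-p/2}$ normalization in \eqref{eq:norms}.

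For item~\ref{ItemEquivSols1}, I start from $g \in L^p_{\loc}(-1,+\infty)$ provided by Definition~\ref{DefWeakSolution} and apply Proposition~\ref{prop:charac1-sol}. Using \eqref{relation-f-g} to eliminate $f$ in \eqref{first-interval}, I rewrite the initial-data relation as $(g(-s), g(s))^{T} = R(z_1(s), -z_0^\prime(s))^{T}$ for a.e.\ $s \in [0,1]$, which is precisely the definition of $\mathfrak I$ in Definition~\ref{DefiSeqI}; hence $g_0 = \mathfrak I(z_0, z_1)$. The recurrence \eqref{recurrence-g}, via the substitution $s = \tau + 2(n+1)$ with $\tau \in [-1,1]$, becomes $(g_n(\tau), g_{n+1}(\tau)) \in R\Sigma$ for a.e.\ $\tau \in [-1,1]$, i.e.\ $g_{n+1}(\tau) \in S(g_n(\tau))$, which is the iterated-sequence property.

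For item~\ref{ItemEquivSols2}, I run the same translations in reverse. Given $(g_n)_{n\in\mathbb N}$ with $g_{n+1}(s)\in S(g_n(s))$ a.e., the concatenation $g = \Seq^{-1}((g_n)_{n\in\mathbb N})$ lies in $L^p_{\loc}(-1,+\infty)$; the definition $f(s) := -g(s)$ gives \eqref{relation-f-g} trivially; the substitution $s = \tau + 2(n+1)$ produces \eqref{recurrence-g}; and $(z_0, z_1) := \mathfrak I^{-1}(g_0)$ is the unique pair for which \eqref{first-interval} holds, since $\mathfrak I$ is an isometric bijection onto $\mathsf Y_p$. The converse part of Proposition~\ref{prop:charac1-sol} then yields that the $z$ defined by the integral formula in \eqref{eq:defWeakSol} is a solution with initial condition $(z_0, z_1)$.

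For item~(3), I use the derivative formula \eqref{eq:der-sol} from the proof of Proposition~\ref{PropDAlembert} together with $f = -g$ on $\mathbb R_+$ to compute $z_x(t,x) + z_t(t,x) = -\sqrt{2}\, g(t+x)$ and $z_x(t,x) - z_t(t,x) = -\sqrt{2}\, g(t-x)$. Substituting these into \eqref{eq:norms} with $(u,v) = (z(t,\cdot), z_t(t,\cdot))$, the prefactor $2^{-p/2}$ exactly cancels the $(\sqrt{2})^p$ coming from the derivative formulas, and the changes of variables $u = t \pm x$ collapse the two integrals into $\int_{t-1}^{t+1} \abs{g(u)}^p \diff u = \norm{g(t+\cdot)}_p^p$, proving \eqref{eq:norm}; the case $p = +\infty$ is identical with essential suprema replacing integrals. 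Specializing to $t = 2n$ and recalling that $g_n(s) = g(s+2n)$ on $[-1,1]$ gives \eqref{eq:normGn}.
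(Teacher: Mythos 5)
Your proposal is correct and follows exactly the route the paper intends: Proposition~\ref{PropGn} is stated there as an immediate consequence of Proposition~\ref{prop:charac1-sol} and Definition~\ref{DefiSeqI}, and your translation of \eqref{first-interval}--\eqref{recurrence-g} through $\Seq$ and $\mathfrak I$, plus the $\sqrt{2}$-cancellation computation for \eqref{eq:norm}, is precisely the bookkeeping the paper leaves implicit. The sign and normalization checks (e.g.\ $z_x+z_t=-\sqrt{2}\,g(t+x)$, $z_x-z_t=-\sqrt{2}\,g(t-x)$ against the $2^{-p/2}$ factor in \eqref{eq:norms}) are accurate, so nothing is missing.
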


\begin{proof} Items \ref{ItemEquivSols1} and \ref{ItemEquivSols2} are reformulations of
Proposition~\ref{prop:charac1-sol}. As for \eqref{eq:norm}, it follows from the definition of $e_p$, \eqref{eq:der-sol}, and \eqref{relation-f-g}.
\end{proof}

Saying that $(g_n)_{n \in \mathbb N}$ is an iterated sequence for the set-valued map $S$ given in the statement of Proposition~\ref{PropGn} is equivalent to
\begin{equation}
\label{eq:dynSystGn}
(g_n(s), g_{n+1}(s)) \in R \Sigma, \qquad n \in \mathbb N,\; \text{a.e.\ } s \in [-1, 1],
\end{equation}
which is nothing but \eqref{recurrence-g} rewritten in terms of the sequence $(g_n)_{n \in \mathbb N}$. 

It is now clear, at the light of what precedes, that addressing standard issues for solutions of \eqref{eq:wave} such as existence, uniqueness, and decay rates and their optimality is completely equivalent to addressing the same issues for sequences $(g_n)_{n \in \mathbb N}$ in $\mathsf Y_p$ verifying \eqref{eq:dynSystGn}. This is the point of view that we will adopt all along the paper.

\begin{remark}
\label{RemkExistUnique}
Thanks to the iterative nature of discrete-time dynamical systems in $\mathsf Y_p$, Proposition~\ref{PropGn} reduces the issue of existence (resp.\ existence and uniqueness) of solutions of \eqref{eq:wave} in $\mathsf X_p$ for every initial condition in $\mathsf X_p$ to the following equivalent statement in terms of $S$: for every $g \in \mathsf Y_p$, there exists (resp.\ there exists a unique) $h \in \mathsf Y_p$ such that $h(s) \in S(g(s))$ for a.e.\ $s \in [-1, 1]$.
\end{remark}

\begin{remark}
\label{remk:energy-derivative}
For $p < +\infty$, one deduces from \eqref{eq:norm} that, for every $t \geq 0$,
\[
e_p^p(z)(t) = \int_{t-1}^{t+1} \abs{g(s)}^p \diff s,
\]
and hence $t \mapsto e_p^p(z)(t)$ is absolutely continuous and
\begin{equation}
\label{eq:deriv-energy}
\frac{\diff}{\diff t} e_p^p(z)(t) = \abs{g(t+1)}^p - \abs{g(t-1)}^p,\hbox{ for a.e.\ }t\geq 0.
\end{equation}
\end{remark}

\begin{remark}
\label{RemkImplicitDescriptionOfS}
When $\Sigma$ is the graph of a function $\sigma: \mathbb R \to \mathbb R$, the set-valued map $S: \mathbb R \rightrightarrows \mathbb R$ whose graph is $R \Sigma$ can be described as follows: for $x \in \mathbb R$, $S(x)$ is the set of solutions $y \in \mathbb R$ of the equation
\[\sigma\left(\frac{x - y}{\sqrt{2}}\right) = \frac{x + y}{\sqrt{2}}.\]
A similar equation has been given in \cite{pierre2000strong} but, instead of working with a set-valued map $S$, the authors consider instead solutions $y$ of the above equation of minimal absolute value.
\end{remark}

\subsection{Hypotheses on \texorpdfstring{$\Sigma$}{Sigma}}

To prepare for the sequel of the paper, we provide a list of assumptions on $\Sigma$ that will be useful to characterize existence, uniqueness, or asymptotic behavior of \eqref{eq:wave}. The results of this paper will require subsets of these assumptions, which are explicitly stated in each result. We stress the fact that we do not assume all these assumptions on $\Sigma$ at the same time, since most of our results do not require all of them. Note that we will use the two notions of \emph{universally measurable function} (whose definition is recalled in Appendix~\ref{AppUniversally}) and \emph{function with linear growth}, i.e., functions $\varphi: \mathbb R \to \mathbb R$ for which there exist $a, b \in \mathbb R_+$ such that $$\abs{\varphi(x)} \leq a \abs{x} + b,\qquad \forall x\in \mathbb{R}.$$

\begin{hypotheses}
\label{HypoSigma}
The following hypotheses concern a set $\Sigma \subset \mathbb R^2$ and the set-valued map $S$ whose graph is $R\Sigma$.
\begin{listhypo}[align=left, labelwidth=\widthof{\normalsize (H3)$_\infty$\ }, leftmargin=!]
\item\label{HypoSigma-Zero} $(0, 0) \in \Sigma$.
\item\label{HypoSigma-Exists} $R\Sigma$ contains the graph of a universally measurable function $\varphi$ with linear growth.

\item[\customlabel{HypoSigma-ExistsInfty}{\ref{HypoSigma-Exists}$_\infty$}] $R\Sigma$ contains the graph of a universally measurable function $\varphi$ mapping boun\-ded sets to bounded sets.

\item\label{HypoSigma-Unique} $R\Sigma$ is equal to the graph of a universally measurable function $\varphi$ with linear growth.

\item[\customlabel{HypoSigma-UniqueInfty}{\ref{HypoSigma-Unique}$_\infty$}] $R\Sigma$ is equal to the graph of a universally measurable function $\varphi$ mapping bounded sets to bounded sets.

\item\label{HypoSigma-Damping} For every $(x, y) \in \Sigma$, one has $x y \geq 0$.
\item\label{HypoSigma-StrictDamping} For every $(x, y) \in \Sigma \setminus \{(0, 0)\}$, one has $x y > 0$.
\item\label{HypoSigma-NoDampingAtInfty} One has
\[
\lim_{\substack{\abs{(x, y)} \to +\infty \\ (x, y) \in \Sigma}} \min\left(\frac{x}{y}, \frac{y}{x}\right) = 0.
\]

\item\label{HypoSigma-SectorZero} $\Sigma$ satisfies \ref{HypoSigma-Damping} and there exist positive constants $M, a, b$ such that
\[
a \abs{x} \leq \abs{y} \leq b \abs{x}, \qquad \text{ for every } (x, y) \in \Sigma \cap B(0, M).
\]

\item\label{HypoSigma-SectorInfty} $\Sigma$ satisfies \ref{HypoSigma-Damping} and there exist positive constants $M, a, b$ such that
\[
a \abs{x} \leq \abs{y} \leq b \abs{x}, \qquad \text{ for every } (x, y) \in \Sigma \setminus B(0, M).
\]

\item\label{HypoSigma-gUpper} $\Sigma$ satisfies \ref{HypoSigma-Damping} and there exist a positive constant $M$ and a function $q \in \mathcal C^1(\mathbb R_+, \mathbb R_+)$ with $q(0) = 0$, $0 < q(x) < x$, and $\abs{q^\prime(x)} < 1$ for every $x>0$ such that
\[
q(\abs{x}) \leq \abs{y} \quad \text{ and } \quad q(\abs{y}) \leq \abs{x}, \qquad \text{ for every } (x, y) \in \Sigma \cap B(0, M).
\]

\item\label{HypoSigma-gLower} $\Sigma$ satisfies \ref{HypoSigma-Damping} and there exist a positive constant $M$ and a function $q \in \mathcal C^1(\mathbb R_+, \mathbb R_+)$ with $q(0) = 0$, $0 < q(x) < x$, and $\abs{q^\prime(x)} < 1$ for every $x>0$ such that
\[
\abs{y} \leq q(\abs{x}) \quad \text{ or } \quad \abs{x} \leq q(\abs{y}), \qquad \text{ for every } (x, y) \in \Sigma \cap B(0, M).
\]
\end{listhypo}
\end{hypotheses}

Throughout the paper we will often assume $\Sigma$ to be a \emph{damping set}, whose definition is given next.

\begin{definition}[Damping set]
\label{def-damping}
A set $\Sigma \subset \mathbb R^2$ is called a \emph{damping set} (or simply \emph{damping}) if it satisfies \ref{HypoSigma-Zero}, \ref{HypoSigma-Exists}, and \ref{HypoSigma-Damping}. It is said to be \emph{strict} when one requires \ref{HypoSigma-StrictDamping} to be satisfied instead of \ref{HypoSigma-Damping}.
\end{definition}

By a slight abuse of notation, we will also refer to the set-valued function $S$ whose graph is $R \Sigma$ as a (resp.\ \emph{strict}) \emph{damping} when $\Sigma$ is a (resp.\ strict) damping.

Assumption \ref{HypoSigma-Zero} is used to guarantee that $z \equiv 0$ is a solution of \eqref{eq:wave}. When $\Sigma$ is the graph of a function $\sigma$, then \ref{HypoSigma-Zero} reduces to $\sigma(0) = 0$.

In the case where $\Sigma$ is the graph of a \emph{linear} function $\sigma(x) = \alpha x$, it is standard that a necessary and sufficient condition for the existence of solutions of \eqref{eq:wave} is $\alpha \neq -1$, i.e., that $R \Sigma$ is not the vertical axis $x = 0$. Hypotheses \ref{HypoSigma-Exists}--\ref{HypoSigma-UniqueInfty} prevent this phenomenon of nonexistence of solutions of \eqref{eq:wave} by imposing a positive distance between the vertical axis $x = 0$ and some points of $R\Sigma$ outside of a neighborhood of $0$. Moreover, we will show in Theorem~\ref{TheoExist} that \ref{HypoSigma-Exists} (resp.\ \ref{HypoSigma-ExistsInfty}) is a sufficient condition for the existence of solutions of \eqref{eq:wave} in $\mathsf X_p$ for $p$ finite (resp.\ $p = +\infty$) and the necessity of the linear growth condition (resp.\ the condition of mapping bounded sets to bounded sets).

Regarding uniqueness, we have a more precise result, namely that \ref{HypoSigma-Unique} (resp.\ \ref{HypoSigma-UniqueInfty}) is necessary and sufficient for $p$ finite (resp.\ for $p = +\infty$), as shown in Theorem~\ref{TheoUnique}. Note that a standard assumption in the literature for obtaining uniqueness of solutions of \eqref{eq:wave} is that $\sigma$ or $\id+\sigma$ are monotone, cf.\ for instance \cite[Proposition~1]{pierre2000strong} and also Proposition~\ref{PropIdPlusSigmaMonotone} below. Either of these properties implies conditions \ref{HypoSigma-Unique} and \ref{HypoSigma-UniqueInfty} on $\Sigma$.

Condition \ref{HypoSigma-Damping} is a generalization of the damping assumption on a function $\sigma$, which states that $s \sigma(s) \geq 0$ for every $s \in \mathbb R$, and which implies that the $\mathsf X_2$ norm of solutions of \eqref{eq:wave} is nonincreasing. Similarly, \ref{HypoSigma-StrictDamping} is a strict version of \ref{HypoSigma-Damping} and generalizes the condition of strict damping for a function $\sigma$, i.e., $s \sigma(s) > 0$ for every $s \in \mathbb R^\ast$.

Hypothesis \ref{HypoSigma-NoDampingAtInfty} is used in the sequel to show that the stability concept of UGAS does not hold in general in $\mathsf X_p$ for finite $p$
and it can be restated, in the case where $\Sigma$ is the graph of a continuous function $\sigma:\mathbb R\to\mathbb R$, as 
\[
\lim_{\abs{s} \to +\infty} \min\left(\frac{\sigma(s)}{s}, \frac{s}{\sigma(s)}\right)=0.
\]
In particular, Hypothesis \ref{HypoSigma-NoDampingAtInfty} is verified if $\sigma$ is either a saturation function (see Figure~\ref{FigSaturation} below) or has a superlinear growth at infinity.

Hypotheses \ref{HypoSigma-SectorZero} and \ref{HypoSigma-SectorInfty} are generalizations of linear sector conditions in neighborhoods of the origin and infinity respectively, which are classical in the case where $\Sigma$ is the graph of a continuous function $\sigma:\mathbb R\to\mathbb R$ and which can be stated in that case respectively as
\[
0 < \liminf_{s \to 0} \frac{\sigma(s)}{s} \leq \limsup_{s \to 0} \frac{\sigma(s)}{s} < +\infty
\]
and
\[
0 <  \liminf_{\abs{s} \to +\infty} \frac{\sigma(s)}{s} \leq \limsup_{\abs{s} \to +\infty} \frac{\sigma(s)}{s}< +\infty.
\]

We next translate these hypotheses in equivalent statements when one replaces $\Sigma$ by $R \Sigma$ and get the following proposition.

\begin{proposition}
\label{PropHTilde}
Let $\Sigma\subset \mathbb R^2$ and consider the list given in 
Hypotheses \ref{HypoSigma}. Then Hypothesis \ref{HypoSigma-Zero} is equivalent to the same statement when replacing $\Sigma$ by $R \Sigma$. As for \ref{HypoSigma-Damping}, \ref{HypoSigma-StrictDamping}, \ref{HypoSigma-NoDampingAtInfty}, \ref{HypoSigma-SectorZero}, \ref{HypoSigma-SectorInfty}, \ref{HypoSigma-gUpper}, and \ref{HypoSigma-gLower} they can be expressed in terms of $R \Sigma$ (or, equivalent, of $S$) respectively as follows:
\begin{itemize}[align=left, labelwidth=\widthof{\normalsize $\widetilde{\text{(H10)}}$\ }, leftmargin=!]
\item[\customlabel{HypoSigma-DampingTilde}{$\widetilde{\text{\ref{HypoSigma-Damping}}}$}] For every $x \in \mathbb R$ and $y \in S(x)$, one has $\abs{y} \leq \abs{x}$,

\item[\customlabel{HypoSigma-StrictDampingTilde}{$\widetilde{\text{\ref{HypoSigma-StrictDamping}}}$}] For every $x \in \mathbb R$ and $y \in S(x)$ with $(x, y) \neq (0, 0)$, one has $\abs{y} < \abs{x}$,

\item[\customlabel{HypoSigma-NoDampingAtInftyTilde}{$\widetilde{\text{\ref{HypoSigma-NoDampingAtInfty}}}$}] One has
\[
\lim_{\substack{\abs{(x, y)} \to +\infty \\ y \in S(x)}} \abs*{\frac{y}{x}} = 1.
\]

\item[\customlabel{HypoSigma-SectorZeroTilde}{$\widetilde{\text{\ref{HypoSigma-SectorZero}}}$}] $\Sigma$ satisfies \ref{HypoSigma-DampingTilde} and there exist $M > 0$ and $\mu \in (0, 1)$ such that
\[
\abs{y} \leq \mu \abs{x}, \qquad \text{ for every } (x, y) \in R \Sigma \cap B(0, M).
\]

\item[\customlabel{HypoSigma-SectorInftyTilde}{$\widetilde{\text{\ref{HypoSigma-SectorInfty}}}$}] $\Sigma$ satisfies \ref{HypoSigma-DampingTilde} and there exist $M > 0$ and $\mu \in (0, 1)$ such that
\[
\abs{y} \leq \mu \abs{x}, \qquad \text{ for every } (x, y) \in R \Sigma \setminus B(0, M).
\]

\item[\customlabel{HypoSigma-gUpperTilde}{$\widetilde{\text{\ref{HypoSigma-gUpper}}}$}] $\Sigma$ satisfies \ref{HypoSigma-DampingTilde} and there exist a positive constant $M$ and a function $Q \in \mathcal C^1(\mathbb R_+, \mathbb R_+)$ with $Q(0) = 0$, $0 < Q(x) < x$, and $Q^\prime(x) > 0$ for every $x>0$ such that
\[
\abs{y} \leq Q(\abs{x}), \qquad \text{ for every } (x, y) \in R \Sigma \cap B(0, M).
\]

\item[\customlabel{HypoSigma-gLowerTilde}{$\widetilde{\text{\ref{HypoSigma-gLower}}}$}] $\Sigma$ satisfies \ref{HypoSigma-DampingTilde} and there exist a positive constant $M$ and a function $Q \in \mathcal C^1(\mathbb R_+, \mathbb R_+)$ with $Q(0) = 0$, $0 < Q(x) < x$, and $Q^\prime(x) > 0$ for every $x>0$ such that
\[
\abs{y} \geq Q(\abs{x}), \qquad \text{ for every } (x, y) \in R \Sigma \cap B(0, M).
\]
\end{itemize}
\end{proposition}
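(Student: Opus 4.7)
The plan is to verify each equivalence by direct computation, relying on the fact that $R$ is a linear isometry (hence fixes the origin and maps every ball $B(0,M)$ to itself and preserves norms), together with the key algebraic identity obtained by writing $(u,v) = R(x,y) = \bigl(\tfrac{x+y}{\sqrt 2}, \tfrac{-x+y}{\sqrt 2}\bigr)$, namely
\[
2xy = u^2 - v^2, \qquad \frac{x}{y} = \frac{u-v}{u+v}, \qquad \frac{y}{x} = \frac{u+v}{u-v}.
\]
From this, \ref{HypoSigma-Zero} is immediate (both sides fix the origin), while \ref{HypoSigma-Damping} $\iff$ \ref{HypoSigma-DampingTilde} and \ref{HypoSigma-StrictDamping} $\iff$ \ref{HypoSigma-StrictDampingTilde} follow directly from $xy \ge 0 \iff |v| \le |u|$ (with strict version for the second).

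For \ref{HypoSigma-NoDampingAtInfty} $\iff$ \ref{HypoSigma-NoDampingAtInftyTilde}, observe that $|(x,y)| = |(u,v)|$. A short discussion shows that $\min(x/y, y/x) \to 0$ forces $xy > 0$ eventually (otherwise the minimum stays $\le -1$), and under this sign condition the limit condition is equivalent to one of the two ratios tending to $0$ and the other to $+\infty$, which via the identities above is equivalent to $|v|/|u| \to 1$. For the sector conditions \ref{HypoSigma-SectorZero} $\iff$ \ref{HypoSigma-SectorZeroTilde} and \ref{HypoSigma-SectorInfty} $\iff$ \ref{HypoSigma-SectorInftyTilde}, using damping to reduce to $x, y \ge 0$, the sector $a \le y/x \le b$ translates via $v/u = (y-x)/(y+x)$ into $v/u \in [\tfrac{a-1}{a+1}, \tfrac{b-1}{b+1}]$, giving $|v| \le \mu |u|$ with $\mu = \max(|\tfrac{a-1}{a+1}|, |\tfrac{b-1}{b+1}|) < 1$; conversely, $|v| \le \mu |u|$ with $\mu < 1$ yields finite positive lower and upper bounds for $y/x$, and $R$-invariance of $B(0,M)$ handles the localization.

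The substantive step is \ref{HypoSigma-gUpper} $\iff$ \ref{HypoSigma-gUpperTilde} (and symmetrically \ref{HypoSigma-gLower} $\iff$ \ref{HypoSigma-gLowerTilde}), where one must build the function $Q$ from $q$. Reducing by damping to $x, y \ge 0$, the region defined by $q(x) \le y$ and $q(y) \le x$ is bounded by the curves $\Gamma_1 = \{(x,q(x)) : x \ge 0\}$ and $\Gamma_2 = \{(q(y),y) : y \ge 0\}$. Setting
\[
U(x) = \frac{x + q(x)}{\sqrt 2}, \qquad V(x) = \frac{q(x) - x}{\sqrt 2},
\]
the hypothesis $|q'| < 1$ yields $U'(x) = (1+q'(x))/\sqrt 2 > 0$ and $V'(x) = (q'(x)-1)/\sqrt 2 < 0$, so that $U: \mathbb R_+ \to \mathbb R_+$ is a $\mathcal C^1$-diffeomorphism and $V \le 0$. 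Defining $Q := -V \circ U^{-1}$, a direct computation gives $Q \in \mathcal C^1(\mathbb R_+,\mathbb R_+)$, $Q(0)=0$,
\[
Q'(u) = \frac{1 - q'(x)}{1 + q'(x)} > 0, \qquad u - Q(u) = \sqrt 2 \, q(x) > 0 \quad (x = U^{-1}(u) > 0),
\]
so $0 < Q(u) < u$. By construction, $R\Gamma_1$ is the graph of $v = -Q(u)$ for $u \ge 0$ and, by the $x\leftrightarrow y$ symmetry, $R\Gamma_2$ is the graph of $v = Q(u)$; hence the region between the two curves transforms into $\{|v| \le Q(|u|)\}$, and the $x,y \le 0$ part is handled by the absolute values. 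The equivalence \ref{HypoSigma-gLower} $\iff$ \ref{HypoSigma-gLowerTilde} follows from the same construction applied to the complementary region.

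I expect Step 4 to be the only nontrivial obstacle: producing a single $Q$ satisfying all the required regularity and monotonicity properties, and carefully identifying the rotated curves and the orientation of the region between them. The other equivalences are essentially algebraic manipulations using the rotation formulas.
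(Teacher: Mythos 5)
Your proposal is correct and takes essentially the same route as the paper, whose proof treats all items as immediate computations with the rotation and singles out only the passage from $q$ to $Q$. Your $Q=-V\circ U^{-1}$ coincides exactly with the paper's explicit formula \eqref{Relation-Q-q}, namely $Q(x)=\frac{1}{\sqrt{2}}\bigl[2(q+\id)^{-1}(\sqrt{2}x)-\sqrt{2}x\bigr]$, and your observation that $U'(x)=(1+q'(x))/\sqrt{2}>0$ is precisely the monotonicity of $\id+q$ on which the paper's argument rests.
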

\begin{proof} This proposition follows after immediate computations, after noticing though that the equivalences between \ref{HypoSigma-gUpper} and \ref{HypoSigma-gUpperTilde} and between \ref{HypoSigma-gLower} and \ref{HypoSigma-gLowerTilde} are obtained via the follow explicit relation between the functions $q$ and $Q$:
\begin{equation}
\label{Relation-Q-q}
Q(x) = \frac{1}{\sqrt{2}} \left[2(q + \id)^{-1}(\sqrt{2} x) - \sqrt{2} 
x\right],
\end{equation}
where indeed $x\mapsto x+q(x)$ is an increasing bijection from $\R_+$ to $\R_+$ when \ref{HypoSigma-gUpper} or \ref{HypoSigma-gLower} holds true.
\end{proof}

\begin{remark}
Hypotheses \ref{HypoSigma-gUpper} and \ref{HypoSigma-gLower} are borrowed from \cite{V-Martinez2000}, where they are only considered in the case where $\Sigma$ is the graph of a function. These hypotheses can be seen as nonlinear sector conditions, which can be clearly understood when written in terms of $R\Sigma$, see Figure~\ref{FigH10} below. Indeed, these conditions naturally arise when one considers, instead of $\Sigma$, the set 
$\Sigma^{-1} = \{(y, x) \suchthat (x, y) \in \Sigma\}$, which generalizes the situation where $\Sigma$ is the graph of an invertible function $\sigma$ and in which case $\Sigma^{-1}$ becomes the graph of $\sigma^{-1}$ and is obtained from $\Sigma$ by the symmetry with respect to the diagonal line $y = x$.
Instead, $R\Sigma^{-1}$ is obtained from $R\Sigma$ by the symmetry with respect to the axis $y = 0$ which corresponds to a simple sign change for $S$. Hence, all the issues related to iterated sequences associated with $S$ (such as existence, uniqueness and asymptotic behavior) remain unchanged when they are considered for $-S$. 
\end{remark}

We next provide figures illustrating the region $R \Sigma$ for different choices of $\Sigma$. The first one, provided in Figure~\ref{FigH4}, is an example of a set $\Sigma$ satisfying \ref{HypoSigma-Damping} together with the corresponding set $R \Sigma$, which satisfies \ref{HypoSigma-DampingTilde}.

\begin{figure}[ht]
\centering
\begin{tikzpicture}

\newcommand{\SigmaHQ}{(0, 0) -- (1, 0) to[out=0, in=-135] (1, 1) to[out=45, in=180] (2.5, 1) to[out=0, in=-90] (3, 2) -- (3, 3) -- (2, 3) to[out=180, in=0] (1.5, 2) -- (0.75, 2) to[out=-180, in=60] (0, 0) -- (-0.5, {-sqrt(3)/2}) to[out=-120, in=45] (-0.5, -2) -- (-1.5, -3) -- (-3, -3) -- (-3, -1.5) -- (-2, -0.5) to[out=45, in=-180] (-1, -0.5) to[out=0, in=-180] (0, 0)}

\fill[blue!20!white] (-3, 0) -- (3, 0) -- (3, 3) -- (0, 3) -- (0, -3) -- (-3, -3) -- cycle;

\fill[blue!60!white] \SigmaHQ;

\node at (2.25, 2) {$\Sigma$};

\draw[semithick, -Stealth] (-3, 0) -- (3, 0);
\draw[semithick, -Stealth] (0, -3) -- (0, 3);

\draw[semithick, -Stealth] (3.5, -0.25) -- node[midway, above] {$R$} (4.5, -0.25);

\fill[blue!20!white] (5, 3) -- (11, -3) -- (11, 3) -- (5, -3) -- cycle;

\begin{scope}[shift={(8, 0)}, rotate=-45]
\clip (0, {3*sqrt(2)}) -- ({3*sqrt(2)}, 0) -- (0, {-3*sqrt(2)}) -- ({-3*sqrt(2)}, 0) -- cycle;

\fill[blue!60!white] \SigmaHQ;
\end{scope}

\draw[semithick, -Stealth] (5, 0) -- (11, 0);
\draw[semithick, -Stealth] (8, -3) -- (8, 3);

\node at (9.5, 0.5) {$R \Sigma$};
\end{tikzpicture}
\caption{A set $\Sigma$ satisfying \ref{HypoSigma-Damping} and the corresponding set $R \Sigma$ satisfying \ref{HypoSigma-DampingTilde}.}
\label{FigH4}
\end{figure}
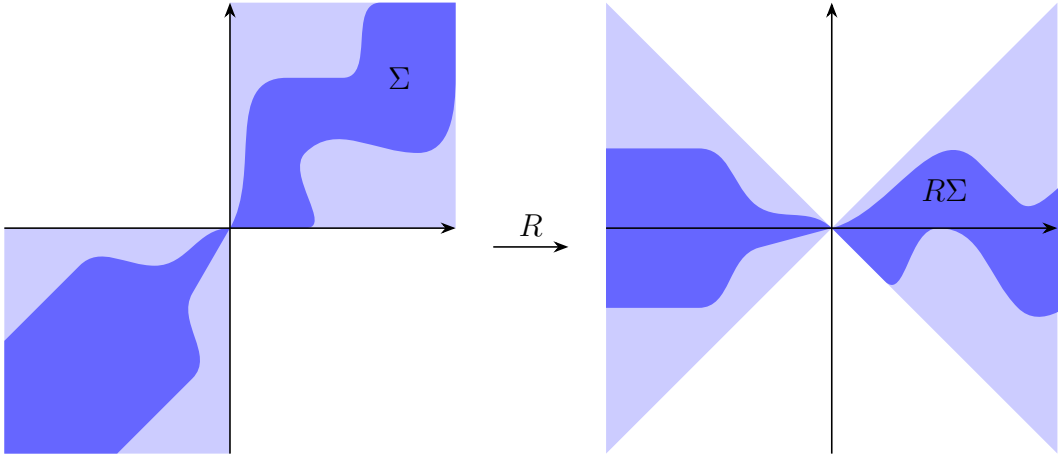

Figure~\ref{FigH10} provides the regions where a set $\Sigma$ must be included in order to satisfy either \ref{HypoSigma-gUpper} (in light blue) or \ref{HypoSigma-gLower} (in light red). The figure also represents the corresponding regions for $R \Sigma$ to satisfy \ref{HypoSigma-gUpperTilde} or \ref{HypoSigma-gLowerTilde}.

\begin{figure}[ht]
\centering
\begin{tikzpicture}

\fill[blue!20!white] (0, 0) to[out=0, in=-135] (3, 1.5) -- (3, 3) -- (1.5, 3) to[out=-135, in=90] (0, 0) to[out=180, in=45] (-3, -1.5) -- (-3, -3) -- (-1.5, -3) to[out=45, in=-90] (0, 0);
\fill[red!20!white] (0, 0) to[out=0, in=-135] (3, 1.5) -- (3, 0) -- cycle;
\fill[red!20!white] (1.5, 3) to[out=-135, in=90] (0, 0) -- (0, 3) -- cycle;
\fill[red!20!white] (0, 0) to[out=180, in=45] (-3, -1.5) -- (-3, 0) -- cycle;
\fill[red!20!white] (-1.5, -3) to[out=45, in=-90] (0, 0) -- (0, -3) -- cycle;
\draw[very thick, blue] (0, 0) to[out=0, in=-135] (3, 1.5) (1.5, 3) to[out=-135, in=90] (0, 0) to[out=180, in=45] (-3, -1.5) (-1.5, -3) to[out=45, in=-90] (0, 0);

\draw[semithick, -Stealth] (-3, 0) -- (3, 0);
\draw[semithick, -Stealth] (0, -3) -- (0, 3);

\draw[semithick, -Stealth] (3.5, -0.25) -- node[midway, above] {$R$} (4.5, -0.25);

\begin{scope}[shift={(8, 0)}, rotate=-45]
\clip (0, {3*sqrt(2)}) -- ({3*sqrt(2)}, 0) -- (0, {-3*sqrt(2)}) -- ({-3*sqrt(2)}, 0) -- cycle;

\fill[blue!20!white] (0, 0) to[out=0, in=-135] (3, 1.5) -- (3, 3) -- (1.5, 3) to[out=-135, in=90] (0, 0) to[out=180, in=45] (-3, -1.5) -- (-3, -3) -- (-1.5, -3) to[out=45, in=-90] (0, 0);
\fill[red!20!white] (0, 0) to[out=0, in=-135] (3, 1.5) -- (4.5, 0) -- cycle;
\fill[red!20!white] (1.5, 3) to[out=-135, in=90] (0, 0) -- (0, 4.5) -- cycle;
\fill[red!20!white] (0, 0) to[out=180, in=45] (-3, -1.5) -- (-4.5, 0) -- cycle;
\fill[red!20!white] (-1.5, -3) to[out=45, in=-90] (0, 0) -- (0, -4.5) -- cycle;
\draw[very thick, blue] (0, 0) to[out=0, in=-135] (3, 1.5) (1.5, 3) to[out=-135, in=90] (0, 0) to[out=180, in=45] (-3, -1.5) (-1.5, -3) to[out=45, in=-90] (0, 0);
\end{scope}

\draw[semithick, -Stealth] (5, 0) -- (11, 0);
\draw[semithick, -Stealth] (8, -3) -- (8, 3);

\draw (3, 1.5) node[above left] {$q$};
\draw (1.5, 3) node[below right] {$q^{-1}$};

\draw (11, 1.5) node[left] {$Q$};
\draw (11, -1.5) node[left] {$-Q$};

\end{tikzpicture}
\caption{Regions for generalized sector conditions for $\Sigma$ and $R \Sigma$.}
\label{FigH10}
\end{figure}
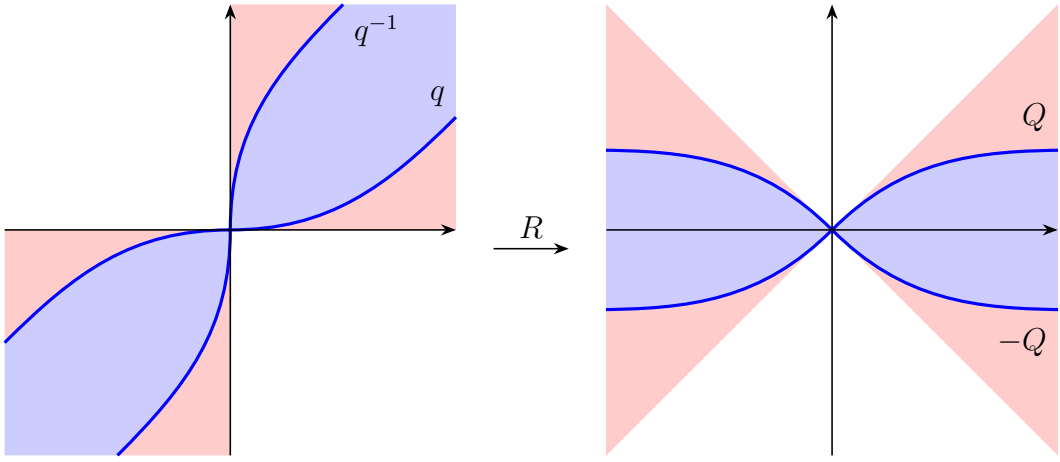

We represent in Figure~\ref{FigSaturation} a set $\Sigma$ as a saturation-type 
sector, i.e., a region comprised between the graphs of two piecewise linear saturation functions. The latter are defined as functions $f$ of the form $f(x) = \lambda x$ for $\abs{x} \leq M$ and $f(x) = \lambda M \frac{x}{\abs{x}}$ for $\abs{x} \geq M$, for some positive constants $\lambda$ and $M$. More generally,
we call \emph{saturation function} any continuous function whose graph is contained in a saturation-type sector. 

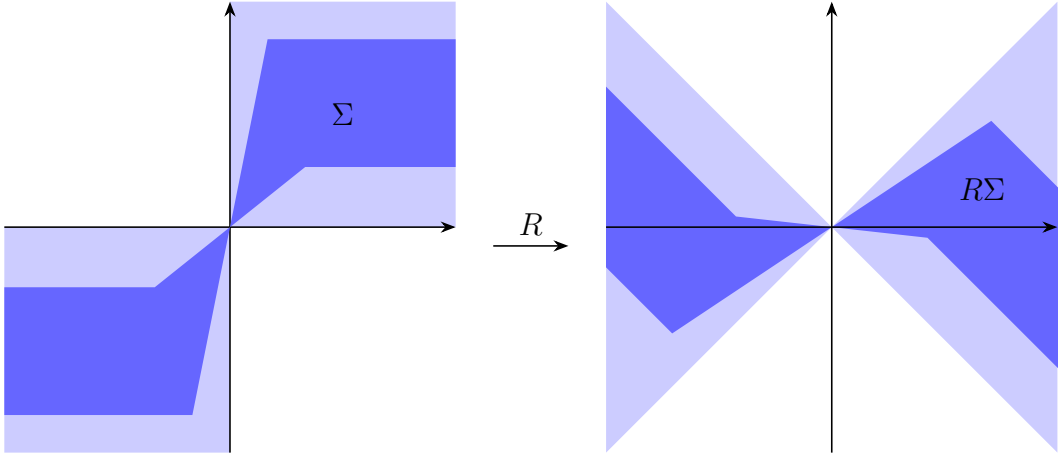
\begin{figure}[ht]
\centering
\begin{tikzpicture}

\fill[blue!20!white] (-3, 0) -- (3, 0) -- (3, 3) -- (0, 3) -- (0, -3) -- (-3, -3) -- cycle;

\draw[semithick, -Stealth] (-3, 0) -- (3, 0);
\draw[semithick, -Stealth] (0, -3) -- (0, 3);


\fill[blue!60!white] (0, 0) -- (1, 0.8) -- (3, 0.8) -- (3, 2.5) -- (0.5, 2.5) -- (-0.5, -2.5) -- (-3, -2.5) -- (-3, -0.8) -- (-1, -0.8) -- cycle;

\draw[semithick, -Stealth] (3.5, -0.25) -- node[midway, above] {$R$} (4.5, -0.25);

\fill[blue!20!white] (5, 3) -- (11, -3) -- (11, 3) -- (5, -3) -- cycle;

\begin{scope}[shift={(8, 0)}, rotate=-45]
\clip (0, {3*sqrt(2)}) -- ({3*sqrt(2)}, 0) -- (0, {-3*sqrt(2)}) -- ({-3*sqrt(2)}, 0) -- cycle;


\fill[blue!60!white] (0, 0) -- (1, 0.8) -- (4.5, 0.8) -- (4.5, 2.5) -- (0.5, 2.5) -- (-0.5, -2.5) -- (-4.5, -2.5) -- (-4.5, -0.8) -- (-1, -0.8) -- cycle;
\end{scope}

\draw[semithick, -Stealth] (5, 0) -- (11, 0);
\draw[semithick, -Stealth] (8, -3) -- (8, 3);

\node at (1.5, 1.5) {$\Sigma$};

\node at (10, 0.5) {$R\Sigma$};

\end{tikzpicture}
\caption{Saturation-type sectors.}
\label{FigSaturation}
\end{figure}

Finally, Figure~\ref{FigSign} represents the set $\Sigma_M$, $M > 0$, given by the graph of the sign set-valued map $\sign_M: \mathbb R \rightrightarrows \mathbb R$ defined by
\begin{equation}
\label{eq:defi-sign}
\sign_M(x) = \begin{dcases*}
\{-M\}, & if $x < 0$, \\
[-M, M], & if $x = 0$, \\
\{M\}, & if $x > 0$,
\end{dcases*}
\end{equation}
i.e.,
\begin{equation}
\label{eq:Sigma-Sign}
\Sigma_M = \left(\mathbb R_- \times \{-M\}\right) \cup \left(\{0\} \times [-M, M]\right) \cup \left(\mathbb R_+ \times \{M\}\right).
\end{equation}
Note that $\Sigma_M$ is not the graph of a single-valued function, but $R \Sigma_M$ is.

\begin{figure}[ht]
\centering
\begin{tikzpicture}

\fill[blue!20!white] (-3, 0) -- (3, 0) -- (3, 3) -- (0, 3) -- (0, -3) -- (-3, -3) -- cycle;

\draw[semithick, -Stealth] (-3, 0) -- (3, 0);
\draw[semithick, -Stealth] (0, -3) -- (0, 3);

\draw[very thick, blue] (-3, -1.5) -- (0, -1.5) -- (0, 1.5) -- (3, 1.5);

\draw[semithick, -Stealth] (3.5, -0.25) -- node[midway, above] {$R$} (4.5, -0.25);

\fill[blue!20!white] (5, 3) -- (11, -3) -- (11, 3) -- (5, -3) -- cycle;

\draw[semithick, -Stealth] (5, 0) -- (11, 0);
\draw[semithick, -Stealth] (8, -3) -- (8, 3);

\begin{scope}[shift={(8, 0)}, rotate=-45]
\clip (0, {3*sqrt(2)}) -- ({3*sqrt(2)}, 0) -- (0, {-3*sqrt(2)}) -- ({-3*sqrt(2)}, 0) -- cycle;

\draw[very thick, blue] (-3, -1.5) -- (0, -1.5) -- (0, 1.5) -- (3, 1.5);
\end{scope}

\end{tikzpicture}
\caption{Sign function.}
\label{FigSign}
\end{figure}
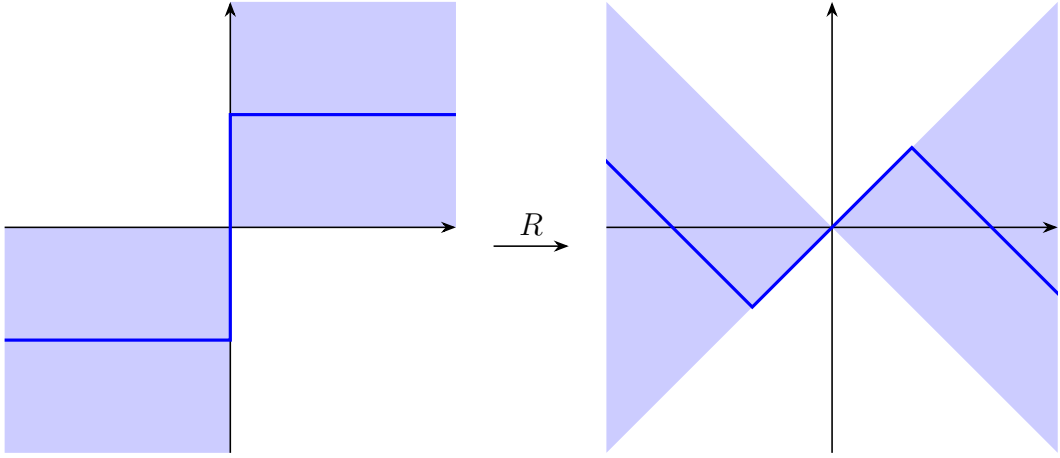

\subsection{Other models of wave equations with set-valued boun\-dary damping}

The flexibility of the viewpoint consisting of translating the study of solutions of \eqref{eq:wave}
into that of iterated sequences of the set-valued map $S$ is well illustrated when considering the wave equation
\begin{equation}\label{eq:wave-D+D}
\left\{
\begin{aligned}
& z_{tt}(t, x) = z_{xx}(t, x), & \qquad & (t, x) \in \mathbb R_+ \times [0, 1], \\
& (z_t(t, 0), z_x(t, 0)) \in \Sigma_0, & & t \in \mathbb R_+, \\
& (z_t(t, 1), -z_x(t, 1)) \in \Sigma_1, & & t \in \mathbb R_+, \\
& z(0, x) = z_0(x), & & x \in [0, 1], \\
& z_t(0, x) = z_1(x), & & x \in [0, 1],
\end{aligned}
\right.
\end{equation}
where $\Sigma_0$ and $\Sigma_1$ are subsets of $\mathbb R^2$. We have taken this example from \cite{V-Martinez2000} where precise decay rates of solutions of \eqref{eq:wave-D+D}, as time tends to infinity, have been given in the case where both $\Sigma_0$ and $\Sigma_1$ are graphs of functions $\sign(x)\abs{x}^{1+\alpha}$, $\alpha>0$. 

Similarly to Proposition~\ref{PropGn}, we aim at characterizing a correspondence between the solutions of \eqref{eq:wave-D+D} and the iterated sequences of a discrete-time dynamical system. For that purpose, we first provide the appropriate counterpart to Definition~\ref{DefiSeqI}.
\begin{definition}\label{DefiSeqII} 
For $p \in [1, +\infty]$, set $\mathsf Z_p = L^p(-1, 0)$.

\begin{enumerate}
\item We use $\Seq_0: L^p_{\loc}(-1, +\infty) \to \mathsf Z_p^{\mathbb N}$ to denote the bijection which associates, with each $h \in L^p_{\loc}(-1,\allowbreak +\infty)$, the sequence $(h_n)_{n \in \mathbb N}$ in $\mathsf Z_p$ defined by $h_n(s) = h(s + n)$ for $n \in \mathbb N$ and a.e.\ $s \in [-1, 0]$.

\item We use $\mathfrak I_2: \mathsf X_p\to \mathsf Z_p\times \mathsf Z_p$ to denote the isometry which associates, with each $(z_0, z_1) \in \mathsf X_p$, the element $(h_0,g_0) \in \mathsf Z_p\times \mathsf Z_p$ defined by
\[
\begin{pmatrix}
g_0(-s) \\
-h_0(s-1) \\
\end{pmatrix} = R \begin{pmatrix}
z_1(s) \\
-z_0^\prime(s) \\
\end{pmatrix}, \qquad \text{for a.e.\ } s \in [0, 1].
\]
\end{enumerate}
\end{definition}

We also assume in the sequel of this subsection that Definition~\ref{DefWeakSolution} is suitably modified in order to take into account the new boundary condition at $x = 0$ and the fact that $z(0, 0)$ is not necessarily zero, and that the contents of Definition~\ref{def:iterated} are extended to set-valued maps defined on $\mathbb{R}^2$ in order to deal with discrete-time dynamical systems on $\mathbb{R}^2$ and $\mathsf Z_p\times \mathsf Z_p$.

\begin{proposition}\label{prop:D+D}
Let $p \in [1, +\infty]$, $\Sigma_0, \Sigma_1$ be two subsets of $\mathbb R^2$, and $S_i: \mathbb R \rightrightarrows \mathbb R$ be the set-valued map whose graph is equal to $R \Sigma_i$ for $i \in \{0, 1\}$. Define the set-valued map $S$ on $\mathbb{R}^2$ which associates with every $(x,y)\in\mathbb{R}^2$ the Cartesian product of the subsets $(-S_1)(y)$ and $(-S_0)(x)$ of the real line.
\begin{enumerate}
\item\label{ItemEquivSols1-TwoBoundariesD+D} Let $z$ be a solution of \eqref{eq:wave-D+D} with initial condition $(z_0, z_1) \in \mathsf X_p$, 
$f \in L_{\loc}^p(0, +\infty)$ and $g \in L_{\loc}^p(-1, +\infty)$ be the corresponding functions from Definition~\ref{DefWeakSolution}, and the sequences $(h_n)_{n \in \mathbb N} = \Seq_0(f(\cdot+1))$ and $(g_n)_{n \in \mathbb N} = \Seq_0(g)$. Then $(h_n, g_n)_{n \in \mathbb N}$ is an iterated sequence for $S$ on $\mathsf Z_p \times \mathsf Z_p$ starting at $(h_0,g_0) = \mathfrak I_2(z_0, z_1)$.
\item\label{ItemEquivSols2-TwoBoundariesD+D} Conversely, let $(h_n,g_n)_{n \in \mathbb N}$ be an iterated sequence for $S$ on $\mathsf Z_p\times \mathsf Z_p$ starting at some $(h_0,g_0) \in\mathsf Z_p\times \mathsf Z_p$. Let
$f(\cdot+1)=\Seq_0^{-1}((h_n)_{n \in \mathbb N})$ and $g = \Seq^{-1}((g_n)_{n \in \mathbb N})$. Then $f \in L^p_{\loc}(0, +\infty)$, $g\in L^p_{\loc}(-1, +\infty)$
and, if $z$ is defined from $f$ and $g$ as in the first equation of \eqref{eq:defWeakSol}, one gets that $z$ is a solution of \eqref{eq:wave-D+D} in $\mathsf X_p$ with initial condition $(z_0, z_1) = \mathfrak I_2^{-1}(h_0,g_0)$.

\item Let $z$, $f$, $g$, and $(h_n, g_n)_{n \in \mathbb N}$ be as in \ref{ItemEquivSols1-TwoBoundariesD+D} or \ref{ItemEquivSols2-TwoBoundariesD+D}. Then, for all $t \in \mathbb R_+$,
\begin{align*}
e_p^p(z)(t) & = \norm{f(t+1 + \cdot)}_{\mathsf Z_p}^p + \norm{g(t + \cdot)}_{\mathsf Z_p}^p, \qquad \text{ if } p < +\infty, \\
e_\infty(z)(t) & = \max(\norm{f(t+1 + \cdot)}_{\mathsf Z_\infty}, \norm{g(t + \cdot)}_{\mathsf Z_\infty}),
\end{align*}
and, in particular, for all $n \in \mathbb N$,
\begin{align*}
e_p^p(z)(n) & = \norm{h_n}_{\mathsf Z_p}^p + \norm{g_n}_{\mathsf Z_p}^p, \qquad \text{ if } p < +\infty, \\
e_\infty(z)(n) & = \max(\norm{h_n}_{\mathsf Z_\infty}, \norm{g_n}_{\mathsf Z_\infty}).
\end{align*}
\end{enumerate}
\end{proposition}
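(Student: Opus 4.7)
The plan is to adapt the scheme used for Proposition~\ref{PropGn} to the case of two set-valued boundaries. First, I would invoke Definition~\ref{DefWeakSolution} to write $z$ in terms of Riemann invariants $f \in L_{\loc}^p(0, +\infty)$ and $g \in L_{\loc}^p(-1, +\infty)$. A direct computation from \eqref{eq:der-sol} translates the boundary condition at $x = 0$ into $(f(t), -g(t)) \in R\Sigma_0$ and the boundary condition at $x = 1$ into $(g(t-1), -f(t+1)) \in R\Sigma_1$, both for a.e.\ $t \geq 0$. Since $R\Sigma_i$ is the graph of $S_i$, these inclusions rewrite as $g(t) \in (-S_0)(f(t))$ and $f(t+1) \in (-S_1)(g(t-1))$.

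The key observation is that introducing $h(s) := f(s+1) \in L_{\loc}^p(-1, +\infty)$ makes the two boundary conditions symmetric: $g(t) \in (-S_0)(h(t-1))$ and $h(t) \in (-S_1)(g(t-1))$ for a.e.\ $t \geq 0$. Applying $\Seq_0$ from Definition~\ref{DefiSeqII} with $t = n + 1 + \tau$, $\tau \in [-1, 0]$, turns these two conditions pointwise into $g_{n+1}(\tau) \in (-S_0)(h_n(\tau))$ and $h_{n+1}(\tau) \in (-S_1)(g_n(\tau))$, i.e., $(h_{n+1}, g_{n+1}) \in S(h_n, g_n)$ in $\mathsf Z_p \times \mathsf Z_p$. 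Moreover, reading $h_0$ and $g_0$ off the restrictions of $h$ and $g$ to $[-1, 0]$ and matching with \eqref{first-interval} recovers exactly the formulas in Definition~\ref{DefiSeqII}, so $(h_0, g_0) = \mathfrak I_2(z_0, z_1)$. The converse direction reverses this chain: an iterated sequence provides $g$ and $h$ via $\Seq_0^{-1}$, hence $f = h(\cdot - 1) \in L_{\loc}^p(0, +\infty)$; the integral formula from \eqref{eq:defWeakSol} then defines $z$, whose two boundary conditions are enforced by the discrete recurrence and whose initial datum is fixed by the definition of $\mathfrak I_2$.

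For the energy identities, I would exploit the two identities $z_x + z_t = \sqrt{2}\, f(t+x)$ and $z_x - z_t = -\sqrt{2}\, g(t-x)$ that follow directly from \eqref{eq:der-sol}. Substituting these into the definition of $\norm{\cdot}_{\mathsf X_p}$ from \eqref{eq:norms} and performing the changes of variables $s = t+x$ and $s = t-x$ yields, for $p < +\infty$, $e_p^p(z)(t) = \int_t^{t+1} \abs{f(s)}^p \diff s + \int_{t-1}^t \abs{g(s)}^p \diff s = \norm{f(t+1+\cdot)}_{\mathsf Z_p}^p + \norm{g(t+\cdot)}_{\mathsf Z_p}^p$, with the obvious analogue for $p = +\infty$; specializing to $t = n$ yields the claimed identities involving $h_n$ and $g_n$. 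The most delicate point in the whole argument is the sign bookkeeping stemming from the different treatment of $z_x$ at the two boundaries (with a $+$ sign at $x=0$ versus a $-$ sign at $x=1$) combined with the rotation $R$: a sign slip would misidentify the two factors in $S(x, y) = (-S_1)(y) \times (-S_0)(x)$, and the remedy is simply to apply \eqref{eq:der-sol} separately at each boundary and carefully verify each of the two inclusions before assembling them.
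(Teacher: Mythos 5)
Your proposal is correct and follows essentially the same route as the paper: it rederives the two boundary relations \eqref{eq:relationsD+D} (in the equivalent form $-g(t)\in S_0(f(t))$, $-f(t+1)\in S_1(g(t-1))$), shifts $f$ by one to symmetrize, and then repeats the argument of Proposition~\ref{PropGn} together with the d'Alembert computation of $e_p$. The sign bookkeeping and the identification $(h_0,g_0)=\mathfrak I_2(z_0,z_1)$ via \eqref{first-interval} are handled correctly, so nothing further is needed.
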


\begin{proof}
The proof for the proposition exactly follows the line of arguments that led to Proposition~\ref{PropGn}. The only difference appears when replacing the Dirichlet boundary condition at $x=0$ in \eqref{eq:wave} by the boundary condition at $x=0$ given by $(z_t(t, 0), z_x(t, 0)) \in \Sigma_0$ in \eqref{eq:wave-D+D}.
One then replaces by \eqref{relation-f-g} and \eqref{recurrence-g} by the equations
\begin{equation}\label{eq:relationsD+D}
-g(t)\in S_0(f(t)),\quad -f(t+1) \in S_1(g(t-1)), \quad \text{ for every }t\geq 0,
\end{equation}
respectively.
\end{proof}

At the light of the above proposition, one can see that the issues of existence and uniqueness of solutions of \eqref{eq:wave-D+D} boil down to the study of the set-valued map $S$, the latter question being equivalent to the separate study 
of $S_0$ and $S_1$. On the other hand, asymptotic stability of solutions of \eqref{eq:wave-D+D} and related issues are addressed through the study of iterated sequences associated with $S$ in $\mathbb{R}^2$. Due to the structure of $S$, the latter question can be reduced to the study of real iterated sequences associated with the set valued maps $(-S_1)\circ (-S_0)$ or $(-S_0)\circ (-S_1)$ since, combining the two equations of \eqref{eq:relationsD+D} yields that $(h_{n})_{n\in\mathbb N}$ is an iterated sequence for $(-S_1) \circ (-S_0)$ in $\mathsf Z_p$ while $(g_{n})_{n\in\mathbb N}$ is an iterated sequence for $(-S_0) \circ (-S_1)$ in $\mathsf Z_p$.

When $S_0$ is single-valued, the sequence $(h_n)_{n \in \mathbb N}$ suffices to describe solutions $z$ of \eqref{eq:wave-D+D}, since the corresponding sequence $(g_n)_{n \in \mathbb N}$ is uniquely determined by the first relation of \eqref{eq:relationsD+D} and the second relation in \eqref{eq:relationsD+D} can be expressed solely in terms of $(h_n)_{n \in \mathbb N}$. If moreover the single-valued map $S_0$ is invertible, we may alternatively describe solutions $z$ of \eqref{eq:wave-D+D} in terms of $(g_n)_{n \in \mathbb N}$ only, since $(h_n)_{n \in \mathbb N}$ can be computed using the first relation of \eqref{eq:relationsD+D}. This is precisely what is done for \eqref{eq:wave} in this paper, in which case $S_0 = \id$ and we can use this simple expression of $S_0$ to further simplify the relation between $e_p(z)(\cdot)$ and the norms of the elements of the sequence $(g_n)_{n \in \mathbb N}$. A similar remark applies when $S_1$ is single-valued.

\begin{remark} Any boundary condition involving only $z_t$ and $z_x$ at the same endpoint can be recovered by our formalism. Indeed, an homogeneous Neumann condition reads $\Sigma$ equal to the horizontal axis $\mathbb R \times \{0\}$, while an homogeneous Dirichlet condition can be seen as taking $\Sigma$ equal to the vertical axis $\{0\} \times \mathbb R$. For instance, proceeding in such a way at the extremity $x=0$ with an homogeneous Dirichlet boundary condition yields that $S_0 = \id$, from which we recover the set-valued map $S$ of Proposition~\ref{PropGn} as equal to $S_1$.
\end{remark}

\subsection{Additional auxiliary results}

We now present some important auxiliary results providing properties of the solutions of \eqref{eq:wave} and of the set $\Sigma$ that will be used several times in the paper. We start with the following result on a decomposition of solutions of \eqref{eq:wave}, whose proof is immediate.

\begin{proposition}
\label{prop:dis-sup}
Let $p \in [1, +\infty]$, $\Sigma \subset \mathbb R^2$, and $S$ be the set-valued function whose graph is $R \Sigma$. Assume that $S(0) = \{0\}$.

Let $(f^{(k)})_{k \in \mathbb N}$ be a sequence in $\mathsf Y_p$ so that $f^{(k)}$ and $f^{(k')}$ have disjoint supports for every integers $k\neq k'$ and assume moreover that $f = \sum_{k\geq 0} f^{(k)}$ belongs to $\mathsf Y_p$. Hence, for every $n \geq 0$,
\begin{equation}\label{eq:dis-sup}
S^{[n]}(f(s)) = \sum_{k\geq 0} S^{[n]}(f^{(k)}(s)), \quad \text{ for a.e.\ } s\in [-1,1]. 
\end{equation}
Moreover, if $(f_n)_{n\geq 0}$ is any iterated sequence for $S$ starting at $f$, then, for every $k \geq 0$, there exists an iterated sequence $(f_n^{(k)})_{n \in \mathbb N}$ for $S$ starting at $f^{(k)}$ such that $f_n=\sum_{k\geq 0}f_n^{(k)}$ for every $n \geq 0$. In particular, if $p$ is finite, it holds
\begin{equation}\label{eq:dis-sup-energie-pfini}
\norm{f_n}^p_p = \sum_{k\geq 0}\norm{f_n^{(k)}}^p_p,
\end{equation}
and, if $p=+\infty$, one has 
\begin{equation}\label{eq:dis-sup-energie-pinfini}
\norm{f_n}_{\infty}=\sup_{k\geq 0}\norm{f_n^{(k)}}_{\infty}.
\end{equation}
\end{proposition}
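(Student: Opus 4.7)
The plan rests on two elementary observations: first, that $S(0) = \{0\}$ extends by induction to $S^{[n]}(0) = \{0\}$ for every $n \geq 0$; second, that the disjoint support hypothesis forces at most one of the values $f^{(k)}(s)$ to be nonzero for almost every $s \in [-1,1]$. I would begin by fixing, for each $k$, a measurable support $A_k \subset [-1,1]$ of $f^{(k)}$ so that $(A_k)_{k \geq 0}$ is pairwise disjoint, and letting $A_\infty := [-1,1] \setminus \bigcup_{k} A_k$.

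To establish \eqref{eq:dis-sup}, I would argue pointwise. For a.e.\ $s \in A_k$ one has $f(s) = f^{(k)}(s)$ while $f^{(k')}(s) = 0$ for every $k' \neq k$, so $S^{[n]}(f^{(k')}(s)) = \{0\}$; the Minkowski sum on the right-hand side of \eqref{eq:dis-sup} therefore collapses to $S^{[n]}(f^{(k)}(s)) = S^{[n]}(f(s))$. On $A_\infty$ both sides reduce to $\{0\}$. This is the first place where $S(0) = \{0\}$ is essential.

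For the decomposition of an arbitrary iterated sequence $(f_n)_{n \geq 0}$ starting at $f$, my candidate is $f_n^{(k)} := f_n \mathbf{1}_{A_k}$. It remains to check that (i) $f_0^{(k)} = f^{(k)}$ by the very choice of $A_k$; (ii) $(f_n^{(k)})_{n \geq 0}$ is an iterated sequence for $S$ in $\mathsf Y_p$, which on $A_k$ is inherited from $(f_n)$ and off $A_k$ uses once more $0 \in S(0)$; (iii) a straightforward induction shows $f_n \equiv 0$ on $A_\infty$ (since $f_0 = 0$ there, and the recursion forces the next iterate into $S(0) = \{0\}$), whence $f_n = \sum_{k \geq 0} f_n^{(k)}$ pointwise. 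The two norm identities \eqref{eq:dis-sup-energie-pfini} and \eqref{eq:dis-sup-energie-pinfini} then follow at once from the disjointness of the $A_k$'s, by splitting the integral over $[-1,1]$ as a sum over the $A_k$'s in the case $p < +\infty$, and by observing that an essential supremum over a countable union of disjoint measurable sets equals the supremum of the essential suprema on each piece when $p = +\infty$. I do not foresee any real obstacle here; the single nonnegotiable ingredient, highlighted throughout, is the hypothesis $S(0) = \{0\}$, which prevents mass from being created on $A_\infty$ at subsequent iterations and thus makes the decomposition stable along the dynamics.
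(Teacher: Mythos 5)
Your proof is correct; the paper itself gives no argument (it declares the proof ``immediate''), and what you write is precisely the intended reasoning: restrict to the pairwise disjoint supports $A_k$ and their complement, use $S(0)=\{0\}$ (hence $S^{[n]}(0)=\{0\}$ by induction) to kill all but one term of the Minkowski sum and to show that any iterated sequence starting at $f$ stays null on the complement, then define $f_n^{(k)}=f_n\mathbf{1}_{A_k}$ and read off the norm identities from disjointness. No gap; the only cosmetic point is to fix measurable representatives so that the supports are genuinely disjoint sets, which you implicitly do when choosing the $A_k$.
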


The next proposition gathers some elementary properties of set-valued dampings, as given in Definition~\ref{def-damping}.

\begin{proposition}
\label{PropSDamping}
Let $S: \mathbb R \rightrightarrows \mathbb R$ be a damping.
\begin{enumerate}
\item\label{ItemS0} $S(0) = \{0\}$ and, if $S$ is a strict damping, then $x \in S(x)$ if and only if $x = 0$.
\item\label{ItemSNonEmpty} For every $x \in \mathbb R$, $S(x)$ is nonempty.
\item\label{ItemSxCompact} If $S$ is closed, then $S(x)$ is compact for every $x \in \mathbb R$.
\item\label{ItemCompositionDamping} If $T$ is a damping, then $S \circ T$ is also a damping, and it is strict if $S$ or $T$ is strict. In particular, the iterates $S^{[n]}$, $n \in \mathbb N^\ast$, are also dampings, and they are strict if $S$ is strict.
\item\label{ItemS2IteratedSequence} Assume that, for every real iterated sequence $(x_n)_{n \in \mathbb N}$ in $\mathbb R$ for $S$ which is not identically zero, there exists $n \in \mathbb N$ such that $\abs{x_n} < \abs{x_0}$. Then $S^{[2]}$ is a strict damping.
\item\label{ItemS2StrictDamping} $S^{[2]}$ is a strict damping if and only if there does not exist $x \in \mathbb R^\ast$ such that either \begin{enumerate*}[label={(\roman*)}]\item $x \in S(x)$ or \item $-x \in S(x)$ and $x \in S(-x)$.\end{enumerate*}
\end{enumerate}
\end{proposition}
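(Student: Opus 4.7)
The six claims rely throughout on the reformulated damping condition \ref{HypoSigma-DampingTilde}, namely $\abs{y} \leq \abs{x}$ for every $(x, y)$ in the graph of $S$ (with strict inequality off $(0,0)$ in the strict case). Items \ref{ItemS0}--\ref{ItemSxCompact} are quick: \ref{HypoSigma-DampingTilde} at $x = 0$ forces $S(0) \subseteq \{0\}$, while \ref{HypoSigma-Zero} gives $0 \in S(0)$, and strictness precludes $x \in S(x)$ for $x \neq 0$; \ref{ItemSNonEmpty} follows from the (universally measurable) single-valued selection $\varphi$ of $S$ furnished by \ref{HypoSigma-Exists}; and for \ref{ItemSxCompact}, $S(x)$ is the fiber of a closed set, hence closed, and lies in $[-\abs{x}, \abs{x}]$ by damping, hence compact. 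For \ref{ItemCompositionDamping}, given $y \in (S \circ T)(x)$ and $z$ with $z \in T(x)$, $y \in S(z)$, the bounds $\abs{y} \leq \abs{z} \leq \abs{x}$ give damping. For strictness (assuming $S$ or $T$ is strict) and $(x, y) \neq (0, 0)$ with $\abs{y} = \abs{x}$, a brief case check using \ref{ItemS0} rules out both $x = 0$ (which would force $y = 0$) and $z = 0$ (which would force $y = 0$, so $x \neq 0$ and $\abs{y} = 0 < \abs{x}$, contradiction); the strict inequality then yields $\abs{z} < \abs{x}$ or $\abs{y} < \abs{z}$, contradicting $\abs{y} = \abs{x}$. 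The selection $\varphi_S \circ \varphi_T$ has linear growth automatically by damping, and its universal measurability is the only delicate point, discussed below. The claim for $S^{[n]}$ then follows by induction.

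Items \ref{ItemS2IteratedSequence} and \ref{ItemS2StrictDamping} share one case analysis. Suppose $S^{[2]}$ is not strict: there exist $(x, y) \in \mathrm{graph}(S^{[2]}) \setminus \{(0, 0)\}$ with $\abs{y} = \abs{x}$ and $z$ with $z \in S(x)$, $y \in S(z)$; then $\abs{x} = \abs{z} = \abs{y}$, and \ref{ItemS0} gives $x \neq 0$ and $z, y \in \{x, -x\}$. The three admissible configurations are: $z = x$, giving $x \in S(x)$ (constant sequence $x_n \equiv x$); $z = -x$ and $y = -x$, giving $-x \in S(-x)$ (constant sequence $x_n \equiv -x$); and $z = -x$ and $y = x$, giving $-x \in S(x)$ together with $x \in S(-x)$ (alternating sequence $x_n = (-1)^n x$). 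Each produces a non-identically-zero iterated sequence with $\abs{x_n} \equiv \abs{x_0}$, which proves \ref{ItemS2IteratedSequence} by contrapositive and the reverse implication of \ref{ItemS2StrictDamping}. The forward implication of \ref{ItemS2StrictDamping} is immediate since (i) yields $x \in S^{[2]}(x)$ via the intermediate $z = x$, and (ii) via $z = -x$, both placing a nontrivial point $(x, x)$ in $\mathrm{graph}(S^{[2]})$.

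\textbf{Main obstacle.} The only delicate step is the universal measurability of $\varphi_S \circ \varphi_T$ in \ref{ItemCompositionDamping}: universal measurability is not preserved under composition of arbitrary measurable maps, so one must invoke closure under composition within the universally measurable class. I would handle this by a lemma in Appendix~\ref{AppUniversally} stating that preimages of universally measurable sets under universally measurable functions remain universally measurable. All other steps reduce to elementary sign-based case analysis.
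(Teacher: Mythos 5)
Your proposal is correct and follows essentially the same route as the paper: items \ref{ItemS0}--\ref{ItemSxCompact} via \ref{HypoSigma-Zero}, \ref{HypoSigma-Exists} and \ref{HypoSigma-DampingTilde}/\ref{HypoSigma-StrictDampingTilde}; item \ref{ItemCompositionDamping} by composing the selections and invoking stability of universal measurability under composition (the paper's Proposition~\ref{PropCompositionUnivMeas}); and items \ref{ItemS2IteratedSequence}--\ref{ItemS2StrictDamping} through the same case analysis yielding either a constant or an alternating real iterated sequence of constant modulus. The only cosmetic differences (arguing strictness of $S\circ T$ by contradiction, and noting that linear growth of the composed selection is automatic from the damping bound) do not change the substance of the argument.
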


\begin{proof}
By \ref{HypoSigma-Zero}, one has $(0,0)\in R\Sigma$ and thus $0 \in S(0)$ since the graph of $S$ is $R\Sigma$. One can deduce easily that \ref{HypoSigma-DampingTilde} implies $S(0) = \{0\}$. The second part of \ref{ItemS0} follows immediately from \ref{HypoSigma-StrictDampingTilde}. The fact that $R \Sigma$ contains the graph of a function, which is a consequence of \ref{HypoSigma-Exists}, also immediately implies \ref{ItemSNonEmpty}. The closedness of $S$ implies that, for every $x \in \mathbb R$, $\left(\{x\} \times \mathbb R\right) \cap R \Sigma$ is closed, and \ref{HypoSigma-DampingTilde} yields that this set is also bounded, showing that it is compact. Then $S(x)$ is compact as the image of $(\{x\} \times \mathbb R) \cap R \Sigma$ through the projection onto the second coordinate, proving \ref{ItemSxCompact}.

To prove \ref{ItemCompositionDamping}, notice that $S \circ T$ trivially satisfies \ref{HypoSigma-Zero}, and \ref{HypoSigma-DampingTilde} for $S \circ T$ follows immediately from the corresponding properties for $S$ and $T$, with $S \circ T$ satisfying \ref{HypoSigma-StrictDampingTilde} as soon as one of $S$ or $T$ satisfies this assumption. Finally, if $\varphi_S: \mathbb R \to \mathbb R$ and $\varphi_T: \mathbb R \to \mathbb R$ are universally measurable functions with linear growth contained in the graphs of $S$ and $T$, respectively, one immediately verifies that $\varphi_S \circ \varphi_T$ is a function with linear growth contained in the graph of $S\circ T$. This function is also universally measurable as the composition of two universally measurable functions (see Proposition~\ref{PropCompositionUnivMeas} in Appendix~\ref{AppUniversally}), showing that $S \circ T$ also satisfies \ref{HypoSigma-Exists}, as required.

Let us show \ref{ItemS2IteratedSequence} by contraposition. By \ref{ItemCompositionDamping}, $S^{[2]}$ is a damping and, if it is not strict, then there exists $x \in \mathbb R^\ast$, $y \in S(x)$, and $z \in S(y)$ such that $\abs{z} = \abs{y} = \abs{x}$. If $x = y$ or $y = z$, then the sequence $(x_n)_{n \in \mathbb N}$ defined by $x_n = y$ for every $n \in \mathbb N$ is a real iterated sequence for $S$ with $\abs{x_n} = \abs{x_0}$ for every $n \in \mathbb N$. Otherwise, one has $z = -y = x$ and we define the sequence $(x_n)_{n \in \mathbb N}$ by $x_n = (-1)^n x$ for $n \in \mathbb N$. It is then clearly a real iterated sequence for $S$ with $\abs{x_n} = \abs{x_0}$ for every $n \in \mathbb N$, leading thus to the proof of the desired result.

Finally, to prove \ref{ItemS2StrictDamping}, notice that, if the damping $S^{[2]}$ is not strict, then, letting $x, y, z$ be as in the proof of \ref{ItemS2IteratedSequence}, the previous argument shows that one has either $y \in S(y)$ or $z = -y = x$, in which case $-x \in S(x)$ and $x \in S(-x)$, as required. Conversely, if there exists $x \in \mathbb R^\ast$ such that $x \in S(x)$, or such that $-x \in S(x)$ and $x \in S(-x)$, then, in both cases, $x \in S^{[2]}(x)$, showing that $S^{[2]}$ is not a strict damping.
\end{proof}

In the case where $\Sigma$ is the graph of a continuous function, items \ref{ItemS0}--\ref{ItemSxCompact} have been already obtained in \cite[Lemma~1]{pierre2000strong}.

We conclude this section with a technical result providing a necessary and sufficient condition on the set-valued map $S$ to be the graph of a continuous function when $\Sigma$ is the graph of a continuous function.

\begin{proposition}
\label{PropIdPlusSigmaMonotone}
Assume that $\Sigma$ is the graph of a continuous function $\sigma:\mathbb R\to\mathbb R$. Then the set-valued function $S$ is single-valued and continuous if and only if $\id+\sigma:\mathbb{R}\to \mathbb{R}$ is a bijection. Moreover, if $\sigma$ is a damping function, then $S$ is single-valued and continuous if and only if $\id + \sigma$ is strictly monotone.
\end{proposition}

\begin{proof}
Since $\Sigma = \{(x, \sigma(x)) \suchthat x \in \mathbb R\}$, one gets that
\[
R \Sigma = \left\{\left(T(x), T(x) - \sqrt{2} x\right) \suchthat x \in \mathbb R\right\}
\]
with $T = \frac{\id + \sigma}{\sqrt{2}}$.

If $\id + \sigma$ is a bijection, then $T$ is invertible and thus $R \Sigma$ is the graph of the continuous function $x \mapsto x - \sqrt{2} T^{-1}(x)$ defined on $\mathbb R$. Conversely, assuming that $R \Sigma$ is the graph of a single-valued function $\varphi: \mathbb R \to \mathbb R$, one immediately deduces that $T: \mathbb R \to \mathbb R$ is surjective. If $x_1, x_2 \in \mathbb R$ are such that $T(x_1) = T(x_2)$, then $T(x_1) - \sqrt{2} x_1 = \varphi(T(x_1)) = \varphi(T(x_2)) = T(x_2) - \sqrt{2} x_2$, implying that $x_1 = x_2$. Thus, $T$ is also injective. Hence $T$ is bijective, as required.

The last assertion of the proposition follows from the fact that, if $\sigma$ is a damping function, then
\[\lim_{s \to -\infty} (\id + \sigma)(s) = -\infty \qquad \text{ and } \qquad \lim_{s \to +\infty} (\id + \sigma)(s) = +\infty,\]
showing that $\id + \sigma$ is surjective. Hence $\id + \sigma$ is bijective if and only if it is strictly monotone.
\end{proof}

\section{Existence and uniqueness of solutions in \texorpdfstring{$\mathsf X_p$}{Xp}}
\label{SecExistUnique}

In this section, we will show that the hypotheses \ref{HypoSigma-Exists}--\ref{HypoSigma-UniqueInfty} introduced in Hypotheses \ref{HypoSigma} actually yield necessary and sufficient conditions for existence and uniqueness of solutions of \eqref{eq:wave} in $\mathsf X_p$, $p \in [1, +\infty]$. We start with the following existence result.

\begin{theorem}
\label{TheoExist}
Let $\Sigma \subset \mathbb R^2$ and $p \in [1, +\infty]$.
\begin{enumerate}
\item\label{ItemTheoExistDirect} If \ref{HypoSigma-Exists} holds and $p < +\infty$, or if \ref{HypoSigma-ExistsInfty} holds and $p = +\infty$, then, for every $(z_0, z_1) \in \mathsf X_p$, there exists a solution of \eqref{eq:wave} in $\mathsf X_p$ with initial condition $(z_0, z_1)$.

\item\label{ItemTheoExistConverse} Assume that, for every $(z_0, z_1) \in \mathsf X_p$, there exists a solution of \eqref{eq:wave} in $\mathsf X_p$ with initial condition $(z_0, z_1)$. Then $R \Sigma$ contains the graph of a Lebesgue measurable function. Moreover, $R \Sigma$ also contains the graph of a function with linear growth if $p < +\infty$ or the graph of a function mapping bounded sets to bounded sets if $p = +\infty$.
\end{enumerate}
\end{theorem}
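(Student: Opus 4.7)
My plan is to exploit Proposition~\ref{PropGn}, and more precisely Remark~\ref{RemkExistUnique}, which recasts the question as a selection problem in $\mathsf{Y}_p$: existence of solutions of \eqref{eq:wave} for every initial condition in $\mathsf{X}_p$ is equivalent to the assertion that for every $g \in \mathsf{Y}_p$ there exists $h \in \mathsf{Y}_p$ with $h(s) \in S(g(s))$ for a.e.\ $s \in [-1,1]$. I will handle the direct implication \ref{ItemTheoExistDirect} by a one-line composition, while for the converse \ref{ItemTheoExistConverse} I will construct measurable selections by testing with dilations of the identity and prove the growth statement by building a pathological initial condition.

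For \ref{ItemTheoExistDirect}, given the universally measurable function $\varphi$ from \ref{HypoSigma-Exists} (resp.\ \ref{HypoSigma-ExistsInfty}) whose graph lies in $R\Sigma$, I simply set $h = \varphi \circ g$. Since the composition of a universally measurable function with a Lebesgue measurable one is Lebesgue measurable (see Appendix~\ref{AppUniversally}), the resulting $h$ is Lebesgue measurable. For $p < +\infty$, the linear growth bound $\abs{\varphi(x)} \leq a\abs{x}+b$ gives $\abs{h(s)} \leq a\abs{g(s)}+b$, so $h \in \mathsf{Y}_p$. For $p = +\infty$, essential boundedness of $g$ combined with the bounded-to-bounded property of $\varphi$ yields $h \in \mathsf{Y}_\infty$.

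For the converse \ref{ItemTheoExistConverse}, I first note that applying existence to the constant initial condition $g \equiv x$ forces $S(x) \neq \emptyset$ for every $x \in \mathbb{R}$. To produce a Lebesgue measurable selection of $S$, I test with $g_N(s) = Ns$ for each $N \in \mathbb{N}^\ast$, which belongs to $\mathsf{Y}_p$ for every $p \in [1,+\infty]$; the resulting $h_N \in \mathsf{Y}_p$ yields, after the change of variable $x = Ns$, a Lebesgue measurable function $\psi_N : [-N,N] \to \mathbb{R}$ with $\psi_N(x) \in S(x)$ for a.e.\ $x \in [-N,N]$. Patching the $\psi_N$'s over successive annuli and redefining the result on the residual null set by an arbitrary pointwise selection (possible since each $S(x)$ is nonempty) delivers a Lebesgue measurable $\varphi : \mathbb{R} \to \mathbb{R}$ with $\varphi(x) \in S(x)$ everywhere, yielding the measurability part of \ref{ItemTheoExistConverse}.

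The main technical step is the growth statement, which I prove by contradiction. Suppose $R\Sigma$ contains no function of linear growth and set $\mu(x) = \inf\{\abs{y} : y \in S(x)\}$; then for every $k \in \mathbb{N}^\ast$ there exists $x_k \in \mathbb{R}$ with $\mu(x_k) > k(\abs{x_k}+1)$. I will construct the pathological initial condition $g$ taking the constant value $x_k$ on pairwise disjoint intervals $I_k \subset [-1,1]$ of length $\epsilon_k = \delta_k/(\abs{x_k}+1)^p$, where the auxiliary scalars $\delta_k = 1/(k^p \log^2(k+1))$ are tuned so that $\sum_k \delta_k < +\infty$ while $\sum_k k^p \delta_k = +\infty$; this controlled divergence is the crux of the argument. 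Then $g \in \mathsf{Y}_p$, whereas any admissible $h$ satisfies $\abs{h(s)} \geq \mu(x_k) > k(\abs{x_k}+1)$ on $I_k$, forcing $\norm{h}_p^p \geq \sum_k \epsilon_k k^p (\abs{x_k}+1)^p = \sum_k k^p \delta_k = +\infty$, the desired contradiction. For $p = +\infty$, failure of the bounded-to-bounded property furnishes a bounded sequence $(x_k)$ with $\mu(x_k) \to +\infty$, and the analogous bounded initial condition forces every selection to have infinite essential supremum.
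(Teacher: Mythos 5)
Your proposal is correct and follows essentially the same route as the paper: the reformulation via Remark~\ref{RemkExistUnique}, the composition $h=\varphi\circ g$ for the direct part, constant test data for nonemptiness of $S(x)$, and a contradiction with a piecewise-constant initial datum for the growth statement; your dilation-and-patching extraction of the measurable selection simply inlines what the paper obtains from a diffeomorphism $(-1,1)\to\mathbb R$ together with Lemma~\ref{LemmaYpMeasurable}. The only cosmetic differences are the choice of weights in the contradiction (your $\delta_k=1/(k^p\log^2(k+1))$ versus the paper's $\alpha_n=(\abs{x_n}+1)^{-p}(n+1)^{-2}$), where one should rescale the $\delta_k$ by a constant so that the interval lengths sum to at most $2$.
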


\begin{proof}
According to Remark~\ref{RemkExistUnique}, the existence, for every initial condition $(z_0, z_1) \in \mathsf X_p$, of a solution of \eqref{eq:wave} in $\mathsf X_p$ with initial condition $(z_0, z_1)$, is equivalent to the following statement: for every $g \in \mathsf Y_p$, there exists $h \in \mathsf Y_p$ such that $h(s) \in S(g(s))$ for a.e.\ $s \in [-1, 1]$, where $S$ denotes the set-valued map whose graph is $R \Sigma$.

We start by showing Item \ref{ItemTheoExistDirect}. It is clear that \ref{HypoSigma-Exists} (resp.\ \ref{HypoSigma-ExistsInfty}) is sufficient to get the statement for $p$ finite (resp.\ for $p = +\infty$) by taking $h = \varphi \circ g$, where $\varphi$ is the function whose existence is asserted in \ref{HypoSigma-Exists} (resp.\ \ref{HypoSigma-ExistsInfty}). Indeed, since $\varphi$ is universally measurable and using Proposition~\ref{PropUnivMeasurable} in Appendix~\ref{AppUniversally}, one gets that $h$ is measurable, and the linear growth assumption (resp.\ the assumption of mapping bounded sets to bounded sets) on $\varphi$ guarantees that $h \in \mathsf Y_p$.

We next turn to an argument for Item \ref{ItemTheoExistConverse}. Assume that, for every $g \in \mathsf Y_p$, there exists $h \in \mathsf Y_p$ such that $h(s) \in S(g(s))$ for a.e.\ $s \in [-1, 1]$. Notice first that, for every $x \in \mathbb R$, $S(x)$ is nonempty. Indeed, given $x \in \mathbb R$, consider $g$ identically equal to $x$ and apply the working hypothesis on $g$. Then clearly $h(s) \in S(x)$ for a.e.\ $s \in [-1, 1]$, and hence $S(x)$ is nonempty.

Let $g: (-1, 1) \to \mathbb R$ be a diffeomorphism. Thanks to Lemma~\ref{LemmaYpMeasurable} in Appendix~\ref{AppLemmas}, there exists a measurable function $h: (-1, 1) \to \mathbb R$ such that $h(s) \in S(g(s))$ for a.e.\ $s \in [-1, 1]$. Let $\varphi = h \circ g^{-1}$, which is measurable since $g$ is a diffeomorphism. Then $\varphi(x) \in S(x)$ for a.e.\ $x \in \mathbb R$, and this inclusion can be made to hold everywhere up to modifying $\varphi$ on set of measure zero.

We finally prove the last parts of the statement regarding linear growth for $1\leq p < +\infty$ and the condition of mapping bounded sets to bounded sets for $p = +\infty$. Reasoning by contradiction yields the existence of a sequence $(x_n)_{n \in \mathbb N}$ such that, for $p < +\infty$, one has
\begin{equation}
\label{ContradictionExistence}
\abs{y} > n (\abs{x_n} + 1) \qquad \text{ for every } n \in \mathbb N \text{ and } y \in S(x_n),
\end{equation}
while, for $p = +\infty$, $(x_n)_{n \in \mathbb N}$ is bounded and
\begin{equation}
\label{ContradictionExistenceInfty}
\abs{y} > n \qquad \text{ for every } n \in \mathbb N \text{ and } y \in S(x_n).
\end{equation}
Let $\{A_n\}_{n \in \mathbb N}$ be a family of disjoint measurable subsets of $[-1, 1]$ of positive Lebesgue measure $\alpha_n$. In the case $p < +\infty$, we further require that $\alpha_n = \frac{1}{(\abs{x_n} + 1)^p (n + 1)^2}$, which is possible since $\sum_{n=0}^\infty \alpha_n \leq 2$. Consider the measurable function $g = \sum_{n = 0}^\infty x_n \chi_{A_n}$. Since one has that
\[
\norm{g}_p^p = \sum_{n=0}^\infty \alpha_n \abs{x_n}^p \quad \text{ if } p < +\infty \qquad \text{ and } \qquad \norm{g}_\infty = \sup_{n \in \mathbb N} \abs{x_n},
\]
then $g \in \mathsf Y_p$. Let $h \in \mathsf Y_p$ be such that $h(s) \in S(g(s))$ for a.e.\ $s \in [-1, 1]$. Then, for $p < +\infty$, one deduces by \eqref{ContradictionExistence} that $\abs{h(s)} > n (\abs{x_n} + 1)$ for every $n \in \mathbb N$ and a.e.\ $s \in A_n$ and therefore
\[
\norm{h}_p^p \geq \sum_{n=0}^\infty \alpha_n n^p (\abs{x_n} + 1)^p = \sum_{n=0}^\infty \frac{n^p}{(n + 1)^2} \geq \sum_{n=0}^\infty \frac{n}{(n + 1)^2} = +\infty,
\]
which contradicts the fact that $h \in \mathsf Y_p$. Similarly, for $p = +\infty$, one gets by \eqref{ContradictionExistenceInfty} that $\abs{h(s)} > n$ for every $n \in \mathbb N$ and a.e.\ $s \in A_n$ and, since $A_n$ has positive measure for every $n \in \mathbb N$, one has $\norm{h}_\infty = +\infty$, yielding the required contradiction also in that case.
\end{proof}
 
We next provide our result on the uniqueness of solutions of \eqref{eq:wave}.

\begin{theorem}
\label{TheoUnique}
Let $\Sigma \subset \mathbb R^2$ and $p \in [1, +\infty]$. Then, for every $(z_0, z_1) \in \mathsf X_p$, there exists a unique solution of \eqref{eq:wave} in $\mathsf X_p$ with initial condition $(z_0, z_1)$ if and only if either \ref{HypoSigma-Unique} holds and $p < +\infty$, or \ref{HypoSigma-UniqueInfty} holds and $p = +\infty$.
\end{theorem}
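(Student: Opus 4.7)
The plan is to leverage the reformulation from Remark~\ref{RemkExistUnique} and Proposition~\ref{PropGn} to translate the statement into the equivalent discrete-time problem: for every $g \in \mathsf Y_p$, there exists a unique $h \in \mathsf Y_p$ satisfying $h(s) \in S(g(s))$ for a.e.\ $s \in [-1, 1]$, where $S: \mathbb R \rightrightarrows \mathbb R$ has graph $R \Sigma$.

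For the sufficient direction, I note that \ref{HypoSigma-Unique} implies \ref{HypoSigma-Exists} (and \ref{HypoSigma-UniqueInfty} implies \ref{HypoSigma-ExistsInfty}), so existence in $\mathsf X_p$ follows at once from Theorem~\ref{TheoExist}\ref{ItemTheoExistDirect}. Uniqueness is then immediate: if $h_1, h_2 \in \mathsf Y_p$ both satisfy the inclusion a.e., the fact that $R\Sigma$ is exactly the graph of $\varphi$ forces $h_1(s) = \varphi(g(s)) = h_2(s)$ for a.e.\ $s$, hence $h_1 = h_2$ in $\mathsf Y_p$.

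For the necessary direction, I would first establish that $S$ is single-valued. Arguing by contradiction, suppose some $x_0 \in \mathbb R$ admits $y_1, y_2 \in S(x_0)$ with $y_1 \neq y_2$. The constant function $g \equiv x_0$ lies in $\mathsf Y_p$ for every $p$, and the distinct constant functions $h_i \equiv y_i$ both belong to $\mathsf Y_p$ and satisfy the inclusion everywhere, contradicting uniqueness. Hence $R\Sigma$ is the graph of some function $\varphi: \mathbb R \to \mathbb R$. Invoking Theorem~\ref{TheoExist}\ref{ItemTheoExistConverse} under the existence hypothesis, $R\Sigma$ contains the graph of a Lebesgue measurable function with linear growth (for $p < +\infty$) or mapping bounded sets to bounded sets (for $p = +\infty$); since $R\Sigma$ is already the graph of $\varphi$, this identifies $\varphi$ itself as having the appropriate growth and as being Lebesgue measurable.

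The most delicate step, and the main obstacle, is to upgrade $\varphi$ from merely Lebesgue measurable to \emph{universally} measurable, as required by \ref{HypoSigma-Unique} and \ref{HypoSigma-UniqueInfty}. The hypothesis actually grants much more than one measurable selection: for \emph{every} $g \in \mathsf Y_p$, the unique selection $\varphi \circ g$ lies in $\mathsf Y_p$ and in particular is Lebesgue measurable. By applying this to truncations $g_n = g\, \chi_{\{\abs g \le n\}}$, which are bounded and hence belong to $\mathsf Y_p$ for every $p$, and passing to the limit a.e., I get that $\varphi \circ g$ is Lebesgue measurable for every Lebesgue measurable $g : [-1,1] \to \mathbb R$. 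I would then invoke the characterization of universal measurability recalled in Appendix~\ref{AppUniversally}, namely that a function $\varphi: \mathbb R \to \mathbb R$ is universally measurable if and only if $\varphi \circ g$ is Lebesgue measurable for every Lebesgue measurable $g : [-1,1] \to \mathbb R$ (this being itself a consequence of the standard representation of any Borel probability measure on $\mathbb R$ as the push-forward of Lebesgue measure on $[-1,1]$ by a measurable map). This yields universal measurability of $\varphi$ and completes the proof.
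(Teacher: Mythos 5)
Your proposal is correct and follows essentially the same route as the paper: reformulate via Remark~\ref{RemkExistUnique}, get sufficiency from Theorem~\ref{TheoExist} plus single-valuedness of $S$, and for necessity deduce that $R\Sigma$ equals the graph of a function $\varphi$ with the right growth via Theorem~\ref{TheoExist}\ref{ItemTheoExistConverse} and constant-function counterexamples, then upgrade to universal measurability through Proposition~\ref{PropUnivMeasurable}. Your only deviation is cosmetic: the truncation-and-pointwise-limit argument showing that $\varphi\circ g$ is Lebesgue measurable for every measurable $g$ replaces the paper's appeal to Lemma~\ref{LemmaYpMeasurable}, but it proves the same fact by the same underlying idea.
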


\begin{proof}
As in the proof of Theorem~\ref{TheoExist}, we use Remark~\ref{RemkExistUnique} to equivalently reformulate the existence and uniqueness, for every initial condition $(z_0, z_1) \in \mathsf X_p$, of a solution of \eqref{eq:wave} in $\mathsf X_p$ with initial condition $(z_0, z_1)$, as the following statement: for every $g \in \mathsf Y_p$, there exists a unique $h \in \mathsf Y_p$ such that $h(s) \in S(g(s))$ for a.e.\ $s \in [-1, 1]$.

We first suppose that $\ref{HypoSigma-Unique}$ or \ref{HypoSigma-UniqueInfty} holds. Notice that \ref{HypoSigma-Unique} implies \ref{HypoSigma-Exists} and \ref{HypoSigma-UniqueInfty} implies \ref{HypoSigma-ExistsInfty} and hence, assuming either \ref{HypoSigma-Unique} and $p < +\infty$ or \ref{HypoSigma-UniqueInfty} and $p = +\infty$, one has, from Theorem~\ref{TheoExist}, existence of solutions to \eqref{eq:wave}. Moreover, both \ref{HypoSigma-Unique} and \ref{HypoSigma-UniqueInfty} imply that $S$ is the graph of a function $\varphi: \mathbb R \to \mathbb R$ and hence having $h(s) \in S(g(s))$ for a.e.\ $s \in [-1, 1]$ is equivalent to having $h = \varphi \circ g$, which uniquely determines $h$.

Conversely, by Theorem~\ref{TheoExist}\ref{ItemTheoExistConverse}, $R \Sigma$ contains the graph of a measurable function $\varphi: \mathbb R \to \mathbb R$. If $R \Sigma$ were not equal to the graph of $\varphi$, there would exist $x \in \mathbb R$ such that $S(x)$ contains more than one element, and thus, by considering the initial condition $g$ constant equal to $x$, one would construct two different solutions to \eqref{eq:wave}, contradicting the uniqueness assumption. Hence $R \Sigma$ is equal to the graph of $\varphi$. Applying once again Theorem~\ref{TheoExist}\ref{ItemTheoExistConverse}, one deduces that $\varphi$ is necessarily a function with linear growth in the case $p < +\infty$ or a function mapping bounded sets to bounded sets in the case $p = +\infty$.

One is left to show that $\varphi$ is universally measurable. This follows by first using Lemma~\ref{LemmaYpMeasurable} from Appendix~\ref{AppLemmas} to conclude that $\varphi$ preserves Lebesgue measurability by left composition and then Proposition~\ref{PropUnivMeasurable} from Appendix~\ref{AppUniversally}.
\end{proof}

\begin{remark}
It is clear from Theorems~\ref{TheoExist} and \ref{TheoUnique} that existence and uniqueness of solutions of \eqref{eq:wave} in $\mathsf X_p$ solely depends on the fact that the set $R \Sigma$ contains (or is equal to) the graph of a function with appropriate properties. Thanks to this fact, it is possible to obtain existence and uniqueness of solutions of \eqref{eq:wave} for a large variety of sets $\Sigma$. For instance, this allows us to consider $\Sigma$ as graphs of discontinuous functions completed by vertical segments at jump discontinuities without facing the usual issues addressed in \cite{Filippov1988Differential, Xu2019Saturated}. An example of the above discussion is the sign function, illustrated in Figure~\ref{FigSign} and treated in more details in Section~\ref{SecSign}.
\end{remark}

\begin{remark}
Hypothesis~\ref{HypoSigma-Exists} falls short of being necessary for existence of solutions of \eqref{eq:wave} in $\mathsf X_p$ for $p$ finite. Indeed, it would be the case if not only the measurable function and that of linear growth provided by Theorem~\ref{TheoExist}\ref{ItemTheoExistConverse} would be equal to the same function $\varphi$, but also if this function $\varphi$ were universally measurable. Note that the further assumption of uniqueness of solutions of \eqref{eq:wave}, together with Proposition~\ref{PropUnivMeasurable}, implies such properties on $\varphi$ and yields Theorem~\ref{TheoUnique}. We conjecture that \ref{HypoSigma-Exists} is actually necessary for the existence of solutions of \eqref{eq:wave} in $\mathsf X_p$ for $p$ finite. Similar comments can be made on \ref{HypoSigma-ExistsInfty} in the case $p = +\infty$.
\end{remark}

\begin{remark}
The notion of solution of \eqref{eq:wave} introduced in Definition~\ref{DefWeakSolution} covers only the case of solutions which are global in time. One can easily adapt Definition~\ref{DefWeakSolution} to allow for solutions of \eqref{eq:wave} defined locally in time: given $T > 0$, it suffices to require that $f \in L^p_{\loc}(0, T+1)$ and $g \in L^p_{\loc}(0, T)$ and that \eqref{eq:defWeakSol} is satisfied for $t \in [0, T)$ instead of for $t \in \mathbb R_+$. The local formulation allows one to also consider solutions of \eqref{eq:wave}  that blow up in finite time, but this topic is outside the scope of the present paper, which focuses instead on the convergence to zero of solutions as time tends to infinity.

Theorem~\ref{TheoExist} concerns the existence of global solutions of \eqref{eq:wave}, but, since one does not necessarily have uniqueness of solutions, there may also exist local solutions blowing up in finite time under the assumptions of Theorem~\ref{TheoExist}\ref{ItemTheoExistDirect}. For finite $p$, one may prevent the existence of such local solutions by requiring the linear growth assumption on $S$ instead of only on the universally measurable function $\varphi$ from \ref{HypoSigma-Exists}, i.e., by requiring that there exist $a, b \in \mathbb R_+$ such that $\norm{S(x)} \leq a \abs{x} + b$ for every $x \in \mathbb R$. Under this assumption, any local solution of \eqref{eq:wave} in $\mathsf X_p$ can be extended to a global solution. A similar remark holds in the case $p = +\infty$ by requiring $\bigcup_{x \in A} S(x)$ to be bounded for every bounded set $A \subset \mathbb R$ (which is in particular satisfied when $S$ has linear growth).

Notice that the linear growth condition on $S$ is satisfied under \ref{HypoSigma-Unique} (and, similarly, in the case $p = +\infty$, the condition of $S$ being bounded on bounded sets is satisfied under \ref{HypoSigma-UniqueInfty}), meaning that, given an initial condition, the unique solution of \eqref{eq:wave} from Theorem~\ref{TheoUnique} is unique not only on the class of global solutions, but also on the class of local solutions. In the sequel of the paper, we will mostly often work with sets $\Sigma \subset \mathbb R^2$ which are dampings, in which case \ref{HypoSigma-Unique} or \ref{HypoSigma-UniqueInfty} are not necessarily satisfied, but the linear growth assumption trivially holds due to \ref{HypoSigma-DampingTilde}.
\end{remark}

\begin{remark}
Since \ref{HypoSigma-Unique} is independent of $p$, it follows from Theorem~\ref{TheoUnique} that existence and uniqueness of solutions of \eqref{eq:wave} in $\mathsf X_p$ for any initial condition in $\mathsf X_p$ for \emph{some} finite $p$ is equivalent to having the same property for \emph{every} finite $p$. However, existence and uniqueness in the case $p = +\infty$ are equivalent to the weaker assumption \ref{HypoSigma-UniqueInfty}. A way to interpret this fact is to remark that the linear growth assumption from \ref{HypoSigma-Unique} is designed to avoid blow-up in finite time of solutions  of \eqref{eq:wave} in $L^p$ norm for finite $p$, but, for bounded initial conditions, this blow-up phenomenon can be prevented with the weaker assumption from \ref{HypoSigma-UniqueInfty}.
\end{remark}

\begin{remark}
\label{RemkExistSimple}
If $g_0$ is a simple function (i.e., a function whose range is a finite set), one may wonder whether there exists a solution of \eqref{eq:wave} with initial condition $\mathfrak I^{-1}(g_0)$. It turns out that a necessary and sufficient condition for the existence of such a solution for every simple function $g_0$ is that $S$ is a multi-valued function (i.e., $S(x) \neq \emptyset$ for every $x \in \mathbb R$). Indeed, in this case, starting from a simple function $g_0$, one builds at once a sequence $(g_n)_{n \in \mathbb N}$ satisfying \eqref{eq:dynSystGn} where, for every $n \geq 0$, $g_n$ is a simple function which is constant on every set on which $g_0$ is constant.

The same question may be asked in the more general case where $g_0 \in \mathsf Y_p$ has countable range. The necessary and sufficient condition for the existence of solutions  of \eqref{eq:wave} starting at $\mathfrak I^{-1}(g_0)$ for every such $g_0$ is simply now that $R \Sigma$ contains the graph of a function with linear growth in the case $p < +\infty$ or that of a function mapping bounded sets to bounded sets in the case $p = +\infty$.

Both statements follow immediately from the arguments provided in the proof of Theorem~\ref{TheoExist}.
\end{remark}

\section{Asymptotic behavior}
\label{SecAsymptotic}

After having provided a suitable notion of weak solution  of \eqref{eq:wave} and established conditions for existence and uniqueness of corresponding solutions, we consider in this section their asymptotic behavior. We start in Subsection~\ref{SecAsymptoticBasic} by identifying \ref{HypoSigma-Damping} as a necessary and sufficient condition for the energy of a solution to be nonincreasing and providing suitable definitions of stability. We then provide necessary and sufficient conditions for these notions of stability in Subsection~\ref{SecRealIterated} in terms of the behavior of real iterated sequences for the set-valued map $S$ whose graph is $R \Sigma$, and relate them to properties of $S^{[2]}$ in Subsection~\ref{SecS2}. A detailed study of the decay rates of solutions  of \eqref{eq:wave} and their optimality is then presented in Subsection~\ref{SecDecayRates}, and the section is concluded in Subsection~\ref{SecSloooooooow} by an example showing that the decay of solutions can be arbitrarily slow when $\Sigma$ is of saturation type, answering a conjecture of \cite{V-Martinez2000}.

\subsection{Basic results and definitions}
\label{SecAsymptoticBasic}

We start with the following basic property of a damping set $\Sigma$ as regards the behavior of $e_p$ along solutions of \eqref{eq:wave}. When $\Sigma$ is the graph of a function, this result is classical for $p = 2$ and has been essentially given for $p \in [1, +\infty]$ in the case of internal distributed damping in \cite{haraux1D}.

\begin{proposition}
\label{PropDampingNonStrict}
Let $\Sigma \subset \mathbb R^2$, $p \in [1, +\infty]$, $S$ be the set-valued map whose graph is $R \Sigma$, and assume that $S$ is a multi-valued function. Then $t \mapsto e_p(z)(t)$ is nonincreasing for every solution $z$ of \eqref{eq:wave} in $\mathsf X_p$ if and only if \ref{HypoSigma-Damping} holds.
\end{proposition}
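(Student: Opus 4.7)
The plan is to translate the question of monotonicity of $t \mapsto e_p(z)(t)$ into a pointwise question on the associated Riemann invariant $g$ via the correspondence in Proposition~\ref{PropGn} and to use Proposition~\ref{PropHTilde}, which reformulates \ref{HypoSigma-Damping} as \ref{HypoSigma-DampingTilde}, i.e., $\abs{y} \leq \abs{x}$ whenever $y \in S(x)$. The boundary relation \eqref{recurrence-g} then reads exactly $\abs{g(s)} \leq \abs{g(s-2)}$ for a.e.\ $s \geq 1$ as soon as \ref{HypoSigma-Damping} is in force.

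For the sufficiency direction, let $z$ be a solution of \eqref{eq:wave} and $g \in L^p_{\loc}(-1, +\infty)$ the corresponding Riemann invariant. When $p < +\infty$, I would simply plug $\abs{g(t+1)} \leq \abs{g(t-1)}$ into the formula
\[
\frac{\diff}{\diff t} e_p^p(z)(t) = \abs{g(t+1)}^p - \abs{g(t-1)}^p
\]
from Remark~\ref{remk:energy-derivative}, obtaining a non-positive a.e.\ derivative of the absolutely continuous map $t \mapsto e_p^p(z)(t)$, whence $e_p(z)$ is nonincreasing. For $p = +\infty$, I would use $e_\infty(z)(t) = \tfrac{1}{\sqrt 2}\,\esssup_{s \in [t-1, t+1]}\abs{g(s)}$ coming from \eqref{eq:norm}. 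Given $0 \leq t_1 < t_2$ with $t_2 - t_1 \leq 2$, I would decompose $[t_2-1, t_2+1] = [t_2-1, t_1+1] \cup [t_1+1, t_2+1]$: the first piece is contained in $[t_1-1, t_1+1]$, and on the second piece one applies the shift-by-two inequality, noting that $s - 2 \in [t_1-1, t_2-1] \subset [t_1-1, t_1+1]$. Taking the essential supremum on both pieces yields $e_\infty(z)(t_2) \leq e_\infty(z)(t_1)$, and the case $t_2 - t_1 > 2$ follows by iterating this step finitely many times.

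For the necessity direction, I would argue by contrapositive. If \ref{HypoSigma-Damping} fails, Proposition~\ref{PropHTilde} provides $(x, y) \in R\Sigma$ with $\abs{y} > \abs{x}$; equivalently, $y \in S(x)$. Using that $S$ is multi-valued, I would construct by induction a sequence of real numbers $(c_n)_{n \in \mathbb{N}}$ with $c_0 = x$, $c_1 = y$, and $c_{n+1} \in S(c_n)$ for every $n \in \mathbb{N}$, then set $g_n \equiv c_n$, which clearly lies in $\mathsf Y_p$ for every $p \in [1, +\infty]$. The sequence $(g_n)_{n \in \mathbb N}$ is an iterated sequence for $S$ on $\mathsf Y_p$, so Proposition~\ref{PropGn} produces a solution $z$ of \eqref{eq:wave} in $\mathsf X_p$ with $e_p(z)(0) = \norm{g_0}_p$ and $e_p(z)(2) = \norm{g_1}_p$. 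Since the $g_n$ are constants, $\norm{g_1}_p > \norm{g_0}_p$ is simply the strict inequality $\abs{y} > \abs{x}$, contradicting the assumed monotonicity. The only mildly delicate point is the iteration in the $p = +\infty$ half of the sufficiency direction, but it is routine since the exceptional null sets accumulate only countably.
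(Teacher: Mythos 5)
Your proof is correct and follows the same overall strategy as the paper: reduce everything to the pointwise inequality $\abs{g(s)} \leq \abs{g(s-2)}$ via \ref{HypoSigma-DampingTilde} and \eqref{recurrence-g}, use the derivative formula of Remark~\ref{remk:energy-derivative} for finite $p$, and, for the converse, build a constant iterated sequence from a pair $(x, y) \in R\Sigma$ with $\abs{y} > \abs{x}$ and invoke Proposition~\ref{PropGn}; your inductive construction of $(c_n)_{n \in \mathbb N}$ is exactly the content of Remark~\ref{RemkExistSimple}, which is also where the multi-valuedness of $S$ enters in the paper's argument. The one genuine divergence is the sufficiency direction for $p = +\infty$: the paper disposes of it in a single line by writing $e_\infty(z)(t) = \lim_{q \to +\infty} e_q(z)(t)$ and transferring the already-established monotonicity of each $e_q(z)(\cdot)$, whereas you argue directly on $\esssup_{s \in [t-1, t+1]} \abs{g(s)}$, splitting $[t_2-1, t_2+1]$ into a part contained in $[t_1-1, t_1+1]$ and a part controlled by the shift-by-two inequality, then iterating when $t_2 - t_1 > 2$. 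Your route is a bit longer but self-contained (it does not pass through the finite-$p$ case), while the paper's limit argument is shorter; both are valid. One harmless slip: by \eqref{eq:norm} and the fact that $\mathfrak I$ is an isometry, $e_\infty(z)(t) = \esssup_{s \in [t-1, t+1]} \abs{g(s)}$ with no $\tfrac{1}{\sqrt 2}$ prefactor, but a constant factor is of course irrelevant for monotonicity.
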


\begin{proof}
Assume that \ref{HypoSigma-Damping} holds and recall that it is equivalent to \ref{HypoSigma-DampingTilde}. Let $z$ be a solution of \eqref{eq:wave} in $\mathsf X_p$ and $g$ be as in Definition~\ref{DefWeakSolution}. By \ref{HypoSigma-DampingTilde} and \eqref{recurrence-g}, one deduces that $\abs{g(t+1)} \leq \abs{g(t-1)}$ for every $t \geq 0$, and the result follows immediately in the case $p < +\infty$ by \eqref{eq:deriv-energy}. If $p = +\infty$, notice that $e_\infty(z)(t) = \lim_{q \to +\infty} e_q(z)(t)$ for every $t \geq 0$. Since $e_q(z)(\cdot)$ is nonincreasing for every $q \in [1, +\infty)$, the same holds true for $e_\infty(z)(\cdot)$.

Conversely, reasoning by contraposition, assume that \ref{HypoSigma-Damping} does not hold. Therefore, there exist $(x, y) \in R \Sigma$ with $\abs{y} > \abs{x}$. Fix $p \in [1, +\infty]$ and let $g_0, g_1$ in $\mathsf Y_p$ be the constant functions equal to $x$ and $y$, respectively. Let $(g_n)_{n \geq 1}$ be the sequence in $\mathsf Y_p$ defined in Proposition~\ref{PropGn} corresponding to a solution of \eqref{eq:wave} in $\mathsf X_p$ with initial condition $\mathfrak I^{-1}(g_1)$, which exists thanks to Remark~\ref{RemkExistSimple} since $g_1$ is a simple function. Then the sequence $(g_n)_{n \geq 0}$ corresponds, in the sense of Proposition~\ref{PropGn}, to a solution $z$ of \eqref{eq:wave} in $\mathsf X_p$ with initial condition $\mathfrak I^{-1}(g_0)$. Using \eqref{eq:normGn}, one has
\[
e_p(z)(2) - e_p(z)(0) = \norm{g_1}_{p} - \norm{g_0}_{p} = 2^{\frac{1}{p}} \left(\abs{y} - \abs{x}\right) > 0,
\]
where $2^{\frac{1}{p}} = 1$ for $p = +\infty$. This shows that the function $t\mapsto e_p(z)(t)$ is not nonincreasing, concluding then the proof of our result.
\end{proof}

For the rest of the section, $\Sigma$ will be assumed to be a damping set (recall Definition~\ref{def-damping}) and we aim at understanding the asymptotic behavior of solutions of \eqref{eq:wave} in $\mathsf X_p$ for $p \in [1, +\infty]$. Taking into account the previous proposition, we next aim at providing necessary and sufficient conditions on $\Sigma$ so that all solutions of \eqref{eq:wave} in $\mathsf X_p$ converges to zero as $t \to +\infty$. Since we are in an infinite-dimensional setting, there are several meaningful definitions of convergence to zero, and we state in the next definition the ones that will be of interest in this paper.

\begin{definition} Let $\Sigma \subset \mathbb R^2$ satisfy \ref{HypoSigma-Zero}, $p \in [1, +\infty]$, and assume that, for every initial condition $(z_0, z_1) \in \mathsf X_p$, there exists a solution $z$ of \eqref{eq:wave} starting at $(z_0, z_1)$.

\begin{description}
\item[\bf{Strong stability}] The wave equation defined by \eqref{eq:wave} in $\mathsf X_p$ is said to be \emph{strongly stable} if, for every $(z_0, z_1)\in \mathsf X_p$ and any solution $z$ of \eqref{eq:wave} starting at $(z_0, z_1)$, one has 
$$
\lim_{t\to+\infty} e_p(z)(t) = 0.
$$

\item[\bf{UGAS}] The wave equation defined by \eqref{eq:wave} in $\mathsf X_p$ is said to be \emph{uniformly globally asymptotically stable} (UGAS) if there exists $\beta\in {\mathcal{KL}}$ such that, for every $(z_0, z_1)\in \mathsf X_p$ and any solution $z$ of \eqref{eq:wave} starting at $(z_0, z_1)$, one has 
\begin{equation}\label{eq:UGAS}
e_p(z)(t) \leq \beta(e_p(z)(0), t), \quad t\geq 0.
\end{equation}

\item[\bf{GES}] The wave equation defined by \eqref{eq:wave} in $\mathsf X_p$ is said to be \emph{globally exponentially stable} (GES) if it is UGAS with $\beta(\xi, t) = C_1 \xi e^{-C_2 t}$ for some positive constants $C_1, C_2$. 
\end{description}
\end{definition}

The definition of strong stability is classical in the context of stabilization of PDEs (see, e.g., the survey \cite{Alabau2012Recent}), while that of UGAS stems from control theory (see, e.g., \cite{Mironchenko2018Characterizations}).

Note that, thanks to Proposition~\ref{PropGn}, these stability concepts in $\mathsf X_p$ admit equivalent statements in terms of the sequences $(g_n)_{n \in \mathbb N}$ in $\mathsf Y_p$ corresponding to solutions of \eqref{eq:wave}. The function $\beta \in \mathcal{KL}$ from the definition of UGAS can be interpreted as a rate of decrease for solutions of \eqref{eq:wave} in $\mathsf X_p$. In the sequel, we will hence say that \eqref{eq:wave} is UGAS in $\mathsf X_p$ with rate $\beta$.

As a first step towards the characterization of the asymptotic behavior of \eqref{eq:wave}, it is useful to consider real iterated sequences for $S$ since they provide particular solutions of \eqref{eq:wave} whose corresponding sequence $(g_n)_{ n \in \mathbb N}$ in the sense of Proposition~\ref{PropGn} is made of constant functions. Thanks to \ref{HypoSigma-DampingTilde} from Proposition~\ref{PropHTilde}, all real iterated sequences are nonincreasing in absolute value as soon as $\Sigma$ is a damping, and we seek extra conditions on $\Sigma$ so that these real iterated sequences converge to $0$. One might think that a natural sufficient condition for that purpose would be that $\Sigma$ is a strict damping set, i.e., it satisfies \ref{HypoSigma-StrictDamping}, but this is not the case, as shown by the following example.

\begin{example}
Let $\Sigma_0$ be a strict damping set, $p \in [1, +\infty]$, and $(a_n)_{n \in \mathbb N}$ be a decreasing sequence in $\mathbb R_+$ converging to a limit $a_\ast > 0$. Let $\Sigma$ be the set such that
\[
R \Sigma = R \Sigma_0 \cup \{(a_n, a_{n+1}) \suchthat n \in \mathbb N\},
\]
i.e., the sequence $(a_n)_{n \in \mathbb N}$ becomes a real iterated sequence for the set-valued map whose graph is $R \Sigma$. Clearly, $\Sigma$ is still a strict damping set since, for every $(x, y) \in R \Sigma$ with $(x, y) \neq (0, 0)$, one has either $(x, y) \in R \Sigma_0$, in which case $\abs{y} < \abs{x}$ since $\Sigma_0$ is a strict damping, or $(x, y) = (a_n, a_{n+1})$ for some $n \in \mathbb N$, in which case $\abs{y} = a_{n+1} < a_n = \abs{x}$ since $(a_n)_{n \in \mathbb N}$ is decreasing.

Consider the sequence $(g_n)_{n \in \mathbb N}$ in $\mathsf Y_p$ such that, for every $n \in \mathbb N$, $g_n$ is constant and equal to $a_n$. Since this sequence clearly satisfies \eqref{eq:dynSystGn}, Proposition~\ref{PropGn} ensures that this sequence corresponds to a solution $z$ of \eqref{eq:wave} in $\mathsf X_p$. The sequence $(g_n)_{n \in \mathbb N}$ converges in $\mathsf Y_p$ to the constant function equal to $a_\ast > 0$, and thus, in particular, $z$ does not converge to $0$ in $\mathsf X_p$.
\end{example}

The previous example furnishes an instance of a set $\Sigma$ which is a strict damping but for which there exists a real iterated sequence not converging to $0$. Conversely, we next provide an example of a set $\Sigma$ which is a nonstrict damping set for which every real iterated sequence converges to $0$.

\begin{example}
\label{ExplStrongStabilityNotNecessary}
Let $\Sigma \subset \mathbb R^2$ be the graph of the function $\sigma: \mathbb R \to \mathbb R$ given by $\sigma(x) = \min(0, x)$ for $x \in \mathbb R$. Then $\Sigma$ is a nonstrict damping set with the corresponding set-valued map $S$ given by $S(x) = \{-\max(x, 0)\}$ for $x \in \mathbb R$. It is immediate to check that every iterated sequence converges to $0$ since $S^{[2]}(x) = \{0\}$ for every $x \in \mathbb R$.
\end{example}

\subsection{Real iterated sequences and stability}
\label{SecRealIterated}

In order to present our results, we introduce the following convergence properties for real iterated sequences.

\begin{definition}
\label{DefConvZero}
Let $S: \mathbb R \rightrightarrows \mathbb R$ be a set-valued map.
\begin{enumerate}
    \item We say that \emph{real iterated sequences for $S$ converge simply to zero} if every real iterated sequence converges to zero.
    \item We say that \emph{real iterated sequences for $S$ converge to zero uniformly (on compact sets)} if, for every $r \geq 0$ and $\varepsilon > 0$, there exists $N \in \mathbb N$ such that, for every $x \in \mathbb R$ satisfying $\abs{x} \leq r$ and every real iterated sequence $(x_n)_{n \in \mathbb N}$ for $S$ starting from $x$, one has $\abs{x_n} < \varepsilon$ for every $n \geq N$.
\end{enumerate}
\end{definition}

Our first result provides necessary and sufficient conditions for the strong stability of \eqref{eq:wave} in $\mathsf X_p$ in terms of convergence properties of real iterated sequences.

\begin{proposition}
\label{PropCvSequence}
Let $\Sigma \subset \mathbb R^2$ be a damping set and $p \in [1, +\infty]$.
\begin{enumerate}
\item\label{ItemCvSequencePFinite} If $p < +\infty$, then the wave equation \eqref{eq:wave} is strongly stable in $\mathsf X_p$ if and only if real iterated sequences for $S$ converge simply to zero.
\item\label{ItemCvSequencePInfinite} For $p = +\infty$, the following statements are equivalent:
\begin{enumerate}
\item\label{ItemItemUGASXInfty} The wave equation \eqref{eq:wave} is UGAS in $\mathsf X_\infty$.
\item\label{ItemItemStrongStabilityInfty} The wave equation \eqref{eq:wave} is strongly stable in $\mathsf X_\infty$.
\item\label{ItemItemRealIteratedCvUnif} Real iterated sequences for $S$ converge uniformly to zero.
\end{enumerate}
\end{enumerate}
\end{proposition}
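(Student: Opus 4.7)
The plan is to translate everything via Proposition~\ref{PropGn} into statements about iterated sequences $(g_n)_{n\in\mathbb N}$ in $\mathsf Y_p$ for the set-valued map $S$, and then to exploit two ingredients: the a.e.\ pointwise bound $\abs{g_{n+1}(s)}\leq\abs{g_n(s)}$ inherited from $\Sigma$ being a damping (via \ref{HypoSigma-DampingTilde}), and the monotonicity of $t\mapsto e_p(z)(t)$ established in Proposition~\ref{PropDampingNonStrict}. The necessity half of Item~\ref{ItemCvSequencePFinite} is immediate: any real iterated sequence $(x_n)_{n\in\mathbb N}$ lifts to the iterated sequence of constant functions $g_n\equiv x_n$ (cf.\ Remark~\ref{RemkExistSimple}), so by \eqref{eq:normGn} one has $e_p(z)(2n)=2^{1/p}\abs{x_n}$ and strong stability forces $x_n\to 0$. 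For the sufficiency, I would take a solution $z$ with associated $(g_n)_{n\in\mathbb N}$, note that for a.e.\ $s$ the trace $(g_n(s))_{n\in\mathbb N}$ is a real iterated sequence for $S$ and hence converges to $0$ by hypothesis, use $\abs{g_0}^p$ as the integrable dominating function coming from the damping bound, apply dominated convergence to obtain $\norm{g_n}_p\to 0$, i.e.\ $e_p(z)(2n)\to 0$, and finally extend to continuous time through the monotonicity of the energy.

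For the equivalence in Item~\ref{ItemCvSequencePInfinite}, I would argue $(a)\Rightarrow(b)\Rightarrow(c)\Rightarrow(a)$, the first implication being trivial. For $(c)\Rightarrow(a)$, I would introduce the monotone envelope
\[
\mu(r,n)=\sup\bigl\{\abs{x_n}\bigm|\abs{x_0}\leq r,\;(x_k)_{k\in\mathbb N}\text{ real iterated sequence for }S\bigr\},
\]
which, by the uniform convergence hypothesis, satisfies $\lim_{n\to\infty}\mu(r,n)=0$ for every $r\geq 0$, is nondecreasing in $r$, nonincreasing in $n$, and bounded above by $r$. For a solution $z$ with $\norm{g_0}_\infty\leq r$, the a.e.\ pointwise bound $\abs{g_n(s)}\leq\mu(r,n)$ yields $\norm{g_n}_\infty\leq\mu(r,n)$; a standard envelope construction, together with the monotonicity of $e_\infty(z)(\cdot)$, then produces a $\mathcal{KL}$ function $\beta$ majorizing $t\mapsto\mu(r,\floor{t/2})$, which gives \eqref{eq:UGAS}.

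The main obstacle is $(b)\Rightarrow(c)$, which I would prove by contraposition. Assuming real iterated sequences fail to converge uniformly to zero, one obtains $r,\varepsilon>0$ and, for each $k\in\mathbb N$, a real iterated sequence $(x_n^{(k)})_{n\in\mathbb N}$ with $\abs{x_0^{(k)}}\leq r$ and $\abs{x_k^{(k)}}\geq\varepsilon$; the damping bound propagates this to $\abs{x_n^{(k)}}\geq\varepsilon$ for every $n\leq k$. The key step is to fix a partition $\{A_k\}_{k\in\mathbb N}$ of $[-1,1]$ into measurable sets of positive Lebesgue measure and set $g_n=\sum_{k\in\mathbb N}x_n^{(k)}\chi_{A_k}$; since the damping forces $\abs{x_n^{(k)}}\leq r$, each $g_n$ lies in $\mathsf Y_\infty$ and $(g_n)_{n\in\mathbb N}$ is by construction an iterated sequence for $S$ on $\mathsf Y_\infty$. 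Proposition~\ref{PropGn}\ref{ItemEquivSols2} then supplies a solution $z\in\mathsf X_\infty$, yet $\norm{g_n}_\infty\geq\sup_{k\geq n}\abs{x_n^{(k)}}\geq\varepsilon$ for every $n$, contradicting strong stability. The subtle point is the diagonal gathering of infinitely many bad trajectories into a single bounded initial condition without any energy inflation, a trick that works precisely because we are in $\mathsf Y_\infty$; in $\mathsf Y_p$ for finite $p$ the analogous sum would either fail to remain in $\mathsf Y_p$ or would dilute the individual $\varepsilon$-lower bounds, which is consistent with the weaker notion of simple convergence being characteristic in that regime.
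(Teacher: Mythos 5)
Your proposal is correct and follows essentially the same route as the paper's proof: necessity of simple convergence via constant-function lifts, sufficiency via dominated convergence under the damping bound, the contrapositive construction on disjoint positive-measure sets $A_k$ for strong stability implying uniform convergence, and the envelope $\mu(r,n)$ (the paper's $\beta_0$) upgraded to a $\mathcal{KL}$ function (the paper's Lemma~\ref{lem:KL}) for the implication to UGAS. The minor variations (using $\sup_{k\geq n}\abs{x_n^{(k)}}$ instead of only the diagonal term, and closing with the monotonicity of $e_p(z)(\cdot)$ explicitly) are cosmetic.
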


\begin{proof}
To prove \ref{ItemCvSequencePFinite}, notice first that real iterated sequences for $S$ provide particular solutions of \eqref{eq:wave} for which the corresponding iterated sequence $(g_n)_{n \in \mathbb N}$ from Proposition~\ref{PropGn} is made of constant functions (cf.\ also Remark~\ref{RemkExistSimple}). Hence, convergence of all such sequences to zero is a necessary condition for the strong stability of \eqref{eq:wave}.

Conversely, assume that real iterated sequences for $S$ converge simply to zero. Let $z$ be a solution of \eqref{eq:wave} in $\mathsf X_p$ and consider the corresponding sequence $(g_n)_{n \in \mathbb N}$ in $\mathsf Y_p$ from Proposition~\ref{PropGn}. Then $(g_n(s))_{n \in \mathbb N}$ is a real iterated sequence for $S$ for a.e.\ $s \in [-1, 1]$ and hence $(g_n)_{n \in \mathbb N}$ converges pointwise to zero almost everywhere. Since $\Sigma$ is a damping set, one has $\abs{g_n(s)} \leq \abs{g_0(s)}$ for every $n \in \mathbb N$ and a.e.\ $s \in [-1, 1]$, and one concludes by the dominated convergence theorem that $g_n \to 0$ in $\mathsf Y_p$ as $n$ tends to infinity.

Let us now prove \ref{ItemCvSequencePInfinite}. The implication \ref{ItemItemUGASXInfty} $\implies$ \ref{ItemItemStrongStabilityInfty} is trivial by definition. We prove that \ref{ItemItemStrongStabilityInfty} $\implies$ \ref{ItemItemRealIteratedCvUnif} reasoning by contraposition. Hence, assume that real iterated sequences for $S$ do not converge uniformly to zero. This implies that there exists $r > 0$ and $\varepsilon > 0$ such that, for every $k \in \mathbb N$, there exists $x^{(k)} \in [-r, r]$ and a real iterated sequence $(x^{(k)}_n)_{n \in \mathbb N}$ for $S$ starting at $x^{(k)}$ for which $\abs{x^{(k)}_{N_k}} \geq \varepsilon$ for some $N_k \geq k$. Without loss of generality, one can assume $N_k = k$ by possibly replacing the initial value $x^{(k)}$ by $x^{(k)}_{N_k - k}$, which still belongs to $[-r, r]$. Let $\{A_n\}_{n \in \mathbb N}$ be a family of disjoint measurable subsets of $[-1, 1]$ of positive Lebesgue measure. Consider the sequence of functions $(g_n)_{n \in \mathbb N}$ defined by
\[
g_n = \sum_{k\geq 0} x^{(k)}_n \chi_{A_k}.
\]
Then, by construction, for every $n \in \mathbb N$, one has $g_n \in \mathsf Y_\infty$ and $g_{n+1}(s) \in S(g_n(s))$ for a.e.\ $s \in [-1, 1]$. By Proposition~\ref{prop:dis-sup}, one gets that $\norm{g_n}_\infty \geq \varepsilon$ for every $n \in \mathbb N$, and thus, using Proposition~\ref{PropGn}, the corresponding solution of \eqref{eq:wave} starting from $\mathfrak I^{-1}(g_0)$ does not converge to $0$ in $\mathsf X_\infty$.

Let us finally prove that \ref{ItemItemRealIteratedCvUnif} $\implies$ \ref{ItemItemUGASXInfty}. Assume that \ref{ItemItemRealIteratedCvUnif} holds and let $\beta_0: \mathbb R_+ \times \mathbb R_+ \to \mathbb R_+$ be defined by
\begin{align*}
\beta_0(r, t) = \sup\{\abs{x_n} & \suchthat 
n \geq t \text{ and } (x_k)_{k \in \mathbb N} \text{ is a real iterated }\\ & \hphantom{\suchthat {}} \text{ sequence for } S \text{ with } \abs{x_0} \leq r \}.
\end{align*}
One can verify that $\beta_0(0, \cdot) \equiv 0$, $\beta_0(\cdot, t)$ is nondecreasing for every $t \geq 0$, $\beta_0(r, \cdot)$ is nonincreasing for every $r \geq 0$, and $\beta_0(r, t) \leq r$ for every $(r, t) \in \mathbb R_+ \times \mathbb R_+$. The statement of \ref{ItemItemRealIteratedCvUnif} exactly says that, for every $r \geq 0$, $\beta_0(r, t) \to 0$ as $t \to +\infty$. Moreover, for every real iterated sequence $(x_n)_{n \in \mathbb N}$ for $S$, one has $\abs{x_n} \leq \beta_0(\abs{x_0}, t)$ for $n\geq t$.

Let $z$ be a solution of \eqref{eq:wave} in $\mathsf X_\infty$ and consider the corresponding sequence $(g_n)_{n \in \mathbb N}$ in $\mathsf Y_\infty$ from Proposition~\ref{PropGn}. For a.e.\ $s \in [-1, 1]$, $(g_n(s))_{n \in \mathbb N}$ is a real iterated sequence for $S$ starting from $g_0(s)$, and thus
\[
\abs{g_n(s)} \leq \beta_0(\abs{g_0(s)}, n) \qquad \text{ for a.e.\ } s \in [-1, 1] \text{ and every } n \in \mathbb N.
\]
Using the fact that $\beta_0$ is nondecreasing with respect to its first argument, we deduce that, for every $n \in \mathbb N$,
\[
\norm{g_n}_\infty \leq \beta_0(\norm{g_0}_\infty, n).
\]
From Propositions~\ref{PropGn} and \ref{PropDampingNonStrict}, one has
\[
e_\infty(z)(t) \leq e_\infty(z)\left(2\floor*{t/2}\right) = \norm*{g_{\floor*{t/2}}}_\infty \leq \beta_0\left(e_\infty(z)(0), \floor*{t/2}\right) \qquad \text{ for every } t \geq 0.
\]
One concludes the proof by applying Lemma~\ref{lem:KL} from Appendix~\ref{AppLemmas} to the function $(r, t) \mapsto \beta_0(r, \floor*{t/2})$.
\end{proof}

\begin{remark}
Since the notion of real iterated sequences of $S$ converging simply to zero is independent of $p$, it follows from Proposition~\ref{PropCvSequence}\ref{ItemCvSequencePFinite} that the convergence of solutions of \eqref{eq:wave} to $0$ for $p$ finite is independent of $p$: if solutions of \eqref{eq:wave} converge to $0$ in \emph{some} $\mathsf X_p$ with $p$ finite, then solutions will converge to $0$ in \emph{every} $\mathsf X_p$ with $p$ finite. The situation is different for $p = +\infty$, in which case one must require a uniformity property on the convergence to zero of real iterated sequences for $S$ in order to obtain convergence in the strong topology of $\mathsf X_\infty$. By adapting the proof of Proposition~\ref{PropCvSequence}\ref{ItemCvSequencePFinite}, one can show that simple convergence to zero of real iterated sequences for $S$ is also a necessary and sufficient condition for the convergence to $0$ of solutions of \eqref{eq:wave} in $\mathsf X_\infty$ in the weak-$\ast$ topology of $\mathsf X_\infty$.
\end{remark}

Recall the obvious fact that, for every solution $z$ of \eqref{eq:wave} in $\mathsf X_\infty$, $p \in [1, +\infty]$, and $t \geq 0$, one has
\[e_p(z)(t) \leq 2^{1/p} e_\infty(z)(t),\]
with the convention that $1/p = 0$ for $p = +\infty$. In case the wave equation \eqref{eq:wave} is UGAS in $\mathsf X_\infty$, one deduces at once a UGAS-like estimate of $e_p(z)(t)$ for every finite $p$, involving however, as regards the dependence with the respect to the initial condition, its $\mathsf X_\infty$ norm $e_\infty(z)(0)$ only. In the next result, we refine the above mentioned trivial estimate of $e_p(z)(t)$, with another one involving the $\mathsf X_p$ norm of the initial condition.

\begin{proposition}\label{prop:inf+p-fini} Let $\Sigma \subset \mathbb R^2$ be a damping set and $p \in [1, +\infty)$. Assume that the wave equation \eqref{eq:wave} associated with $\Sigma$ is UGAS in $\mathsf X_\infty$ with rate $\beta$. Then, for every solution $z$ of \eqref{eq:wave} in $\mathsf X_\infty$, one has, for every $t \geq 0$,
\begin{equation}\label{eq:UGASp-inf}
e_p(z)(t) \leq 2^{1/p} \beta\left(e_p^{1/2}(z)(0), 2\floor*{\tfrac{t}{2}}\right) + e_p^{1/2}(z)(0) \beta\left(\max(e_\infty(z)(0), e_p^{1/2}(z)(0)), 2\floor*{\tfrac{t}{2}}\right).
\end{equation}
\end{proposition}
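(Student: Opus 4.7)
The plan is to establish the bound at times $t = 2n$ by decomposing the given solution into two pieces, applying the $\mathsf{X}_\infty$-UGAS bound separately to each, and then passing to arbitrary $t$ by monotonicity. Let $z$ be a solution of \eqref{eq:wave} in $\mathsf{X}_\infty$ and let $g \in L^\infty_{\loc}(-1,+\infty)$ together with $(g_n)_{n \in \mathbb{N}} = \Seq(g)$ be as in Proposition~\ref{PropGn}. If $e_p(z)(0) = 0$ then $g_0 = 0$ a.e., so $g_n = 0$ for all $n$ (using $S(0) = \{0\}$), and the bound is trivial; otherwise set $\lambda = e_p^{1/2}(z)(0) > 0$ and partition $[-1, 1]$ into the disjoint measurable sets $A_1 = \{s : \abs{g_0(s)} \leq \lambda\}$ and $A_2 = \{s : \abs{g_0(s)} > \lambda\}$. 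For $i \in \{1,2\}$ and $n \in \mathbb{N}$, I would then define $g_n^{(i)} = g_n \chi_{A_i}$.

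Since $\Sigma$ is a damping, Proposition~\ref{PropSDamping} gives $S(0) = \{0\}$, and a direct verification shows that $(g_n^{(i)})_{n \in \mathbb{N}}$ is an iterated sequence for $S$ in $\mathsf{Y}_\infty$ starting at $g_0^{(i)}$: on $A_j$ with $j \neq i$ it remains identically zero, while on $A_i$ it coincides with $(g_n)$ and inherits the inclusion $g_{n+1}(s) \in S(g_n(s))$. Hence Proposition~\ref{PropGn} yields corresponding solutions $z^{(i)}$ of \eqref{eq:wave} in $\mathsf{X}_\infty$, with $e_\infty(z^{(1)})(0) = \norm{g_0^{(1)}}_\infty \leq \lambda$ and $e_\infty(z^{(2)})(0) = \norm{g_0^{(2)}}_\infty \leq e_\infty(z)(0)$. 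Applying UGAS in $\mathsf{X}_\infty$ to each $z^{(i)}$ and using the monotonicity of $\beta$ in its first argument yields $\norm{g_n^{(1)}}_\infty \leq \beta(\lambda, 2n)$ and $\norm{g_n^{(2)}}_\infty \leq \beta(e_\infty(z)(0), 2n)$.

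To pass from these $L^\infty$ bounds to the desired $L^p$ bound, I would use $\norm{g_n^{(i)}}_p \leq \abs{A_i}^{1/p} \norm{g_n^{(i)}}_\infty$, together with $\abs{A_1} \leq 2$ and the Chebyshev estimate $\abs{A_2} \leq \norm{g_0}_p^p / \lambda^p = e_p^{p/2}(z)(0)$. Since $g_n^{(1)}$ and $g_n^{(2)}$ have disjoint supports summing to $g_n$, the triangle inequality combined with monotonicity of $\beta$ in its first argument gives
\[
\norm{g_n}_p \leq 2^{1/p} \beta\bigl(e_p^{1/2}(z)(0),\, 2n\bigr) + e_p^{1/2}(z)(0) \, \beta\bigl(\max(e_\infty(z)(0),\, e_p^{1/2}(z)(0)),\, 2n\bigr).
\]
Finally, since $\Sigma$ is a damping, Proposition~\ref{PropDampingNonStrict} ensures that $e_p(z)(\cdot)$ is nonincreasing, so $e_p(z)(t) \leq e_p(z)\bigl(2\floor*{\tfrac{t}{2}}\bigr) = \norm{g_{\floor*{t/2}}}_p$, which produces \eqref{eq:UGASp-inf}. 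The only delicate point is the decomposition step: the damping property is essential in order to guarantee that the two truncated pieces still evolve as iterated sequences for $S$ even when $S$ is multi-valued; the remainder is routine Chebyshev/interpolation bookkeeping.
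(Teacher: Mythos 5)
Your proof is correct and follows essentially the same route as the paper: partition $[-1,1]$ according to whether $\abs{g_0}$ exceeds $e_p^{1/2}(z)(0)$, apply the $\mathsf X_\infty$-UGAS estimate to each truncated iterated sequence (the paper justifies the truncation via Proposition~\ref{prop:dis-sup}, you via $S(0)=\{0\}$ directly, which is the same fact), use Chebyshev's inequality to bound the measure of the large-value set, and conclude by monotonicity of $e_p(z)(\cdot)$. The only cosmetic difference is that you use the triangle inequality on the two disjointly supported pieces where the paper sums $p$-th powers and then uses subadditivity of $x\mapsto x^{1/p}$, which yields the same bound.
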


\begin{proof}
Let $z$ be a solution of \eqref{eq:wave} in $\mathsf X_\infty$ and consider the corresponding sequence $(g_n)_{n \in \mathbb N}$ in $\mathsf Y_\infty$ in the sense of Proposition~\ref{PropGn}. Let us denote $Z_{q} = e_q(z)(0)$ for $q \in \{p, \infty\}$. Consider the partition of $[-1, 1]$ in the disjoint subsets
\begin{align*}
E_1&=\{s\in [-1,1] \suchthat \abs{g_0(s)} < Z_p^{1/2}\},\\
E_2&=\{s\in [-1,1] \suchthat \abs{g_0(s)} \geq Z_p^{1/2}\}.
\end{align*}

Let $\chi_i$ and $\alpha_i$, $i\in\{1,2\}$, be the characteristic 
functions and Lebesgue measures associated with $E_i$,
respectively. One clearly has, for every $n \in \mathbb N$,
\begin{align*}
g_n & = g_n\chi_1 + g_n\chi_2, \\
\norm{g_n}^p_p & = \norm{g_n\chi_1}^p_p + \norm{g_n\chi_2}^p_p.
\end{align*}
Let $i\in\{1,2\}$. Then
$$
\abs{g_n(s) \chi_i(s)} \leq \chi_i(s) \beta(\norm{g_0 \chi_i}_\infty, 2n), \qquad \text{ for a.e.\ } s \in [-1, 1].
$$
For $i=1$, since $\beta(\cdot,t)$ is increasing for all $t\geq 0$, one deduces that
\begin{equation}\label{eq:gchiLp1}
\norm{g_n \chi_1}_p^p \leq 2 \Bigl(\beta(Z_p^{1/2}, 2n)\Bigr)^p, 
\end{equation}
and, for $i=2$, one gets
\begin{equation}\label{eq:gchiLp2}
\norm{g_n \chi_2}_p^p \leq \alpha_2 \Bigl(\beta(\max(Z_\infty, Z_p^{1/2}), 2n)\Bigr)^p.
\end{equation}
By Chebyshev's inequality, it follows that $\alpha_2 \leq Z_p^{p/2}$. Putting together \eqref{eq:gchiLp1}, \eqref{eq:gchiLp2} and the above estimate of $\alpha_2$, one obtains 
\[
\norm{g_n}_{p} \leq \left(2 \beta(Z_p^{1/2}, 2n)^p + Z_p^{p/2}
\beta(\max(Z_\infty, Z_p^{1/2}), 2n)^p\right)^{\frac{1}{p}}.
\]
Since $p\geq 1$, one deduces that 
\begin{equation}\label{eq:ep-fini-infini1}
\norm{g_n}_{p} \leq 2^{1/p} \beta(Z_p^{1/2}, 2n) + Z_p^{1/2}
\beta(\max(Z_\infty, Z_p^{1/2}), 2n).
\end{equation}
We conclude using the fact that $t \mapsto e_p(z)(t)$ is nonincreasing.
\end{proof}

At the light of the equivalence between \ref{ItemItemUGASXInfty} and \ref{ItemItemStrongStabilityInfty} from Proposition~\ref{PropCvSequence}, one may wonder if such an equivalence holds in $\mathsf X_p$ with $p$ finite. The answer is negative, which is a consequence of the next proposition.

\begin{proposition}
\label{PropPFiniteNotUGAS}
Let $\Sigma \subset \mathbb R^2$ be a damping set and $p \in [1, +\infty)$. If \ref{HypoSigma-NoDampingAtInfty} holds, then the wave equation \eqref{eq:wave} is not UGAS in $\mathsf X_p$.
\end{proposition}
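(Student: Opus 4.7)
The plan is to argue by contradiction: assume \eqref{eq:wave} is UGAS in $\mathsf X_p$ with some rate $\beta \in \mathcal{KL}$, and exhibit a sequence $(z^{(n)})_{n \in \mathbb N^\ast}$ of solutions with $e_p(z^{(n)})(0) = 1$ and $e_p(z^{(n)})(2n)$ bounded away from $0$, contradicting $\beta(1, 2n) \to 0$. The key ingredient is the equivalent form \ref{HypoSigma-NoDampingAtInftyTilde} of \ref{HypoSigma-NoDampingAtInfty} provided by Proposition~\ref{PropHTilde}: for every $\varepsilon > 0$ there exists $R > 0$ such that any $(x, y) \in R\Sigma$ with $\abs{x} \geq R$ satisfies $\abs{y} \geq (1-\varepsilon)\abs{x}$. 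Crucially, this weak-damping property is self-perpetuating, in the sense that $\abs{x} \geq R$ forces $\abs{y} \geq (1-\varepsilon)R$, so the property can be iterated as long as the iterates remain above $R$.

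For each $n \geq 1$, I would set $\varepsilon_n = 1/n^2$, pick an associated threshold $R_n \geq 1$ given by the property above, and put $x_0^{(n)} = R_n(1-\varepsilon_n)^{-(n-1)}$. A straightforward induction on $k$ then produces a real iterated sequence $(x_k^{(n)})_{0 \leq k \leq n}$ for $S$ satisfying $\abs{x_k^{(n)}} \geq R_n(1-\varepsilon_n)^{-(n-1-k)}$ for $0 \leq k \leq n-1$; in particular $\abs{x_{k-1}^{(n)}} \geq R_n$ for every $k \in \{1, \dotsc, n\}$, which allows one to select $x_k^{(n)} \in S(x_{k-1}^{(n)})$ with $\abs{x_k^{(n)}} \geq (1-\varepsilon_n)\abs{x_{k-1}^{(n)}}$. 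Chaining the inequalities yields $\abs{x_n^{(n)}} \geq (1-\varepsilon_n)^n \abs{x_0^{(n)}}$.

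Next, I would embed this real sequence into $\mathsf Y_p$ by spatial rescaling: choose a measurable $A_n \subset [-1, 1]$ of Lebesgue measure $\alpha_n = \abs{x_0^{(n)}}^{-p} \in (0, 1]$, extend the real sequence for $k > n$ by any selection $x_k^{(n)} \in S(x_{k-1}^{(n)})$ (possible by \ref{HypoSigma-Exists}), and set $g_k^{(n)} = x_k^{(n)} \chi_{A_n}$. Using that $S(0) = \{0\}$ (Proposition~\ref{PropSDamping}\ref{ItemS0}), these simple functions form an iterated sequence for $S$ on $\mathsf Y_p$, and Proposition~\ref{PropGn}\ref{ItemEquivSols2} produces a solution $z^{(n)}$ of \eqref{eq:wave} with $e_p(z^{(n)})(0) = \norm{g_0^{(n)}}_p = 1$ and $e_p(z^{(n)})(2n) = \norm{g_n^{(n)}}_p \geq (1 - 1/n^2)^n$. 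Since $(1-1/n^2)^n \to 1$ while $\beta(1, 2n) \to 0$, the UGAS estimate fails for $n$ large, the desired contradiction.

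The subtle point, rather than conceptual, is combinatorial book-keeping: $\abs{x_0^{(n)}}$ must be inflated by a factor $(1-\varepsilon_n)^{-(n-1)}$ so that the iterate still sits above the threshold $R_n$ after losing a factor $(1-\varepsilon_n)$ at each of the $n-1$ intermediate steps. The characteristic-function embedding on the small set $A_n$ of measure $\abs{x_0^{(n)}}^{-p}$ is exactly what converts ``slow decay of a very large real iterate'' into ``slow decay of a bounded-energy solution of the PDE'', and it is at this stage that the restriction $p < +\infty$ is used, since the rescaling ingredient $\alpha_n \to 0$ has no $L^\infty$ analogue.
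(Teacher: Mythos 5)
Your proof is correct and follows essentially the same route as the paper: both extract from \ref{HypoSigma-NoDampingAtInftyTilde} a threshold above which each step of a real iterated sequence loses only a factor close to $1$, start the sequence high enough to stay above that threshold for the required number of steps, and then rescale by a characteristic function of measure $\abs{x_0}^{-p}$ to obtain a unit-energy solution whose energy stays bounded away from zero, contradicting UGAS. The only difference is organizational: the paper fixes one horizon $N$ with $\beta(1,2N)<1/2$ and one solution, whereas you use a family of solutions indexed by $n$ with per-step loss $(1-1/n^2)$, which is the same mechanism.
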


\begin{proof}
We prove that \eqref{eq:wave} is not UGAS in $\mathsf X_p$ reasoning by contradiction. Assume then that \eqref{eq:wave} is UGAS in $\mathsf X_p$ with rate $\beta$, i.e., there exists some $\beta \in \mathcal{KL}$ such that, for every solution $z$ of \eqref{eq:wave}, one has
\begin{equation}
\label{ProofNotUGAS}
e_p(z)(t) \leq \beta(e_p(z)(0), t), \qquad \text{ for every } t \geq 0.
\end{equation}
Since $\beta \in \mathcal{KL}$, there exists $N \in \mathbb N$ such that $\beta(1, 2N) < 1/2$.

Assumption \ref{HypoSigma-NoDampingAtInftyTilde} implies that there exists $M > 1$ such that, for every $(x, y) \in \mathbb R^2$, if $\abs{(x, y)} > M$ and $y \in S(x)$, then $\abs*{\frac{y}{x}} > \frac{1}{2^{1/N}}$. Let $x_0 \in \mathbb R$ be such that $\abs{x_0} \geq 2M$ and consider a real iterated sequence $(x_n)_{n \in \mathbb N}$ for $S$ starting from $x_0$. Then, for every $n \in \{0, \dotsc, N\}$, one has
\begin{equation}
\label{ToShowByInduction}
\abs*{\frac{x_n}{x_0}} \geq \frac{1}{2^{n/N}}.
\end{equation}
Indeed, \eqref{ToShowByInduction} trivially holds for $n = 0$. If $n \in \{0, \dotsc, N-1\}$ is such that \eqref{ToShowByInduction} holds, then $\abs{x_n} \geq \frac{\abs{x_0}}{2^{n/N}} > M$ and thus, since $x_{n+1} \in S(x_n)$, we deduce that $\abs*{\frac{x_{n+1}}{x_n}} > \frac{1}{2^{1/N}}$, yielding that $\abs*{\frac{x_{n+1}}{x_0}} = \abs*{\frac{x_{n+1}}{x_n}} \abs*{\frac{x_n}{x_0}} \geq \frac{1}{2^{(n+1)/N}}$, showing that \eqref{ToShowByInduction} holds for $n+1$. Hence, by induction, \eqref{ToShowByInduction} is established for every $n \in \mathbb N$.

Consider now the iterated sequence of functions $(g_n)_{n \in \mathbb N}$ for $S$ in $\mathsf Y_p$ such that $g_n = x_n \chi_{A}$, where $A \subset [-1, 1]$ is an interval of length $1/\abs{x_0}^p$. Then clearly $\norm{g_n}_p = \abs*{\frac{x_n}{x_0}}$. Letting $z$ be the solution of \eqref{eq:wave} corresponding to $(g_n)_{n \in \mathbb N}$ in the sense of Proposition~\ref{PropGn}, one deduces from \eqref{eq:normGn} that
\[
e_p(z)(2N) = \norm{g_N}_p = \abs*{\frac{x_N}{x_0}} \geq \frac{1}{2}.
\]
This is a contradiction since, from \eqref{ProofNotUGAS}, one also has that
\[
e_p(z)(2N) \leq \beta(e_p(z)(0), 2N) = \beta(1, 2N) < \frac{1}{2}.
\]
This contradiction establishes the desired result.
\end{proof}

\begin{remark}
A particular instance of the above proposition has been established in \cite[Theorem~4.1.1]{V-Martinez2000} where it is shown that \eqref{eq:wave} is not GES in the case $p = 2$ and $\Sigma$ is the graph of a saturation function.
\end{remark}

Proposition~\ref{PropPFiniteNotUGAS} raises the question of whether one can provide conditions on $\Sigma$ under which \eqref{eq:wave} is UGAS for finite $p$. Our next result identifies the assumption \ref{HypoSigma-SectorInfty} as such a condition.

\begin{theorem}
\label{thm-ugas}
Let $\Sigma \subset \mathbb R^2$ be a damping satisfying \ref{HypoSigma-SectorInfty} and $p \in [1, +\infty)$. Assume moreover that the wave equation defined in \eqref{eq:wave} is UGAS in $\mathsf X_\infty$ with rate $\beta$. Then it is also UGAS in $\mathsf X_p$.
\end{theorem}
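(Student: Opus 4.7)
The plan is to reformulate the question in terms of iterated sequences via Proposition~\ref{PropGn}, and then to use Proposition~\ref{prop:dis-sup} to split any initial datum $g_0 \in \mathsf Y_p$ into a large-value piece---on which the sector condition \ref{HypoSigma-SectorInftyTilde} yields pointwise exponential decay---and a bounded piece---on which UGAS in $\mathsf X_\infty$ with rate $\beta$ provides the decay. Throughout, I will write $r = \norm{g_0}_p = e_p(z)(0)$ for the initial energy.

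The first step is to observe that \ref{HypoSigma-SectorInftyTilde} entails the universal inequality $\abs{y} \leq \max(M, \mu\abs{x})$ for every $(x,y) \in R\Sigma$ (splitting according to whether $(x,y) \in B(0,M)$ or not), which by induction on $n$ via \eqref{eq:dynSystGn} yields the pointwise bound
\[
\abs{g_n(s)} \leq \max(M, \mu^n \abs{g_0(s)}) \qquad \text{for a.e.\ } s\in[-1,1] \text{ and every } n\in\mathbb{N}.
\]
Given any $K>0$, set $A=\{s \suchthat \abs{g_0(s)}>K\}$ and decompose $g_0 = g_0\chi_A + g_0\chi_{A^c}$. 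Since $S(0)=\{0\}$ by \ref{HypoSigma-DampingTilde}, the supports of the iterated sequences issued from $g_0\chi_A$ and $g_0\chi_{A^c}$ remain disjoint, so Proposition~\ref{prop:dis-sup} yields $\norm{g_n}_p \leq \norm{g_n\chi_A}_p + \norm{g_n\chi_{A^c}}_p$. On one hand, $\norm{g_0\chi_{A^c}}_\infty \leq K$, whence the $\mathsf X_\infty$-UGAS hypothesis combined with Proposition~\ref{PropGn} gives $\norm{g_n\chi_{A^c}}_\infty \leq \beta(K,2n)$ and therefore $\norm{g_n\chi_{A^c}}_p \leq 2^{1/p}\beta(K,2n)$. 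On the other hand, Chebyshev's inequality provides $\abs{A} \leq r^p/K^p$, and combined with the pointwise estimate and $(a^p+b^p)^{1/p}\leq a+b$ for $a,b\geq 0$ and $p\geq 1$, we obtain
\[
\norm{g_n\chi_A}_p \leq \Bigl(\tfrac{M^p r^p}{K^p} + \mu^{np} r^p\Bigr)^{\!1/p} \leq \frac{Mr}{K} + \mu^n r.
\]
Summing produces the master inequality
\[
\norm{g_n}_p \leq \frac{Mr}{K} + \mu^n r + 2^{1/p}\beta(K,2n), \qquad \text{valid for every } K>0.
\]

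From this master inequality I derive uniform attractivity on bounded sets. Given $R>0$ and $\varepsilon>0$, fix $K = 3MR/\varepsilon$ so that $Mr/K \leq \varepsilon/3$ whenever $r\leq R$; since $\mu\in(0,1)$ and $\beta(K,\cdot)$ decreases to zero with $K$ now fixed, there exists $N$ depending only on $R$ and $\varepsilon$ such that both $\mu^n R \leq \varepsilon/3$ and $2^{1/p}\beta(K,2n)\leq \varepsilon/3$ for every $n\geq N$, whence $\norm{g_n}_p \leq \varepsilon$ uniformly in all initial data with $\norm{g_0}_p\leq R$. Combined with the stability bound $\norm{g_n}_p \leq r$ from Proposition~\ref{PropDampingNonStrict}, I introduce
\[
\beta_0(r, n) := \sup\bigl\{\norm{g_m}_p \suchthat m\geq n,\ (g_k)_{k\in\mathbb{N}} \text{ iterated sequence for } S \text{ with } \norm{g_0}_p\leq r\bigr\},
\]
which is nondecreasing in $r$, nonincreasing in $n$, bounded by $r$, and tends to zero as $n\to+\infty$ uniformly on compacts in $r$. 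Since $t\mapsto e_p(z)(t)$ is nonincreasing, Propositions~\ref{PropGn} and~\ref{PropDampingNonStrict} give $e_p(z)(t)\leq \beta_0(e_p(z)(0),\floor{t/2})$, and applying Lemma~\ref{lem:KL} from Appendix~\ref{AppLemmas} to $(r,t)\mapsto \beta_0(r,\floor{t/2})$---exactly as at the end of the proof of Proposition~\ref{PropCvSequence}---produces a $\mathcal{KL}$ rate $\tilde\beta$ with $e_p(z)(t)\leq \tilde\beta(e_p(z)(0),t)$, which is the required UGAS bound in $\mathsf X_p$. The principal delicacy lies in the master inequality: the threshold $K$ must be chosen large enough to dominate $Mr/K$ by $\varepsilon$, yet one has no quantitative information on how fast $\beta(K,\cdot)$ decays as $K$ grows; the point that unlocks the argument is that, for uniform attractivity alone, $K$ may be chosen as a function of $(R,\varepsilon)$ only, so that $\beta(K,\cdot)$ is evaluated at a fixed first argument and its automatic decay in the second suffices to conclude.
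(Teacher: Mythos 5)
Your proof is correct, and it reaches the conclusion by a noticeably lighter route than the paper's. Both arguments share the same core ingredients: split the initial datum by magnitude, use \ref{HypoSigma-SectorInftyTilde} to force geometric decay where the datum is large, invoke UGAS in $\mathsf X_\infty$ where it is bounded, control the measure of the large set by Chebyshev, and smooth the resulting envelope with Lemma~\ref{lem:KL}. The difference is in the implementation. The paper partitions $[-1,1]$ into $E$, $F$, and the geometric shells $E_k$ indexed by $\eta^k Z$ with $Z=\max(M,Z_p^{1/2})$, tracks for each shell the number of steps ($\approx k+1$) needed to enter the bounded regime, and sums the shell contributions; the payoff is an explicit bound on $\norm{g_n}_p$ whose arguments depend only on $Z_p$, which is then reused verbatim to get the quantitative estimate \eqref{eq:estim-toto59} in Theorem~\ref{th:Qn}. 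You instead use the single crude pointwise bound $\abs{g_n(s)}\leq\max(M,\mu^n\abs{g_0(s)})$ and a two-set split at a free threshold $K$; since $\beta(K,\cdot)$ comes with no quantitative control in its first argument, you cannot let $K$ depend on the solution or on $n$, and you correctly resolve this by choosing $K=K(R,\varepsilon)$, obtaining uniform attractivity on energy balls, and concluding non-constructively via the worst-case envelope $\beta_0$ and Lemma~\ref{lem:KL} — exactly the device the paper itself uses at the end of Proposition~\ref{PropCvSequence}, so it is perfectly legitimate here. The trade-off is that your $\mathcal{KL}$ rate is abstract, whereas the paper's finer decomposition buys an explicit one. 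Two minor presentational points: the inequality $\norm{g_n}_p\leq\norm{g_n\chi_A}_p+\norm{g_n\chi_{A^c}}_p$ is just the triangle inequality once $g_n=g_n\chi_A+g_n\chi_{A^c}$; the real content you need from Proposition~\ref{prop:dis-sup} (or from $S(0)=\{0\}$ directly, as the paper phrases it) is that each truncated sequence $(g_n\chi_{A^c})_n$ is itself an iterated sequence for $S$, which is what licenses applying the $\mathsf X_\infty$-UGAS hypothesis to it via Proposition~\ref{PropGn}; and your threshold device plays the same role as the paper's truncation at $Z$ in avoiding any dependence of the final bound on $e_\infty(z)(0)$, which would otherwise be fatal since initial data in $\mathsf X_p$ need not lie in $\mathsf X_\infty$.
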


\begin{proof}
Let $M > 0$ and $\mu \in (0, 1)$ be given by \ref{HypoSigma-SectorInftyTilde} and set $\eta = 1/\mu$. Let $z$ be a solution of \eqref{eq:wave} in $\mathsf X_p$ and denote by $(g_n)_{n \in \mathbb N}$ the corresponding sequence in $\mathsf Y_p$ in the sense of Proposition~\ref{PropGn}. For simplicity of notation, we set $g = g_0$, $Z_p = \norm{g}_p$, $Z = \max(M, Z_p^{1/2})$.

Consider the partition of $[-1, 1]$ in disjoints subsets defined by
\begin{align*}
E&=\{s\in [-1,1] \suchthat \abs{g(s)}< Z_p^{1/2}\},\\
F&=\{s\in [-1,1] \suchthat Z_p^{1/2}\leq\abs{g(s)}<Z\},\\
E_k&=\{s\in [-1,1] \suchthat \eta^{k} Z \leq \abs{g(s)}<\eta^{k+1}Z\},
\ k\geq 0,
\end{align*}
Let $\chi_E$ and $\alpha_E$, $\chi_F$ and $\alpha_F$, $\chi_k$ and $\alpha_k$, $k\geq 0$, be the characteristic functions and Lebesgue measures associated 
with $E$, $F$ and $E_k$, $k\geq 0$, respectively. One clearly has
\begin{align*}
g&=g\chi_E+g\chi_F+\sum_{k\geq 0}g\chi_k,\\
Z_p^p&=\norm{g\chi_E}^p_p+\norm{g\chi_F}^p_p+\sum_{k\geq 0}\alpha_k\xi_k^p,\quad \text{ where } \xi_k\in [\eta^{k}Z,\eta^{k+1}Z] \text{ for } k\geq 0.
\end{align*}
Moreover, for every $n \in \mathbb N$, one has
\begin{equation}\label{eq:ep-fini-infini}
\norm{g_n}_p^p = \norm{g_n \chi_E}_p^p + \norm{g_n \chi_F}_p^p + \sum_{k \geq 0} \norm{g_n \chi_k}_p^p.
\end{equation}

Note that, for every set $\mathcal S \in \{E, F, E_0, E_1, \dotsc\}$, the sequence $(g_n \chi_{\mathcal S})_{n \in \mathbb N}$ corresponds, in the sense of Proposition~\ref{PropGn}, to a solution $z_{\mathcal S}$ of \eqref{eq:wave} with initial condition $\mathfrak I^{-1}(g \chi_{\mathcal S})$. In particular, since $g \chi_{\mathcal S} \in \mathsf Y_\infty$, $z_{\mathcal S}$ is a solution of \eqref{eq:wave} in $\mathsf X_\infty$ and one has, for every $n \in \mathbb N$,
\begin{equation}\label{eq:estSETstepN}
\abs{g_n(s) \chi_{\mathcal S}(s)} \leq \chi_{\mathcal S}(s) \beta(\norm{g \chi_{\mathcal S}}_\infty, 2n), \qquad \text{ for a.e.\ } s \in [-1, 1].
\end{equation}
since \eqref{eq:wave} is UGAS in $\mathsf X_\infty$ with rate $\beta$.

One deduces for ${\mathcal{S}}=E$ that 
\begin{equation}\label{eq:gchiLpE}
\norm{g_n \chi_{E}}_p^p \leq \alpha_E \Bigl(\beta(Z_p^{1/2}, 2n)\Bigr)^p
\end{equation}
and, for ${\mathcal{S}}=F$
$$
\norm{g_n \chi_{F}}_p^p \leq \alpha_F \Bigl(\beta(Z, 2n)\Bigr)^p.
$$ 
By Chebyshev's inequality, we have that $\alpha_F \leq \frac{Z_p^p}{Z_p^{p/2}} \leq Z_p^{p/2}$. Then, we obtain
\begin{equation}\label{eq:gchiLpF}
\norm{g_n \chi_{F}}_p^p \leq Z_p^{p/2} \Bigl(\beta(Z, 2n)\Bigr)^p.
\end{equation}

We next estimate the sum appearing in the right-hand side of \eqref{eq:ep-fini-infini}. For $k \geq 0$ and $s\in E_k$, by applying \ref{HypoSigma-SectorInftyTilde}, it holds that $\norm{S(g(s))} \leq \eta^k Z$ and, more generally, an immediate inductive argument using \ref{HypoSigma-SectorInftyTilde} shows that $\norm{S^{[n]}(g(s))} \leq \eta^{\max(0, k+1-n)} Z$ for every $n \geq 0$. Then, for every $k \in \mathbb N$ and $n \in \mathbb N$, one has
\[
\norm{g_n \chi_k}_p^p \leq \alpha_k \eta^{p \max(0, k+1-n)} Z^p \leq \frac{\alpha_k \xi_k^p}{\eta^{p (k - \max(0, k+1-n))}}.
\]
For $k \geq \floor*{n/2} + 1$, it follows that $k - \max(0, k+1-n) \geq \frac{n-1}{2}$ and hence
\begin{equation*}
\norm{g_n \chi_k}_p^p \leq \frac{\alpha_k \xi_k^p}{\sqrt{\eta}^{p (n-1)}}.
\end{equation*}
Since $\sum_{k \geq 0} \alpha_k \xi_k^p \leq Z_p^p$, we deduce that
\begin{equation}
\label{eq:estim-sum-infty}
\sum_{k = \floor*{n/2}+1}^\infty \norm{g_n \chi_k}_p^p \leq \frac{Z_p^p}{\sqrt{\eta}^{p (n-1)}}.
\end{equation}
For $k \in \left\{0, \dotsc, \floor*{n/2}\right\}$ and $n \geq 1$, note that, since $g_{k+1}(s) \in S^{[k+1]}(g(s))$, one has $\abs{g_{k+1}(s)} \leq Z$ for $s \in E_k$. Using the facts that $(g_{k+1+j} \chi_{k})_{j \in \mathbb N}$ is an iterated sequence for $S$ in $\mathsf X_\infty$ and that \eqref{eq:wave} is UGAS in $\mathsf X_\infty$, we deduce that $\norm{g_{k+1+j} \chi_k}_\infty \leq \beta(Z, 2j)$ for every $j \in \mathbb N$, and thus
\begin{equation*}
\norm{g_n \chi_k}_p^p \leq \alpha_k \Bigl(\beta(Z, 2(n-1-k))\Bigr)^p \leq \alpha_k \Bigl(\beta(Z, n-1)\Bigr)^p,
\end{equation*}
since $\beta(Z, \cdot)$ is nonincreasing. Using the facts that $\xi_k \geq \eta^k Z$ and that $\sum_{k \geq 0} \alpha_k \xi_k^p \leq Z_p^p$, one deduces that
\begin{equation}
\label{eq:estim-sum-n2}
\begin{aligned}
\sum_{k=0}^{\floor*{n/2}} \alpha_k \Bigl(\beta(Z, n-1)\Bigr)^p & \leq \frac{\Bigl(\beta(Z, n-1)\Bigr)^p}{Z^p} \sum_{k=0}^{\floor*{n/2}} \frac{\alpha_k \xi_k^p}{\eta^{k p}} \\ 
& \leq \frac{\Bigl(\beta(Z, n-1)\Bigr)^p}{Z^p} Z_p^p \leq Z_p^{p/2} \Bigl(\beta(Z, n-1)\Bigr)^p.
\end{aligned}
\end{equation}
Putting together \eqref{eq:gchiLpE}, \eqref{eq:gchiLpF}, \eqref{eq:estim-sum-infty}, and \eqref{eq:estim-sum-n2}, and using the fact that $\alpha_E \leq 2$, one deduces that
\[
\norm{g_n}_p^p \leq 2 \Bigl(\beta(Z_p^{1/2}, 2n)\Bigr)^p + Z_p^{p/2} \Bigl(\beta(Z, 2n)\Bigr)^p + Z_p^{p/2} \Bigl(\beta(Z, n-1)\Bigr)^p + \frac{Z_p^p}{\sqrt{\eta}^{p (n-1)}}.
\]
We conclude using the fact that $t\mapsto e_p(z)(t)$ is nonincreasing and by Lemma~\ref{lem:KL} as in the proof of Proposition~\ref{PropCvSequence}.
\end{proof}

Note that Proposition~\ref{prop:inf+p-fini} and Theorem~\ref{thm-ugas} both assume that \eqref{eq:wave} is UGAS in $\mathsf X_\infty$, this property being characterized in terms of real iterated sequences for $S$ in Proposition~\ref{PropCvSequence}. Proposition~\ref{PropDefiRho}\ref{ItemIteratedUniform} in the next subsection provides an easy to check sufficient condition to have the latter property.

\subsection{Stability properties based on properties of \texorpdfstring{$S^{[2]}$}{S2}}
\label{SecS2}

Proposition~\ref{PropCvSequence} provides necessary and sufficient conditions for the strong stability of \eqref{eq:wave} in $\mathsf X_p$ in terms of convergence to zero of \emph{real} iterated sequences for the multi-valued map $S: \mathbb R \rightrightarrows \mathbb R$. These necessary and sufficient conditions are not yet satisfactory because they are difficult to verify. Instead, we provide in the sequel necessary or sufficient conditions on $S$ that are simpler to check. For that purpose, we introduce the function $\rho: \mathbb R_+ \to \mathbb R_+$ defined next which will be useful for several results in the sequel of the paper.

\begin{proposition}
\label{PropDefiRho}
Let $\Sigma \subset \mathbb R^2$ be a damping set. Let $\rho: \mathbb R_+ \to \mathbb R_+$ be the function defined by
\begin{equation}
\label{eq:defiRho}
\rho(r) = \lim_{\eta \to 0^+} \sup_{\abs{x} \leq r + \eta} \norm{S^{[2]}(x)}.
\end{equation}
Then the following properties hold.

\begin{enumerate}
\item\label{ItemRhoEasy} $\rho(0) = 0$, $\rho$ is nondecreasing, and $\rho(r) \leq r$ for every $r \in \mathbb R_+$.

\item\label{ItemRhoUSC} $\rho$ is upper semi-continuous.

\item\label{ItemRhoStrictDamping} Assume that $S^{[2]}$ is closed. Then $\rho(r) < r$ for every $r > 0$ if and only if $S^{[2]}$ is a strict damping.

\item\label{ItemIterateEstimateRho} For every $n \in \mathbb N$ and $r \in \mathbb R_+$, one has
\begin{equation}\label{eq:rho-iterate}
\sup_{\abs{x} \leq r} \norm{S^{[n]}(x)} \leq \rho^{[\floor{n/2}]}(r).
\end{equation}
In particular, for every real iterated sequence $(x_n)_{n \in \mathbb N}$ for $S$, one has
\begin{equation}
\label{eq:iterated-rho}
\abs{x_n} \leq \rho^{[\floor{n/2}]}(\abs{x_0}).
\end{equation}

\item\label{ItemIteratedUniform} If $\rho(r) < r$ for every $r > 0$, then real iterated sequences for $S$ converge uniformly to zero.
\end{enumerate}
\end{proposition}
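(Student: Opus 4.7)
\textbf{Plan for Proposition~\ref{PropDefiRho}.}

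I would dispose of items \ref{ItemRhoEasy} and \ref{ItemRhoUSC} first, as they follow almost directly from the definition. The bound $\rho(r)\le r$ comes from the damping property \ref{HypoSigma-DampingTilde}, which yields $\norm{S^{[2]}(x)}\le\abs{x}$ for every $x\in\R$ (using Proposition~\ref{PropSDamping}\ref{ItemCompositionDamping}). Monotonicity is immediate from the supremum in \eqref{eq:defiRho}, and $\rho(0)=0$ then follows from $0\le\rho(0)\le 0$. For upper semi-continuity, I would observe that the map $\eta\mapsto\sup_{\abs{x}\le r+\eta}\norm{S^{[2]}(x)}$ is nondecreasing, so $\rho(r)$ is actually an infimum over $\eta>0$. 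Combined with monotonicity, this gives that for any $r'>r$ close enough to $r$, one has $\rho(r')\le\sup_{\abs{x}\le r+\eta}\norm{S^{[2]}(x)}$ for any prescribed $\eta>0$, yielding $\limsup_{r'\to r^+}\rho(r')\le\rho(r)$; the left limit is handled by monotonicity.

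Next I would prove item \ref{ItemRhoStrictDamping}. The easy direction is by contraposition: if $S^{[2]}$ fails strict damping, pick $(x_\ast,y_\ast)$ in its graph with $(x_\ast,y_\ast)\ne(0,0)$ and $\abs{y_\ast}=\abs{x_\ast}$; then $r=\abs{x_\ast}>0$ satisfies $\rho(r)\ge\abs{y_\ast}=r$. For the reverse direction, assume $S^{[2]}$ is a strict damping and $\rho(r_0)\ge r_0$ for some $r_0>0$. By the definition of $\rho$, pick sequences $x_n,y_n$ with $y_n\in S^{[2]}(x_n)$, $\abs{x_n}\le r_0+1/n$, and $\abs{y_n}\to\rho(r_0)$. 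Since $\abs{y_n}\le\abs{x_n}$, both sequences are bounded, so we extract converging subsequences $x_n\to x_\infty$, $y_n\to y_\infty$ with $\abs{x_\infty}\le r_0$ and $\abs{y_\infty}=\rho(r_0)\ge r_0$. The closedness of $S^{[2]}$ gives $y_\infty\in S^{[2]}(x_\infty)$, hence $\abs{y_\infty}\le\abs{x_\infty}\le r_0\le\abs{y_\infty}$, so $\abs{x_\infty}=\abs{y_\infty}=r_0>0$, contradicting \ref{HypoSigma-StrictDampingTilde} for $S^{[2]}$. The key obstacle here is ensuring the extraction argument goes through, which is why closedness of $S^{[2]}$ is assumed.

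For item \ref{ItemIterateEstimateRho}, I would proceed by induction on $n$, treating $n=2k$ and $n=2k+1$ together by showing that both satisfy $\sup_{\abs{x}\le r}\norm{S^{[n]}(x)}\le\rho^{[k]}(r)$. The cases $n\in\{0,1\}$ are immediate from the damping property. For the induction step from $n$ to $n+2$, I use the decomposition $S^{[n+2]}=S^{[2]}\circ S^{[n]}$: for $\abs{x}\le r$ and $y\in S^{[n]}(x)$, the induction hypothesis gives $\abs{y}\le\rho^{[k]}(r)$; then the key inequality $\norm{S^{[2]}(y)}\le\rho(\abs{y})$ (which follows directly from the definition of $\rho$ applied at $\abs{y}$) combined with monotonicity of $\rho$ yields $\norm{S^{[2]}(y)}\le\rho(\rho^{[k]}(r))=\rho^{[k+1]}(r)$. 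The bound \eqref{eq:iterated-rho} for real iterated sequences is then a specialization to singletons.

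Finally, for item \ref{ItemIteratedUniform}, fix $r>0$ and consider the sequence $(\rho^{[k]}(r))_{k\in\N}$, which is nonincreasing by item~\ref{ItemRhoEasy}, hence converges to some $\ell\ge 0$. If $\ell>0$, upper semi-continuity of $\rho$ gives $\rho(\ell)\ge\limsup_k\rho(\rho^{[k]}(r))=\ell$, contradicting the hypothesis $\rho(\ell)<\ell$. Hence $\ell=0$, so for every $\varepsilon>0$ there exists $K\in\N$ with $\rho^{[K]}(r)<\varepsilon$. By \eqref{eq:iterated-rho}, every real iterated sequence $(x_n)_{n\in\N}$ starting from $\abs{x_0}\le r$ satisfies $\abs{x_n}<\varepsilon$ for $n\ge 2K$, which is exactly uniform convergence to zero on compact sets in the sense of Definition~\ref{DefConvZero}.
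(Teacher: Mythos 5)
Your proof is correct and follows essentially the same route as the paper's: the same direct consequences of the damping property for items \ref{ItemRhoEasy} and \ref{ItemRhoUSC}, the same compactness-plus-closedness extraction for item \ref{ItemRhoStrictDamping} (phrased by contradiction rather than directly, and with the easy converse made explicit), the same induction via $S^{[n+2]} = S^{[2]} \circ S^{[n]}$ and the inequality $\norm{S^{[2]}(y)} \leq \rho(\abs{y})$ for item \ref{ItemIterateEstimateRho}, and the same monotone-limit/upper-semicontinuity fixed-point argument for item \ref{ItemIteratedUniform}. No gaps.
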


\begin{proof}
Since $\Sigma$ is a damping, one has at once that $\norm{S^{[2]}(x)} \leq \abs{x}$ for every $x \in \mathbb R$ and hence \ref{ItemRhoEasy} follows at once.

To prove \ref{ItemRhoUSC}, notice that, by definition of $\rho$, for every $r \in \mathbb R_+$ and $\varepsilon > 0$, there exists $\delta > 0$ such that
\[
\sup_{\abs{x} \leq r + \delta} \norm{S^{[2]}(x)} \leq \rho(r) + \varepsilon,
\]
implying that, for every $\eta \in (0, \delta)$, $\rho(r + \eta) \leq \rho(r) + \varepsilon$. This shows that $\lim_{\eta \to 0^+} \rho(r + \eta) \leq \rho(r)$ for every $r \in \mathbb R_+$, which is equivalent to the upper semi-continuity of $\rho$ since $\rho$ is nondecreasing.

Let us now prove \ref{ItemRhoStrictDamping}. The only nontrivial implication is that $\rho(r) < r$ for every $r > 0$ as soon as $S^{[2]}$ is closed and a strict damping. Fix $r > 0$. By definition of $\rho(r)$, there exist sequences $(x_n)_{n \in \mathbb N}$ and $(y_n)_{n \in \mathbb N}$ with $y_n \in S^{[2]}(x_n)$ for every $n \in \mathbb N$ and such that, up to extracting subsequences, $(x_n)_{n \in \mathbb N}$ converges to some $x_\ast \in [-r, r]$ and $(y_n)_{n \in \mathbb N}$ converges to some $y_\ast \in \mathbb R$ with $\rho(r) = \abs{y_\ast}$. Since $S^{[2]}$ is closed, one has $y_\ast \in S^{[2]}(x_\ast)$, which implies, using the fact that $S^{[2]}$ is a strict damping, that $(x_\ast, y_\ast) = (0, 0)$ or $\abs{y_\ast} < \abs{x_\ast}$. In both cases, one deduces that $\rho(r) < r$.

To prove \ref{ItemIterateEstimateRho}, notice that it suffices to prove \eqref{eq:rho-iterate} for even integers since $\norm{S(x)} \leq \abs{x}$ for every $x \in \mathbb R$, and that \eqref{eq:rho-iterate} is trivially true for $n = 0$ and $n = 2$. The argument goes on by induction: let $n \geq 4$ be an even integer so that \eqref{eq:rho-iterate} holds for even integers $m < n$ and set $k = \frac{n}{2}$. For every $r \geq 0$, $y \in [-r, r]$, and $z \in S^{[2(k-1)]}(y)$, one has
\[
\norm{S^{[2]}(z)} \leq \rho(\abs{z}) \leq \rho\left(\norm{S^{[2(k-1)]}(y)}\right) \leq \rho(\rho^{[k-1]}(\abs{y})) \leq \rho^{[k]}(r),
\]
where we used the definition of $\rho$ in the first inequality, the fact that $\rho$ is nondecreasing in the second, third, and fourth inequalities, and the induction hypothesis in the third inequality. Hence \eqref{eq:rho-iterate} follows at once. It is clear that \eqref{eq:iterated-rho} is an immediate consequence of \eqref{eq:rho-iterate}.

Let us finally prove \ref{ItemIteratedUniform}. By taking into account \eqref{eq:iterated-rho}, it is enough to prove that, for every $r > 0$, $\rho^{[n]}(r)$ converges to $0$ as $n \to +\infty$. Since $\rho(x) \leq x$ for every $x \in \mathbb R_+$, $(\rho^{[n]}(r))_{n \in \mathbb N}$ is nonincreasing, and hence it admits a limit $r_\ast \in \mathbb R_+$. Since $\rho^{[n+1]}(r) = \rho(\rho^{[n]}(r))$ for every $n \in \mathbb N$, one deduces, letting $n \to +\infty$ and using the upper semi-continuity of $\rho$, that $r_\ast = \lim_{n \to +\infty} \rho(\rho^{[n]}(r)) \leq \rho(r_\ast) \leq r_\ast$, showing that $\rho(r_\ast) = r_\ast$. Since $\rho(x) < x$ for every $x > 0$, one then has necessarily that $r_\ast = 0$.
\end{proof}

\begin{remark}
Notice that, if $S$ is closed, then $S^{[2]}$ is closed, and therefore the conclusion of item \ref{ItemRhoStrictDamping} holds true. This is the case, in particular, if $S$ is the graph of a continuous function.
\end{remark}

We can now state necessary and sufficient conditions on $S$ only (and not relying on real iterated sequences) for strong stability of \eqref{eq:wave} in $\mathsf X_p$.

\begin{theorem}
\label{TheoStrongStability}
Let $\Sigma \subset \mathbb R^2$ be a damping set and $p \in [1, +\infty]$. If \eqref{eq:wave} is strongly stable in $\mathsf X_p$, then $S^{[2]}$ is a strict damping. 

Conversely, assume that $S^{[2]}$ is a strict damping and that $\rho$ defined in Proposition~\ref{PropDefiRho} satisfies $\rho(r) < r$ for every $r > 0$. Then \eqref{eq:wave} is strongly stable in $\mathsf X_p$.
\end{theorem}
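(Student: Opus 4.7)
My plan is to handle the two implications separately, exploiting the characterizations of strong stability via real iterated sequences (Proposition~\ref{PropCvSequence}) and the machinery already developed for $\rho$ (Proposition~\ref{PropDefiRho}).

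For the necessary direction, I will argue by contraposition. Suppose $S^{[2]}$ is not a strict damping. By Proposition~\ref{PropSDamping}\ref{ItemS2StrictDamping}, there exists $x \in \mathbb{R}^\ast$ such that either $x \in S(x)$, or $-x \in S(x)$ together with $x \in S(-x)$. In the first case, the constant real sequence $x_n = x$ is an iterated sequence for $S$ which does not converge to zero; in the second case, the alternating sequence $x_n = (-1)^n x$ does the job. Taking $g_n$ to be the constant function on $[-1,1]$ equal to $x_n$, the sequence $(g_n)_{n \in \mathbb{N}}$ lies in $\mathsf{Y}_p$ and satisfies \eqref{eq:dynSystGn}, so by Proposition~\ref{PropGn} it corresponds to a solution $z$ of \eqref{eq:wave} in $\mathsf{X}_p$ with $e_p(z)(2n) = \norm{g_n}_p = 2^{1/p}\abs{x}$ (or $\abs{x}$ when $p = +\infty$) for every $n$, which contradicts strong stability.

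For the sufficient direction, assume $S^{[2]}$ is a strict damping and $\rho(r) < r$ for every $r > 0$. Proposition~\ref{PropDefiRho}\ref{ItemIteratedUniform} then guarantees that real iterated sequences for $S$ converge uniformly to zero (on compact sets). When $p = +\infty$, this is exactly condition \ref{ItemItemRealIteratedCvUnif} of Proposition~\ref{PropCvSequence}\ref{ItemCvSequencePInfinite}, which yields strong stability (in fact UGAS) in $\mathsf{X}_\infty$. When $p < +\infty$, uniform convergence on compact sets trivially implies simple convergence of every real iterated sequence for $S$ to zero, and Proposition~\ref{PropCvSequence}\ref{ItemCvSequencePFinite} delivers strong stability in $\mathsf{X}_p$.

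The proof is really an assembly of pre-existing results, so there is no substantial obstacle: the only subtle point is to notice that the two scenarios identified by Proposition~\ref{PropSDamping}\ref{ItemS2StrictDamping} each yield an \emph{iterated sequence of constant functions}, which lives automatically in $\mathsf{Y}_p$ for every $p$ and thus produces a concrete non-decaying solution of \eqref{eq:wave} simultaneously in every functional setting. This is what makes the necessary direction uniform in $p$, while the sufficient direction simply splits into the two cases already treated in Proposition~\ref{PropCvSequence}.
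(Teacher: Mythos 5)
Your proof is correct and follows essentially the same route as the paper: the sufficiency part is verbatim the paper's argument (Proposition~\ref{PropDefiRho}\ref{ItemIteratedUniform} plus Proposition~\ref{PropCvSequence}), and your necessity part merely unfolds what the paper cites, using the characterization in Proposition~\ref{PropSDamping}\ref{ItemS2StrictDamping} and building the constant/alternating non-decaying solution via Proposition~\ref{PropGn}, which is exactly the content of Proposition~\ref{PropSDamping}\ref{ItemS2IteratedSequence} combined with the necessity half of Proposition~\ref{PropCvSequence}.
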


\begin{proof}
The first part of the statement follows immediately from Propositions~\ref{PropSDamping}\ref{ItemS2IteratedSequence} and \ref{PropCvSequence}, while the second part is an immediate consequence of Propositions~\ref{PropDefiRho}\ref{ItemIteratedUniform} and \ref{PropCvSequence}.
\end{proof}

\begin{remark}
In the case where the damping set $\Sigma$ is the graph of a function $\sigma: \mathbb R \to \mathbb R$, the second part of  Theorem~\ref{TheoStrongStability} has been essentially already obtained in \cite{pierre2000strong} in the case $p = 2$ under the assumption that either $\sigma(s) > 0$ for all $s > 0$ or $\sigma(s) < 0$ for all $s < 0$ (referred to in that reference as a unilateral condition), which is stronger than requiring $S^{[2]}$ to be a strict damping.
\end{remark}

\begin{remark}
Recall that, for $p = +\infty$, Proposition~\ref{PropCvSequence} ensures that strong stability and UGAS are equivalent and \eqref{eq:iterated-rho} immediately implies that, for every solution $z$ of \eqref{eq:wave} in $\mathsf X_\infty$, the corresponding sequence $(g_n)_{n \in \mathbb N}$ in $\mathsf Y_\infty$ from Proposition~\ref{PropGn} satisfies, for every $n \geq 0$,
\begin{equation}
\norm{g_n}_\infty \leq \rho^{[\floor{n/2}]}(\norm{g_0}_\infty),
\end{equation}
which yields
\begin{equation}
e_\infty(z)(t) \leq e_\infty(z)(2 \floor{t/2}) \leq \rho^{[\floor{t/4}]}(e_\infty(z)(0)).
\end{equation}
If now $\rho$ verifies that $\rho(r) < r$ for every $r > 0$, we have UGAS for solutions of \eqref{eq:wave} in $\mathsf X_\infty$ and one can build a corresponding $\mathcal{KL}$ function $\beta$ by applying Lemma~\ref{lem:KL} to the function $(r, t) \mapsto \rho^{[\floor{t/4}]}(r)$.
\end{remark}

\begin{remark}
\label{remk:rho-n}
One could have replaced $S^{[2]}$ by $S^{[n]}$ for $n \geq 1$ in Proposition~\ref{PropDefiRho} to define functions $\rho_n$ similar to $\rho$ ($= \rho_2$) satisfying the same properties. Note that $(\rho_n)_{n \in \mathbb N^\ast}$ is a nonincreasing sequence of functions. We focus on the case $n = 2$ due to Proposition~\ref{PropSDamping}\ref{ItemS2IteratedSequence} as well as to the fact that, from Theorem~\ref{TheoStrongStability}, $S^{[2]}$ being a strict damping is a necessary condition for the strong stability of \eqref{eq:wave}, which is not the case for $S$ due to Example~\ref{ExplStrongStabilityNotNecessary}.

An ultimate justification for sticking to $n = 2$ is the fact that we were not able to come up with a result interesting enough to justify the use of $\rho_n$ with $n > 2$, even though, for $n \geq 3$, the condition $\rho_n(r) < r$ for every $r > 0$ is strictly weaker than the corresponding condition with $n = 2$.
\end{remark}

As an immediate consequence of Proposition~\ref{PropDefiRho}\ref{ItemRhoStrictDamping} and Theorem~\ref{TheoStrongStability}, one deduces the following result.

\begin{corollary}
\label{CoroS2Closed}
Let $\Sigma \subset \mathbb R^2$ be a damping set, $p \in [1, +\infty]$, and assume that $S^{[2]}$ is closed. Then \eqref{eq:wave} is strongly stable in $\mathsf X_p$ if and only if $S^{[2]}$ is a strict damping.
\end{corollary}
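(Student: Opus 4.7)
The plan is straightforward, since the corollary is obtained by directly combining the two results cited just before its statement, and the role played by each of them is quite clean.

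For the forward implication, I would simply invoke the first sentence of Theorem~\ref{TheoStrongStability}: if \eqref{eq:wave} is strongly stable in $\mathsf X_p$, then $S^{[2]}$ is a strict damping. Notice that this half of Theorem~\ref{TheoStrongStability} does not require any closedness assumption on $S^{[2]}$, so no extra argument is needed; the hypothesis that $S^{[2]}$ is closed is simply unused in this direction.

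For the converse, I would assume that $S^{[2]}$ is a strict damping. Combined with the standing hypothesis that $S^{[2]}$ is closed, Proposition~\ref{PropDefiRho}\ref{ItemRhoStrictDamping} applies and yields that the function $\rho$ defined by \eqref{eq:defiRho} satisfies $\rho(r) < r$ for every $r > 0$. With both conditions in hand, namely $S^{[2]}$ being a strict damping \emph{and} $\rho(r) < r$ for all $r > 0$, the hypotheses of the second part of Theorem~\ref{TheoStrongStability} are exactly met, and it delivers strong stability of \eqref{eq:wave} in $\mathsf X_p$.

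There is no genuine obstacle in this argument: the only conceptual point to highlight is \emph{why} the closedness of $S^{[2]}$ is needed at all. It is used exclusively to upgrade ``$S^{[2]}$ strict damping'' (a pointwise inequality on the graph of $S^{[2]}$) to the uniform-type inequality $\rho(r)<r$ on $\mathbb R_+^\ast$ required by the sufficient part of Theorem~\ref{TheoStrongStability}; without closedness, a strict damping $S^{[2]}$ might fail to verify this pointwise strict inequality at the supremum in \eqref{eq:defiRho}, which is precisely the gap that Proposition~\ref{PropDefiRho}\ref{ItemRhoStrictDamping} fills by a compactness/limit argument.
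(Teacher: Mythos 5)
Your proposal is correct and follows exactly the paper's route: the corollary is stated there as an immediate consequence of Proposition~\ref{PropDefiRho}\ref{ItemRhoStrictDamping} and Theorem~\ref{TheoStrongStability}, with the necessity coming from the first part of the theorem and the sufficiency from the closedness-based equivalence $\rho(r)<r$ for $r>0$ combined with the second part. Your additional observation on where the closedness hypothesis enters is accurate and consistent with the paper's argument.
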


\subsection{Decay rates and their optimality}
\label{SecDecayRates}

In the previous sections, we have considered the convergence to zero of solutions of \eqref{eq:wave} and the speed of convergence to zero (or decay rate) was in some cases upper bounded by a $\mathcal{KL}$ function $\beta$ arising from the stability concept of UGAS. In this section, we intend to be more explicit on the dependence in time of $\beta$ and also to provide lower bounds for the decay rate in some cases. More precisely, we say that a $\mathcal{KL}$ function $\beta$ is \emph{optimal} for \eqref{eq:wave} if the latter is UGAS with rate $\beta$ and there exists a nonzero initial condition $(z_0, z_1)\in \mathsf X_p$ such that 
\begin{equation}\label{eq:optimal}
\liminf_{t\to+\infty}\frac{\norm{(z(t,\cdot),z_t(t,\cdot))}_{\mathsf X_p}}
{\beta(\norm{(z_0, z_1)}_{\mathsf X_p},t)}>0.
\end{equation}
In the sequel, we will essentially work with Hypotheses \ref{HypoSigma-gUpperTilde} or \ref{HypoSigma-gLowerTilde}. Our results in this section are decomposed in two parts: first, in Section~\ref{sec:decay-sequence}, we present some preliminary results concerning the decay rates of real iterated sequences, and then, in Section~\ref{sec:decay-solution}, we apply these results to solutions of \eqref{eq:wave}.

\subsubsection{Decay rates for real iterated sequences}
\label{sec:decay-sequence}

The main results of this section are the next two propositions dealing with real iterated sequences for $S$. They will be crucial in order to establish our subsequent results for decay rates of solutions of \eqref{eq:wave} and their optimality in $\mathsf X_p$, $p \in [1, +\infty]$. The first proposition translates \ref{HypoSigma-gUpperTilde} and \ref{HypoSigma-gLowerTilde} in terms of upper and lower bounds, respectively, on real iterated sequences for $S$, relying on iterates of the function $Q$ from \ref{HypoSigma-gUpperTilde} and \ref{HypoSigma-gLowerTilde}. Its proof is an immediate consequence of the formulation of \ref{HypoSigma-gUpperTilde} and \ref{HypoSigma-gLowerTilde}.

\begin{proposition}
\label{prop:ineq-sequences}

Let $\Sigma \subset \mathbb R^2$ be a damping set and $S$ be the corresponding set-valued map whose graph is equal to $R \Sigma$.
\begin{enumerate}
\item Assume that \ref{HypoSigma-gUpper} holds and let $Q$ be as in \ref{HypoSigma-gUpperTilde}. Then, for every real iterated sequence $(x_n)_{n \in \mathbb N}$ for $S$ with $\abs{x_0} \leq M/\sqrt{2}$, one has
\begin{equation}\label{eq:real-iterated-Upper}
\abs{x_n} \leq Q^{[n]}(\abs{x_0}), \qquad \text{ for every } n \in \mathbb N.
\end{equation}

\item\label{ItemIneqSequenceGeq} Assume that \ref{HypoSigma-gLower} holds and let $Q$ be as in \ref{HypoSigma-gLowerTilde}. Then, for every real iterated sequence $(x_n)_{n \in \mathbb N}$ for $S$ with $\abs{x_0} \leq M/\sqrt{2}$, one has
\begin{equation}\label{eq:real-iterated-Lower}
\abs{x_n} \geq Q^{[n]}(\abs{x_0}), \qquad \text{ for every } n \in \mathbb N.
\end{equation}
\end{enumerate}
\end{proposition}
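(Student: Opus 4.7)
The plan is to prove both statements by a straightforward induction on $n$, with the base case $n=0$ being trivial. The only delicate point is to ensure that, at each step, the pair $(x_n, x_{n+1})$ lies in $R\Sigma \cap B(0, M)$, so that the generalized sector condition furnished by \ref{HypoSigma-gUpperTilde} or \ref{HypoSigma-gLowerTilde} can indeed be invoked. This is precisely why the assumption on $|x_0|$ involves the factor $1/\sqrt{2}$.

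For the induction step, I would argue as follows. Since $\Sigma$ is a damping, applying \ref{HypoSigma-DampingTilde} from Proposition~\ref{PropHTilde} gives $|x_{n+1}| \leq |x_n|$ for every $n\in\mathbb N$, so that the sequence $(|x_n|)_{n\in\mathbb N}$ is nonincreasing and in particular $|x_n| \leq |x_0| \leq M/\sqrt{2}$ for all $n$. It follows that
\[
|(x_n, x_{n+1})| = \sqrt{|x_n|^2 + |x_{n+1}|^2} \leq \sqrt{2}\,|x_n| \leq M,
\]
so that $(x_n, x_{n+1}) \in R\Sigma \cap B(0, M)$. The hypothesis \ref{HypoSigma-gUpperTilde} (resp.\ \ref{HypoSigma-gLowerTilde}) then yields $|x_{n+1}| \leq Q(|x_n|)$ (resp.\ $|x_{n+1}| \geq Q(|x_n|)$).

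The induction itself then closes using the monotonicity of $Q$, which is guaranteed by $Q'(x) > 0$ for $x > 0$ together with $Q(0)=0$: assuming $|x_n| \leq Q^{[n]}(|x_0|)$ (resp.\ $|x_n| \geq Q^{[n]}(|x_0|)$), one gets
\[
|x_{n+1}| \leq Q(|x_n|) \leq Q\bigl(Q^{[n]}(|x_0|)\bigr) = Q^{[n+1]}(|x_0|),
\]
and analogously with the reverse inequality for the second case. The main (and essentially only) obstacle is the verification that one stays inside $B(0,M)$ so that the sector condition can be reapplied indefinitely; apart from this, the argument is purely routine.
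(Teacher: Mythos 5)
Your proof is correct and is exactly the argument the paper has in mind (the paper dismisses it as "an immediate consequence of the formulation of \ref{HypoSigma-gUpperTilde} and \ref{HypoSigma-gLowerTilde}"): you simply spell out the induction, correctly identifying that the damping property keeps $\abs{x_n}$ nonincreasing so that $\abs{(x_n,x_{n+1})}\leq\sqrt{2}\,\abs{x_n}\leq M$, which legitimizes applying the sector condition at every step, and then closing with the monotonicity of $Q$.
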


The next proposition provides an explicit asymptotic behavior for the sequence $(Q^{[n]}(\abs{x_0}))_{n\in \mathbb N}$, which serves as either an upper or a lower bound in the previous proposition.

\begin{proposition}\label{prop:decay-rate}
Let $q \in \mathcal C^1(\mathbb R_+, \mathbb R_+)$ be as in \ref{HypoSigma-gUpper} or \ref{HypoSigma-gLower}, i.e., $q(0) = 0$, $0 < q(x) < x$, and $\abs{q^\prime(x)} < 1$ for every $x>0$, and $Q$ be defined from $q$ as in \eqref{Relation-Q-q}. Let $x_0 \in \mathbb R_+^\ast$.

\begin{enumerate}
\item\label{ItemQPrime0} Assume that $q^\prime(0) = 0$ and let $F: (0, x_0] \to \mathbb R_+$ be the diffeomorphism defined by
\[
F(z) = \int_{z}^{x_0} \frac{\diff \xi}{\overline q(\xi)},
\]
where $\overline q(s) = \sqrt{2} q(\sqrt{2} s)$ for $s \in \mathbb R_+$. Then $F(Q^{[n]}(x_0)) \sim n$ as $n \to +\infty$. If moreover there exists $C > 0$ such that
\begin{equation}
\label{EqConditionEquiv}
\frac{F(z) \overline q(z)}{z} \leq C, \qquad \text{ for every } z \in (0, x_0],
\end{equation}
then $$Q^{[n]}(x_0) \sim F^{-1}(n)\text{ as }n \to +\infty.$$

\item\label{ItemQPrimeBetween} Assume that $q^\prime(0) \in (0, 1)$ and let $\lambda = 2\artanh(q^\prime(0))$. Then $\ln Q^{[n]}(x_0) \sim -\lambda n$ as $n \to +\infty$. If moreover one has
\begin{equation}\label{eq:bizarre}
\sum_{k=0}^\infty \psi(e^{-\frac{\lambda}{2} k}) < +\infty,
\end{equation}
where $\psi(r) = \sup_{s \in (0, r]} \abs*{\frac{q(s)}{s} - q^\prime(0)}$, then there exists $C > 1$ such that, for every $n\in\mathbb N$, $$C^{-1} e^{-\lambda n} \leq Q^{[n]}(x_0) \leq C e^{-\lambda n}.$$
\item\label{ItemQPrime1} If $q^\prime(0) = 1$, then for every $C>0$, $\lim_{n\to+\infty}e^{Cn}Q^{[n]}(x_0)=0$. If moreover, there exist positive constants $C_\ast,\alpha,x_*$ such that $C_\ast x_*^{\alpha}<1$ and $|q(x) - x|\leq C_\ast 2^{-\frac{\alpha}{2}} |x|^{1+\alpha}$ for $|x|\leq x_*$, then there exists a positive constant $\mu_*$ such that 
$$Q^{[n]}(x_0)\leq C_\ast^{-\frac1\alpha}e^{-\mu_*(1+\alpha)^n},$$
for $n$ large enough.
\end{enumerate}
\end{proposition}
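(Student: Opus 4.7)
The approach hinges on the implicit identity that follows from \eqref{Relation-Q-q} after setting $y = (q+\id)^{-1}(\sqrt{2}\,x)$: writing $\sqrt{2}\,y = x + Q(x)$ and $\sqrt{2}\,q(y) = x - Q(x)$, one obtains
\[
x - Q(x) = \overline q\!\left(\tfrac{x+Q(x)}{2}\right).
\]
The hypotheses on $q$ yield $0 < Q(x) < x$ for $x > 0$, so the sequence $x_n := Q^{[n]}(x_0)$ is positive, strictly decreasing, and tends to $0$. Solving the identity for $Q(x)/x$ produces the closed form
\[
\frac{Q(x)}{x} = \frac{1-\alpha(x)}{1+\alpha(x)}, \qquad \alpha(x) := \frac{\overline q((x+Q(x))/2)}{x+Q(x)},
\]
with $\alpha(x) \to q'(0)$ as $x \to 0^+$. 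This formula underlies each of the three regimes, since $Q(x)/x \to (1-q'(0))/(1+q'(0))$, which is $1$, $e^{-\lambda}$, or $0$ respectively in items \ref{ItemQPrime0}, \ref{ItemQPrimeBetween}, \ref{ItemQPrime1}. Moreover, the identity $x_n - x_{n+1} = \overline q((x_n+x_{n+1})/2)$ is a one-step discretization of $\dot x = -\overline q(x)$, whose time-to-reach-$z$ function from $x_0$ is exactly $F$.

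\textbf{Item \ref{ItemQPrime0}.} Since $x_{n+1}/x_n \to 1$, I invoke the integral mean value theorem to write
\[
F(x_{n+1}) - F(x_n) = \frac{x_n - x_{n+1}}{\overline q(\xi_n)} = \frac{\overline q((x_n+x_{n+1})/2)}{\overline q(\xi_n)}
\]
for some $\xi_n \in [x_{n+1}, x_n]$; the continuity of $\overline q$ near $0$ forces this ratio to tend to $1$, so a telescoping-plus-Cesàro argument yields $F(x_n) \sim n$. To upgrade this to $x_n \sim F^{-1}(n)$ under \eqref{EqConditionEquiv}, set $G = F^{-1}$ and $F(x_n) = n + \varepsilon_n$ with $\varepsilon_n = o(n)$. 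Applying the mean value theorem to $\ln \circ G$ (licit since $G$ is positive and decreasing) gives
\[
|\ln G(n+\varepsilon_n) - \ln G(n)| = \frac{\overline q(G(\tau_n))}{G(\tau_n)}\, |\varepsilon_n|
\]
for some $\tau_n$ between $n$ and $n+\varepsilon_n$. Invoking \eqref{EqConditionEquiv} at $z = G(\tau_n)$ and using $F(G(\tau_n)) = \tau_n$ bounds the right-hand side by $C|\varepsilon_n|/\tau_n = o(1)$, whence $G(n+\varepsilon_n)/G(n) \to 1$, i.e.\ $x_n \sim F^{-1}(n)$.

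\textbf{Item \ref{ItemQPrimeBetween}.} From $Q'(0) = (1-q'(0))/(1+q'(0)) = e^{-\lambda}$ one has $\ln(x_{n+1}/x_n) = \ln(Q(x_n)/x_n) \to -\lambda$, and Cesàro averaging gives $\ln x_n / n \to -\lambda$. For the sharp two-sided bound, a direct computation using $\overline q(s)/s = 2q'(0) + 2(q(\sqrt{2}\,s)/(\sqrt{2}\,s) - q'(0))$ yields $|\alpha(x) - q'(0)| \le \psi(\sqrt{2}\,x)$, and differentiating the closed form shows $|\ln(Q(x)/x) + \lambda| \le C\,\psi(\sqrt{2}\,x)$ for $x$ small. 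Combining the first-part bound $x_n \le e^{-\lambda n/2}$ (valid for $n$ large) with the monotonicity of $\psi$ and a harmless index shift, \eqref{eq:bizarre} gives $\sum_n \psi(\sqrt{2}\,x_n) < +\infty$; the series $\sum_n \ln(e^{\lambda} x_{n+1}/x_n)$ therefore converges absolutely, so $e^{\lambda n} x_n$ tends to a positive limit, which furnishes the constant $C$.

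\textbf{Item \ref{ItemQPrime1}.} Since $Q(x)/x \to 0$, for any $C>0$ one has $x_{n+1}/x_n < e^{-C}$ eventually, whence $e^{Cn} x_n \to 0$. For the sharper estimate, the assumption on $|q(x)-x|$ translates into $0 \le 2s - \overline q(s) \le 2C_\ast s^{1+\alpha}$ for small $s$; substituting $y = (x+Q(x))/2$ in $x - Q(x) = \overline q(y)$ gives $2Q(x) = 2y - \overline q(y) \le 2 C_\ast y^{1+\alpha} \le 2 C_\ast x^{1+\alpha}$, i.e.\ $x_{n+1} \le C_\ast x_n^{1+\alpha}$. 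Setting $z_n := \ln(C_\ast^{1/\alpha} x_n)$ linearizes this into $z_{n+1} \le (1+\alpha) z_n$; because $C_\ast x_\ast^\alpha < 1$, one has $z_{n_0} < 0$ whenever $x_{n_0} \le x_\ast$, and iterating yields $z_n \le (1+\alpha)^{n-n_0} z_{n_0}$ for $n \ge n_0$, equivalent to $x_n \le C_\ast^{-1/\alpha} e^{-\mu_\ast(1+\alpha)^n}$ with $\mu_\ast := (1+\alpha)^{-n_0} |z_{n_0}|$. The hardest step is the sharp asymptotic in item \ref{ItemQPrime0}: converting $F(x_n) = n + o(n)$ into an equivalent for $x_n$ requires controlling the multiplicative perturbation of the inverse of a singular diffeomorphism near infinity, which is precisely the role of the technical hypothesis \eqref{EqConditionEquiv}.
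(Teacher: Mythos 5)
Your structural identity $x - Q(x) = \overline q\bigl(\tfrac{x+Q(x)}{2}\bigr)$ is a correct and pleasant repackaging of \eqref{Relation-Q-q}, and items \ref{ItemQPrimeBetween} and \ref{ItemQPrime1}, as well as the second half of item \ref{ItemQPrime0}, are essentially right; in particular your treatment of the second half of \ref{ItemQPrime0} (mean value theorem applied to $\ln\circ F^{-1}$, then \eqref{EqConditionEquiv} evaluated at $z=F^{-1}(\tau_n)$) is a slightly cleaner route than the expansion used in the paper. There is, however, a genuine gap in the first half of item \ref{ItemQPrime0}: the assertion that ``the continuity of $\overline q$ near $0$ forces'' $\overline q\bigl(\tfrac{x_n+x_{n+1}}{2}\bigr)/\overline q(\xi_n)\to 1$ is not a valid deduction. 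Continuity only gives that the \emph{difference} of the two values tends to zero, which says nothing about their ratio since both values tend to zero. Tellingly, your argument for $F(Q^{[n]}(x_0))\sim n$ never invokes the hypothesis $q^\prime(0)=0$, while the conclusion is false without it: for $q(u)=cu$ with $c\in(0,1)$ one computes $F(Q^{[n]}(x_0))\sim \frac{\artanh(c)}{c}\,n$ with $\artanh(c)/c>1$. The ratio-to-one claim is precisely the crux of the paper's proof of this part (its estimate \eqref{EqUniformConvergence}), and it is exactly where $q^\prime(0)=0$ must enter: by the mean value theorem applied to $\overline q$ on $[x_{n+1},x_n]$, an interval of length $x_n-x_{n+1}=\overline q(m_n)$ with $m_n=\tfrac{x_n+x_{n+1}}{2}$, one gets $\abs{\overline q(\xi_n)-\overline q(m_n)}\le \overline q(m_n)\sup_{[0,x_n]}\abs{\overline q^\prime}$, and $\sup_{[0,x_n]}\abs{\overline q^\prime}\to 0$ because $\overline q^\prime$ is continuous with $\overline q^\prime(0)=2q^\prime(0)=0$; hence $\overline q(\xi_n)=\overline q(m_n)(1+o(1))$ and the step is repaired, after which your Cesàro/telescoping argument goes through.

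The remaining items check out, up to two cosmetic repairs. In item \ref{ItemQPrime1}, from a fixed $C$ with $x_{n+1}/x_n<e^{-C}$ eventually you only obtain boundedness of $e^{Cn}x_n$; apply the ratio bound with $2C$ instead (harmless, since the ratio tends to $0$). In the superexponential estimate, the one-step inequality $Q(x)\le C_\ast x^{1+\alpha}$ requires $\sqrt{2}\,y\le x_\ast$, hence $x\le x_\ast/\sqrt{2}$, so $n_0$ should be chosen with $x_{n_0}\le x_\ast/\sqrt{2}$ rather than $x_{n_0}\le x_\ast$. Your deduction of $\sum_n\psi(\sqrt{2}\,x_n)<+\infty$ from \eqref{eq:bizarre} in item \ref{ItemQPrimeBetween}, and the absolute convergence of $\sum_n\ln\bigl(e^{\lambda}x_{n+1}/x_n\bigr)$, follow the same lines as the paper's proof.
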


\begin{proof}
By \eqref{Relation-Q-q}, one immediately gets that
\[Q^\prime(0) = \frac{1 - q^\prime(0)}{1 + q^\prime(0)}.\]
For $n \in \mathbb N$, set $x_n = Q^{[n]}(x_0)$ and remark that $(x_n)_{n \in \mathbb N}$ is a sequence of positive real numbers decreasing to zero.

We start by establishing Item~\ref{ItemQPrime0}. Notice that $\overline q$ is of class $\mathcal C^1$ and one has $\overline q(0) = \overline q^\prime(0) = 0$. Set $\varphi(x) = x - Q(x)$ and notice that $\varphi$ is $\mathcal C^1$ and verifies $\varphi(0) = \varphi^\prime(0) = 0$ and $0 < \varphi(x) < x$ for every $x > 0$. The sequence $(x_n)_{n \in \mathbb N}$ is then solution of the recurrence relation $x_{n+1} = x_n - \varphi(x_n)$ for $n \in \mathbb N$. Moreover, for $x > 0$ and setting $y = (q + \id)^{-1}(x)$, one has
\[\frac{\overline q(x)}{\varphi(x)} = \frac{q(y + q(y))}{q(y)} = 1 + \frac{1}{q(y)} \int_y^{y + q(y)} q^\prime(\xi) \diff \xi \to 0 \qquad \text{ as } y \to 0,\]
implying that $\varphi(x) \sim \overline q(x)$ as $x \to 0$.

For $n \in \mathbb N$, define $t_n = F(x_n)$. We first prove that $t_n \sim n$ as $n \to +\infty$. Notice that, for $n \geq 1$, one has 
\[
\frac{t_n}{n} = \frac{F(x_n)}{n} = \frac{1}{n} \sum_{k=0}^{n-1}\int_{x_{k+1}}^{x_k} \frac{\diff \xi}{\overline q(\xi)},
\]
hence, by Cesàro's theorem, the claim follows if one shows that
\[
\lim_{a \to 0} \int_{a - \varphi(a)}^{a} \frac{\diff \xi}{\overline q(\xi)} = 1,
\]
i.e., that
\[
\lim_{a \to 0} \frac{1}{\overline q(a)} \int_{a - \varphi(a)}^a \frac{\overline q(a) - \overline q(\xi)}{\overline q(\xi)} \diff \xi = 0,
\]
which would follow if
\begin{equation}
\label{EqUniformConvergence}
\lim_{a \to 0} \max_{\xi \in [a - \varphi(a), a]} \abs*{\frac{\overline q(a) - \overline q(\xi)}{\overline q(\xi)}} = 0.
\end{equation}
Notice that, since, for every $\xi \in [a - \varphi(a), a]$, $\overline q(\xi) = \overline q(a) + (\xi - a) \overline q^\prime(\xi_a)$, for some $\xi_a \in [\xi, a]$, one gets that
\[
\abs{\overline q(a) - \overline q(\xi)} \leq \varphi(a) \max_{\zeta \in [0, a]} \abs{\overline q^\prime(\zeta)}, \qquad \overline q(\xi) \geq \varphi(a) \left(\frac{\overline q(a)}{\varphi(a)} - \max_{\zeta \in [0, a]} \abs{\overline q^\prime(\zeta)}\right).
\]
One deduces that, for every $\xi \in [a - \varphi(a), a]$,
\[
\abs*{\frac{\overline q(a) - \overline q(\xi)}{\overline q(\xi)}} \leq \frac{\max_{\zeta \in [0, a]} \abs{\overline q^\prime(\zeta)}}{\frac{\overline q(a)}{\varphi(a)} - \max_{\zeta \in [0, a]} \abs{\overline q^\prime(\zeta)}}.\]
Since $\max_{\zeta \in [0, a]} \abs{\overline q^\prime(\zeta)}$ tends to $0$ as $a \to 0$, one immediately deduces \eqref{EqUniformConvergence}.

Let us now show that, under \eqref{EqConditionEquiv}, one has the stronger conclusion that $x_n \sim F^{-1}(n)$ as $n \to +\infty$. Notice that $x_n = F^{-1}(t_n) = F^{-1}(n) - (t_n - n) \overline q(F^{-1}(\xi_n))$ for some $\xi_n$ between $n$ and $t_n$. Set $z_n = F^{-1}(\xi_n)$, which tends to zero as $n \to +\infty$. Then, for $n \in \mathbb N$, 
\[
\frac{x_n}{F^{-1}(n)} = 1 - \rho_n \frac{z_n}{F^{-1}(n)} \qquad \text{ with } \rho_n = \frac{F(z_n)\overline q(z_n)}{z_n} \frac{t_n - n}{\xi_n}.
\]
In addition, one has that
\[
\abs*{\frac{t_n - n}{\xi_n}} = \left(\frac{\max(t_n, n)}{\min(t_n, n)} - 1\right) \frac{\min(t_n, n)}{\xi_n}\leq \left(\frac{\max(t_n, n)}{\min(t_n, n)} - 1\right).
\]
Since $t_n \sim n$ as $n \to +\infty$ and using \eqref{EqConditionEquiv}, one deduces that $\rho_n \to 0$ as $n \to +\infty$. Moreover, since $F^{-1}$ is decreasing, for every $n \in \mathbb N$, there exists $\alpha_n \in [0, 1]$ such that $z_n = (1 - \alpha_n) F^{-1}(n) + \alpha_n x_n$. Then, for $n$ large enough, one deduces, after an elementary computation, that
\[
\frac{x_n}{F^{-1}(n)} = \frac{1 - \rho_n (1 - \alpha_n)}{1 + \rho_n \alpha_n},
\]
yielding that $x_n \sim F^{-1}(n)$ as $n \to +\infty$.

We next turn to an argument for Item~\ref{ItemQPrimeBetween}. Notice first that $\lambda = -\ln Q^\prime(0)$. For $n \in \mathbb N$, one has
\[
x_{n+1}=Q'(0)x_n(1+Q_1(x_n))), 
\]
where $Q_1(x)=\frac{Q(x)}{x Q^\prime(0)} - 1$ for $x > 0$. Notice that $Q_1(x)$ tends to zero as $x$ tends to zero since $Q$ is differentiable at zero. Moreover, there exists $n_0\in\mathbb N$ so that $|Q_1(x_n)|<1$ for $n\geq n_0$. One deduces that, for $n\geq n_0$,
\[
x_ne^{\lambda n}=C(x_0,n_0)\prod_{k=n_0}^{n-1}(1+Q_1(x_k)),
\]
where $C(x_0,n_0)>0$ only depends on $x_0$ and $n_0$. Hence, 
\begin{equation}\label{eq:x_nBetween}
\frac{\ln(x_n)}n+\lambda =\frac{\ln C(x_0,n_0)}n +\frac1n\sum_{k=n_0}^{n-1}\ln(1+Q_1(x_k)).
\end{equation}
Since $Q_1(x_n) \to 0$ as $n$ tends to infinity, the first part of 
Item~\ref{ItemQPrimeBetween} follows at once. In particular, it implies that, for $n$ large enough, one has the estimate
\begin{equation}
\label{eq:exponential-estimate-xn}
e^{-\frac32\lambda n}\leq x_n\leq e^{-\frac23\lambda n}.
\end{equation}

Assume moreover that \eqref{eq:bizarre} holds true. Notice that, for $x$ small enough, one has
\begin{equation}
\label{eq:estimQxOverX}
\abs*{\frac{Q(x)}{x} - Q^\prime(0)} \leq 2 \abs*{\frac{q(y)}{y} - q^\prime(0)} \leq 2 \psi(\sqrt{2} x),
\end{equation}
where $y = (\id + q)^{-1}(\sqrt{2} x) \leq \sqrt{2} x$. Indeed, for $x > 0$, one has 
\begin{align*}
\frac{Q(x)}{x} - Q^\prime(0) & = 2 \left(\frac{y}{y + q(y)} - \frac{1}{1 + q^\prime(0)}\right) \\
& = \frac{2}{\left(1 + \frac{q(y)}{y}\right)\left(1 + q^\prime(0)\right)} \left(q^\prime(0) - \frac{q(y)}{y}\right),
\end{align*}
and \eqref{eq:estimQxOverX} follows since \ref{HypoSigma-gUpper} or \ref{HypoSigma-gLower} are supposed to hold true.

The second part of Item~\ref{ItemQPrimeBetween} is equivalent to the fact that $\ln(x_n)+\lambda n$ remains bounded as $n$ tends to infinity. By \eqref{eq:x_nBetween}, it is then enough to prove that the series of general term $\abs{Q_1(x_n)}$ is convergent. By \eqref{eq:exponential-estimate-xn} and \eqref{eq:estimQxOverX}, for $n$ large enough, one has
\[
\abs{Q_1(x_n)} \leq \frac{2 \psi(\sqrt{2} x_n)}{Q^\prime(0)} \leq \frac{2 \psi(\sqrt{2} e^{- \frac{2}{3} \lambda n})}{Q^\prime(0)} \leq \frac{2 \psi(e^{-\frac{\lambda}{2} n})}{Q^\prime(0)},
\]
where we also use the fact that $\psi$ is nondecreasing. The conclusion follows from \eqref{eq:bizarre}.

We finally prove Item~\ref{ItemQPrime1}. Note that $Q'(0)=0$ and hence $\frac{x_{n+1}}{x_n}$ tends to zero as $n$ tends to infinity.
Fix $C>0$. For every $n\geq 0$, one has 
\[e^{Cn}x_n=x_0\prod_{k=0}^{n-1}y_k,\qquad y_n:=e^C\frac{x_{n+1}}{x_n}.
\]
The first part of Item~\ref{ItemQPrime1} immediately follows since $\lim_{n\to+\infty}y_n=0$. For the second part, we start by noticing that, for every $x > 0$, one has $Q(x) = \frac{1}{\sqrt{2}} \left(y - q(y)\right)$ with $y = (\id + q)^{-1}(\sqrt{2} x)$. For $x \leq \frac{x_\ast}{\sqrt{2}}$ and since one has $y \leq \sqrt{2} x \leq x_\ast$, one deduces by assumption that
\[
\abs{Q(x)} \leq \frac{C_\ast}{\sqrt{2}} 2^{-\frac{\alpha}{2}} \abs{y}^{1 + \alpha} \leq C_\ast \abs{x}^{1 + \alpha}.
\]
Let $n_2\in\mathbb N$ be such that $x_n\leq \frac{x_*}{\sqrt{2}}$ for $n\geq n_2$. Then one has, for $n\geq n_2$,
\[
x_{n+1}\leq C_\ast x_n^{1+\alpha}.
\]
By setting $z_n = \ln\left(x_n C_\ast^{1/\alpha}\right)$, an elementary computation yields that $z_{n+1} \leq (1+\alpha) z_n$, hence, for $n \geq n_2$, one has $z_n \leq (1+\alpha)^{n - n_2} z_{n_2}$ and one gets the desired conclusion after setting
\[
\mu_* = -\frac{\ln(x_* C_\ast^{1/\alpha})}{(1+\alpha)^{n_2}}>0. \qedhere
\]
\end{proof}

\begin{remark}
Note that $F^{-1}: \mathbb R_+ \to (0, x_0]$ is equal to the solution $V$ of the Cauchy problem
\[
\frac{\diff }{\diff t} V(t) = -\overline q(V(t)), \qquad V(0) = x_0.
\]
This is how $F^{-1}$ is introduced in \cite{V-Martinez2000}. We postpone to Remark~\ref{RemkComparison} a comparison of our results from Proposition~\ref{prop:decay-rate}\ref{ItemQPrime0} with the corresponding ones of \cite{V-Martinez2000}.

Condition \eqref{EqConditionEquiv} is satisfied in several cases considered in the literature (see, e.g., \cite[Theorem~1.7.12, Examples~1 to 4]{Alabau2012Recent} and \cite{V-Martinez2000}), such as polynomial feedbacks of the form $q(s) = s \abs{s}^{p-1}$ for $p > 1$, or when $q$ is an odd function given for $s > 0$ by
$q(s) = e^{-\beta(s)}$ with $\beta(s) = 1/s^p$ for $p > 0$ or $\beta(s) = e^{1/s}$.

Concerning \eqref{eq:bizarre}, it holds true if for instance there exist positive $C,\alpha$ such that $\abs{q^\prime(x) - q^\prime(0)} \leq C\abs{x}^{\alpha}$ in a neighborhood of zero. Moreover one has quasi-optimality of \eqref{eq:bizarre} in the following sense: if the series of general term $Q_1(x_n)$ is unbounded, then $\limsup_{n\to+\infty}e^{\lambda n}Q^{[n]}(x_0) = +\infty$ or $\liminf_{n \to +\infty} e^{\lambda n} Q^{[n]}(x_0) = 0$.
\end{remark}

\begin{remark}
\label{remk:optimal-rate}
In the case where $q^\prime(0) = 0$ but \eqref{EqConditionEquiv} is not satisfied, Proposition~\ref{prop:decay-rate} ensures that $F(Q^{[n]}(x_0)) \sim n$ as $n \to +\infty$ but no direct information is provided on the asymptotic behavior of $Q^{[n]}(x_0)$. On the other hand, the techniques used in the proof of Proposition~\ref{prop:decay-rate}, together with standard techniques in analysis, propose a simple strategy to derive the asymptotic behavior of $Q^{[n]}(x_0)$ for any choice of $q$ with $q^\prime(0) = 0$, especially when \eqref{EqConditionEquiv} is not satisfied. To illustrate that strategy, consider the case of the function $q(x) = \frac{x}{(-\ln x)^p}$ for $x \in (0, M]$, where $p,M>0$ and $0 < q(x) < x$ and $\abs{q^\prime(x)} < 1$ for every $x > 0$, which appears in \cite[Theorem~1.7.12, Example~5]{Alabau2012Recent}. In that reference, the optimality of the decay rate associated with such a function $q$ is left open, as discussed in \cite{Alabau2012Recent} after its Theorem~1.7.16. In Appendix~\ref{app:decay}, we provide an answer to that open problem, by determining the precise decay rate of the corresponding the sequence $Q^{[n]}(x_0)$.
\end{remark}

The next proposition gathers some useful additional properties of $Q^{[n]}(x_0)$ under the assumptions of Proposition~\ref{prop:decay-rate}\ref{ItemQPrime0}.

\begin{proposition}
\label{prop:fnplusn0}
Let $q \in \mathcal C^1(\mathbb R_+, \mathbb R_+)$ be as in the statement of Proposition~\ref{prop:decay-rate}\ref{ItemQPrime0}, i.e., $q(0) = q^\prime(0) = 0$, $0 < q(x) < x$, and $\abs{q^\prime(x)} < 1$ for every $x>0$. Let $x_0 \in \mathbb R_+^\ast$ and $Q$ be defined from $q$ as in \eqref{Relation-Q-q}.
\begin{enumerate}
\item\label{ItemTimeTranslation} For every $n_0 \in \mathbb N$, one has $Q^{[n + n_0]}(x_0) \sim Q^{[n]}(x_0)$ as $n \to +\infty$.

\item\label{ItemDifferentInitialPoint} For every $y_0 \in \mathbb R_+^\ast$, one has $Q^{[n]}(x_0) \sim Q^{[n]}(y_0)$ as $n \to +\infty$.

\item\label{ItemSlowerThanExponential} For every $\varepsilon > 0$, we have
\[
\lim_{n \to +\infty} e^{\varepsilon n} Q^{[n]}(x_0) = +\infty.
\]
\end{enumerate}
\end{proposition}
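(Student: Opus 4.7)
The plan is to treat the three items in order, since later items will build on earlier ones, and to exploit systematically the two basic facts already present in the proof of Proposition~\ref{prop:decay-rate}\ref{ItemQPrime0}: the asymptotic equivalence $F(x_n)\sim n$ and the fact that, since $q(0)=q^\prime(0)=0$, one has $\overline q(\xi)=o(\xi)$ as $\xi\to 0^+$. Throughout I write $x_n=Q^{[n]}(x_0)$, and I first note that differentiating \eqref{Relation-Q-q} gives $Q^\prime(x)=\tfrac{1-q^\prime(y)}{1+q^\prime(y)}$ where $y=(q+\id)^{-1}(\sqrt 2 x)$, which is continuous, positive on $\mathbb R_+^\ast$, and satisfies $Q^\prime(x)\to 1$ as $x\to 0^+$; in particular $Q$ is strictly increasing and $Q(0)=0$, so $Q(x)/x\to 1$ as $x\to 0^+$.

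For \ref{ItemTimeTranslation}, since $x_n\to 0$ I deduce $x_{n+1}/x_n=Q(x_n)/x_n\to 1$. For any fixed $n_0\in\mathbb N$, the telescoping identity
\[
\frac{x_{n+n_0}}{x_n}=\prod_{k=0}^{n_0-1}\frac{x_{n+k+1}}{x_{n+k}}
\]
is a product of a fixed finite number $n_0$ of factors each tending to $1$, so the product tends to $1$, proving \ref{ItemTimeTranslation}.

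For \ref{ItemDifferentInitialPoint}, I invoke a sandwich argument based on the strict monotonicity of $Q$ (and hence of every iterate $Q^{[n]}$). Assume without loss of generality that $y_0\leq x_0$; the opposite case is symmetric by swapping the roles of $x_0$ and $y_0$ and applying \ref{ItemTimeTranslation} to $(Q^{[n]}(y_0))_{n\in\mathbb N}$. Since $x_n\to 0$ and $y_0>0$, there exists $k_0\in\mathbb N$ such that $x_{k_0}\leq y_0\leq x_0$. Applying $Q^{[n]}$ and using monotonicity yields
\[
x_{n+k_0}\leq Q^{[n]}(y_0)\leq x_n,\qquad n\in\mathbb N,
\]
hence $x_{n+k_0}/x_n\leq Q^{[n]}(y_0)/x_n\leq 1$. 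The left-hand side tends to $1$ by \ref{ItemTimeTranslation}, so $Q^{[n]}(y_0)/x_n\to 1$ as required. This is the step I expect to be the main technical point of the proposition, since it relies crucially on the monotonicity of $Q$, which itself requires the careful sign analysis of $Q^\prime$ from \eqref{Relation-Q-q}.

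For \ref{ItemSlowerThanExponential}, I use $F(x_n)\sim n$ together with $\overline q(\xi)=o(\xi)$. Given $\varepsilon>0$, fix $\delta\in(0,\varepsilon/2)$ and choose $\xi_\delta\in(0,x_0)$ with $\overline q(\xi)\leq \delta\xi$ for $\xi\in(0,\xi_\delta]$. For $n$ large enough that $x_n\leq \xi_\delta$,
\[
F(x_n)\geq \int_{x_n}^{\xi_\delta}\frac{\diff\xi}{\overline q(\xi)}\geq \frac{1}{\delta}\ln\!\left(\frac{\xi_\delta}{x_n}\right),
\]
so $\ln x_n\geq \ln\xi_\delta-\delta F(x_n)$. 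Since $F(x_n)/n\to 1$, one obtains
\[
\varepsilon n+\ln x_n\geq \varepsilon n-\delta F(x_n)+\ln\xi_\delta=(\varepsilon-\delta)n+o(n)+\ln\xi_\delta\longrightarrow +\infty,
\]
so $e^{\varepsilon n}Q^{[n]}(x_0)\to +\infty$, which is exactly \ref{ItemSlowerThanExponential}.
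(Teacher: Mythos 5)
Your proof is correct and follows essentially the same route as the paper's: the telescoping product with $Q(x)/x \to 1$ for \ref{ItemTimeTranslation}, the monotone sandwich $x_{n+k_0} \leq Q^{[n]}(y_0) \leq x_n$ for \ref{ItemDifferentInitialPoint}, and a lower bound on $F$ using $\overline q(\xi) = o(\xi)$ combined with $F(x_n) \sim n$ for \ref{ItemSlowerThanExponential}. The only cosmetic difference is that you justify $Q(x)/x \to 1$ via the explicit formula for $Q^\prime$ rather than via the paper's auxiliary function $\varphi = \id - Q$ with $\varphi(0) = \varphi^\prime(0) = 0$.
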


\begin{proof}
To show \ref{ItemTimeTranslation}, note that, for $n \in \mathbb N$,
\[
\frac{Q^{[n + n_0]}(x_0)}{Q^{[n]}(x_0)}=\prod_{j=0}^{j=n_0-1}\frac{Q(Q^{[n+j]}(x_0))}{Q^{[n+j]}(x_0)}=\prod_{j=0}^{j=n_0-1}\Big(1-\frac{\varphi(Q^{[n+j]}(x_0))}{Q^{[n+j]}(x_0)}\Big),
\]
where $\varphi$ is defined as in the proof of Proposition~\ref{prop:decay-rate}\ref{ItemQPrime0}. Since $Q^{[n+j]}(x_0) \to 0$ as $n \to +\infty$ for every $j\in\{0,\cdots,n_0-1\}$ and $\varphi(0)=\varphi'(0)=0$, one gets the conclusion.

In order to prove \ref{ItemDifferentInitialPoint}, assume, with no loss of generality, that $y_0 \leq x_0$. Then there exists $n_0 \in \mathbb N$ such that $Q^{[n_0 + 1]}(x_0) < y_0 \leq Q^{[n_0]}(x_0)$, yielding that $Q^{[n + n_0 + 1]}(x_0) < Q^{[n]}(y) \leq Q^{[n + n_0]}(x_0)$, and one gets the conclusion from \ref{ItemTimeTranslation}.

Finally, to show \ref{ItemSlowerThanExponential}, let $F$ and $\overline q$ be defined as in the statement of Proposition~\ref{prop:decay-rate}\ref{ItemQPrime0}. Since $\overline q^\prime(0) = 0$, one has $0 < \overline q(s) < \frac{\varepsilon}{4} s$ for every $s \in (0, s_0)$, for some $s_0 > 0$. Hence $F(z) \geq \frac{3}{\varepsilon}\ln\left(\frac{1}{Q^{[n]}(x_0)}\right)$. Since $F(Q^{[n]}(x_0)) \sim n$ and $Q^{[n]}(x_0) \to 0$ as $n \to +\infty$, one deduces that, for $n$ large enough, $n \geq \frac{2}{\varepsilon}\ln\left(\frac{1}{Q^{[n]}(x_0)}\right)$, which is equivalent to $Q^{[n]}(x_0) \geq e^{-\frac{n \varepsilon}{2}}$, yielding the conclusion.
\end{proof}

A quick look at the argument in Proposition~\ref{prop:decay-rate}\ref{ItemQPrime1} for the decrease of $(x_n)_{n \in \mathbb N}$ faster than any exponential in the case $q^\prime(0) = 1$ may let one think that more precise decay rates can be obtained. However, it is not really the case, as explained in the following proposition, whose proof is given in Appendix~\ref{AppProofProp}.

\begin{proposition}
\label{prop:faster-than-exponential-but-not-so-fast}
Let $\varphi: \mathbb R_+ \to \mathbb R_+$ be an increasing function such that $\lim_{x\rightarrow +\infty} \varphi(x) \allowbreak = +\infty$. Then there exists $q \in \mathcal C^1(\mathbb R_+, \mathbb R_+)$ satisfying $q(0) = 0$, $0 < q(x) < x$, $\abs{q^\prime(x)} < 1$ for every $x > 0$, and $q^\prime(0) = 1$, such that, for every $x_0 > 0$,
\begin{equation}\label{eq:q'=0-x}
\liminf_{n \to +\infty} e^{n \varphi(n)} Q^{[n]}(x_0) > 0,
\end{equation}
where $Q$ is defined from $q$ as in \eqref{Relation-Q-q}.
\end{proposition}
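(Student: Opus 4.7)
My approach is to translate the problem into one about the associated map $Q$ defined from $q$ via the inverse of \eqref{Relation-Q-q}, construct $Q$ explicitly by a piecewise scheme, and then recover $q$. By the computation at the end of the proof of Proposition~\ref{PropHTilde}, together with the relation between $q^\prime$ and $Q^\prime$ that follows from \eqref{Relation-Q-q} (namely $q^\prime(u) = (1 - Q^\prime(x))/(1 + Q^\prime(x))$ where $u = (x + Q(x))/\sqrt{2}$), the conditions $q^\prime(0) = 1$ together with $\abs{q^\prime(x)} < 1$ for $x > 0$ correspond to $Q^\prime(0) = 0$ together with $0 < Q^\prime(x) < 1$ for $x > 0$. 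It therefore suffices to build $Q \in \mathcal C^1(\mathbb R_+, \mathbb R_+)$ satisfying $Q(0) = 0$, $0 < Q(x) < x$, $0 < Q^\prime(x) < 1$, $Q^\prime(0) = 0$, such that for every $x_0 > 0$ one has $\liminf_{n \to +\infty} e^{n\varphi(n)} Q^{[n]}(x_0) > 0$. Once $Q$ is built, $q$ is recovered through the $\mathcal C^1$-diffeomorphism $x \mapsto (x + Q(x))/\sqrt{2}$ of $\mathbb R_+$ (whose derivative $(1 + Q^\prime(x))/\sqrt{2}$ is positive) by setting $q(u) = (x - Q(x))/\sqrt{2}$, and all the required properties of $q$ follow from those of $Q$.

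To construct $Q$, I would fix a strictly increasing sequence of positive integers $(N_k)_{k \geq 0}$ with $N_k \to +\infty$ and a strictly decreasing sequence of positive reals $(a_k)_{k \geq 0}$ with $a_k \to 0$, calibrated so that $-\log a_k$ matches $N_k \varphi(N_k)$ (up to a multiplicative constant): this sets the target decay rate. On each interval $I_k = [a_{k+1}, a_k]$, define $Q$ as a smooth strictly increasing map such that iterating $Q$ starting from $a_k$ yields $a_{k+1}$ after exactly $N_{k+1} - N_k$ steps; a convenient model is the geometric contraction $Q(x) = r_k x$ with $r_k = (a_{k+1}/a_k)^{1/(N_{k+1} - N_k)}$, smoothly glued across each endpoint $a_k$ using Hermite splines or bump-function interpolation so as to obtain global $\mathcal C^1$ regularity. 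Extend $Q$ to $[a_0, +\infty)$ by an affine function with slope in $(0, 1)$ matching the derivative at $a_0$. Since the differences $\beta_{k+1} - \beta_k := -\log(a_{k+1}/a_k)$ tend to $+\infty$, the ratios $r_k$ tend to $0$, yielding $Q^\prime(0) = 0$; the remaining properties of $Q$ are straightforward. For the iterates, monotonicity of $Q$ implies that for $x_0 \in I_k$, $Q^{[n]}(x_0)$ lies in some $I_{k + \ell(n)}$, so $Q^{[n]}(x_0) \geq a_{k+\ell(n)+1}$.

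The main obstacle is to show the $\liminf$ bound for \emph{every} initial condition $x_0 > 0$, not only for $x_0$ on the canonical orbit $(a_k)$. Since $Q$ is monotone increasing, orbits in the variable $u = -\log x$ cannot merge, and a naive choice such as $N_k = k$ with $a_k = e^{-k\varphi(k)}$ produces an unbounded shift $u_n(x_0) - n\varphi(n)$ for $x_0 \in I_k$ with $k \geq 1$ whenever $\varphi$ is not essentially linearly bounded --- the liminf would then vanish. The crux is therefore a careful joint calibration of $(N_k)$ (which must grow fast enough that any bounded shift in the initial condition becomes asymptotically negligible relative to $N_k$) and of $(a_k)$ (which must decrease slowly enough that $-\log a_{k+\ell+1} \leq n\varphi(n) + C(x_0)$ for all $n$ sufficiently large, for each fixed $k$). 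This delicate matching --- together with verifying that the smooth patches between the $I_k$ preserve both the $\mathcal C^1$ regularity and the sign constraints $0 < Q^\prime(x) < 1$ --- is the technical heart of the argument, and is the step where most of the work goes.
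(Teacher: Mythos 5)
Your reduction from $q$ to $Q$ is fine and matches the first step of the paper's argument (the paper goes one change of variables further, setting $y_n=-1/\ln Q^{[n]}(x_0)$ so that the iteration becomes $y_{n+1}=y_n-U(y_n)$ and the target estimate becomes $\liminf_n\bigl(n\varphi(n)-1/y_n\bigr)>-\infty$, which linearizes all the bookkeeping). But the proposal stops exactly at the point where the proof actually lives: you state yourself that the joint calibration of $(N_k)$ and $(a_k)$ needed to make the bound hold for \emph{every} $x_0>0$ is ``the technical heart of the argument'' and you do not carry it out. This is a genuine gap, not a routine verification, because the obstruction you identify is real: if the canonical orbit is calibrated so that $-\ln a_k\approx N_k\varphi(N_k)$, then an arbitrary $x_0\in[a_{k+1},a_k]$ is only comparable, via monotonicity, to the canonical orbit shifted in time by some constant $m$ depending on $x_0$, and the exponent cost of that shift, $(n+m)\varphi(n+m)-n\varphi(n)\geq m\,\varphi(n)$, tends to $+\infty$ for any unbounded $\varphi$. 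No choice of $(N_k,a_k)$ alone removes this; some additional idea is required, and the proposal does not supply one.

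For comparison, the paper resolves precisely this point with two ingredients absent from your sketch. First, it replaces $\varphi$ by a smooth, concave, slowly varying minorant $\psi\leq\varphi$ (Lemma~\ref{lem:LB}), which is legitimate since proving the liminf bound with $\psi$ in place of $\varphi$ is stronger; slow variation gives $\psi(x+C)/\psi(x)\to1$, so constant time shifts become asymptotically harmless \emph{multiplicatively}. Second, it builds in a multiplicative safety margin by targeting half the allowed rate: with $\Psi(x)=2/(x\varphi(x))$ and $U=-\tfrac12\Psi'\circ\Psi^{-1}$, an induction gives $y_n\geq\Psi(n+n_0)$ with $n_0=n_0(x_0)$, i.e.\ $1/y_n\leq\tfrac12(n+n_0)\varphi(n+n_0)$, and the factor $\tfrac12$ together with slow variation absorbs the shift $n_0$, yielding the bound uniformly in $x_0$. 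If you want to salvage your piecewise-geometric construction, you would need to import both ideas (calibrate $-\ln a_k$ to $\tfrac12 N_k\psi(N_k)$ for a regularized $\psi$, then absorb the phase shift using slow variation), and in addition verify the $\mathcal C^1$ gluing keeps $0<Q'<1$ and $Q'(0)=0$ --- all of which is doable but is exactly the work the proposal leaves undone; the paper's smooth construction of $U$ (hence of $\psi(x)=Q(x)/x$ by an explicit formula) avoids the gluing altogether.
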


\subsubsection{Decay rates for solutions}
\label{sec:decay-solution}

We now use Section~\ref{sec:decay-sequence} to derive results regarding decay rates for solutions of \eqref{eq:wave}. We start with the following consequence of Proposition~\ref{prop:ineq-sequences}.

\begin{theorem}\label{th:Qn} Let $\Sigma\subset \mathbb{R}^2$ be a damping set. 
\begin{enumerate}
\item\label{Itemh10} Suppose that $\Sigma$ satisfies \ref{HypoSigma-gUpper}. Let $M_0 > 0$ and $Q$ be the constant and the function whose existences are asserted in \ref{HypoSigma-gUpperTilde}.

\begin{enumerate}
\item\label{ItemItemXInftySmall} For $g_0\in \mathsf Y_\infty$ satisfying $\norm{g_0}_{\infty}\leq \frac{M_0}{\sqrt{2}}$, one has that, every solution $z$ of \eqref{eq:wave} starting at $\mathfrak I^{-1}(g_0)$, 
\begin{equation}\label{eq:h10-loc}
e_{\infty}(z)(t)\leq Q^{[\floor*{t/2}]}(e_{\infty}(z)(0)),\qquad \forall t\geq 0,
\end{equation}
and, for $p \in [1, +\infty)$,
\begin{equation}
\label{eq:h10-loc-p}
e_p(z)(t) \leq 2^{\frac1p}Q^{[\floor{t/2}]}(Z_p^{1/2}) + Z_p^{1/2} Q^{[\floor{t/2}]}(\max(Z_\infty, Z_p^{1/2})),\qquad \forall t\geq 0,
\end{equation}
where $Z_q = e_q(z)(0)$ for $q \in \{p, \infty\}$.

\item\label{ItemItemBoundUGASXinfty} Assume that the wave equation defined in \eqref{eq:wave} is UGAS in $\mathsf X_\infty$ with rate $\beta$. Then, for every solution $z$ of \eqref{eq:wave} in $\mathsf X_{\infty}$, one has
\begin{equation}\label{eq:h10-glob}
e_{\infty}(z)(t)\leq Q^{\left[\floor*{\frac{t-t_z}2}\right]}
\left(\min\Bigl(e_{\infty}(z)(0),\frac{M_0}{\sqrt{2}}\Bigr)\right),\qquad \forall t\geq t_z,
\end{equation}
where $t_z\geq 0$ is the first nonnegative time so that 
\[
\beta(e_{\infty}(z)(0), t_z) = \min\Bigl(e_{\infty}(z)(0),\allowbreak \frac{M_0}{\sqrt{2}}\Bigr).
\]

\item\label{ItemItemXp} Assume that \ref{HypoSigma-SectorInfty} holds and that the wave equation defined in \eqref{eq:wave} is UGAS in $\mathsf X_\infty$. Then, for every $p \in [1, +\infty]$ and every solution $z$ of \eqref{eq:wave} in $\mathsf X_p$, one has, for $t$ large enough,
\begin{equation}
\label{eq:estim-toto59}
e_p(z)(t) \leq 2 Q^{[\floor{t - t_1}]}(Z_p^{1/2}) + 2 Z_p^{1/2} Q^{\left[\floor*{t/2} - t_2\right]}(Z) + Z_p \mu^{\frac{t}{4} - \frac{1}{2}},
\end{equation}
where $Z_p = e_p(z)(0)$, $Z = \max(M, Z_p^{1/2})$, $M$ and $\mu$ are provided by \ref{HypoSigma-SectorInftyTilde}, and the times $t_1,t_2$ only depend on $Z_p$ and $Z$.
\end{enumerate}

\item\label{Itemh11} Suppose that $\Sigma$ satisfies \ref{HypoSigma-gLower}. Let $p\in [1, +\infty]$ and $z$ be a nontrivial solution of \eqref{eq:wave} in $\mathsf X_p$ starting at $\mathfrak I^{-1}(g_0)$ for some $g_0 \in \mathsf Y_p$. Assume moreover that either the Lebesgue measure of $\left\{s \in [-1, 1] \suchthat 0 < \abs{g_0(s)} \leq \frac{M}{\sqrt{2}}\right\}$ is positive or $\Sigma \cap \Delta = \{(0, 0)\}$, where $\Delta = \{(x, x) \suchthat x \in \mathbb R\}$. Then there exists positive constants $C_1, C_2$ only depending on the initial condition such that 
\begin{equation}\label{eq:h11-glob}
e_p(z)(t)\geq C_1 Q^{[\floor*{t/2}]}(C_2), \qquad \forall t\geq 0.
\end{equation}
\end{enumerate}
\end{theorem}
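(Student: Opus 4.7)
The proof transfers the pointwise iterated-sequence estimates of Proposition \ref{prop:ineq-sequences} into norm estimates via Proposition \ref{PropGn}. Throughout, $(g_n)_{n \in \mathbb N}$ denotes the iterated sequence associated with $z$; the identity $e_p(z)(2n) = \norm{g_n}_p$ together with the monotonicity of $t \mapsto e_p(z)(t)$ from Proposition \ref{PropDampingNonStrict} bridges discrete and continuous time, and the fact that $(g_n(s))_{n\in\mathbb N}$ is a real iterated sequence for $S$ for a.e.\ $s\in[-1,1]$ bridges pointwise and $L^p$ control.

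For \ref{ItemItemXInftySmall}, the bound $\norm{g_0}_\infty\leq M_0/\sqrt{2}$ forces $\abs{g_0(s)}\leq M_0/\sqrt{2}$ a.e., so Proposition \ref{prop:ineq-sequences} yields $\abs{g_n(s)}\leq Q^{[n]}(\abs{g_0(s)})\leq Q^{[n]}(\norm{g_0}_\infty)$ a.e., whence \eqref{eq:h10-loc}. For \eqref{eq:h10-loc-p} I mimic Proposition \ref{prop:inf+p-fini}: split $[-1,1]$ into $E=\{\abs{g_0}<Z_p^{1/2}\}$ and $F=\{\abs{g_0}\geq Z_p^{1/2}\}$, estimate $\abs{F}\leq Z_p^{p/2}$ by Chebyshev, bound $\abs{g_n}$ on each piece by $Q^{[n]}$ of the relevant sup-norm (using that $Q^{[n]}$ is nondecreasing), and combine via $(a^p+b^p)^{1/p}\leq a+b$. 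Statement \ref{ItemItemBoundUGASXinfty} follows directly from \ref{ItemItemXInftySmall} when $Z_\infty\leq M_0/\sqrt{2}$ (so $t_z=0$); otherwise the UGAS estimate $e_\infty(z)(\cdot)\leq\beta(Z_\infty,\cdot)$ provides a first time $t_z>0$ at which $e_\infty(z)(t_z)\leq M_0/\sqrt{2}$, and I apply \ref{ItemItemXInftySmall} to the solution time-translated by $t_z$. For \ref{ItemItemXp} I adopt the layered decomposition of Theorem \ref{thm-ugas}: partition $[-1,1]$ into $E=\{\abs{g_0}<Z_p^{1/2}\}$, $F=\{Z_p^{1/2}\leq\abs{g_0}<Z\}$ with $Z=\max(M,Z_p^{1/2})$, and outer layers $E_k=\{\eta^kZ\leq\abs{g_0}<\eta^{k+1}Z\}$ with $\eta=1/\mu$; Proposition \ref{prop:dis-sup} then decomposes $(g_n)$ into a sum of iterated sequences supported on each piece. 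On $E_k$, \ref{HypoSigma-SectorInftyTilde} contracts the trajectory by factor $\mu$ per step while it remains outside $B(0,M)$, producing the exponential term $Z_p\mu^{t/4-1/2}$; once inside the ball, \ref{ItemItemBoundUGASXinfty} delivers the $Q$-rate and contributes the middle term. The piece on $E$ is handled directly by \ref{ItemItemXInftySmall} using $\abs{E}\leq 2$, and $F$ is handled via Chebyshev and \ref{ItemItemBoundUGASXinfty}, assembling the first term.

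For \ref{Itemh11}, Proposition \ref{prop:ineq-sequences}\ref{ItemIneqSequenceGeq} yields $\abs{g_n(s)}\geq Q^{[n]}(\abs{g_0(s)})$ whenever $\abs{g_0(s)}\leq M/\sqrt{2}$. Under the first assumption, I pick $a>0$ and a positive-measure subset $A\subset\{a\leq\abs{g_0}\leq M/\sqrt{2}\}$ to obtain $\norm{g_n}_p\geq\abs{A}^{1/p}Q^{[n]}(a)$, and \eqref{eq:h11-glob} follows from the monotonicity of $e_p$ after shifting by one iteration of $Q$ to absorb the floor. Under the second assumption, a direct computation gives $R\Delta=\mathbb R\times\{0\}$, so $\Sigma\cap\Delta=\{(0,0)\}$ is equivalent to $S^{-1}(0)=\{0\}$; consequently $g_n(s)\neq 0$ for every $n$ as soon as $g_0(s)\neq 0$. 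For a nontrivial solution, $\{g_0\neq 0\}$ has positive measure, and the nonincreasing sequence $\abs{g_n(s)}$ has some limit $\ell(s)\geq 0$. If $\{\ell>0\}$ has positive measure, $\norm{g_n}_p$ is already bounded below by a positive constant, trivially dominating $C_1 Q^{[\floor{t/2}]}(C_2)$. Otherwise the nested sets $E_N=\{s:g_0(s)\neq 0,\ \abs{g_N(s)}\leq M/\sqrt{2}\}$ exhaust $\{g_0\neq 0\}$ in measure, so some $E_N$ has positive measure; restricting further to a subset where $\abs{g_N(s)}\geq a>0$ and invoking Proposition \ref{prop:ineq-sequences}\ref{ItemIneqSequenceGeq} from time $N$ gives $\abs{g_n(s)}\geq Q^{[n-N]}(a)$ for $n\geq N$, whence \eqref{eq:h11-glob} with constants depending on the initial condition through $N$, $a$, and the measure of the selected subset.

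The main technical obstacle is \ref{ItemItemXp}: tracking how many iterations each outer layer $E_k$ takes to enter the local regime where \ref{ItemItemBoundUGASXinfty} activates, and assembling these bounds into precisely the three terms of \eqref{eq:estim-toto59} with constants $t_1, t_2$ depending only on $Z_p$ and $Z$, requires delicate bookkeeping. A secondary subtlety lies in the second subcase of \ref{Itemh11}, where the exhaustion argument only produces constants $C_1, C_2$ depending on the particular initial condition via the integer $N$, which is consistent with (but a constraint upon) the statement of the theorem.
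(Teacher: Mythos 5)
Your proposal follows essentially the same route as the paper: \ref{ItemItemXInftySmall} via the pointwise bound \eqref{eq:real-iterated-Upper} together with the monotonicity of the energy and the splitting argument of Proposition~\ref{prop:inf+p-fini}; \ref{ItemItemBoundUGASXinfty} by restarting the solution at time $t_z$; \ref{ItemItemXp} by rerunning the proof of Theorem~\ref{thm-ugas} with the $\mathcal{KL}$ bound replaced by the $Q$-bound; and \ref{Itemh11} via Proposition~\ref{prop:ineq-sequences}\ref{ItemIneqSequenceGeq}, with a case distinction that is only a mild reorganization of the paper's (which splits on whether all the sets $F_n$ of \eqref{eq:Fn} have zero measure); your exhaustion argument and the trivial lower bound when the pointwise limits stay positive both work and yield \eqref{eq:h11-glob} with the same kind of initial-condition-dependent constants.

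One point in \ref{ItemItemXp} needs correcting. On the set $E=\{s \suchthat \abs{g_0(s)}<Z_p^{1/2}\}$ you cannot invoke \ref{ItemItemXInftySmall} ``directly'': $Z_p^{1/2}$ may well exceed $M_0/\sqrt{2}$, so there is no guarantee the trajectory starts in the region where the generalized sector condition \ref{HypoSigma-gUpperTilde} applies. One must instead use \ref{ItemItemBoundUGASXinfty}, and the waiting time it produces is precisely the $t_1$ (depending on $Z_p$) appearing in the first term of \eqref{eq:estim-toto59}; this is what the paper does, and it is inconsistent with your sketch, which has no explanation for why $t_1$ should appear if \ref{ItemItemXInftySmall} sufficed. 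Relatedly, your attribution of terms is off: the Chebyshev-controlled piece $F$, together with the inner layers $E_k$ with $k\leq\floor{n/2}$ once they have entered $B(0,M)$, is what produces the middle term $2Z_p^{1/2}Q^{[\floor{t/2}-t_2]}(Z)$ (both carry the factor $Z_p^{1/2}$), while only the outer layers $k>\floor{n/2}$ give the exponential tail $Z_p\mu^{t/4-1/2}$, and $E$ alone gives the first term. These are bookkeeping slips rather than a wrong strategy, but as written the pieces would not assemble into \eqref{eq:estim-toto59}.
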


\begin{proof} To get the first part of Item~\ref{ItemItemXInftySmall}, simply notice by Proposition~\ref{PropDampingNonStrict} that
\[
e_{\infty}(z)(t)\leq e_{\infty}(z)(2\floor*{t/2}) =
\norm*{g_{\floor*{t/2}}}_{\infty},
\]
and, by \eqref{eq:real-iterated-Upper},
\begin{equation}
\label{eq:iterated-Upper}
\abs{g_n(s)} \leq Q^{[n]}(\abs{g_0(s)}), \qquad \text{ for } n \in \mathbb N \text{ and a.e.\ } s \in [-1, 1].
\end{equation}
The conclusion follows at once since $Q^{[n]}$ is increasing for every integer $n$. As regards the second part of Item~\ref{ItemItemXInftySmall}, we simply follow the argument of Proposition~\ref{prop:inf+p-fini} replacing the bounds using the $\mathcal{KL}$ function $\beta(\cdot, t)$ by $Q^{[\floor{t/2}]}(\cdot)$ and the conclusion follows from \eqref{eq:ep-fini-infini1}.

Since $e_\infty(z)(t_z) \leq \frac{M_0}{\sqrt{2}}$, Item~\ref{ItemItemBoundUGASXinfty} follows immediately by applying Item~\ref{ItemItemXInftySmall} to $g_0 = \mathfrak I(z(t_z, \cdot), z_t(t_z, \cdot))$.

As regards Item~\ref{ItemItemXp}, the argument consists in reproducing the proof of Theorem~\ref{thm-ugas} with modifications taking into account Item~\ref{ItemItemBoundUGASXinfty}. In the sequel, we use the notations of the proof of Theorem~\ref{thm-ugas}. Thanks to Item~\ref{ItemItemBoundUGASXinfty}, one can replace the estimate \eqref{eq:gchiLpE} by
\[
\norm{g_n \chi_E}_p \leq \alpha_E^{1/p} Q^{\left[\floor*{\frac{2n - t_1}{2}}\right]}\left(\min\left(Z_p^{1/2}, \frac{M_0}{\sqrt{2}}\right)\right),
\]
where $t_1 \geq 0$ only depends on $Z_p$. We use again Item~\ref{ItemItemBoundUGASXinfty} to replace equations \eqref{eq:gchiLpF} and \eqref{eq:estim-sum-n2} by
\[
\norm{g_n \chi_F}_p \leq Z_p^{1/2} Q^{\left[\floor*{\frac{2n - t_2}{2}}\right]}\left(\min\left(Z, \frac{M_0}{\sqrt{2}}\right)\right),
\]
and
\[
\sum_{k=0}^{\floor{n/2}} \norm{g_n \chi_k}_p^p \leq Z_p^{p/2} \left(Q^{\left[\floor*{\frac{n - 1 - t_2}{2}}\right]}\left(\min\left(Z, \frac{M_0}{\sqrt{2}}\right)\right)\right)^p,
\]
where $t_2 \geq 0$ only depends on $Z$. Putting together \eqref{eq:estim-sum-infty} and the previous inequalities, and up to increasing $t_1$ and $t_2$, we deduce \eqref{eq:estim-toto59}.

We finally provide an argument for Item \ref{Itemh11}. Denote by $(g_n)_{n \in \mathbb N}$ the sequence in $\mathsf Y_p$ corresponding to the solution $z$ in the sense of Proposition~\ref{PropGn}. For every $n \in \mathbb N$, define the measurable set $F_n\subset [-1, 1]$ by
\begin{equation}
\label{eq:Fn}
F_n = \left\{s\in [-1, 1] \suchthat 0 < \abs{g_n(s)} \leq \frac{M}{\sqrt{2}}\right\}.
\end{equation}
Assume first that $F_n$ is of zero Lebesgue measure for every $n\in \mathbb N$. In particular, $\Sigma \cap \Delta = \{(0, 0)\}$, i.e., $0 \notin S(x)$ for every $x \in \mathbb R^\ast$. Let $E = \{s \in [-1, 1] \suchthat g_0(s) \neq 0\}$. Since $z$ is nontrivial, the Lebesgue measure $\alpha_E$ of $E$ is positive. Moreover, our assumption on the sets $F_n$ and the fact that $\Sigma \cap \Delta = \{(0, 0)\}$ yield that, for almost every $s \in E$ and every $n \in \mathbb N$, one has $\abs{g_n(s)} \geq \frac{M}{\sqrt{2}}$. In particular, $\norm{g_n}_p \geq \alpha_E^{1/p} \frac{M}{\sqrt{2}}$, and the conclusion follows with $C_1 = \alpha_E^{1/p} \frac{M}{\sqrt{2}}$, $C_2 = 1$, and by using Propositions~\ref{PropGn} and \ref{PropDampingNonStrict} and the fact that $Q(x) \leq x$ for every $x \geq 0$.

If now there exists $n_0 \in \mathbb N$ so that $F_{n_0}$ has positive Lebesgue measure, then there exists $C_z>0$ and a subset $G_z$ of $F_{n_0}$ of positive measure $\alpha_z$ such that $C_z \leq \abs{g_{n_0}(s)} \leq \frac{M}{\sqrt{2}}$ for $s \in G_z$. By Proposition~\ref{prop:ineq-sequences}\ref{ItemIneqSequenceGeq} and by using the fact that $Q$ is increasing, it follows, for $n\geq n_0$, that $\abs{g_n(s)} \geq Q^{[n - n_0]}(\abs{g_{n_0}(s)}) \geq Q^{[n]}(C_z)$ for $s \in G_z$. Thus $\norm{g_n}_p \geq \alpha_z^{1/p} Q^{[n]}(C_z)$ and, from Propositions~\ref{PropGn} and \ref{PropDampingNonStrict}, one deduces that $e_p(z)(t) \geq \norm{g_{\floor{t/2} + 1}}_p \geq \alpha_z^{1/p} Q^{[\floor{t/2} + 1]}(C_z)$, whence the conclusion with $C_1 = \alpha_z^{1/p}$ and $C_2 = Q(C_z)$.
\end{proof}

\begin{remark}
\label{remk:hypotheses-ugas}
If we assume in \ref{ItemItemXp} that \ref{HypoSigma-SectorInfty} holds with a constant $M > 0$ equal to the constant $M_0 > 0$ from \ref{HypoSigma-gUpper}, then necessarily \eqref{eq:wave} is UGAS in $\mathsf X_\infty$. Indeed, in that case, by combining the bounds from \ref{HypoSigma-SectorInftyTilde} and \ref{HypoSigma-gUpperTilde}, we deduce that $\rho(r) < r$ for every $r > 0$, where $\rho$ is given by \eqref{eq:defiRho}. The fact that \eqref{eq:wave} is UGAS in $\mathsf X_\infty$ then follows from Propositions~\ref{PropCvSequence}\ref{ItemCvSequencePInfinite} and \ref{PropDefiRho}\ref{ItemIteratedUniform}.
\end{remark}

\begin{remark}
\label{Remk-ep}
It is useful to notice that, for $p = +\infty$, we have deduced the estimate \eqref{eq:h10-loc} immediately from \eqref{eq:iterated-Upper}. One may wonder whether it is possible to deduce from \eqref{eq:iterated-Upper} a similar estimate replacing $e_\infty$ by $e_p$ for finite $p$. This is indeed the case under the extra assumption that $Q$ is concave (or, equivalently, that $q$ is convex): one deduces from Lemma~\ref{LemmaJensen} in Appendix~\ref{AppLemmas} that
\begin{equation}
\label{eq:estim-ep}
e_p(z)(t) \leq 2^{1/p} Q^{[\floor{t/2}]}(2^{-1/p} e_p(z)(0)),
\end{equation}
as soon as $e_\infty(z)(0) \leq \frac{M_0}{\sqrt{2}}$. Note that \eqref{eq:estim-ep} holds true for every solution  of \eqref{eq:wave} in $\mathsf X_p$ as soon as one assumes that \ref{HypoSigma-gUpper} holds \emph{globally}, i.e., $M = +\infty$ in its definition. In that case, the wave equation in \eqref{eq:wave} is UGAS in $\mathsf X_p$. This is in accordance with Theorem~\ref{thm-ugas} since \ref{HypoSigma-SectorInfty} holds if $Q$ is concave in $\mathbb R_+$ and \ref{HypoSigma-gUpper} holds globally.
\end{remark}

Theorem~\ref{th:Qn} together with Propositions~\ref{prop:decay-rate} and \ref{prop:fnplusn0} provide accurate estimates for the behavior of trajectories of \eqref{eq:wave} as time tends to $+\infty$. Moreover, the optimality of such estimates can be addressed using Theorem~\ref{th:Qn}\ref{Itemh11}. Even though this can be done in full generality in the case where $\Sigma$ is the graph of a function $q$ or $q^{-1}$, with $q \in \mathcal C^1(\mathbb R_+, \mathbb R_+)$ satisfying the statements in \ref{HypoSigma-gUpper} or \ref{HypoSigma-gLower}, we only focus in the sequel on the case where $q^\prime(0) = 0$ since it is the one usually addressed in the literature (see, e.g., \cite{V-Martinez2000, Alabau2012Recent}). More precisely, we have the following result.

\begin{corollary}
\label{coro:equiv-einfty}
Let $\Sigma \subset \mathbb R^2$ be a damping set satisfying \ref{HypoSigma-SectorInfty} and $M > 0$ be the constant from \ref{HypoSigma-SectorInfty}. Assume moreover that
\begin{enumerate}
\item $\Sigma \cap \Delta = \{(0, 0)\}$, where $\Delta = \{(x, x) \suchthat x \in \mathbb R\}$; and
\item\label{ItemAssumptionH10AndH11} there exists a function $q \in \mathcal C^1(\mathbb R_+, \mathbb R_+)$ with $q(0) = q^\prime(0) = 0$, $0 < q(x) < x$, and $\abs{q^\prime(x)} < 1$ for every $x>0$ such that
\[
\abs{y} = q(\abs{x}) \quad \text{ or } \quad \abs{x} = q(\abs{y}), \qquad \text{ for every } (x, y) \in \Sigma \cap B(0, M).
\]
\end{enumerate}
Then, for every $p \in [1, +\infty]$ and every nontrivial solution $z$ of \eqref{eq:wave} in $\mathsf X_p$, there exist positive constants $C_1, C_2$ such that, for every $t \geq 0$,
\[C_1 Q^{[\floor{t/2}]}(1) \leq e_p(z)(t) \leq C_2 Q^{[\floor{t/2}]}(1),\]
where $Q$ is the function defined from $q$ in \eqref{Relation-Q-q}.
\end{corollary}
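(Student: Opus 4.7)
The plan is to verify the structural hypotheses needed to invoke Theorem~\ref{th:Qn} on both sides and then polish the resulting bounds using the asymptotic properties of $Q^{[n]}$ collected in Proposition~\ref{prop:fnplusn0}.

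First I would observe that hypothesis \ref{ItemAssumptionH10AndH11} forces simultaneously both \ref{HypoSigma-gUpper} and \ref{HypoSigma-gLower} with the \emph{same} function $q$ and the same constant $M$. Indeed, for any $(x,y)\in\Sigma\cap B(0,M)$ either $|y|=q(|x|)$ or $|x|=q(|y|)$, so the disjunction in \ref{HypoSigma-gLower} holds by equality in one of the alternatives; and in either case, using $0<q(s)<s$ together with one step of composition, both $q(|x|)\le|y|$ and $q(|y|)\le|x|$, which is precisely \ref{HypoSigma-gUpper}. Via the transformation \eqref{Relation-Q-q}, the corresponding assertions \ref{HypoSigma-gUpperTilde} and \ref{HypoSigma-gLowerTilde} then share the same function $Q$. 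Moreover, since \ref{HypoSigma-SectorInfty} and \ref{HypoSigma-gUpper} now hold with a common constant $M$, Remark~\ref{remk:hypotheses-ugas} ensures that \eqref{eq:wave} is UGAS in $\mathsf X_\infty$.

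For the upper bound, I would apply Theorem~\ref{th:Qn}\ref{Itemh10}\ref{ItemItemXp} to the nontrivial solution $z$, obtaining for $t$ large enough
\[
e_p(z)(t)\le 2\,Q^{[\lfloor t-t_1\rfloor]}(Z_p^{1/2}) + 2 Z_p^{1/2}\,Q^{[\lfloor t/2\rfloor-t_2]}(Z) + Z_p\,\mu^{t/4-1/2}.
\]
Each $Q^{[n]}(\cdot)$ factor can be replaced by $Q^{[\lfloor t/2\rfloor]}(1)$, up to multiplicative constants, by combining Proposition~\ref{prop:fnplusn0}\ref{ItemTimeTranslation} (translating the iteration index by a fixed integer) and Proposition~\ref{prop:fnplusn0}\ref{ItemDifferentInitialPoint} (changing the starting point to $1$); note that the first term uses $\lfloor t-t_1\rfloor\ge \lfloor t/2\rfloor$ for large $t$ and $Q^{[n]}$ is decreasing in $n$, so it is dominated. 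Finally, the exponentially decaying residual $Z_p\mu^{t/4-1/2}$ is $o\bigl(Q^{[\lfloor t/2\rfloor]}(1)\bigr)$ by Proposition~\ref{prop:fnplusn0}\ref{ItemSlowerThanExponential}, since $Q^{[n]}(1)$ decays more slowly than any exponential. Summing yields $e_p(z)(t)\le C_2\,Q^{[\lfloor t/2\rfloor]}(1)$ for large $t$; enlarging $C_2$ to cover an initial compact time interval (using that $t\mapsto e_p(z)(t)$ is nonincreasing, cf.\ Proposition~\ref{PropDampingNonStrict}) gives the estimate for every $t\ge0$.

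For the lower bound, the hypothesis $\Sigma\cap\Delta=\{(0,0)\}$ is precisely the second sufficient condition stated in Theorem~\ref{th:Qn}\ref{Itemh11}, so that theorem delivers positive constants $\widetilde C_1,\widetilde C_2$ (depending on the initial datum) with $e_p(z)(t)\ge \widetilde C_1\,Q^{[\lfloor t/2\rfloor]}(\widetilde C_2)$ for all $t\ge 0$. A final invocation of Proposition~\ref{prop:fnplusn0}\ref{ItemDifferentInitialPoint} absorbs the ratio between $Q^{[\lfloor t/2\rfloor]}(\widetilde C_2)$ and $Q^{[\lfloor t/2\rfloor]}(1)$ into a new constant $C_1$. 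The main obstacle I anticipate is the three-term upper bound: reducing it to the clean form $C_2\,Q^{[\lfloor t/2\rfloor]}(1)$ requires carefully deploying all three items of Proposition~\ref{prop:fnplusn0} and, in particular, using the slower-than-exponential decay of $Q^{[n]}$ to dominate the exponential tail.
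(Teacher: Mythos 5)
Your proposal is correct and follows essentially the same route as the paper: verify that assumption \ref{ItemAssumptionH10AndH11} yields both \ref{HypoSigma-gUpper} and \ref{HypoSigma-gLower} with the same $q$ and $M$, invoke Remark~\ref{remk:hypotheses-ugas} for UGAS in $\mathsf X_\infty$, apply Theorem~\ref{th:Qn}\ref{ItemItemXp} and \ref{Itemh11} for the two-sided bounds, and conclude via Proposition~\ref{prop:fnplusn0}. The only difference is that you spell out explicitly the bookkeeping (index shifts, change of starting point, domination of the exponential tail, and the compact initial time interval) that the paper compresses into ``the conclusion follows by Proposition~\ref{prop:fnplusn0}'', and these details are carried out correctly.
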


\begin{proof}
Note that assumption \ref{ItemAssumptionH10AndH11} implies that both \ref{HypoSigma-gUpper} and \ref{HypoSigma-gLower} are satisfied with the same function $q$ and the same $M > 0$. Moreover, as noticed in Remark~\ref{remk:hypotheses-ugas}, \ref{ItemAssumptionH10AndH11} and \ref{HypoSigma-SectorInfty} imply that \eqref{eq:wave} is UGAS in $\mathsf X_\infty$. Then, by Theorem~\ref{th:Qn}\ref{ItemItemXp} and \ref{Itemh11}, one deduces that \eqref{eq:estim-toto59} and \eqref{eq:h11-glob} hold, and the conclusion follows by Proposition~\ref{prop:fnplusn0}.
\end{proof}

\begin{remark}
\label{RemkComparison}
Our results given in Proposition~\ref{prop:decay-rate}\ref{ItemQPrime0}, Theorem~\ref{th:Qn} (equations \eqref{eq:h10-loc}, \eqref{eq:h10-glob}, and \eqref{eq:h11-glob} with $p = +\infty$), and Corollary~\ref{coro:equiv-einfty} are directly inspired by a string of results of \cite{V-Martinez2000}, namely Theorem~2.1 and all the results from Section~3 of that reference. Because of the flexibility of our approach, we provide simpler proofs and we are able to relax some assumptions on the function $q$ and the set of initial conditions for which the appropriate estimates hold true.

Note that Theorem~\ref{th:Qn}\ref{ItemItemXp}, together with the assumption that the function $q$ from \ref{HypoSigma-gUpper} satisfies $q^\prime(0) = 0$ and \eqref{EqConditionEquiv}, yields the estimate
\[
e_p(z)(t) \leq 3 (1 + e_p(z)(0)^{1/2}) F^{-1}(t/2)
\]
for every $p \in [1, +\infty]$, every solution $z$ of \eqref{eq:wave} in $\mathsf X_p$, and $t$ large enough, where $F$ is as in Proposition~\ref{prop:decay-rate}\ref{ItemQPrime0}. Indeed, this follows from \eqref{eq:estim-toto59}, Proposition~\ref{prop:decay-rate}\ref{ItemQPrime0}, 
and manipulations similar to those in the argument of Corollary~\ref{coro:equiv-einfty}. This estimate can be compared to that of \cite[Theorem~2.1(b)]{Zuazua1990Uniform}, which obtains a similar estimate for wave equations in space dimension up to $3$ but with stronger assumptions on $q$.
\end{remark}

An interesting instance of the preceding results is the classical linear case, which corresponds to $\Sigma$ satisfying \ref{HypoSigma-SectorZero}. As a consequence of the above results and remarks, we have the following.

\begin{corollary}
\label{thm-decay}
Let $\Sigma \subset \mathbb R^2$ be a damping set.
\begin{enumerate}
\item\label{ItemTheoCvExpoLocal} If $\Sigma$ satisfies \ref{HypoSigma-SectorZero} and \eqref{eq:wave} is UGAS in $X_\infty$, then there exists $\lambda > 0$ such that, for every $p \in [1, +\infty]$ and every initial condition $(z_0,z_1) \in \mathsf X_\infty$, every solution $z$ of \eqref{eq:wave} starting at $(z_0,z_1)$ satisfies
\begin{equation}
\label{eq:exp-decay}
e_p(z)(t) \leq C e^{-\lambda t} e_p(z)(0), \qquad \text{ for every } t\geq 0,
\end{equation}
where the constant $C > 0$ depends only on $\lambda$ and $\norm*{(z_0,z_1)}_{\mathsf X_\infty}$.

\item\label{ItemTheoCvExpoGlobal} If there exist positive constants $a, b$ such that $a \abs{x} \leq \abs{y} \leq b \abs{x}$ for every $(x, y) \in \Sigma$, then \eqref{eq:wave} is GES in $\mathsf X_p$ for every $p \in [1, +\infty]$. More precisely, there exist $C > 0$ and $\lambda > 0$ such that, for every $p \in [1, +\infty]$ and every initial condition $(z_0,z_1) \in \mathsf X_p$, every solution $z$ of \eqref{eq:wave} starting at $(z_0,z_1)$ satisfies
\begin{equation}
\label{eq:exp-decay-bis}
e_p(z)(t) \leq C e^{-\lambda t} e_p(z)(0), \qquad \text{ for every } t\geq 0.
\end{equation}
\end{enumerate}
\end{corollary}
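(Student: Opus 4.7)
Both parts exploit the same principle: a \emph{linear} sector condition on $\Sigma$ transfers, via Proposition~\ref{PropHTilde}, to a linear contraction $|y|\leq \mu|x|$ on $R\Sigma$ with $\mu \in [0,1)$---globally in Part~(b), locally near the origin in Part~(a). The pointwise nature of the discrete-time dynamical system \eqref{eq:dynSystGn} then propagates this contraction to every $\mathsf Y_p$, from which the $\mathsf X_p$ estimate follows through Proposition~\ref{PropGn}.

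For Part~(b), parametrise points $(x,y)\in \Sigma$ with $xy\geq 0$ as $y=\alpha x$ with $\alpha \in [a,b]$; a short computation of $R(x,y)^\top$ shows that the slope of the corresponding point in $R\Sigma$ equals $\pm|\alpha-1|/(1+\alpha)$, whose maximum over $\alpha\in[a,b]$ is
\[
\mu:=\max\Bigl(\tfrac{b-1}{b+1},\tfrac{1-a}{1+a}\Bigr)\in[0,1),
\]
so that $|y|\leq \mu|x|$ for every $(x,y)\in R\Sigma$. For any solution $z$ of \eqref{eq:wave} and the associated sequence $(g_n)_{n \in \mathbb N}$ from Proposition~\ref{PropGn}, this forces $|g_{n+1}(s)|\leq \mu |g_n(s)|$ for a.e.\ $s\in[-1,1]$, whence $\norm{g_n}_p\leq \mu^n \norm{g_0}_p$ by iteration. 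Combining with \eqref{eq:normGn} and the monotonicity of $e_p$ from Proposition~\ref{PropDampingNonStrict} yields \eqref{eq:exp-decay-bis} with $\lambda=-\tfrac12\ln\mu$ and $C=\mu^{-1}$ (the latter absorbing the floor function).

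For Part~(a), condition \ref{HypoSigma-SectorZeroTilde} provides $M>0$ and $\mu\in(0,1)$ such that $|y|\leq \mu|x|$ on $R\Sigma\cap B(0,M)$. Using UGAS in $\mathsf X_\infty$ with some rate $\beta$, pick $t_z\geq 0$---depending only on $\beta$ and $e_\infty(z)(0)$---with $e_\infty(z)(t)\leq M/\sqrt 2$ for every $t\geq t_z$, and set $n_z=\lceil t_z/2\rceil$. Then \eqref{eq:normGn} and monotonicity of $e_\infty$ give $\norm{g_n}_\infty\leq M/\sqrt 2$ for every $n\geq n_z$, so the local contraction applies pointwise and iterates to $|g_n(s)|\leq \mu^{n-n_z}|g_{n_z}(s)|$ for a.e.\ $s$; integration then yields $\norm{g_n}_p\leq \mu^{n-n_z} e_p(z)(0)$ for $n\geq n_z$. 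Combining with the trivial bound $e_p(z)(t)\leq e_p(z)(0)$ on $[0,2n_z]$ and translating back via \eqref{eq:normGn} produces \eqref{eq:exp-decay} with $\lambda=-\tfrac12\ln\mu$ and a constant $C$ depending only on $\lambda$ and $n_z$, hence only on $\lambda$ and $\norm{(z_0,z_1)}_{\mathsf X_\infty}$.

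No step is genuinely delicate: the only real computations are the trigonometric passage between $\Sigma$ and $R\Sigma$ in Part~(b), and the bookkeeping of the dependence of $t_z$ on the initial $\mathsf X_\infty$ norm in Part~(a). Everything else follows immediately from the translation of \eqref{eq:wave} into the pointwise iteration \eqref{eq:dynSystGn} carried out in Section~\ref{sec:description}.
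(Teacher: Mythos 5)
Your proof is correct and follows essentially the same route as the paper: pass to the iterated sequence $(g_n)$, use UGAS in $\mathsf X_\infty$ to enter the ball where the linear sector condition yields the pointwise contraction $\abs{g_{n+1}(s)}\leq\mu\abs{g_n(s)}$, iterate geometrically, and convert back through \eqref{eq:normGn} and the monotonicity of $e_p$, your explicit computation of $\mu$ from $a,b$ in part (b) being just the global analogue of Proposition~\ref{PropHTilde}. The only cosmetic caveat is the degenerate case $\mu=0$ (i.e., $a=b=1$), where one should replace $\mu$ by, say, $\max(\mu,\tfrac12)$ before setting $\lambda=-\tfrac12\ln\mu$.
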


\begin{proof}
Item~\ref{ItemTheoCvExpoLocal} can be deduced by combining the previous results and remarks and noticing that \ref{HypoSigma-SectorZero} is a particular case of \ref{HypoSigma-gUpper} with a linear function $q$. We choose however to provide the following direct ad hoc argument.

Let $M > 0$ and $\mu \in (0, 1)$ be as in \ref{HypoSigma-SectorZeroTilde} and $\beta$ be the $\mathcal{KL}$ function provided by the UGAS assumption in $\mathsf X_\infty$. Let $z$ be a solution of \eqref{eq:wave} in $\mathsf X_\infty$ and consider the corresponding sequence $(g_n)_{n \in \mathbb N}$ in $\mathsf Y_\infty$ from Proposition~\ref{PropGn}. Set $R = \norm{g_0}_\infty$, let $t_R > 0$ be such that $\beta(R, t_R) \leq \frac{M}{\sqrt{2}}$, and define $n_R = \ceil{t_R/2}$. For $n \geq n_R$ and a.e.\ $s \in [-1, 1]$, one has
\[
\abs{g_n(s)} \leq e_\infty(z)(2n) \leq \beta(R, 2n) \leq \beta(R, t_R) \leq \frac{M}{\sqrt{2}}.
\]
It follows from \ref{HypoSigma-SectorZeroTilde} that, for $n \geq n_R$, one has
\begin{equation}
\label{eq:ExpoEstimGn}
\norm{g_n}_p \leq \mu^{n - n_R} \norm*{g_{n_R}}_p \leq \mu^{n - n_R} \norm*{g_0}_p.
\end{equation}
The previous inequality still holds for $n \in \{0, \dotsc, n_R-1\}$ since $(g_n)_{n \in \mathbb N}$ is nonincreasing. One immediately gets the conclusion with $\lambda = - \frac{\ln \mu}{2}$ using Propositions~\ref{PropGn} and \ref{PropDampingNonStrict}.

Part \ref{ItemTheoCvExpoGlobal} follows immediately since, from its assumptions, one deduces that \eqref{eq:ExpoEstimGn} holds for every $n \in \mathbb N$ with $n_R = 0$.
\end{proof}

We next provide an alternative proof to Corollary~\ref{thm-decay}\ref{ItemTheoCvExpoLocal} at the price of strengthening Hypothesis \ref{HypoSigma-SectorZero} to the following hypothesis on a damping set $\Sigma \subset \mathbb R^2$: 
\begin{itemize}[labelwidth=\widthof{\normalsize (H7)$_{\ast}$}, leftmargin=!]
\item[\customlabel{HypoSigma-SectorZeroAst}{\ref{HypoSigma-SectorZero}$_\ast$}] For every $M > 0$, there exists $\mu \in (0, 1)$ such that $\abs{y} \leq \mu \abs{x}$ for every $(x, y) \in R \Sigma \cap B(0, M)$.
\end{itemize}
It is immediate to see that, under \ref{HypoSigma-SectorZeroAst}, real iterated sequences for $S$ converge uniformly to zero and hence \eqref{eq:wave} is UGAS in $\mathsf X_\infty$, therefore \ref{HypoSigma-SectorZeroAst} is stronger than the assumptions of Corollary~\ref{thm-decay}\ref{ItemTheoCvExpoLocal}.

Nevertheless, we can prove Corollary~\ref{thm-decay}\ref{ItemTheoCvExpoLocal} under \ref{HypoSigma-SectorZeroAst} for $p$ finite with an argument having its own interest, which is given next. Consider the Lyapunov function $V_p$, based on a variant of $e_p$, and defined, for a solution $z$ of \eqref{eq:wave} in $\mathsf X_p$, by
\begin{equation}\label{eq:V_p-1}
V_p(t) =\int_0^1 e^{\nu x} F(f(t+x)) \diff x + \int_0^1 e^{-\nu x} F(g(t-x)) \diff x, 
\quad t\geq 0,
\end{equation}
where $f$ and $g$ are the functions associated with $z$ from Definition~\ref{DefWeakSolution}, $\nu$ is a positive constant to be chosen later, and $F(s) = \abs*{s}^p$ for $s\in\mathbb{R}$. Note that $V_p$ coincides with $e_p^p(z)$ if $\nu = 0$. In the case $p = 2$, the corresponding Lyapunov function $V_2$ has been extensively used in the literature of hyperbolic systems of conservation laws, as detailed in \cite{bastin2016stability}.

We first provide another expression for $V_p$ before taking its time derivative along trajectories of \eqref{eq:wave}. From \eqref{eq:V_p-1} and \eqref{relation-f-g}, one gets for $t \geq 0$ that
\begin{equation}\label{eq:V_p-2}
V_p(t) =e^{-\nu t}\int_{t-1}^{t+1} e^{\nu s} F(g(s)) \diff s.
\end{equation}
Hence $V_p$ is absolutely continuous and its time derivative along the trajectories of \eqref{eq:wave} satisfies
\begin{equation}\label{eq:derVp}
\frac{\diff V_p}{\diff t}(t) = - \nu V_p(t) + e^{\nu} F(g(t+1))-e^{-\nu}F(g(t-1)), \qquad t\geq 0.
\end{equation}
Since $g(t+1)\in S(g(t-1))$ for $t\geq 0$, we deduce from \ref{HypoSigma-SectorZeroAst} that $\abs{g(t+1)}\leq \mu \abs{g(t-1)}$, where $\mu \in (0, 1)$ is obtained by taking $M = \norm{(z_0, z_1)}_{\mathsf X_\infty}$ in \ref{HypoSigma-SectorZeroAst} and $(z_0, z_1)$ is the initial condition of $z$. Then, from \eqref{eq:derVp}, one deduces that
\begin{equation*}
\frac{\diff V_p}{\diff t}(t) \leq  - \nu V_p(t) + e^{-\nu} F(g(t-1)) \Bigl(\mu^p e^{2\nu}-1\Bigr), \quad t\geq 0.
\end{equation*}
Setting $\nu = -\frac{p}{2}\ln(\mu)$, one obtains that
\begin{equation*}
\frac{\diff V_p}{\diff t}(t) \leq -\nu V_p(t),\quad t\geq 1,
\end{equation*}
which implies the exponential convergence of trajectories of \eqref{eq:wave} with a decay rate depending on $\norm{(z_0,z_1)}_{\mathsf X_\infty}$. It is now standard to obtain \eqref{eq:exp-decay} with $\nu$ independent of the initial condition, cf.\ \cite{HaidarLyapunov}.
\begin{remark}
Note that the previous argument stills works without considering the parameter $\nu$ in the definition of $V_p$ in \eqref{eq:V_p-1}, in which case $V_p=e_p^p(z)$. Equation \eqref{eq:derVp} becomes, after taking into account \ref{HypoSigma-SectorZeroAst},
\[\frac{\diff V_p}{\diff t}(t) \leq -(1-\mu^p)F(g(t-1)), \qquad \text{ for a.e.\ }t\geq 0.
\]
Integrating between $t-1$ and $t+1$ for $t\geq 1$, one gets that
\[e_p(z)(t+1)\leq \mu e_p(z)(t-1),
\]
which yields exponential decay. More generally, at the light of Remark~\ref{remk:energy-derivative} and without assuming necessarily  \ref{HypoSigma-SectorZeroAst}, one can follow  a reasoning relying on  a Lyapunov function of the form \eqref{eq:V_p-1} with any positive definite function $F$ with no additional regularity assumption on the solution $z\in \mathsf X_p$ of \eqref{eq:wave}, as soon as a damping and sector conditions are satisfied.
\end{remark}
We close this section by providing a necessary and sufficient condition of GES.

\begin{corollary}
Let $\Sigma \subset \mathbb R^2$ be a damping set, $p \in [1, +\infty]$, and $S$ be the set-valued map whose graph is $R \Sigma$. Then \eqref{eq:wave} is GES in $\mathsf X_p$ if and only if there exists $\mu \in (0, 1)$ and $n_0 \in \mathbb N^\ast$ such that, for every $x \in \mathbb R$, one has $\norm{S^{[n_0]}(x)} \leq \mu \abs{x}$.
\end{corollary}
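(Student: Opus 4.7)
The plan is to prove both directions by passing through the equivalence between trajectories of \eqref{eq:wave} and iterated sequences for $S$ given by Proposition~\ref{PropGn}, so that exponential decay for $e_p(z)(t)$ is transcribed into a geometric decay for $\norm{g_n}_p$, which in turn follows from a pointwise geometric bound on $S^{[n_0]}$.

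For the sufficient direction, I would start from the hypothesis and show by a straightforward induction on $k$, using the damping inequality $\norm{S(x)}\leq\abs{x}$ from \ref{HypoSigma-DampingTilde} to absorb the intermediate iterations, that
\[
\norm{S^{[k n_0]}(x)}\leq \mu^{k}\abs{x},\qquad k\in\mathbb N,\; x\in\mathbb R.
\]
Given a solution $z$ of \eqref{eq:wave} in $\mathsf X_p$ with associated iterated sequence $(g_n)_{n\in\mathbb N}$, the pointwise estimate $\abs{g_{k n_0}(s)}\leq \mu^{k}\abs{g_0(s)}$ for a.e.\ $s\in[-1,1]$ then integrates to $\norm{g_{k n_0}}_p\leq \mu^{k}\norm{g_0}_p$. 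Combined with the monotonicity of $t\mapsto e_p(z)(t)$ (Proposition~\ref{PropDampingNonStrict}) and the identity \eqref{eq:normGn}, this produces, for $k=\lfloor t/(2n_0)\rfloor$, the bound $e_p(z)(t)\leq \mu^{k}e_p(z)(0)\leq \mu^{-1}e^{-\lambda t}e_p(z)(0)$ with $\lambda=-\frac{\ln\mu}{2n_0}>0$, which is exactly GES in $\mathsf X_p$.

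For the necessary direction, assume GES with rate $C_1 e^{-C_2 t}$ and pick $n_0\in\mathbb N^\ast$ large enough that $\mu:=C_1 e^{-2C_2 n_0}<1$. Fix $x\in\mathbb R$ and any $y\in S^{[n_0]}(x)$: by definition of iteration, there exist $x_0=x,x_1,\dotsc,x_{n_0}=y$ with $x_{k+1}\in S(x_k)$. Define $g_k$ to be the constant function equal to $x_k$ on $[-1,1]$ for $k\in\{0,\dotsc,n_0\}$, and extend $(g_k)_{k\geq n_0}$ by iterating on the constant value the universally measurable selector $\varphi$ furnished by \ref{HypoSigma-Exists} (available since $\Sigma$ is a damping), so that the resulting sequence is a bona fide iterated sequence in $\mathsf Y_p$. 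By Proposition~\ref{PropGn}\ref{ItemEquivSols2} this gives rise to a solution $z$ of \eqref{eq:wave} in $\mathsf X_p$ with $e_p(z)(0)=\norm{g_0}_p=c_p\abs{x}$ and $e_p(z)(2n_0)=\norm{g_{n_0}}_p=c_p\abs{y}$, where $c_p=2^{1/p}$ for $p<+\infty$ and $c_\infty=1$. The GES estimate then reads $c_p\abs{y}\leq C_1 e^{-2C_2 n_0}c_p\abs{x}=\mu c_p\abs{x}$, hence $\abs{y}\leq\mu\abs{x}$. Taking the supremum over $y\in S^{[n_0]}(x)$ concludes the proof.

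No step poses a real obstacle; the only mildly delicate point is in the necessary direction, namely arranging an infinite iterated sequence in $\mathsf Y_p$ whose first $n_0+1$ terms coincide with the prescribed constants $x_0,\dotsc,x_{n_0}$. This is exactly where the fact that a damping set satisfies \ref{HypoSigma-Exists} (giving a universally measurable selector $\varphi$ with linear growth) is used, ensuring that the extension stays in $\mathsf Y_p$ for every $p\in[1,+\infty]$.
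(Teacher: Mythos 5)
Your proof is correct and follows essentially the same route as the paper: sufficiency by iterating the bound on $S^{[n_0]}$ to get $\norm{g_{k n_0}}_p \leq \mu^k \norm{g_0}_p$ and concluding via the monotonicity of $e_p$, and necessity by testing the GES estimate on solutions arising from constant initial data (your explicit extension of the chain $x_0,\dotsc,x_{n_0}$ into a global iterated sequence via the selector from \ref{HypoSigma-Exists} is just a spelled-out version of the paper's phrase ``by considering constant initial conditions'').
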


\begin{proof}
Assume that there exists $\mu \in (0, 1)$ and $n_0 \in \mathbb N^\ast$ such that, for every $x \in \mathbb R$, one has $\norm{S^{[n_0]}(x)} \leq \mu \abs{x}$. Let $z$ be a solution of \eqref{eq:wave} in $\mathsf X_p$ and consider the sequence $(g_n)_{n \in \mathbb N}$ in $\mathsf Y_p$ corresponding to $z$ according to Proposition~\ref{PropGn}. Then, for every $k \in \mathbb N$, one has
\[
\norm{g_{k n_0}}_p \leq \mu^k \norm{g_0}_{p},
\]
and, by using Propositions~\ref{PropGn} and \ref{PropDampingNonStrict}, one gets that \eqref{eq:wave} is GES.

Conversely, assume that \eqref{eq:wave} is GES and let $C, \lambda$ be positive constants such that $e_p(z)(t) \leq C e^{-\lambda t} e_p(z)(0)$ for every $t \geq 0$ and every solution $z$ of \eqref{eq:wave} in $\mathsf X_p$. Let $n_0 \in \mathbb N^\ast$ be such that $C e^{-2 \lambda n_0} < 1$. By considering constant initial conditions, one gets the conclusion with $\mu = C e^{-2 \lambda n_0}$.
\end{proof}

\subsection{Arbitrary slow convergence}
\label{SecSloooooooow}

In this subsection, we positively answer a conjecture posed in \cite[Theorem~4.1, Remark~2]{V-Martinez2000} 
regarding the worst possible decay rate of solutions of \eqref{eq:wave} when $\Sigma$ is of saturation type.

\begin{theorem}
\label{TheoremSloooooooow}
For $C > 0$, define $\mathcal R_C = \{(x, y) \in \mathbb R^2 \suchthat \abs{x} \leq C/\sqrt{2} \text{ or } \abs{y} \leq C/\sqrt{2}\}$. Let $\Sigma \subset \mathbb R^2$ be a damping set such that $\Sigma \subset \mathcal R_C$ for some $C > 0$. Then, for every $p \in [1, +\infty)$ and every decreasing function $\varphi: \mathbb R_+ \to \mathbb R_+^\ast$ tending to $0$ as $t \to +\infty$, there exists $(z_0, z_1) \in \mathsf X_p$ such that every solution $z$ of \eqref{eq:wave} with initial condition $(z_0, z_1)$ satisfies, for every $t \geq 0$,
\[
e_p(z)(t) \geq \varphi(t).
\]
\end{theorem}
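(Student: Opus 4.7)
The strategy is to use Proposition~\ref{PropGn} to translate the problem into constructing an iterated sequence $(g_n)_{n \in \mathbb N}$ in $\mathsf Y_p$, starting from some well-chosen $g_0$, whose $L^p$-norm decreases more slowly than the prescribed function $\varphi$. The key saturation observation is the following: writing a point $(x,y)\in\Sigma$ as $(u,v) = R(x,y) \in R\Sigma$ gives $x = (u-v)/\sqrt{2}$ and $y = (u+v)/\sqrt{2}$, so the inclusion $\Sigma \subset \mathcal R_C$ becomes ``$|u-v|\leq C$ or $|u+v|\leq C$''. Combined with the damping bound $|v|\leq|u|$ from \ref{HypoSigma-DampingTilde}, a short case analysis yields the universal estimate
\[
|v| \geq |u| - C, \qquad \text{for every } (u,v) \in R\Sigma \text{ with } |u|\geq C.
\]
Iterating, any real iterated sequence $(x_n)$ for $S$ satisfies $|x_n| \geq |x_0|-nC$ provided $|x_0|$ is large enough compared to $nC$, which is the mechanism producing arbitrary slow decay.

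\emph{Construction.} Choose integers $N_0 = 0 < N_1 < N_2 < \cdots$ recursively so that $\varphi(2N_k)\leq 2^{-k}$ for every $k\geq 1$, which is possible since $\varphi(t)\to 0$. For each $k\geq 1$, set
\[
a_k = 4C(N_k+1), \qquad \alpha_k = \Bigl(\tfrac{\varphi(2N_{k-1})}{2C(N_k+1)}\Bigr)^p,
\]
enlarging the $N_k$ further if necessary to guarantee $\sum_k \alpha_k \leq 2$. Pick pairwise disjoint measurable subsets $A_k \subset [-1,1]$ of Lebesgue measure $\alpha_k$ and define $g_0 = \sum_{k\geq 1} a_k \chi_{A_k}$. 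Since $\alpha_k a_k^p = 2^p \varphi(2N_{k-1})^p$, one has $\|g_0\|_p^p = 2^p \sum_{k\geq 1} \varphi(2N_{k-1})^p < +\infty$, so $g_0 \in \mathsf Y_p$, and we take $(z_0,z_1) = \mathfrak I^{-1}(g_0) \in \mathsf X_p$.

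\emph{Lower bound.} Let $z$ be any solution of \eqref{eq:wave} with initial condition $(z_0,z_1)$ and $(g_n)_{n\in\mathbb N}$ be its corresponding iterated sequence in $\mathsf Y_p$ via Proposition~\ref{PropGn}. Since $S(0) = \{0\}$ (Proposition~\ref{PropSDamping}\ref{ItemS0}), every $g_n$ vanishes outside $\bigcup_k A_k$. On each $A_k$, $g_n(s) \in S^{[n]}(a_k)$, and since $a_k \geq 2N_kC$, the saturation estimate applied inductively yields $|g_n(s)|\geq a_k - nC \geq a_k/2$ for every $n\leq N_k$. Setting $k(n) = \min\{k\geq 1 : N_k\geq n\}$ (so $N_{k(n)-1} < n$), the monotonicity of $\varphi$ gives, for every $n\geq 1$,
\[
\|g_n\|_p \geq \alpha_{k(n)}^{1/p}\, \tfrac{a_{k(n)}}{2} = \varphi(2N_{k(n)-1}) \geq \varphi(2(n-1)),
\]
while $\|g_0\|_p \geq 2\varphi(0)$. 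Using the nonincreasingness of $e_p(z)(\cdot)$ from Proposition~\ref{PropDampingNonStrict} together with $e_p(z)(2n)=\|g_n\|_p$, we conclude
\[
e_p(z)(t) \geq e_p(z)\bigl(2\lceil t/2\rceil\bigr) = \|g_{\lceil t/2\rceil}\|_p \geq \varphi\bigl(2\lceil t/2\rceil -2\bigr) \geq \varphi(t), \qquad t > 0,
\]
and $e_p(z)(0)\geq\varphi(0)$, completing the proof.

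The main difficulty is the three-way balance between (i) the amplitude $a_k$, which must be of order $N_k$ so as to survive $N_k$ applications of the saturation-decrement; (ii) the support measure $\alpha_k$, which must be small enough to make $g_0$ belong to $\mathsf Y_p$; and (iii) the product $\alpha_k^{1/p}a_k$, which must match $\varphi(2N_{k-1})$ so that the $k$-th piece alone realises the desired lower bound throughout the time window $[N_{k-1}+1, N_k]$. The choices $a_k \sim N_k$, $\alpha_k^{1/p}a_k \sim \varphi(2N_{k-1})$, together with $N_k$ chosen large enough that $\varphi(2N_k)\leq 2^{-k}$, simultaneously satisfy all three requirements.
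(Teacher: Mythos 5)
Your proof is correct. The reduction is the same as the paper's: via Proposition~\ref{PropGn} and the monotonicity of the energy (Proposition~\ref{PropDampingNonStrict}), everything comes down to building $g_0\in\mathsf Y_p$ such that every iterated sequence satisfies $\norm{g_n}_p\geq\varphi(2(n-1))$, and the engine is the same saturation estimate $\abs{x}-C\leq\abs{y}\leq\abs{x}$ for $(x,y)\in R\Sigma$, hence $\abs{x_n}\geq\abs{x_0}-nC$ along real iterated sequences. Where you genuinely diverge is in the construction of $g_0$ and the extraction of the lower bound. The paper uses a single staircase profile \eqref{DefiG0} whose levels are $k^{1/p}$ on intervals $(a_{k+1},a_k]$ produced by the convexification Lemma~\ref{LemmaSequences}, and the bound at time $n$ is obtained by summing over all surviving levels $k\gtrsim 3^pn^p$, which requires the Abel-type resummation and the mean-value inequality \eqref{eq:bound-to-be-proved}. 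Your construction is lacunary: one block $A_k$ per time window $(N_{k-1},N_k]$, with height $\sim CN_k$ (so it survives $N_k$ bounces with at least half its height) and measure tuned so that $\alpha_k^{1/p}a_k\sim\varphi(2N_{k-1})$, while $\sum_k\alpha_k a_k^p\sim\sum_k\varphi(2N_{k-1})^p<\infty$ thanks to the requirement $\varphi(2N_k)\leq 2^{-k}$. The lower bound at each time then comes from a single block and the verification is elementary arithmetic, so you avoid Lemma~\ref{LemmaSequences} and \eqref{eq:bound-to-be-proved} entirely; the trade-off is that your initial datum is tied to the chosen epochs $N_k$ rather than being the paper's more canonical profile built directly from $\varphi^p$, but for the statement as given both routes deliver exactly the same conclusion, and yours is arguably simpler to check. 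All the small points (recursive choice of $N_k$ making $\sum_k\alpha_k\leq 2$ consistent, membership $g_0\in\mathsf Y_p$, validity of the bound for \emph{every} solution since the saturation estimate holds on all of $R\Sigma$, and the endpoint cases $t=0$ and $2\ceil{t/2}-2\leq t$) are handled correctly.
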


A graphical representation of the region $\mathcal R_C$ and the rotated region $R \mathcal R_C$ is provided in Figure~\ref{FigTheoremSlow}. The darker shade represents the intersection between $\mathcal R_C$ and the damping region $\{(x, y) \in \mathbb R^2 \suchthat x y \geq 0\}$.

\begin{figure}[ht]
\centering
\begin{tikzpicture}

\fill[blue!20!white] (-3, -0.8) -- (-0.8, -0.8) -- (-0.8, -3) -- (0.8, -3) -- (0.8, -0.8) -- (3, -0.8) -- (3, 0.8) -- (0.8, 0.8) -- (0.8, 3) -- (-0.8, 3) -- (-0.8, 0.8) -- (-3, 0.8) -- cycle;

\fill[blue!40!white] (-3, -0.8) -- (-0.8, -0.8) -- (-0.8, -3) -- (0, -3) -- (0, 3) -- (0.8, 3) -- (0.8, 0.8) -- (3, 0.8) -- (3, 0) -- (-3, 0) -- cycle;

\draw[semithick, -Stealth] (-3, 0) -- (3, 0);
\draw[semithick, -Stealth] (0, -3) -- (0, 3);

\draw (1.5, 0) node[above] {$\mathcal R_C$};

\draw[semithick, -Stealth] (3.5, -0.25) -- node[midway, above] {$R$} (4.5, -0.25);

\begin{scope}[shift={(8, 0)}, rotate=-45]
\clip (0, {3*sqrt(2)}) -- ({3*sqrt(2)}, 0) -- (0, {-3*sqrt(2)}) -- ({-3*sqrt(2)}, 0) -- cycle;

\fill[blue!20!white] (-4.5, -0.8) -- (-0.8, -0.8) -- (-0.8, -4.5) -- (0.8, -4.5) -- (0.8, -0.8) -- (4.5, -0.8) -- (4.5, 0.8) -- (0.8, 0.8) -- (0.8, 4.5) -- (-0.8, 4.5) -- (-0.8, 0.8) -- (-4.5, 0.8) -- cycle;

\fill[blue!40!white] (-4.5, -0.8) -- (-0.8, -0.8) -- (-0.8, -4.5) -- (0, -4.5) -- (0, 4.5) -- (0.8, 4.5) -- (0.8, 0.8) -- (4.5, 0.8) -- (4.5, 0) -- (-4.5, 0) -- cycle;
\end{scope}

\draw[semithick, -Stealth] (5, 0) -- (11, 0);
\draw[semithick, -Stealth] (8, -3) -- (8, 3);

\node at (9.5, 1.5) {$R \mathcal R_C$};

\end{tikzpicture}
\caption{Regions $\mathcal R_C$ and $R \mathcal R_C$.}
\label{FigTheoremSlow}
\end{figure}
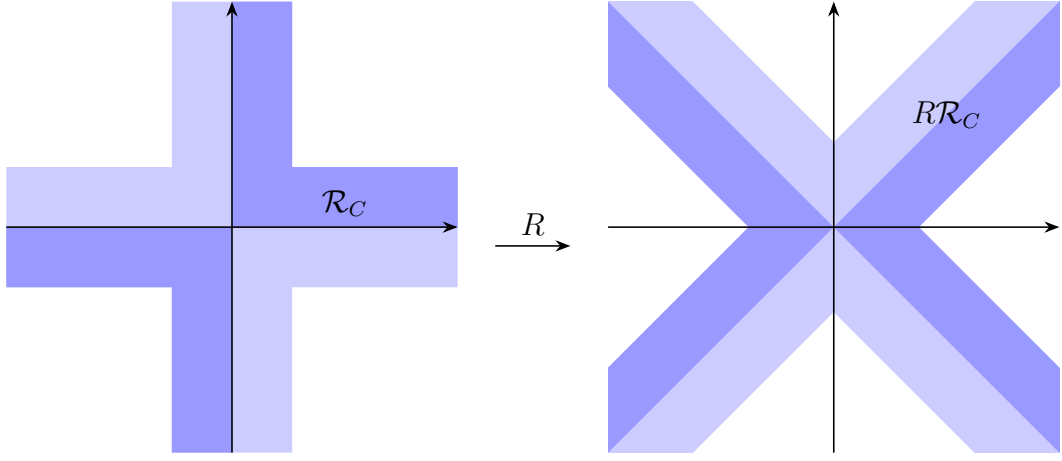

\begin{proof}
Let $S$ be the set-valued map whose graph is $R \Sigma$. The assumptions on $\Sigma$, namely the fact that $\Sigma$ is a damping set and that $\Sigma\subset \mathcal{R}_C$, imply that, for every $x \in \mathbb R$ and $y \in S(x)$, one has $\abs{x} - C \leq \abs{y} \leq \abs{x}$.

From Propositions~\ref{PropGn} and \ref{PropDampingNonStrict}, it is enough to construct $g_0 \in \mathsf Y_p$ such that, for every iterated sequence $(g_n)_{n \in \mathbb N}$ in $\mathsf Y_p$ for $S$ starting at $g_0$, one has
\begin{equation}
\label{IneqDiscreteTime}
\norm{g_n}_{p} \geq \varphi(2(n-1)), \qquad \forall n \in \mathbb N^\ast.
\end{equation}
Up to dividing the sequence $(g_n)_{n \in \mathbb N}$ and the function $\varphi$ by $C$, we assume with no loss of generality that $C = 1$.

Let $C_p = \frac{1}{3^{p-1}}$ and $\phi: \mathbb R_+ \to \mathbb R_+$ be a decreasing function such that $\phi(t) = \frac{1}{C_p} \varphi^p(2(\frac{t^{1/p}}{3}-1))$ for every $t \geq 3^p$. Note that $C_p \phi(3^p n^p) = \varphi^p(2(n-1))$ for every $n \in \mathbb N^\ast$. Let $(a_n)_{n \in \mathbb N}$ and $(b_n)_{n \in \mathbb N}$ be the sequences obtained by applying Lemma~\ref{LemmaSequences} in Appendix~\ref{AppLemmas} to $\phi$. Define $g_0: [-1, 1] \to \mathbb R_+$ by
\begin{equation}
\label{DefiG0}
g_0(s) = \sum_{k=0}^\infty k^{1/p} \chi_{(a_{k+1}, a_k]}(s),
\end{equation}
and consider an iterated sequence $(g_n)_{n \in \mathbb N}$ for $S$ starting at $g_0$. Using the fact that the sequence $(b_n)_{n\in\mathbb N}$ converges to $0$ and the relationship between the sequences $(a_n)_{n\in \mathbb N}$ and $(b_n)_{n\in\mathbb N}$, it is easy to see that
\[
\norm{g_0}_p^p = \sum_{k=0}^\infty k (a_k - a_{k+1}) = \sum_{k=1}^\infty a_k = b_0 < +\infty,
\]
and thus $g_0 \in \mathsf Y_p$, which implies that $g_n \in \mathsf Y_p$ for every $n \in \mathbb N$. Moreover, Proposition~\ref{prop:dis-sup} shows that, for every $n \geq 1$, one has
\[
g_n(s) = \sum_{k=0}^\infty \alpha_{k, n} \chi_{(a_{k+1}, a_k]}(s), \qquad \text{ with } \alpha_{k, n} \in S^{[n]}(k^{1/p}) \text{ for }k\in\mathbb N.
\]

Notice that, for every $n \in \mathbb N$ and $k \geq n^p$, one has $\abs{\alpha_{k, n}} \geq k^{1/p} - n$. Then, for every $n \in \mathbb N$, one has
\begin{equation}
\label{eq:estim-norm-gn-below}
\begin{aligned}
\norm{g_n}_p^p & \geq \sum_{k \geq n^p} (k^{1/p} - n)^p (a_k - a_{k+1}) \\
& = \sum_{k \geq n^p+1} \left[\left(k^{1/p} - n\right)^p - \left((k-1)^{1/p} - n\right)^p\right] a_k.
\end{aligned}
\end{equation}
We next prove that, for every $n \in \mathbb N^\ast$ and $k \geq 3^p n^p$, one has
\begin{equation}
\label{eq:bound-to-be-proved}
\left(k^{1/p} - n\right)^p - \left((k-1)^{1/p} - n\right)^p \geq C_p.
\end{equation}
To see that, we rewrite the left-hand side of \eqref{eq:bound-to-be-proved} as $k (A^p - B^p)$ with
\[
A = 1 - \frac{n}{k^{1/p}}, \qquad B = \left(1 - \frac{1}{k}\right)^{1/p} - \frac{n}{k^{1/p}}.
\]
Using that $k \geq 3^p n^p$, one deduces at once that $\frac{1}{3} \leq B \leq A \leq 1$. On the other hand, there exists $\Gamma \in [B, A] \subset \left[\frac{1}{3}, 1\right]$ such that $A^p - B^p = p (A-B) \Gamma^{p-1}$. Since
\[A - B = 1 - \left(1 - \frac{1}{k}\right)^{1/p} \geq \frac{1}{p k} \qquad \text{ for every } k \in \mathbb N^\ast,\]
one concludes that \eqref{eq:bound-to-be-proved} holds.

It now follows from \eqref{eq:estim-norm-gn-below}, \eqref{eq:bound-to-be-proved}, and the expression of the sequence $(a_k)_{k\in\mathbb N}$ with respect to $(b_k)_{k\in \mathbb N}$ that
\[
\norm{g_n}_p^p \geq C_p b_{\ceil{3^p n^p}-1} \geq C_p \phi(3^p n^p) = \varphi^p (2 (n-1)), \qquad n\geq 1,
\]
as required.
\end{proof}

\begin{remark}
Note that Theorem~\ref{TheoremSloooooooow} does not hold for $p=+\infty$. Indeed, assume that $\Sigma \subset \mathbb R^2$ is the graph of the piecewise linear saturation function $\sigma$ defined by $\sigma(x) = x$ for $\abs{x}\leq 1$ and $\sigma(x) = \frac{x}{\abs{x}}$ for $\abs{x}\geq 1$. Then $R \Sigma$ is the graph of the function $S$ given by $S(x) = 0$ for $\abs{x} \leq \sqrt{2}$ and $S(x) = -x + \sqrt{2}\frac{x}{\abs{x}}$ for $\abs{x} \geq \sqrt{2}$. A straightforward computation using Proposition~\ref{PropGn} shows that, for every $(z_0, z_1) \in \mathsf X_\infty$, we have $e_\infty(z)(t) = 0$ for every $t \geq 2 \ceil*{e_\infty(z)(0) / \sqrt{2}}$, and thus an estimate such as that of Theorem~\ref{TheoremSloooooooow} cannot hold.
\end{remark}

\section{Boundary perturbations}
\label{SecISS}

In this section, we show that the framework developed previously to address the stability of \eqref{eq:wave} can also be applied to handle wave equations with disturbances in the boundary condition $(z_t(t, 1), -z_x(t, 1)) \in \Sigma$. More precisely, the disturbed version of \eqref{eq:wave} we consider in this section is
\begin{equation}
\label{eq:wave-disturbed}
\left\{
\begin{aligned}
& z_{tt}(t, x) = z_{xx}(t, x), & \qquad & (t, x) \in \mathbb R_+ \times [0, 1], \\
& z(t, 0) = 0, & & t \in \mathbb R_+, \\
& (z_t(t, 1), -z_x(t, 1)) \in \Sigma + d(t), & & t \in \mathbb R_+, \\
& z(0, x) = z_0(x), & & x \in [0, 1], \\
& z_t(0, x) = z_1(x), & & x \in [0, 1],
\end{aligned}
\right.
\end{equation}
where $d: \mathbb R_+ \to \mathbb R^2$ is a measurable function representing the disturbance.

Given $p \in [1, +\infty]$ and a disturbance $d$ as above, solutions of \eqref{eq:wave-disturbed} in $\mathsf X_p$ can be defined with an obvious modification of Definition~\ref{DefWeakSolution}, consisting in replacing the set $\Sigma$ in the boundary condition at $x = 1$ in \eqref{eq:defWeakSol} by $\Sigma + d(t)$. Proposition~\ref{prop:charac1-sol} still holds for \eqref{eq:wave-disturbed} after replacing \eqref{recurrence-g} by
\begin{equation}
\label{recurrence-g-disturbed}
(g(s-2), g(s)) \in R \Sigma + R d(s - 1), \qquad \text{for a.e.\ } s \geq 1.
\end{equation}
One may use the one-to-one correspondence $\Seq$ from Definition~\ref{DefiSeqI} between elements $g \in L^p_{\loc}(-1, +\infty)$ and sequences $(g_n)_{n \in \mathbb N}$ in $\mathsf Y_p$ to rewrite \eqref{recurrence-g-disturbed} as
\begin{equation}
\label{eq:gn-disturbed}
(g_n(s), g_{n+1}(s)) \in R \Sigma + \delta_n(s), \qquad n \in \mathbb N, \text{ a.e.\ } s \in [-1, 1],
\end{equation}
where the sequence of measurable functions $(\delta_n)_{n \in \mathbb N}$ is defined from $d$ by
\begin{equation}
\label{eq:delta-from-d}
\delta_n(s) = R d(s + 2n + 1) \qquad \text{for every } n \in \mathbb N \text{ and } s \in [-1, 1].
\end{equation}

As regards existence and uniqueness of solutions of \eqref{eq:wave-disturbed}, similarly to Theorems~\ref{TheoExist} and \ref{TheoUnique}, we deduce in a straightforward manner the following result.

\begin{theorem}
Let $\Sigma \subset \mathbb R^2$ and $p \in [1, +\infty]$.
\begin{enumerate}
\item If \ref{HypoSigma-Exists} holds and $p < +\infty$, or if \ref{HypoSigma-ExistsInfty} holds and $p = +\infty$, then, for every $(z_0, z_1) \in \mathsf X_p$ and every $d \in L^p_{\loc}(\mathbb R_+, \mathbb R^2)$, there exists a solution of \eqref{eq:wave-disturbed} in $\mathsf X_p$ with initial condition $(z_0, z_1)$.

\item For every $(z_0, z_1) \in \mathsf X_p$ and every $d \in L^p_{\loc}(\mathbb R_+, \mathbb R^2)$, there exists a unique solution of \eqref{eq:wave-disturbed} in $\mathsf X_p$ with initial condition $(z_0, z_1)$ if and only if either \ref{HypoSigma-Unique} holds and $p < +\infty$, or \ref{HypoSigma-UniqueInfty} holds and $p = +\infty$.
\end{enumerate}
\end{theorem}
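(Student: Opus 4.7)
The plan is to mirror the structure of the proofs of Theorems~\ref{TheoExist} and \ref{TheoUnique}, adapting the one-to-one correspondence of Proposition~\ref{prop:charac1-sol} to its disturbed counterpart \eqref{recurrence-g-disturbed} (equivalently \eqref{eq:gn-disturbed}). The key observation is that \eqref{eq:gn-disturbed} can be rewritten, component by component, as $g_{n+1}(s) \in S(g_n(s) - \delta_n^{(1)}(s)) + \delta_n^{(2)}(s)$, where $\delta_n = (\delta_n^{(1)}, \delta_n^{(2)})$ is defined from $d$ through \eqref{eq:delta-from-d} and $S$ denotes, as usual, the set-valued map whose graph is $R\Sigma$. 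Thus, existence/uniqueness of a solution to \eqref{eq:wave-disturbed} reduces, iteratively over $n$, to the following statement: for every $h, \delta^{(1)}, \delta^{(2)} \in \mathsf Y_p$, there exists (resp.\ a unique) $h^\prime \in \mathsf Y_p$ such that $h^\prime(s) \in S(h(s) - \delta^{(1)}(s)) + \delta^{(2)}(s)$ for a.e.\ $s \in [-1,1]$.

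For the existence part, under \ref{HypoSigma-Exists} (or \ref{HypoSigma-ExistsInfty} if $p=+\infty$), let $\varphi$ be the universally measurable function whose graph is contained in $R\Sigma$. Setting $h^\prime(s) = \varphi(h(s) - \delta^{(1)}(s)) + \delta^{(2)}(s)$ produces a measurable function thanks to Proposition~\ref{PropUnivMeasurable}. For $p$ finite, the linear growth of $\varphi$ gives $\abs{h^\prime(s)} \leq a\abs{h(s)} + a\abs{\delta^{(1)}(s)} + b + \abs{\delta^{(2)}(s)}$, which lies in $L^p(-1,1)$ since $h \in \mathsf Y_p$ and $\delta_n \in L^p([-1,1], \mathbb R^2)$ as $d \in L^p_{\loc}(\mathbb R_+, \mathbb R^2)$. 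For $p=+\infty$, the assumption that $\varphi$ maps bounded sets to bounded sets together with the boundedness of $h$, $\delta^{(1)}, \delta^{(2)}$ yields $h^\prime \in \mathsf Y_\infty$. Iterating this construction provides the desired sequence $(g_n)_{n \in \mathbb N}$, which corresponds via $\Seq^{-1}$ and the adapted version of Proposition~\ref{prop:charac1-sol} to a solution of \eqref{eq:wave-disturbed}.

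For the uniqueness part, the sufficiency proceeds as in Theorem~\ref{TheoUnique}: under \ref{HypoSigma-Unique} (or \ref{HypoSigma-UniqueInfty}), $R\Sigma$ is exactly the graph of $\varphi$, so $h^\prime$ is uniquely determined by the above explicit formula, giving uniqueness by induction on $n$. For the necessity, one specializes to $d \equiv 0$, in which case \eqref{eq:wave-disturbed} reduces to \eqref{eq:wave} and the existence and uniqueness of solutions of \eqref{eq:wave} in $\mathsf X_p$ for every initial condition forces \ref{HypoSigma-Unique} (resp.\ \ref{HypoSigma-UniqueInfty}) by Theorem~\ref{TheoUnique}.

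The main obstacle, or rather the only point requiring slight care, is verifying the $L^p$-integrability of the constructed $g_{n+1}$ and keeping track of how the $L^p$-norm of the disturbance enters the bound. Nevertheless, since both $\varphi$'s growth and the disturbance sit in $L^p_{\loc}$, the additional disturbance terms are absorbed without difficulty, and no new conceptual ingredient beyond those already present in the proofs of Theorems~\ref{TheoExist} and \ref{TheoUnique} is needed. In particular, no assumption of monotonicity or closedness on $\Sigma$ enters, which reflects the genuine flexibility of the discrete-time reformulation in handling boundary perturbations.
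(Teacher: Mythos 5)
Your proposal is correct and follows exactly the route the paper intends: the paper deduces this theorem ``in a straightforward manner'' from Theorems~\ref{TheoExist} and \ref{TheoUnique} via the disturbed recurrence \eqref{recurrence-g-disturbed}--\eqref{eq:gn-disturbed}, and your rewriting $g_{n+1}(s) \in S(g_n(s) - \delta_{n,1}(s)) + \delta_{n,2}(s)$ is precisely the reformulation the paper itself uses later (in the proof of Theorem~\ref{thm:ISS}). The measurability argument via Proposition~\ref{PropUnivMeasurable}, the $L^p$ bookkeeping with $d \in L^p_{\loc}$, and the reduction to $d \equiv 0$ for the necessity part are all sound and match the intended adaptation.
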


We now turn to the issue of asymptotic behavior of solutions of \eqref{eq:wave-disturbed}. Before stating our results, we need to introduce, for every damping set $\Sigma \subset \mathbb R^2$, the function $\mu: \mathbb R_+ \to \mathbb R_+$ defined by
\begin{equation}
\label{eq:defiMu}
\mu(r) = \lim_{\eta \to 0^+} \sup_{\abs{x} \leq r + \eta} \norm{S(x)},
\end{equation}
where we recall that $S$ is the set-valued map whose graph is $R\Sigma$. Similarly to the function $\rho$ introduced in \eqref{eq:defiRho}, the function $\mu$ satisfies the properties stated in Proposition~\ref{PropDefiRho}, with $S^{[2]}$ replaced by $S$ in \ref{ItemRhoStrictDamping} and $\rho^{[\floor{n/2}]}$ replaced by $\mu^{[n]}$ in \ref{ItemIterateEstimateRho}. We also need to introduce the space of disturbances $\mathcal D_p$, $p \in [1, +\infty)$, given by
\begin{equation}
\label{eq:defi-space-Dp}
\mathcal{D}_p:=\left\{ d:\mathbb R_+\rightarrow \mathbb R^2 \text{ is measurable} \suchthat D(\cdot):=\sum_{n=0}^\infty |d(\cdot+2n+1)|\in \mathsf Y_p \right\}.
\end{equation}
Note that $\mathcal D_p \subset L^p(\mathbb R_+, \mathbb R^2)$ with equality if and only if $p = 1$.

We shall use the next definition of input-to-state stability for \eqref{eq:wave-disturbed}, which can can be seen as a generalization of the UGAS property in presence of disturbances (see, for instance, \cite{Mironchenko2018Characterizations}).

\begin{definition}\label{def:ISS}
Let $p\in[1, +\infty]$. We say that the dynamical system \eqref{eq:wave-disturbed} is \emph{input-to-state stable} (ISS) with respect to the state space $\mathsf X_p$ and the disturbance space $L^p(\mathbb R_+, \mathbb R^2)$ if there exist a $\mathcal{KL}$ function $\beta$ and a $\mathcal{K}$ function $\gamma$ such that, for every trajectory $z$ of \eqref{eq:wave-disturbed} associated with an initial condition $(z_0,z_1)\in \mathsf X_p$ and a disturbance $d\in L^p(\mathbb R_+, \mathbb R^2)$, one has
\begin{equation}\label{eq:iss}
e_p(z)(t)\leq \beta(e_p(z)(0),t)+\gamma(\norm{d}_{L^p(\mathbb R_+,\mathbb R^2)}),\qquad t\geq 0.
\end{equation}
\end{definition}

The main result of this section is the next theorem, which provides conditions on $\mu$ ensuring that \eqref{eq:wave-disturbed} is either strongly stable or ISS.

\begin{theorem}
\label{thm:ISS}
Let $p \in [1, +\infty]$, $\Sigma \subset \mathbb R^2$ be a damping set, and $\mu$ be the corresponding function defined in \eqref{eq:defiMu}. Assume that $\mu(r) < r$ for every $r > 0$.
\begin{enumerate}
\item\label{ItemStrongStab} Let $d \in L^p_{\loc}(\mathbb R_+, \mathbb R^2)$ and $z$ be a solution of \eqref{eq:wave-disturbed} with disturbance $d$. Then $e_p(z)(t) \to 0$ as $t \to +\infty$ if one of the following conditions holds.
\begin{enumerate}
\item\label{ItemItemStrongStabPFinite} $p < +\infty$ and $d \in \mathcal D_p$.
\item\label{ItemItemStrongStabDConverges} $d(t) \to 0$ as $t \to +\infty$ and, for every $\varepsilon > 0$, one has
\[
\inf_{r \geq \varepsilon} r - \mu(r) > 0.
\]
\end{enumerate}
\item\label{ItemISS}
The dynamical system \eqref{eq:wave-disturbed} is ISS with respect to $\mathsf X_p$ and $L^p(\mathbb R_+, \mathbb R^2)$ if one of the following conditions holds.
\begin{enumerate}
\item\label{ItemISS-pinf} $p=+\infty$ and $\lim_{r\to+\infty}r -\mu(r)=+\infty$.
\item\label{ItemISS-pfini} $p<+\infty$ and \ref{HypoSigma-SectorInfty} is satisfied.
\end{enumerate}
\end{enumerate}
\end{theorem}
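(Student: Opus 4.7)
The plan is to translate the problem via Proposition~\ref{PropGn} and \eqref{eq:gn-disturbed} into a pointwise discrete-time recursion for the sequence $(g_n)_{n \in \mathbb N}$ corresponding to $z$. Writing $\delta_n(s) = (\delta_n^{(1)}(s), \delta_n^{(2)}(s))$, the inclusion \eqref{eq:gn-disturbed} is equivalent to $g_{n+1}(s) - \delta_n^{(2)}(s) \in S(g_n(s) - \delta_n^{(1)}(s))$, which, together with the definition of $\mu$ and the fact that $\mu$ is nondecreasing, yields the master inequality
\begin{equation}\label{eq:master-ISS}
\abs{g_{n+1}(s)} \leq \mu\bigl(\abs{g_n(s)} + \abs{\delta_n(s)}\bigr) + \abs{\delta_n(s)}, \qquad \text{for a.e.\ } s \in [-1, 1].
\end{equation}
I would first check that $\mu$ defined in \eqref{eq:defiMu} shares with $\rho$ from Proposition~\ref{PropDefiRho} the properties of being nondecreasing and upper semi-continuous, so that taking $\limsup$'s in \eqref{eq:master-ISS} is legitimate. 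The passage from the discrete norms $\norm{g_n}_p$ to $e_p(z)(t)$ for continuous $t$ follows from \eqref{eq:norm}, as in the proofs of Propositions~\ref{PropCvSequence} and \ref{PropDampingNonStrict}.

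For \ref{ItemStrongStab}, in case \ref{ItemItemStrongStabPFinite}, note that $\mu(r) \leq r$ gives $\abs{g_n(s)} \leq \abs{g_0(s)} + 2 D(s)$, providing an $L^p$ envelope since $D \in \mathsf Y_p$; moreover $\abs{\delta_n(s)} \to 0$ for a.e.\ $s$. Setting $L(s) = \limsup_n \abs{g_n(s)}$, the upper semi-continuity of $\mu$ together with \eqref{eq:master-ISS} gives $L(s) \leq \mu(L(s))$ for a.e.\ $s$; since $\mu(r) < r$ for $r > 0$, we conclude $L(s) = 0$ and dominated convergence finishes the argument. In case \ref{ItemItemStrongStabDConverges}, let $\eta_n = \esssup_{s} \abs{\delta_n(s)}$, which tends to $0$. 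Fix $\varepsilon > 0$ and set $c_\varepsilon = \inf_{r \geq \varepsilon}(r - \mu(r)) > 0$; for $n$ large enough that $\eta_n \leq c_\varepsilon/4$, \eqref{eq:master-ISS} shows that either $\abs{g_n(s)} < \varepsilon$ and $\abs{g_{n+1}(s)} < \varepsilon + c_\varepsilon/2$, or $\abs{g_{n+1}(s)} \leq \abs{g_n(s)} - c_\varepsilon/2$. A standard trapping argument then yields $\abs{g_n(s)} \leq 3\varepsilon/2$ uniformly in $s$ for $n$ large, and letting $\varepsilon \to 0$ gives $\norm{g_n}_\infty \to 0$; for $p < \infty$ an envelope coming from the initial absorption phase (bounded by $\abs{g_0(s)}$ plus a constant, finite since $d$ is eventually small) together with DCT gives $\norm{g_n}_p \to 0$.

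For \ref{ItemISS}, in case \ref{ItemISS-pinf}, set $D = \norm{d}_\infty$ and $a_n = \norm{g_n}_\infty$, so \eqref{eq:master-ISS} reduces to $a_{n+1} \leq \mu(a_n + D) + D$; the hypothesis $\lim_{r \to +\infty} (r - \mu(r)) = +\infty$ lets one define $\gamma(D) = \inf\{\gamma \geq 0 : \mu(\gamma + D) + D \leq \gamma\}$, which is a well-defined $\mathcal K$ function; the set $[0, \gamma(D)]$ is forward invariant, and outside it the iteration strictly decreases, producing the required $\mathcal{KL}$ bound $\beta$ by an application of Lemma~\ref{lem:KL}. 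In case \ref{ItemISS-pfini}, the sector condition \ref{HypoSigma-SectorInftyTilde} sharpens \eqref{eq:master-ISS} into a dichotomy: if $\abs{g_n(s)} \geq M + \abs{\delta_n(s)}$ then $\abs{g_{n+1}(s)} \leq \alpha \abs{g_n(s)} + (1+\alpha) \abs{\delta_n(s)}$ (with $\alpha \in (0,1)$ the constant of \ref{HypoSigma-SectorInftyTilde}), while otherwise $\abs{g_{n+1}(s)} \leq M + 2 \abs{\delta_n(s)}$. Following the partition strategy of the proof of Theorem~\ref{thm-ugas}, I would split $[-1, 1]$ according to the size of $\abs{g_0(s)}$ relative to $M$ and $\norm{g_0}_p^{1/2}$, treating the ``large'' region by iterated exponential contraction in $\alpha$ (which aggregates the disturbance terms into a geometric series controlled by $\norm{D}_p$, hence by $\norm{d}_{L^p}$), and the ``small'' region via the $\mathcal{KL}$ decay of the undisturbed problem applied at a shifted time.

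The main obstacle I anticipate is \ref{ItemISS-pfini}: the $L^p$-level bookkeeping for the pointwise dichotomy above is delicate, since the disturbance must be aggregated through the shifted windows $\delta_n(s) = R d(s + 2n + 1)$, and one must turn the pointwise bounds into an $L^p$ bound involving $\norm{d}_{L^p(\mathbb R_+, \mathbb R^2)}$ (not a larger norm). I expect the geometric decay factor $\alpha$ combined with Young's convolution inequality on the discrete side to produce the ISS gain $\gamma(r) = C r$, as in the analogous finite-dimensional arguments, but verifying the constants and the uniformity over initial data in $\mathsf X_p$ will be the technical crux.
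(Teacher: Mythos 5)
Your item \ref{ItemStrongStab} is essentially the paper's own proof: the paper runs the same recursion (written for $h_n=g_n-\delta_{n,1}$ rather than $g_n$), and your upper-semicontinuity/limsup step for (a-i) and the trapping dichotomy for (a-ii) match it; the only imprecisions are the ``uniformly in $s$'' claim and the ``plus a constant'' envelope in (a-ii) for $p<+\infty$ (the early windows of $d$ are only $L^p$, not bounded, so the envelope is $\abs{g_0}+2\sum_{k<n_1}\abs{\delta_k}\in\mathsf Y_p$ and the convergence is pointwise, not uniform), both easily repaired. The genuine gap is in item \ref{ItemISS}. In case \ref{ItemISS-pinf} you assert that $[0,\gamma(D)]$, with $\gamma(D)=\inf\{\gamma\ge 0 \suchthat \mu(\gamma+D)+D\le\gamma\}$, is forward invariant (true) and that ``outside it the iteration strictly decreases''. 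The latter is false under the stated hypotheses, because $\id-\mu$ need not be monotone, so $\{\gamma \suchthat \mu(\gamma+D)+D\le\gamma\}$ need not be an interval. Concretely, take the damping $S$ with $\norm{S(x)}=0$ for $\abs{x}<5$, $\norm{S(x)}=4.9$ for $5\le\abs{x}<6$, $\norm{S(x)}=\abs{x}-1.1$ for $6\le\abs{x}<20$, and $\norm{S(x)}=18.9+(\abs{x}-20)/2$ for $\abs{x}\ge 20$. The associated $\mu$ is nondecreasing, upper semi-continuous, satisfies $\mu(r)<r$ for all $r>0$ and $r-\mu(r)\to+\infty$, so all hypotheses of \ref{ItemISS-pinf} hold; yet for $D=1$ one has $\gamma(D)=1$, while from $a_n=4.5>\gamma(D)$ your comparison bound gives $a_{n+1}\le\mu(5.5)+1=5.9>a_n$, and iterating the bound climbs to about $20.8$ before settling. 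So the invariance-plus-decrease mechanism with the gain you define does not produce a $\mathcal{KL}$ estimate. This is precisely what the paper's Lemma~\ref{LemmKInfty} is for: it builds $\varphi\in\mathcal K_\infty$ with $\varphi\le\id-\mu$ and $\id-\varphi$ nondecreasing (and, under \ref{HypoSigma-SectorInfty}, $\varphi$ convex, i.e.\ $\id-\varphi$ concave), after which the scalar comparison system is monotone and its ISS is established via the Jiang--Wang characterization for discrete-time systems. Without this monotonization step your argument for (b-i) does not go through.

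For case \ref{ItemISS-pfini}, your proposal stops exactly at what you yourself call the crux: the $L^p$ aggregation of the disturbance through the partition is not carried out, and the treatment of the ``small'' region via ``the $\mathcal{KL}$ decay of the undisturbed problem applied at a shifted time'' is not legitimate, since the disturbance keeps acting there and only $\mu(r)<r$ (no uniform contraction) is available below $M$. The paper takes a different, and much shorter, route that avoids the partition of Theorem~\ref{thm-ugas} altogether: using the concave nondecreasing envelope $\id-\varphi$ from Lemma~\ref{LemmKInfty} and the Jensen-type Lemma~\ref{LemmaJensen}, the pointwise inequality is transferred directly to the norms, $\norm{g_{n+1}}_p\le 2^{1/p}(\id-\varphi)(2^{-1/p}\norm{g_n}_p)+2\norm{\delta_n}_p$, and ISS of \eqref{eq:wave-disturbed} is reduced to ISS of this one-dimensional discrete-time system (again via the Jiang--Wang criterion), with $\sup_n\norm{\delta_n}_p\le\norm{d}_{L^p(\mathbb R_+,\mathbb R^2)}$ giving the gain. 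As written, part (b) of your proposal is therefore not a proof: both halves need the missing monotonization/concavification lemma (or an equivalent device), and (b-ii) additionally needs the norm-level comparison argument you only sketch.
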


\begin{proof}
Let $d: \mathbb R_+ \to \mathbb R^2$ be measurable, $z$ be a trajectory of \eqref{eq:wave-disturbed}, and $(g_n)_{n \in \mathbb N}$ and $(\delta_n)_{n \in \mathbb N}$ be the sequences corresponding to $z$ and $d$ according to Proposition~\ref{PropGn} and \eqref{eq:delta-from-d}, respectively. For $n \in \mathbb N$ and $s \in [-1, 1]$, denote $\delta_n(s) = (\delta_{n, 1}(s), \delta_{n, 2}(s)) \in \mathbb R^2$ and set $h_n(s) = g_n(s) - \delta_{n, 1}(s)$. Then, using \eqref{eq:gn-disturbed}, one deduces that, for $n \in \mathbb N$ and a.e.\ $s \in [-1, 1]$,
\begin{equation}
\label{eq:yacine-veut-un-label-ici}
\abs{h_{n+1}(s)} \leq \mu(\abs{h_n(s)}) + \abs{\delta_{n+1, 1}(s) - \delta_{n, 2}(s)}.
\end{equation}

We first prove \ref{ItemStrongStab} under the assumption \ref{ItemItemStrongStabPFinite}. One deduces from \eqref{eq:yacine-veut-un-label-ici} that, for $n \in \mathbb N$ and a.e.\ $s \in [-1, 1]$,
\begin{equation}
\label{eq:yacine-veut-un-label-ici-aussi}
\abs{h_n(s)} \leq \abs{h_0(s)} + 2 D(s),
\end{equation}
where $D$ is the function in $\mathsf Y_p$ associated with $d \in \mathcal D_p$ in the sense of \eqref{eq:defi-space-Dp}. Hence $(h_n)_{n \in \mathbb N}$ is a sequence in $\mathsf Y_p$ and it is dominated by the function $\abs{h_0} + 2 D$, which also belongs to $\mathsf Y_p$. In addition, $\delta_n(s)$ tends to $0$ as $n \to +\infty$ for a.e.\ $s \in [-1, 1]$ since $D \in \mathsf Y_p$. Then, in order to obtain the conclusion of \ref{ItemStrongStab}, it suffices to show that $(h_n)_{n \in \mathbb N}$ tends to zero almost everywhere.

Let $s \in [-1, 1]$ be such that the series of general term $\abs{\delta_{n+1, 1}(s) - \delta_{n, 2}(s)}$ is convergent. The sequence $(\abs{h_n(s)})_{n \in \mathbb N}$ being bounded thanks to \eqref{eq:yacine-veut-un-label-ici-aussi}, it is enough to prove that its only limit point is zero. Reasoning by contradiction, assume that there exists a subsequence $(\abs{h_{n_k}(s)})_{k \in \mathbb N}$ converging to some $r_\ast > 0$. Then, for every $\varepsilon > 0$, there exists $k_0 \in \mathbb N$ such that $\abs[\big]{\abs{h_{n_k}(s)} - r_\ast} < \varepsilon$ for every $k \geq k_0$ and $\sum_{n = n_{k_0}}^{\infty} \abs{\delta_{n+1, 1}(s) - \delta_{n, 2}(s)} < \varepsilon$. Using \eqref{eq:yacine-veut-un-label-ici} and the fact that $\mu$ is increasing, one deduces that
\[
r_\ast - \varepsilon \leq \abs{h_{n_{k + 1}}(s)} \leq \mu(\abs{h_{n_k}(s)}) + \sum_{n = n_k}^{n_{k+1} - 1} \abs{\delta_{n+1, 1}(s) - \delta_{n, 2}(s)} \leq \mu(r_\ast + \varepsilon) + \varepsilon.
\]
Hence $r_\ast \leq \mu(r_\ast + \varepsilon) + 2 \varepsilon$ and, since $\mu$ is upper semi-continuous and $\varepsilon > 0$ is arbitrary, we deduce that $\mu(r_\ast) \geq r_\ast$, yielding the desired contradiction.

We next prove \ref{ItemStrongStab} under the assumption \ref{ItemItemStrongStabDConverges}, which amounts to prove that $(h_n)_{n \in \mathbb N}$ tends to $0$ in $\mathsf Y_p$. For every $\varepsilon > 0$, let $\eta = \min\left(\varepsilon/2, \inf_{r \geq \varepsilon/2} r - \mu(r)\right) > 0$. Let $n_0 \in \mathbb N$ be such that $\abs{\delta_{n+1, 1}(s) - \delta_{n, 2}(s)} < \eta/2$ for every $n \geq n_0$ and $s \in [-1, 1]$. It follows from \eqref{eq:yacine-veut-un-label-ici} that, for $n \geq n_0$ and $s \in [-1, 1]$, one has
\begin{align*}
\abs{h_n(s)} \leq \varepsilon & \implies \abs{h_{n+1}(s)} \leq \varepsilon, \\
\abs{h_n(s)} \geq \frac{\varepsilon}{2} & \implies \abs{h_{n+1}(s)} \leq \abs{h_n(s)} - \frac{\eta}{2}.
\end{align*}
For $n \geq n_0$, let $\chi_n$ denote the characteristic function of the set $\{s \in [-1, 1] \suchthat \abs{h_n(s)} \leq \varepsilon\}$. Then the sequence $(h_n \chi_n)_{n \in \mathbb N}$ takes values in $\mathsf Y_p$ and, for $n \geq n_0$, one has $\norm{h_n \chi_n}_p \leq 2^{1/p} \varepsilon$ (with the convention that $1/p = 0$ for $p = +\infty$). On the other hand, for a.e.\ $s \in [-1, 1]$, the sequence $(\abs{h_n(s)} (1 - \chi_n(s)))_{n \in \mathbb N}$ is nonincreasing for $n \geq n_0$ and converges in a finite number of steps to zero. In particular, by the dominated convergence theorem, $\norm{\abs{h_n} (1 - \chi_n)}_p$ tends to zero as $n \to +\infty$. The conclusion follows.

We next turn to \ref{ItemISS}. Notice first that $e_p^p(z)(t) \leq e_p^p(z)(2n) + e_p^p(z)(2(n+1))$ for $p < +\infty$ and $e_\infty(z)(t) \leq \max(e_\infty(z)(2n), e_\infty(z)(2(n+1)))$ for every $n \in \mathbb N$ and $t \in [2n, 2(n+1)]$. Hence, using also Lemma~\ref{lem:KL} in Appendix~\ref{AppLemmas}, it is enough to show that the inequality in \eqref{eq:iss} holds for $t = 2n$ for every $n \in \mathbb N$ to obtain that \eqref{eq:wave-disturbed} is ISS. Moreover, notice that \ref{HypoSigma-SectorInfty} implies that $\lim_{r \to +\infty} r -  \mu(r) = +\infty$.

First of all, one obtains from Lemma~\ref{LemmKInfty} given in Appendix~\ref{AppLemmas} that there exists a $\mathcal{K}_\infty$ function $\varphi$ lower bounding the function $r\mapsto r - \mu(r)$ and such that $\id - \varphi$ is nondecreasing. In particular, one then has that $\varphi^{-1}: \mathbb R_+ \to \mathbb R_+$ is $\mathcal K_\infty$. Moreover, if \ref{HypoSigma-SectorInfty} holds true, one has in addition that $\id - \varphi$ is concave. From \eqref{eq:gn-disturbed}, one deduces that, for every $n \in \mathbb N$ and $s \in [-1, 1]$, one has
\[\abs{g_{n+1}(s)} \leq (\id - \varphi)(\abs{g_n(s)} + \abs{\delta_{n, 1}(s)}) + \abs{\delta_{n, 2}(s)},\]
and thus
\[
\abs{g_{n+1}(s)} \leq (\id - \varphi)(\abs{g_n(s)}) + \abs{\delta_{n, 1}(s)} + \abs{\delta_{n, 2}(s)},
\]
since, by an immediate computation using the fact that $\varphi$ is nondecreasing, for every $a, b \in \mathbb R_+$, one has $(\id - \varphi)(a + b) \leq (\id - \varphi)(a) + b$. Since $\id - \varphi$ is nondecreasing and it is concave when $p$ is finite, one deduces, applying Lemma~\ref{LemmaJensen} in Appendix~\ref{AppLemmas} in the case $p < +\infty$, that, for every $n \in \mathbb N$,
\[
\norm{g_{n+1}}_p \leq 2^{1/p} (\id - \varphi)(2^{-1/p} \norm{g_n}_p) + 2 \norm{\delta_{n}}_p,
\]
with the convention that $1/p = 0$ for $p = +\infty$. It follows that $\norm{g_n}_p \leq k_n$, where $(k_n)_{n \in \mathbb N}$ is the trajectory of the one-dimensional discrete-time control system defined by
\begin{equation}
\label{eq:DynSyst}
k_{n+1} = 2^{1/p}(\id - \varphi)(2^{-1/p}\abs{k_n}) + \abs{u_n}, \qquad n \in \mathbb N,
\end{equation}
with initial condition and control given respectively by
\[
\begin{aligned}
k_0 & = \norm{g_0}_p, & \qquad & \\
u_n & = 2 \norm{\delta_n}_p, & & n \in \mathbb N.
\end{aligned}
\]
To obtain the conclusion, it is enough to prove that the control system \eqref{eq:DynSyst} is ISS according to the standard definition of ISS for finite-dimensional control systems as given in \cite{Jiang2001Input}. Indeed, in this case, one will deduce that there exist a $\mathcal{KL}$ function $\beta$ and a $\mathcal K$ function $\gamma$ such that
\[
\norm{g_n}_p \leq k_n\leq \beta(k_0, n) + \gamma\biggl(\sup_{n \in \mathbb N} u_n\biggr)=
\beta(\norm{g_0}_p, n) + \gamma\biggl(2 \sup_{n \in \mathbb N} \norm{\delta_n}_p\biggr)
\]
and the conclusion follows by noticing that $\sup_{n \in \mathbb N} \norm{\delta_n}_p \leq \norm{d}_{L^p(\mathbb R_+, \mathbb R^2)}$.

Thanks to \cite[Theorem~1]{Jiang2001Input}, the proof of ISS for \eqref{eq:DynSyst} is reduced to establishing the existence of three $\mathcal K$ functions $\gamma_0, \sigma_1, \sigma_2$ such that, for every trajectory $(k_n)_{n \in \mathbb N}$ of \eqref{eq:DynSyst} starting at $k_0 \in \mathbb R$ and corresponding to a control $(u_n)_{n \in \mathbb N}$, one has
\begin{equation}
\label{eq:iss-limsup}
\limsup_{n \to +\infty} \abs{k_n} \leq \gamma_0\left(\limsup_{n \to +\infty}\abs{u_n}\right)
\end{equation}
and
\begin{equation}
\label{eq:iss-ubibs}
\sup_{n \in \mathbb N} \abs{k_n} \leq \max\left(\sigma_1(\abs{k_0}), \sigma_2\left(\sup_{n \in \mathbb N} \abs{u_n}\right)\right).
\end{equation}

From \eqref{eq:DynSyst} and the fact that $\id-\varphi$ is continuous and nondecreasing, it is immediate to derive that
\[
\limsup_{n \to +\infty} \abs{k_n} \leq 2^{1/p} (\id - \varphi)(2^{-1/p} \limsup_{n \to +\infty} \abs{k_n}) + \limsup_{n \to +\infty} \abs{u_n},
\]
yielding \eqref{eq:iss-limsup} with $\gamma_0(r) = 2^{1/p} \varphi^{-1}(2^{-1/p} r)$ for $r \in \mathbb R_+$.

Finally, \eqref{eq:iss-ubibs} follows by proving that, for every $n \in \mathbb N$,
\[\abs{k_n} \leq \max\left(\abs{k_0}, U + 2^{1/p} \varphi^{-1}(2^{-1/p} U)\right),\]
where $U = \sup_{n \in \mathbb N} \abs{u_n}$. Indeed, using an inductive argument, the above inequality holds trivially for $n = 0$, while the induction step follows by considering separately the cases where $\abs{k_n} < 2^{1/p} \varphi^{-1}(2^{-1/p} U)$ or not.
\end{proof}

\begin{remark} Consider a damping set $\Sigma$ verifying the following  
generalized sector condition, which is a global version of \ref{HypoSigma-gUpper}: there exist a 
positive constant $M$ and a function $q \in \mathcal C^1(\mathbb R_+, \mathbb R_+)$ with $q(0) = 0$, $0 < 
q(x) < x$, and $\abs{q^\prime(x)} < 1$ for every $x>0$ such that $q(\abs{x}) \leq \abs{y}$ and $q(\abs{y}) \leq \abs{x}$ for every $(x, y) \in \Sigma$. In that case, the function $\mu$ defined in \eqref{eq:defiMu} satisfies $\mu \leq Q$, where the function $Q$ is defined from $q$ as in \eqref{Relation-Q-q}. Moreover, conditions on $\mu$ expressed in \ref{ItemItemStrongStabDConverges} and \ref{ItemISS-pinf} are satisfied if $\liminf_{x \to +\infty} q(x) > 0$ and $\lim_{x\to + \infty} q(x) = +\infty$, respectively.
\end{remark}

\begin{remark}\label{rem:loc}
One can weaken the assumption in \ref{ItemISS-pinf} to $p = +\infty$ and $\liminf_{r \to +\infty} r - \mu(r) =: \ell > 0$ and obtain an ISS-type result under additional assumptions on the $L^\infty$ norm of the disturbance $d$. More precisely, in that case, one can adapt the proof of Lemma~\ref{LemmKInfty} in Appendix~\ref{AppLemmas} to provide a $\mathcal K$ function $\varphi$ lower bounding $\id - \mu$ such that $\id - \varphi$ is nondecreasing, but whose range is $[0, \ell)$ instead of $\mathbb R_+$. By following the same lines of the arguments of Item~\ref{ItemISS-pinf}, one deduces an ISS-type estimate for trajectories of \eqref{eq:wave-disturbed} associated with disturbances $d$ in $L^\infty(\mathbb R_+, \mathbb R^2)$ with $\norm{d}_{L^\infty} < \ell/2$, i.e., one has an estimate of the form \eqref{eq:iss} but the function $\gamma$ is defined only on $[0, \ell/2)$.
\end{remark}

\begin{remark}
To the best of our knowledge, few results on \eqref{eq:wave-disturbed} 
have been obtained and the most precise ones can be found in \cite{Xu2019Saturated}, which considers the case where $\Sigma$ is the graph of a saturation function and essentially deals with disturbance rejection. Theorem~\ref{thm:ISS}\ref{ItemStrongStab} improves the results of that reference since the disturbance there is assumed to be matching and regular (both $d(\cdot)$ and $d'(\cdot)$ belong to $\mathcal{D}_1$) and Remark~\ref{rem:loc} shows that we can achieve ISS-type of results in $\mathsf X_\infty$.
\end{remark}

\section{Application to the case of the sign function}
\label{SecSign}

In this section, we apply the framework introduced in the paper to the particular choice of $\Sigma$ as the graph of the sign multi-valued function defined in \eqref{eq:defi-sign}, which is not the graph of a function $\sigma: \mathbb R \to \mathbb R$ and which is not a strict damping. We will illustrate how our techniques easily handle this case, obtaining in particular existence and uniqueness of solutions  of \eqref{eq:wave} and the precise characterization of their asymptotic behavior. This boundary condition has been previously considered in the literature, for instance in \cite{Xu2019Saturated}.

We then consider that $\Sigma = \Sigma_M$, where $\Sigma_M$ is defined in \eqref{eq:Sigma-Sign}, and, for sake of simplicity, we assume in the sequel that $M = \sqrt{2}$. All the results that we will present in this section in that case readily extend to the general case of $\Sigma_M$ for any $M > 0$ by simply remarking that $z$ is a solution of \eqref{eq:wave} associated with the set $\Sigma_M$ if and only if $\sqrt{2} z / M$ is a solution of \eqref{eq:wave} associated with the set $\Sigma_{\sqrt{2}}$.

A straightforward computation shows that $R\Sigma$ is the graph of a function (which we denote by $S$ in a slight abuse of notation) given by
\begin{equation}
\label{eq:S-Sign}
S(x) = \begin{dcases*}
x, & if $\abs{x} \leq 1$, \\
2 - x, & if $x > 1$, \\
-2 - x, & if $x < -1$.
\end{dcases*}
\end{equation}
Note that this set satisfies both \ref{HypoSigma-Unique} and \ref{HypoSigma-UniqueInfty} and thus, as an immediate consequence of Theorem~\ref{TheoUnique}, we obtain the following.

\begin{proposition}
Let $p \in [1, +\infty]$ and consider the wave equation \eqref{eq:wave} with the set $\Sigma$ given by \eqref{eq:Sigma-Sign}. Then, for every $(z_0, z_1) \in \mathsf X_p$, there exists a unique solution of \eqref{eq:wave} in $\mathsf X_p$ with initial condition $(z_0, z_1)$.
\end{proposition}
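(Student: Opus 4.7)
The plan is to reduce the statement to a direct application of Theorem~\ref{TheoUnique}, for which it suffices to verify that $R\Sigma$ is equal to the graph of a universally measurable function with linear growth (yielding \ref{HypoSigma-Unique}) that also maps bounded sets to bounded sets (yielding \ref{HypoSigma-UniqueInfty}), so that both the finite $p$ and $p=+\infty$ cases are covered simultaneously.

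First, I would verify the explicit formula \eqref{eq:S-Sign} for $S$ by decomposing $\Sigma$ according to \eqref{eq:Sigma-Sign} into its three pieces and applying $R$ to each. For $(x,\sqrt{2})$ with $x\geq 0$, one computes $R(x,\sqrt 2)^\top = (x/\sqrt 2 + 1,\, -x/\sqrt 2 + 1)$, so that the image consists of all points $(u,v)$ with $u\geq 1$ and $v = 2-u$. Similarly, $(x,-\sqrt 2)$ with $x\leq 0$ maps to pairs $(u,v)$ with $u\leq -1$ and $v=-2-u$. Finally, $(0,y)$ with $\abs{y}\leq\sqrt 2$ maps to the diagonal segment $\{(u,u)\suchthat \abs{u}\leq 1\}$. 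Putting these three pieces together shows that $R\Sigma$ is precisely the graph of the function $S$ defined in \eqref{eq:S-Sign}.

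Second, I would check the regularity and growth properties of $S$. By inspection, $S$ is continuous on $\mathbb R$ (the one-sided limits match at $x=\pm 1$), hence Borel measurable, and therefore universally measurable (every Borel function is universally measurable, cf.\ Appendix~\ref{AppUniversally}). For the growth condition, one has $\abs{S(x)} = \abs{x} \leq \abs{x}+2$ for $\abs{x}\leq 1$, and $\abs{S(x)} = \abs[\big]{\abs{x}-2} \leq \abs{x}+2$ for $\abs{x}>1$; hence $S$ has linear growth with constants $a=1$, $b=2$, and in particular it maps bounded sets to bounded sets.

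Thus $\Sigma$ satisfies both \ref{HypoSigma-Unique} and \ref{HypoSigma-UniqueInfty}, and Theorem~\ref{TheoUnique} yields existence and uniqueness of a solution in $\mathsf X_p$ for every $(z_0,z_1)\in\mathsf X_p$ and every $p\in[1,+\infty]$. There is no real obstacle in this argument: the only slightly delicate point is correctly computing $R\Sigma$ from the three pieces of $\Sigma_{\sqrt 2}$, after which everything is immediate from the continuity and linear growth of the resulting single-valued map $S$.
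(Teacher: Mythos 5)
Your proposal is correct and follows essentially the same route as the paper: compute $R\Sigma$ explicitly to see that it is the graph of the continuous, piecewise affine map $S$ in \eqref{eq:S-Sign}, note that this gives \ref{HypoSigma-Unique} and \ref{HypoSigma-UniqueInfty} (continuity yields universal measurability, and the linear growth/bounded-on-bounded-sets conditions are immediate), and conclude by Theorem~\ref{TheoUnique}. You merely spell out the ``straightforward computation'' that the paper leaves implicit, and your rotation computations are accurate.
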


\begin{remark}
In \cite{Xu2019Saturated}, the sign function has been introduced as the limit of linear saturated feedbacks, and the existence and uniqueness result therein, \cite[Lemma~3]{Xu2019Saturated}, is shown in the Hilbertian framework of $\mathsf X_2$ by proving that the generator of the corresponding equation in $\mathsf X_2$ is a maximal monotone nonlinear operator. With respect to that framework, our techniques allow one to consider solutions of \eqref{eq:wave} in $\mathsf X_p$ for any $p \in [1, +\infty]$.
\end{remark}

We now turn to the asymptotic behavior of trajectories of \eqref{eq:wave} in $\mathsf X_p$ for $p \in [1, +\infty]$ with $\Sigma$ given by \eqref{eq:Sigma-Sign} and $M = \sqrt{2}$. It is immediate to see that $\Sigma$ is a damping set and hence, by Proposition~\ref{PropDampingNonStrict}, the energy $e_p(z)(\cdot)$ is nonincreasing along trajectories $z$. On the other hand, $S$ is continuous, and hence its graph is closed, as well as the graphs of its iterates $S^{[n]}$ for $n \in \mathbb N$. Since $S(x) = x$ for $x \in [-1, 1]$, $S^{[2]}$ is not a strict damping, and thus Corollary~\ref{CoroS2Closed} immediately implies that \eqref{eq:wave} is not strongly stable in $\mathsf X_p$ for any $p \in [1, +\infty]$. A more direct way to see that, which is similar to the argument provided in \cite[Lemma~4]{Xu2019Saturated}, consists of considering a constant function $g_0$ in $\mathsf Y_p$ whose constant value belongs to $[-1, 1]$. The corresponding solution $z$ of \eqref{eq:wave} with initial condition $\mathfrak I^{-1}(g_0)$ corresponds to the sequence $(g_n)_{n \in \mathbb N}$ which is constant and equal to $g_0$.

We next characterize the limits of solutions of \eqref{eq:wave}. We start with the following preliminary result dealing with limits of real iterated sequences for $S$, which is obtained by straightforward computations.

\begin{lemma}
\label{LemmaSign}
Let $S$ be given by \eqref{eq:S-Sign}.
\begin{enumerate}
\item For every $n \in \mathbb N$, one has $S^{[n]}(0) = 0$ and, for $x \neq 0$,
\begin{equation}
\label{eq:Sn-Sign}
S^{[n]}(x) = (-1)^k \frac{x}{\abs{x}} \left(\abs{x} - 2 k\right), \qquad k = \min\left(n, \floor*{\frac{\abs{x} + 1}{2}}\right).
\end{equation}

\item Let $(x_n)_{n \in \mathbb N}$ be a real iterated sequence for $S$ starting at $x_0 \neq 0$ and denote $k_0 = \floor*{\frac{\abs{x_0} + 1}{2}}$. Then $(x_n)_{n \geq k_0}$ is constant and
\[x_n = (-1)^{k_0} \frac{x_0}{\abs{x_0}} \left(\abs{x_0} - 2 k_0\right), \qquad \text{ for } n \geq k_0.\]
\end{enumerate}
\end{lemma}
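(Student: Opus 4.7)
The plan is to establish Part 1 by induction on $n$, reducing first to $x > 0$ via the fact that $S$ is odd (so each iterate $S^{[n]}$ is odd, and the claimed formula likewise changes sign under $x \mapsto -x$). Part 2 will then be immediate from Part 1, since the excerpt already notes that $R\Sigma$ is the graph of the single-valued function $S$, so the unique real iterated sequence starting from $x_0$ is $x_n = S^{[n]}(x_0)$, and evaluating the Part 1 formula for $n \geq k_0 := \lfloor (|x_0|+1)/2 \rfloor$ yields precisely the stated constant value.

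For the induction with $x > 0$ fixed and $k_0 := \lfloor (x+1)/2 \rfloor$, the base case $n = 0$ is immediate since $k = 0$ and $S^{[0]}(x) = x$. For the inductive step, I would split into the two regimes $n < k_0$ and $n \geq k_0$. In the saturated regime $n \geq k_0$, the formula predicts $S^{[n]}(x) = (-1)^{k_0}(x - 2k_0)$, and the definition of $k_0$ gives $2k_0 - 1 \leq x < 2k_0 + 1$, hence $|S^{[n]}(x)| \leq 1$; since $S$ restricted to $[-1, 1]$ is the identity, $S^{[n+1]}(x) = S^{[n]}(x)$, which matches the formula at $n+1$ (still with $k = k_0$).

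In the unsaturated regime $n < k_0$, the inductive hypothesis gives $S^{[n]}(x) = (-1)^n(x - 2n)$ with $x - 2n \geq 1$ from the inequality $n + 1 \leq k_0 \leq (x+1)/2$. Splitting according to the parity of $n$ and applying the appropriate branch of $S$: if $n$ is even then $S^{[n]}(x) = x - 2n \geq 1$, so $S(S^{[n]}(x)) = 2 - (x - 2n) = -(x - 2(n+1))$; if $n$ is odd then $S^{[n]}(x) = -(x - 2n) \leq -1$, so $S(S^{[n]}(x)) = -2 - (-(x-2n)) = x - 2(n+1)$. In both cases the result equals $(-1)^{n+1}(x - 2(n+1))$, which is exactly the formula at step $n+1$ with $k = n+1 \leq k_0$.

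The only minor delicacy is the boundary case $x = 2k_0 - 1$, where the orbit stabilizes one step earlier than $k_0$; a direct check confirms the formula remains correct, since $S^{[k_0 - 1]}(x) = (-1)^{k_0 - 1}$ agrees with $(-1)^{k_0}(x - 2k_0) = (-1)^{k_0+1}$ (same parity). Apart from this bookkeeping, there is no real obstacle—the argument is purely computational, leveraging the two-piece structure of $S$ outside $[-1, 1]$ together with the identity action on $[-1, 1]$, exactly as the phrase ``straightforward computations'' in the excerpt suggests.
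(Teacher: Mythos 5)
Your proof is correct, and it is exactly the ``straightforward computations'' the paper alludes to (no explicit proof is given there): reduction to $x>0$ by oddness of $S$, induction on $n$ split into the regimes $n<\floor*{\frac{\abs{x}+1}{2}}$ and $n\geq\floor*{\frac{\abs{x}+1}{2}}$, and Part 2 as the specialization of Part 1 using that $R\Sigma$ is the graph of the single-valued map $S$. The boundary case $\abs{x}=2k_0-1$ that you flag is indeed harmless, since the two branches of $S$ agree at $\pm 1$, and your check that the formula's values coincide there is accurate.
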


For sake of simplicity and using an abuse of notation, we will write the right-hand side of \eqref{eq:Sn-Sign} also when $x = 0$, and it should be considered as being equal to zero in that case.

As an immediate consequence of Lemma~\ref{LemmaSign}, we have the following.

\begin{proposition}
\label{PropSign}
Let $g_0: [-1, 1] \to \mathbb R$ be measurable and consider the iterated sequence $(g_n)_{n \in \mathbb N}$ for $S$ starting at $g_0$. Let $K: [-1, 1] \to \mathbb N$ and $g_\infty: [-1, 1] \to \mathbb R$ be defined by $K(s) = \floor*{\frac{\abs{g_0(s)} + 1}{2}}$ and
\[
g_\infty(s) = (-1)^{K(s)} \frac{g_0(s)}{\abs{g_0(s)}} \left(\abs{g_0(s)} - 2 K(s)\right).
\]
Then $S\circ g_\infty = g_\infty$, $g_n$ converges to $g_\infty$ a.e.\ on $[-1, 1]$, and $\abs{g_n} \leq \abs{g_0}$. In particular, if $g_0 \in \mathsf Y_p$ for some $p \in [1, +\infty)$, then $g_n$ converges to $g_\infty$ in $\mathsf Y_p$. In addition, if $g_0 \in \mathsf Y_\infty$, then $g_n = g_\infty$ for every $n \geq \norm{K}_\infty$.
\end{proposition}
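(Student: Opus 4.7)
The proposition is essentially a pointwise application of Lemma~\ref{LemmaSign} combined with standard convergence arguments, so the plan is quite short.

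First I would observe that, for almost every $s \in [-1,1]$, the sequence $(g_n(s))_{n \in \mathbb{N}}$ is a real iterated sequence for the single-valued map $S$ given by \eqref{eq:S-Sign}, starting at $g_0(s)$. Applying Lemma~\ref{LemmaSign} pointwise then yields that, for a.e.\ $s \in [-1,1]$ and every $n \geq K(s)$, one has $g_n(s) = g_\infty(s)$. This immediately gives the a.e.\ pointwise convergence $g_n \to g_\infty$. Moreover, since $\Sigma$ is a damping set, \ref{HypoSigma-DampingTilde} yields $|g_{n+1}(s)| \leq |g_n(s)|$ for a.e.\ $s$ and every $n$, hence by induction $|g_n| \leq |g_0|$ almost everywhere.

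Next I would verify the invariance $S \circ g_\infty = g_\infty$. By definition of $K(s)$, one has $K(s) \leq \frac{|g_0(s)|+1}{2} < K(s) + 1$, which rearranges to $|g_0(s)| - 2K(s) \in [-1, 1)$, so $|g_\infty(s)| \leq 1$ for a.e.\ $s$. Since $S$ acts as the identity on $[-1, 1]$ by \eqref{eq:S-Sign}, the invariance follows. (Alternatively, this is already contained in Lemma~\ref{LemmaSign}, which asserts that $(x_n)_{n \geq K(s)}$ is constant, so applying $S$ to the limit reproduces the limit.)

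Finally I would address the $\mathsf Y_p$ convergence. For $p \in [1,+\infty)$, the sequence $(g_n)_{n \in \mathbb{N}}$ converges pointwise a.e.\ to $g_\infty$ and is dominated by $|g_0| \in L^p(-1,1)$, so Lebesgue's dominated convergence theorem yields convergence in $\mathsf Y_p$. For $p = +\infty$, the assumption $g_0 \in \mathsf Y_\infty$ gives $\|K\|_\infty \leq \lfloor (\|g_0\|_\infty + 1)/2 \rfloor < +\infty$, so for every $n \geq \|K\|_\infty$ one has $n \geq K(s)$ for a.e.\ $s$, and Lemma~\ref{LemmaSign} gives $g_n(s) = g_\infty(s)$ for a.e.\ $s$, as claimed. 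There is no real obstacle here; the only point that requires mild care is ensuring that the formula defining $g_\infty$ on the set $\{g_0 = 0\}$ is interpreted as $g_\infty(s) = 0$, which is harmless since on that set the sequence $(g_n(s))_{n \in \mathbb{N}}$ is identically zero anyway.
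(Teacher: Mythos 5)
Your proof is correct and follows exactly the route the paper intends: Proposition~\ref{PropSign} is stated there as an immediate consequence of Lemma~\ref{LemmaSign}, applied pointwise, with the damping bound, dominated convergence for finite $p$, and the uniform bound on $K$ for $p=+\infty$ filling in the details just as you do. Your remark about interpreting $g_\infty(s)=0$ on the set $\{g_0=0\}$ matches the paper's own convention stated after Lemma~\ref{LemmaSign}.
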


Translating the above result in terms of solutions of \eqref{eq:wave}, one immediately gets the following.

\begin{theorem}
\label{TheoSign}
Let $p \in [1, +\infty]$, $(z_0, z_1) \in \mathsf X_p$, and consider the solution $z$ of \eqref{eq:wave} with $\Sigma$ given by \eqref{eq:Sigma-Sign} with $M = \sqrt{2}$ and with initial condition $(z_0, z_1)$. Let $g_0 = \mathfrak I(z_0, z_1) \in \mathsf Y_p$, $g_\infty$ be defined from $g_0$ as in Proposition~\ref{PropSign}, and $z_\infty$ be the solution of \eqref{eq:wave} starting at $\mathfrak I^{-1}(g_\infty)$. Then $z_\infty$ is $2$-periodic and $z(t) - z_\infty(t)$ converges to $0$ in $\mathsf X_p$ as $t \to +\infty$. Moreover, if $p = +\infty$, the convergence takes place in finite time less than or equal to $T = 2 \floor*{\frac{\norm{(z_0, z_1)}_{\mathsf X_\infty} + 1}{2}}$, i.e., $z(t) = z_\infty(t)$ for $t \geq T$.
\end{theorem}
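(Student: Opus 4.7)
The plan is to translate Theorem~\ref{TheoSign} into statements about the iterated sequence $(g_n)_{n \in \mathbb N}$ for $S$ associated with $z$ via Proposition~\ref{PropGn}, and then to invoke Proposition~\ref{PropSign}. Since $S \circ g_\infty = g_\infty$, the iterated sequence associated with $z_\infty$ is the constant sequence $(g_\infty, g_\infty, \ldots)$, so by Definition~\ref{DefiSeqI} the Riemann invariant $g^\infty := \Seq^{-1}((g_\infty)_{n \in \mathbb N})$ of $z_\infty$ is the $2$-periodic extension of $g_\infty$ to $(-1, +\infty)$; consequently, $f^\infty = -g^\infty$ on $[0, +\infty)$ (from \eqref{relation-f-g}) is also $2$-periodic. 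Substituting into the integral formula in \eqref{eq:defWeakSol} and using the invariance of length-$2$ integrals of $2$-periodic functions together with $\int_0^2 f^\infty = -\int_{-1}^1 g^\infty$, a direct computation then yields $z_\infty(t+2, x) - z_\infty(t, x) = 0$ for all $(t, x) \in \mathbb R_+ \times [0, 1]$, proving the $2$-periodicity of $z_\infty$.

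To analyze $z - z_\infty$, I would set $w := z - z_\infty$: it satisfies $w_{tt} = w_{xx}$ in $\mathbb R_+ \times [0, 1]$ with $w(t, 0) = 0$ (but not, in general, the sign condition at $x = 1$), and its Riemann invariants are $f - f^\infty$ and $g - g^\infty$. Identity \eqref{eq:norm} relies only on the Dirichlet condition at $x = 0$ (through the pointwise identity $f = -g$ on $[0, +\infty)$), so it applies to $w$ and gives $e_p(w)(t) = \|(g - g^\infty)(t + \cdot)\|_p$. For $p \in [1, +\infty)$, evaluating at $t = 2n$ yields $e_p(w)(2n) = \|g_n - g_\infty\|_p$, which tends to $0$ by Proposition~\ref{PropSign}; the sandwich bound $e_p(w)(t)^p \leq \|g_{\lceil t/2 \rceil - 1} - g_\infty\|_p^p + \|g_{\lceil t/2 \rceil} - g_\infty\|_p^p$ for arbitrary $t \geq 0$ then extends the convergence to all times.

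For $p = +\infty$, Proposition~\ref{PropSign} gives $g_n = g_\infty$ for every $n \geq N := \|K\|_\infty$, where $K(s) = \lfloor (|g_0(s)| + 1)/2 \rfloor$; since $\mathfrak I$ is an isometry, $\|g_0\|_\infty = \|(z_0, z_1)\|_{\mathsf X_\infty}$, so $N \leq T/2$. Translating back via Definition~\ref{DefiSeqI}, $g = g^\infty$ a.e.\ on $[2N - 1, +\infty) \supseteq [T - 1, +\infty)$, and, via \eqref{relation-f-g}, also $f = f^\infty$ a.e.\ on $[T - 1, +\infty) \cap [0, +\infty)$. Assuming $T \geq 1$ (the case $T = 0$ is trivial, as it forces $g_0 = g_\infty$ hence $z = z_\infty$), for every $t \geq T$ and $x \in [0, 1]$ one has $t + x \geq T \geq T - 1 \geq 0$ and $t - x \geq T - 1 \geq 0$, so
\[
z(t, x) - z_\infty(t, x) = \frac{1}{\sqrt 2}\int_0^{T-1}(f - f^\infty) + \frac{1}{\sqrt 2}\int_0^{T-1}(g - g^\infty) = 0,
\]
where the cancellation comes from $f - f^\infty = -(g - g^\infty)$ on $[0, +\infty)$.

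The main subtle point is the careful identification of $g^\infty$ as the $2$-periodic extension of $g_\infty$ and the verification that \eqref{eq:norm} transfers to $w$ despite $w$ not necessarily satisfying a sign boundary condition at $x = 1$; once this bookkeeping between the sequence-level statements of Proposition~\ref{PropSign} and the function-level statements about $z$ and $z_\infty$ is in place, the three claims of the theorem follow by direct computations.
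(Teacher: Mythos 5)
Your proposal is correct and follows the same route as the paper, which simply states that Theorem~\ref{TheoSign} is obtained by translating Proposition~\ref{PropSign} back to solutions of \eqref{eq:wave} via Proposition~\ref{PropGn}; you have merely written out the routine bookkeeping (the $2$-periodic extension $g^\infty$, the transfer of \eqref{eq:norm} to $w = z - z_\infty$ using only $f = -g$, and the cancellation $\int_0^{T-1}(f - f^\infty) = -\int_0^{T-1}(g - g^\infty)$) that the paper leaves implicit. All these steps check out, including the observation that the energy identity applies to $w$ despite $w$ not satisfying the sign boundary condition at $x = 1$.
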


\begin{proof}
We only provide an argument for the $2$-periodicity of $z_\infty$. Indeed, recall that, by Proposition~\ref{PropGn}\ref{ItemEquivSols2}, the Riemannian invariants of $z_\infty$ are built after the $S$-iterates of $g_\infty$. The latter being a fixed point of $S$, the Riemannian invariants of $z_\infty$ are then $2$-periodic. One derives the $2$-periodicity of $z_\infty$ by using the first equation of \eqref{eq:defWeakSol}.
\end{proof}

\begin{remark}
Theorem~\ref{TheoSign} improves the result \cite[Theorem~6]{Xu2019Saturated} in the following directions: firstly, Theorem~\ref{TheoSign} applies to solutions  of \eqref{eq:wave} with initial conditions in $\mathsf X_p$ for some $p \in [1, +\infty]$, whereas the main convergence result in \cite[Theorem~6]{Xu2019Saturated} only consider regular solutions of \eqref{eq:wave} in the Hilbertian setting. Secondly, thanks to Theorem~\ref{TheoSign}, we provide an expression for the limit $z_\infty$ which is more explicit than the one based on Fourier series provided in \cite[Theorem~6]{Xu2019Saturated}.
\end{remark}

\begin{remark}
At the light of the finite-time convergence in $\mathsf X_\infty$ in Theorem~\ref{TheoSign}, one may wonder whether uniform bounds on the convergence rate can also be obtained for finite $p$. Unfortunately, the answer turns out to be negative, since, by Theorem~\ref{TheoremSloooooooow}, the energy of solutions of \eqref{eq:wave} in $\mathsf X_p$ for finite $p$ may decrease arbitrarily slow.
\end{remark}

\appendix

\section{Universally measurable functions}
\label{AppUniversally}

Hypotheses \ref{HypoSigma-Exists}--\ref{HypoSigma-UniqueInfty} used throughout this paper use the notion of universally measurable functions. This appendix provides the definition of this class of functions together with their main properties used in this paper. Interested readers may find further properties of universally measurable sets and functions and their applications in analysis in \cite{Bertsekas1978Stochastic, Cohn2013Measure, Nishiura2008Absolute}. For sake of simplicity, we only consider here real-valued universally measurable functions defined on a (possibly unbounded) interval $I \subset \mathbb R$, since this particular setting is the only one used in the paper. We denote by $\mathfrak B$ and $\mathfrak L$ the $\sigma$-algebras of Borel and Lebesgue measurable subsets of $\mathbb R$, respectively.

\begin{definition}
\label{DefiUniversally}
Let $\mathfrak F$ be a $\sigma$-algebra on $\mathbb R$.
\begin{enumerate}
\item Let $\mu$ be a nonnegative measure on $(\mathbb R, \mathfrak F)$. A set $M \subset \mathbb R$ is said to be \emph{$\mu$-measurable} if there exist $A, B$ in $\mathfrak F$ with $A \subset M \subset B$ such that $\mu(B \setminus A) = 0$.

\item\label{ItemDefiUSet} A set $M \subset \mathbb R$ is said to be \emph{universally measurable with respect to $\mathfrak F$} if, for every probability measure $\mu$ on $(\mathbb R, \mathfrak F)$, $M$ is $\mu$-measurable. The family of all universally measurable subsets of $\mathbb R$ with respect to $\mathfrak F$ is denoted by $\mathfrak F_\ast$.

When $\mathfrak F$ is the $\sigma$-algebra $\mathfrak B$ of Borel subsets of $\mathbb R$, we define $\mathfrak U = \mathfrak B_\ast$ and we say that the elements of $\mathfrak U$ are the \emph{universally measurable} subsets of $\mathbb R$.

\item Let $I \subset \mathbb R$ be an interval and $f: I \to \mathbb R$. We say that $f$ is an \emph{universally measurable function} if, for every $A \in \mathfrak B$, one has $f^{-1}(A) \in \mathfrak U$.
\end{enumerate}
\end{definition}

For every $\sigma$-algebra $\mathfrak F$ of $\mathbb R$, the set $\mathfrak F_\ast$ is also a $\sigma$-algebra and one has further that $(\mathfrak F_\ast)_\ast = \mathfrak F_\ast$. In particular, $\mathfrak U_\ast = \mathfrak U$. The $\sigma$-algebra $\mathfrak U$ is called the \emph{universal $\sigma$-algebra} of $\mathbb R$. Using the fact that the Lebesgue measure is complete with respect to the $\sigma$-algebra $\mathfrak L$, one also immediately checks that $\mathfrak L_\ast = \mathfrak L$.

The above definition implies that $\mathfrak B \subset \mathfrak U \subset \mathfrak L$, and classical counterexamples presented in \cite{Bertsekas1978Stochastic, Cohn2013Measure, Nishiura2008Absolute} show that these inclusions are strict. One deduces from these inclusions that every Borel measurable function is universally measurable and that every universally measurable function is Lebesgue measurable. Note that, as stated, e.g., in \cite{Nishiura2008Absolute}, the requirement on $\mu$ to be a probability measure in Definition~\ref{DefiUniversally}\ref{ItemDefiUSet} may be replaced with the requirement on $\mu$ being finite or also $\sigma$-finite with no change in the definition.

A classical result on universal measurability is the following property, whose proof can be found, for instance, in \cite[Lemma~8.4.6]{Cohn2013Measure}.

\begin{proposition}
\label{PropFuncUniv}
Let $\mathfrak F$ and $\mathfrak G$ be $\sigma$-algebras on $\mathbb R$, $I \subset \mathbb R$ be an interval, and $f: I \to \mathbb R$. Assume that $f^{-1}(A) \in \mathfrak F$ for every $A \in \mathfrak G$. Then $f^{-1}(A) \in \mathfrak F_\ast$ for every $A \in \mathfrak G_\ast$.
\end{proposition}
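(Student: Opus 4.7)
The plan is to reduce the statement to the defining property of universally measurable sets (Definition~\ref{DefiUniversally}\ref{ItemDefiUSet}) via pushforward of measures. Fix $A \in \mathfrak G_\ast$. To conclude $f^{-1}(A) \in \mathfrak F_\ast$, I need to show that for every probability measure $\mu$ on $(\mathbb R, \mathfrak F)$ there exist $B_1, B_2 \in \mathfrak F$ with $B_1 \subset f^{-1}(A) \subset B_2$ and $\mu(B_2 \setminus B_1) = 0$.

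First, given such a $\mu$, I would define the pushforward measure $\nu$ on $(\mathbb R, \mathfrak G)$ by $\nu(G) = \mu(f^{-1}(G))$ for $G \in \mathfrak G$. This is well-defined precisely because the hypothesis $f^{-1}(G) \in \mathfrak F$ for every $G \in \mathfrak G$ ensures that the right-hand side makes sense, and standard arguments show that $\nu$ inherits $\sigma$-additivity from $\mu$ together with $\nu(\mathbb R) = \mu(f^{-1}(\mathbb R)) = \mu(I) \leq 1$, so $\nu$ is a finite measure (which, as noted after Definition~\ref{DefiUniversally}, may be used in place of a probability measure).

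Next, I would exploit the fact that $A \in \mathfrak G_\ast$, hence $A$ is $\nu$-measurable. This yields $A_1, A_2 \in \mathfrak G$ with $A_1 \subset A \subset A_2$ and $\nu(A_2 \setminus A_1) = 0$. Setting $B_1 = f^{-1}(A_1)$ and $B_2 = f^{-1}(A_2)$, the assumption gives $B_1, B_2 \in \mathfrak F$, the monotonicity of preimages yields $B_1 \subset f^{-1}(A) \subset B_2$, and
\[
\mu(B_2 \setminus B_1) = \mu\bigl(f^{-1}(A_2 \setminus A_1)\bigr) = \nu(A_2 \setminus A_1) = 0,
\]
so $f^{-1}(A)$ is $\mu$-measurable. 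Since $\mu$ was arbitrary, $f^{-1}(A) \in \mathfrak F_\ast$.

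The argument is essentially bookkeeping; no step is genuinely difficult. The only place requiring a moment of attention is checking that $\nu$ qualifies as an admissible measure for testing membership in $\mathfrak G_\ast$: since $f$ is only defined on $I$, the total mass of $\nu$ is $\mu(I) \leq 1$ rather than exactly $1$, but one may either normalize (if $\mu(I) > 0$) or observe that the case $\mu(I) = 0$ is trivial since then $f^{-1}(A) \subset I$ has $\mu$-outer measure zero, and in any case the parenthetical remark after Definition~\ref{DefiUniversally} allows finite measures.
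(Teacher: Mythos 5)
Your proof is correct. The paper does not include its own argument for this proposition — it simply cites \cite[Lemma~8.4.6]{Cohn2013Measure} — and the pushforward argument you give (transport $\mu$ through $f$ to a measure $\nu$ on $(\mathbb R, \mathfrak G)$, use $\nu$-measurability of $A$ to get $A_1 \subset A \subset A_2$ in $\mathfrak G$ with $\nu(A_2 \setminus A_1) = 0$, and pull back) is precisely the classical proof behind that reference, with the minor point about the total mass $\mu(I) \leq 1$ correctly disposed of via the remark after Definition~\ref{DefiUniversally} (or by normalizing, with the trivial case $\mu(I) = 0$ handled separately).
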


As an immediate consequence of Proposition~\ref{PropFuncUniv}, one obtains the following alternative characterizations of Lebesgue and universally measurable functions.

\begin{corollary}
\label{CoroCharactLebesgueUniv}
Let $I \subset \mathbb R$ be an interval and $f: I \to \mathbb R$.
\begin{enumerate}
\item The function $f$ is Lebesgue measurable if and only if $f^{-1}(A) \in \mathfrak L$ for every $A \in \mathfrak U$.
\item\label{ItemCharactUniv} The function $f$ is universally measurable if and only if $f^{-1}(A) \in \mathfrak U$ for every $A \in \mathfrak U$.
\end{enumerate}
\end{corollary}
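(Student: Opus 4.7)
The plan is to derive both parts directly from Proposition~\ref{PropFuncUniv}, together with the two identities $\mathfrak L_\ast = \mathfrak L$ and $\mathfrak U_\ast = \mathfrak U$ recalled in the discussion preceding the corollary, and the inclusion $\mathfrak B \subset \mathfrak U$.

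For the forward implication in \textbf{part 1}, I will start from the definition of Lebesgue measurability, which asserts $f^{-1}(A) \in \mathfrak L$ for every $A \in \mathfrak B$. I then apply Proposition~\ref{PropFuncUniv} with $\mathfrak F = \mathfrak L$ and $\mathfrak G = \mathfrak B$, to conclude that $f^{-1}(A) \in \mathfrak L_\ast$ for every $A \in \mathfrak B_\ast$. Using $\mathfrak L_\ast = \mathfrak L$ and $\mathfrak B_\ast = \mathfrak U$, this is exactly the required statement. The converse is immediate because $\mathfrak B \subset \mathfrak U$: if $f^{-1}(A) \in \mathfrak L$ for every universally measurable $A$, then it holds in particular for every Borel $A$, which is Lebesgue measurability.

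For \textbf{part 2}, the argument is strictly analogous. The forward direction starts from universal measurability (i.e., $f^{-1}(A) \in \mathfrak U$ for every $A \in \mathfrak B$) and applies Proposition~\ref{PropFuncUniv} with $\mathfrak F = \mathfrak U$ and $\mathfrak G = \mathfrak B$, yielding $f^{-1}(A) \in \mathfrak U_\ast$ for every $A \in \mathfrak B_\ast$; the identities $\mathfrak U_\ast = \mathfrak U$ and $\mathfrak B_\ast = \mathfrak U$ then give the claim. The converse again follows from $\mathfrak B \subset \mathfrak U$ by restricting the hypothesis from universally measurable sets $A$ to Borel sets $A$, recovering the definition of universal measurability of $f$.

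There is no genuine obstacle here: the corollary is essentially a packaging of Proposition~\ref{PropFuncUniv} in the two canonical cases $(\mathfrak F, \mathfrak G) = (\mathfrak L, \mathfrak B)$ and $(\mathfrak F, \mathfrak G) = (\mathfrak U, \mathfrak B)$, combined with the stability properties $\mathfrak L_\ast = \mathfrak L$ and $\mathfrak U_\ast = \mathfrak U$. The only mildly delicate point is to remember to invoke the inclusion $\mathfrak B \subset \mathfrak U$ to handle the easy ``$\Leftarrow$'' direction, and to quote the two $\sigma$-algebra identities explicitly, so the two equivalences become apparent. The whole proof therefore reduces to a few lines once Proposition~\ref{PropFuncUniv} is in hand.
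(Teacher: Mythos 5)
Your proof is correct and follows exactly the route the paper intends: the corollary is stated there as an immediate consequence of Proposition~\ref{PropFuncUniv}, and your instantiations $(\mathfrak F, \mathfrak G) = (\mathfrak L, \mathfrak B)$ and $(\mathfrak U, \mathfrak B)$ together with $\mathfrak L_\ast = \mathfrak L$, $\mathfrak U_\ast = \mathfrak U$, $\mathfrak B_\ast = \mathfrak U$, and the inclusion $\mathfrak B \subset \mathfrak U$ for the easy direction are precisely the intended argument.
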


Corollary~\ref{CoroCharactLebesgueUniv}\ref{ItemCharactUniv} implies in particular that universal measurability of functions is preserved under composition, as we state next.

\begin{proposition}
\label{PropCompositionUnivMeas}
Let $f: \mathbb R \to \mathbb R$ and $g: \mathbb R \to \mathbb R$ be universally measurable functions. Then $f \circ g$ is also universally measurable.
\end{proposition}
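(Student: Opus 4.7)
The plan is to invoke the alternative characterization of universal measurability given in Corollary~\ref{CoroCharactLebesgueUniv}\ref{ItemCharactUniv}, which states that a function $h: \mathbb R \to \mathbb R$ is universally measurable if and only if $h^{-1}(A) \in \mathfrak U$ for every $A \in \mathfrak U$ (not merely for every Borel set $A$). Once this strengthened pullback property is available for both $f$ and $g$, the composition result follows essentially for free.

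More precisely, to verify that $f \circ g$ is universally measurable, I would fix an arbitrary Borel set $A \in \mathfrak B$ and show that $(f \circ g)^{-1}(A) \in \mathfrak U$. Writing $(f \circ g)^{-1}(A) = g^{-1}\bigl(f^{-1}(A)\bigr)$, the first step is to apply the definition of universal measurability to $f$, which yields $f^{-1}(A) \in \mathfrak U$. The second step is to apply Corollary~\ref{CoroCharactLebesgueUniv}\ref{ItemCharactUniv} to $g$: since $g$ is universally measurable and $f^{-1}(A) \in \mathfrak U$, the preimage $g^{-1}\bigl(f^{-1}(A)\bigr)$ lies in $\mathfrak U$. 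Combining the two inclusions gives $(f \circ g)^{-1}(A) \in \mathfrak U$, as required.

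There is essentially no obstacle here, since Corollary~\ref{CoroCharactLebesgueUniv}\ref{ItemCharactUniv} has already done the substantive work of lifting the measurability statement from pullbacks of Borel sets to pullbacks of universally measurable sets (via Proposition~\ref{PropFuncUniv}). The only point worth stressing in the write-up is that one cannot conclude directly from the definition of universal measurability alone, because the definition only guarantees $g^{-1}(B) \in \mathfrak U$ for $B \in \mathfrak B$, whereas here we need to pull back the set $f^{-1}(A)$, which is a priori only in $\mathfrak U$ and not in $\mathfrak B$. Using the corollary to bridge this gap is the key (and only) conceptual step.
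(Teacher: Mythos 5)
Your argument is correct and is exactly the route the paper takes: the paper derives this proposition directly from Corollary~\ref{CoroCharactLebesgueUniv}\ref{ItemCharactUniv}, writing $(f \circ g)^{-1}(A) = g^{-1}(f^{-1}(A))$ and using the corollary to pull back the universally measurable set $f^{-1}(A)$ through $g$. Your remark that the bare definition would not suffice, and that the corollary is the bridging step, is precisely the point.
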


The major result on universally measurable functions that we need in this paper is the following, which characterizes the set of universally measurable functions as those which preserve Lebesgue measurability by left composition. The statement and the proof presented below were communicated to the authors\footnote{See \url{https://mathoverflow.net/questions/366953/}.} by Mateusz Kwa\'{s}nicki.

\begin{proposition}
\label{PropUnivMeasurable}
Let $f: \mathbb R \to \mathbb R$. Then $f$ is universally measurable if and only if, for every Lebesgue measurable function $g: (-1, 1) \to \mathbb R$, $f \circ g$ is Lebesgue measurable.
\end{proposition}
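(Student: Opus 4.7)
The plan is to establish each implication in turn.

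For the direction $(\Rightarrow)$: Suppose $f$ is universally measurable and let $g : (-1, 1) \to \mathbb{R}$ be Lebesgue measurable. For any Borel set $A \subset \mathbb{R}$, one has $(f \circ g)^{-1}(A) = g^{-1}(f^{-1}(A))$; since $f$ is universally measurable, $f^{-1}(A) \in \mathfrak U$, and the first item of Corollary~\ref{CoroCharactLebesgueUniv} applied to $g$ yields that this preimage lies in $\mathfrak L$. Hence $f \circ g$ is Lebesgue measurable.

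For the converse direction, I would argue by contraposition. Assume $f$ is not universally measurable, so that there exist a Borel set $B \subset \mathbb{R}$ and a Borel probability measure $\mu$ on $\mathbb{R}$ such that $M := f^{-1}(B)$ is not $\mu$-measurable. The goal is to build a Lebesgue measurable $g : (-1, 1) \to \mathbb{R}$ for which $g^{-1}(M) = (f \circ g)^{-1}(B)$ fails to be Lebesgue measurable. A first step is a reduction to the non-atomic case: writing $\mu = \mu_a + \mu_c$ with $\mu_a$ purely atomic and $\mu_c$ non-atomic, every subset of $\mathbb{R}$ is $\mu_a$-measurable because $\mu_a$ is concentrated on a countable set, so $M$ cannot be $\mu_c$-measurable, and in particular $\mu_c(\mathbb{R}) > 0$. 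Normalizing, I may replace $\mu$ by $\mu_c / \mu_c(\mathbb{R})$ and thus assume $\mu$ itself is a non-atomic Borel probability measure with respect to which $M$ is not measurable.

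The central ingredient is then the classical isomorphism theorem for non-atomic standard Borel probability spaces, applied to $((-1, 1), \tfrac{1}{2} \lambda)$ and $(\mathbb{R}, \mu)$, where $\lambda$ denotes Lebesgue measure on $\mathbb{R}$. It furnishes Borel null sets $N \subset (-1, 1)$ and $N' \subset \mathbb{R}$, together with a Borel isomorphism $\phi : (-1, 1) \setminus N \to \mathbb{R} \setminus N'$ satisfying $\phi_\ast(\tfrac{1}{2}\lambda|_{(-1,1)\setminus N}) = \mu|_{\mathbb{R}\setminus N'}$. I extend $\phi$ to a Borel (hence Lebesgue) measurable function $g : (-1, 1) \to \mathbb{R}$ by setting $g = 0$ on $N$.

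To conclude, suppose for contradiction that $g^{-1}(M)$ is Lebesgue measurable. Then so is $E := g^{-1}(M) \cap ((-1, 1) \setminus N)$, so one can sandwich $E$ between Borel sets $C \subset E \subset D$, both contained in $(-1, 1) \setminus N$, with $\lambda(D \setminus C) = 0$. Since $\phi$ is a Borel isomorphism, $\phi(C)$ and $\phi(D)$ are Borel subsets of $\mathbb{R} \setminus N'$ satisfying $\phi(C) \subset M \cap (\mathbb{R} \setminus N') \subset \phi(D)$ and $\mu(\phi(D) \setminus \phi(C)) = \tfrac{1}{2}\lambda(D \setminus C) = 0$. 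Adjoining the $\mu$-null set $N'$ then sandwiches $M$ between Borel sets whose symmetric difference is $\mu$-null, contradicting the non-$\mu$-measurability of $M$. The main obstacle is invoking the Borel isomorphism theorem and carefully tracking the null sets $N$ and $N'$ through the construction; once this classical tool is in hand, the remainder of the proof is routine bookkeeping of set inclusions and measures.
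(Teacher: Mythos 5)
Your proof is correct, but the converse direction follows a genuinely different route from the paper. You reduce to the non-atomic part of the offending measure $\mu$ and then invoke the classical measure isomorphism theorem for non-atomic standard Borel probability spaces, transporting the non-$\mu$-measurable set $f^{-1}(B)$ through a measure-preserving Borel isomorphism $\phi$ between $\bigl((-1,1),\tfrac12\lambda\bigr)$ and $(\mathbb R,\mu)$ modulo null sets; your bookkeeping of the null sets $N, N'$ and the sandwiching argument is sound (in particular the observation that every set is measurable for the atomic part, so non-measurability survives passage to $\mu_c$, is correct). The paper instead avoids any descriptive-set-theoretic machinery: it mixes $\mu$ with a Gaussian, strips the atoms, and uses the cumulative distribution function $h$ of the resulting measure, which is a continuous increasing bijection of $\mathbb R$ onto $(0,1)$ satisfying $\nu(E)=m(h(E))$, to define $g=h^{-1}\circ T$ with $T$ affine; non-measurability of $h(A)$ then follows by the same sandwiching idea. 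The trade-off is that your argument is shorter at the top level and would generalize verbatim to any standard Borel space, but it imports a heavier classical theorem and produces only a Borel-measurable counterexample $g$, whereas the paper's hands-on construction yields a \emph{continuous} $g$, i.e.\ the strictly stronger statement that preservation of Lebesgue measurability under composition with continuous functions already forces universal measurability.
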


\begin{proof}
Assuming first that $f$ is universally measurable, one immediately obtains from the characterizations in Corollary~\ref{CoroCharactLebesgueUniv} that $f \circ g$ is Lebesgue measurable for every Lebesgue measurable function $g: (-1, 1) \to \mathbb R$.

Let us now assume that $f$ is not universally measurable. We will construct a continuous function $g: (-1, 1) \to \mathbb R$ such that $f \circ g$ is not Lebesgue measurable.

Since $f$ is not universally measurable, there exists $B \in \mathfrak B$ such that $A = f^{-1}(B) \notin \mathfrak U$. Thus, there exists a probability measure $\mu$ on $(\mathbb R, \mathfrak B)$ such that $A$ is not $\mu$-measurable. Let $\lambda$ be the standard Gaussian probability measure on $\mathbb R$, i.e., $\diff \lambda(x) = \frac{1}{\sqrt{2 \pi}} e^{-x^2 / 2} \diff x$, and consider the probability measure $\nu_0 = \frac{1}{2} \mu + \frac{1}{2} \lambda$ on $(\mathbb R, \mathfrak B)$. Clearly, $A$ is not $\nu_0$-measurable either. Let $\nu$ be the probability measure obtained from $\nu_0$ by removing its atoms and renormalizing the resulting measure, and notice that $A$ is not $\nu$-measurable.

Let $h: \mathbb R \to \mathbb R$ be the cumulative distribution function of $\nu$, defined for $x \in \mathbb R$ by $h(x) = \nu((-\infty, x])$. Then, by construction of $\nu$, one deduces that $h$ is continuous, increasing, $h(x) \in (0, 1)$ for every $x \in \mathbb R$, $\lim_{x \to -\infty} h(x) = 0$, and $\lim_{x \to +\infty} h(x) = 1$. In particular, $h$ admits an inverse $h^{-1}: (0, 1) \to \mathbb R$ which is continuous and increasing. Recall also that $\nu(E) = m(h(E))$ for every $E \in \mathfrak B$, where $m$ denotes the Lebesgue measure on $(0, 1)$.

We claim that $h(A)$ is not Lebesgue measurable. Indeed, if it were not the case, there would exist two Borel subsets $F_1, F_2$ of $(0, 1)$ such that $F_1 \subset h(A) \subset F_2$ and $m(F_2 \setminus F_1) = 0$. Then, letting $E_i = h^{-1}(F_i)$ for $i \in \{1, 2\}$, we would have that $E_1$ and $E_2$ are Borel sets (since $h$ is continuous) with $E_1 \subset A \subset E_2$ and $\nu(E_2 \setminus E_1) = m(h(E_2 \setminus E_1)) = m(F_2 \setminus F_1) = 0$, implying that $A$ is $\nu$-measurable, a contradiction.

Let $T: (-1, 1) \to (0, 1)$ be the linear map defined by $T(x) = \frac{x+1}{2}$. Then $T^{-1}(h(A))$ is not Lebesgue measurable. Let $g: (-1, 1) \to \mathbb R$ be the continuous function defined by $g = h^{-1} \circ T$. Then $(f \circ g)^{-1}(B) = g^{-1}(A) = T^{-1}(h(A))$ and, since $B \in \mathfrak B$ and $T^{-1}(h(A)) \notin \mathfrak L$, one deduces that $f \circ g$ is not Lebesgue measurable, as required.
\end{proof}

\section{An optimal decay rate}\label{app:decay}

This appendix proves the following result, which identifies the optimal decay rate of $Q^{[n]}(x_0)$ when $q(x) = \frac{x}{(-\ln x)^p}$ for $x > 0$ small enough and $Q$ is defined from $q$ as in \eqref{Relation-Q-q} (cf.\ Remark~\ref{remk:optimal-rate}).

\begin{theorem}
Let $p > 0$, $M \in (0, 1)$, $q \in \mathcal C^1(\mathbb R_+, \mathbb R_+)$ be given by $q(x) = \frac{x}{(-\ln x)^p}$ for $x \in (0, \frac{M}{\sqrt{2}})$, and assume further that $0 < q(x) < x$ and $\abs{q^\prime(x)} < 1$ for every $x > 0$. Let $x_0 \in \mathbb R_+^\ast$, $Q$ be defined from $q$ by \eqref{Relation-Q-q}, and the sequence $(x_n)_{n\in \mathbb N}$ be given by $x_n=Q^{[n]}(x_0)$ for $n\geq 0$. 

Set $N=\floor*{\frac1{2p}}$. Then there exist $N+1$ real numbers $\alpha_k$, $k \in \{0, \dotsc, M\}$, with $\alpha_0=\left(2(p+1)\right)^{\frac1{p+1}}$, such that, as $n \to +\infty$, one has
\begin{equation}\label{eq:est-final-xn}
x_n \sim \frac1{\sqrt{2}}e^{-\sum_{k=0}^N\alpha_kn^{\frac{1-2pk}{p+1}}}.
\end{equation}
\end{theorem}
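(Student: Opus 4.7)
The plan is to reduce the recurrence $x_{n+1}=Q(x_n)$ to a scalar recurrence in the logarithmic variable $u_n := -\ln(\sqrt 2\, x_n)$ and then extract its asymptotic expansion term by term. Let $y_n := (q+\mathrm{id})^{-1}(\sqrt 2\, x_n)$ and $v_n := -\ln y_n$. Directly from \eqref{Relation-Q-q} one has $\sqrt 2\, x_n = y_n + q(y_n)$ and $\sqrt 2\, x_{n+1} = y_n - q(y_n)$, and substituting $q(y) = y(-\ln y)^{-p}$ turns these identities into the clean recurrence
\begin{equation*}
v_n = u_n + \ln(1+v_n^{-p}), \qquad u_{n+1} - u_n \;=\; 2\,\mathrm{artanh}(v_n^{-p}).
\end{equation*}
Iterating the implicit equation for $v_n$ (which yields $v_n = u_n + u_n^{-p} + O(u_n^{-\min(2p,p+1)})$) and using the Taylor series of $\mathrm{artanh}$, the right-hand side admits a full asymptotic expansion $\Phi(u_n) := u_{n+1}-u_n = 2u_n^{-p} + \tfrac{2}{3} u_n^{-3p} + \dotsb$ in the principal scale $\{u_n^{-(2k+1)p}\}_{k\geq 0}$, each term carrying subdominant corrections in powers of $u_n^{-1}$. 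The leading behavior $u_n \sim (2(p+1)n)^{1/(p+1)}$, which is exactly the $k=0$ term of \eqref{eq:est-final-xn} with $\alpha_0=(2(p+1))^{1/(p+1)}$, follows from Proposition~\ref{prop:decay-rate}\ref{ItemQPrime0} via $F(x_n)\sim n$ together with the explicit computation $F(x)\sim u^{p+1}/(2(p+1))$ as $x\to 0^+$.

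The next step is to postulate the ansatz $u_n = A_N(n) + r_n$ with $A_N(n):=\sum_{k=0}^N \alpha_k n^{\beta_k}$ and $\beta_k:=(1-2pk)/(p+1)$, and to determine the $\alpha_k$ by matching. Viewed as a discretization of $\dot U = \Phi(U)$, the recurrence forces precisely these exponents: they are those produced by integrating the successive corrections $u^{-(2k+1)p}$ of $\Phi$ along the leading trajectory $u\sim \alpha_0 n^{\beta_0}$. Truncation at $N=\lfloor 1/(2p)\rfloor$ is dictated by the requirement $\beta_N>0$, since any non-positive-exponent term necessarily belongs to $r_n$. Inserting the ansatz and equating, in decreasing order, the coefficients of $n^{\beta_k-1}$ in the finite-Taylor expansion of $A_N(n+1)-A_N(n)$ and in the expansion of $\Phi(A_N(n))$ around its leading term, one obtains a triangular linear system: the leading equation $\alpha_0\beta_0 = 2\alpha_0^{-p}$ recovers $\alpha_0$, and each subsequent $\alpha_k$ is determined uniquely and in closed form from $\alpha_0,\dotsc,\alpha_{k-1}$.

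It remains to verify that $r_n\to 0$, which is the main technical obstacle. Subtracting the two recurrences, $r_n$ satisfies a perturbed linear recurrence of the form
\begin{equation*}
r_{n+1} = \Bigl(1 + \Phi'(A_N(n))\Bigr)\, r_n + \mathcal E_n + O\bigl(r_n^{\,2}\, A_N(n)^{-p-1}\bigr),
\end{equation*}
with $\Phi'(A_N(n)) = -2p\alpha_0^{-p-1}\, n^{-1} + o(1/n)$ and a forcing term $\mathcal E_n$ which, by construction, collects only unmatched exponents --- all either of the form $\beta_{N+j}-1$ with $\beta_{N+j}\leq 0$, or finite-difference and cross-term exponents of the form $\beta_k - m$ with integer $m\geq 2$, hence strictly less than $-1$. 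Since the homogeneous part contracts at rate $\prod_{k}(1-2p/k+o(1/k))\asymp n^{-2p}$, a discrete Gr\"onwall argument yields $r_n \to 0$, equivalently \eqref{eq:est-final-xn}. The delicate points are the simultaneous bookkeeping of the two independent small scales generated by discrete calculus ($n^{-1}$) and by $\Phi$ itself ($n^{-2p/(p+1)}$), the verification that every unmatched exponent produced by the expansions is strictly negative so that it never resonates with one of the $\beta_k$, and the borderline case in which $1/(2p)$ is an integer, where $\beta_N=0$ and the last equation of the triangular system merely fixes a pure constant $\alpha_N$.
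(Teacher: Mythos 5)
Your reduction is, at bottom, the same as the paper's: the variables $u_n=-\ln(\sqrt2\,x_n)$ and $v_n$ are exactly $1/\xi_n$ and $1/\mu_n$ of the appendix, your recurrence $u_{n+1}-u_n=2\artanh(v_n^{-p})$ is \eqref{eq:xin} combined with \eqref{eq:log}, the leading term comes from Proposition~\ref{prop:decay-rate}\ref{ItemQPrime0} in both arguments, and your non-resonance observation (the pivots $\beta_k+\tfrac{p}{p+1}=\tfrac{1+p-2pk}{p+1}$ never vanish for $k\le N$) is the counterpart of the paper's nonvanishing denominators $1-\tfrac{2pk}{p+1}$. Where you genuinely diverge is in how the full expansion is established: the paper passes to $w_n=u_n^{p+1}$, whose increments are $\alpha_0^{p+1}$ plus lower-order terms, so that \eqref{eq:xin4} follows by direct summation and an induction with explicit error $O(\ln n+n^{1-\frac{2p(N+1)}{p+1}})$, and the root extraction \eqref{eq:xin5} converts this into the expansion of $u_n$; you instead match an ansatz $A_N(n)$ for $u_n$ itself and carry all the difficulty into showing $r_n=u_n-A_N(n)\to0$ by a contraction argument. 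That is a legitimate alternative, but as written the contraction step has two problems.

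First, a computational slip: since $\alpha_0^{-(p+1)}=\tfrac1{2(p+1)}$, one has $2p\alpha_0^{-p-1}=\tfrac{p}{p+1}$, so the homogeneous factor is $1-\tfrac{p}{(p+1)n}+o(1/n)$ and the product decays like $n^{-p/(p+1)}$, not $n^{-2p}$. The conclusion survives only because every unmatched forcing exponent is strictly below $-1$ (note $\beta_{N+1}<0$ strictly, so your ``$\beta_{N+j}\le0$'' can never be an equality; if it were, the Duhamel sum against the kernel $(k/n)^{p/(p+1)}$ would leave a non-vanishing or logarithmic contribution), so this needs to be stated and checked with the correct rate. Second, and more seriously, the Gr\"onwall step does not close in the form you give it: keeping a quadratic remainder $O(r_n^2A_N(n)^{-p-2})$ (the exponent is $-p-2$, not $-p-1$), the only a priori information is $r_n=o(n^{1/(p+1)})$, so this term is $o(n^{-p/(p+1)})$, far larger than the $O(n^{-1-\delta})$ forcing; feeding it into the Duhamel formula merely reproduces the bound $o(n^{1/(p+1)})$, and iterating gives no improvement, so no bootstrap starting from this estimate reaches $r_n\to0$. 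The repair is to avoid the quadratic term altogether: write $\Phi(u_n)-\Phi(A_N(n))=\Phi'(\theta_n)\,r_n$ with $\theta_n$ between $u_n$ and $A_N(n)$, and use that both are $\sim\alpha_0n^{1/(p+1)}$ to get $\Phi'(\theta_n)=-\tfrac{p}{(p+1)n}(1+o(1))$; the resulting exactly linear recurrence with summable forcing then yields $r_n\to0$ as you intend. With that correction (plus the routine verification that $\Phi$, defined through the implicit relation for $v$, is $\mathcal C^1$ for large $u$ with the stated expansion), your plan does prove the theorem.
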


\begin{proof} Notice first that $q^\prime(0) = 0$. Thanks to the assumptions on $q$, $(x_n)_{n \in \mathbb N}$ is a decreasing sequence of positive real numbers with $x_n \to 0$ as $n \to +\infty$ and we assume that $x_n\in (0,\frac{M}{\sqrt{2}})$ for every $n\geq 0$ with no loss of generality (cf.\ Proposition~\ref{prop:fnplusn0}\ref{ItemTimeTranslation}). 
Let $F$ be the diffeomorphism defined in Proposition~\ref{prop:decay-rate}\ref{ItemQPrime0}. One computes that, for $z \in (0, x_0]$, it holds
\begin{equation}\label{eq:Fz}
F(z) = \frac{(-\ln(\sqrt{2} z))^{p+1}}{2(p+1)} + C,
\end{equation}
where $C$ is a positive constant. In particular, $q$ does not satisfy \eqref{EqConditionEquiv}. 

We start the argument for the theorem by setting some notations for the subsequent computations
\[
y_n=\sqrt{2}x_n,\ \ z_n=2(q+\id)^{-1}(y_n),\ \ \xi_n=\frac1{-\ln(y_n)},
\quad \mu_n=\frac1{-\ln(z_n/2)}\ \hbox{ for }n\in\mathbb N.
\]
It follows at once that all the sequences defined above are positive, decreasing, and tend to zero as $n$ tends to infinity. By manipulating their definitions and using also \eqref{Relation-Q-q}, the explicit expression of $q$, and the fact that $x_{n+1} = Q(x_n)$ for $n \in \mathbb N$, we deduce that, for $n\in\mathbb N$ and $a\in \mathbb R$,
\begin{equation}\label{eq:xin}
\frac1{\xi_{n+1}}=\frac1{\xi_n}\left[1+\xi_n\ln\Bigl(\frac{1+\mu_n^p}{1-\mu_n^p}\Bigr)\right], 
\end{equation}
and 
\begin{equation}\label{eq:xin-mun}
\xi_n^a=\mu_n^a\left(1-\mu_n^{p+1}\frac{\ln(1+\mu_n^p)}{\mu_n^p}\right)^{-a}.
\end{equation}
For the rest of the argument, we also use the standard symbols $\sim$, $O(\cdot)$ and $o(\cdot)$ as $n$ tends to infinity without writing the latter fact.

By \eqref{eq:xin-mun}, one has that $\xi_n\sim \mu_n$ and it follows from \eqref{eq:Fz} and Proposition~\ref{prop:decay-rate}\ref{ItemQPrime0} that
\begin{equation}
\label{eq:expl-lim}
\Big(\frac1{\xi_n}\Big)^{p+1}\sim \alpha_0^{p+1} n.
\end{equation}
Moreover, since $0 < \mu_n < 1$ for every $n \in \mathbb N$, one has
\begin{equation}
\label{eq:log}
\ln\Big(\frac{1+\mu_n^p}{1-\mu_n^p}\Big)=2\mu_n^p\sum_{k\geq 0}\frac{\mu_n^{2pk}}{2k+1}.
\end{equation}

Using \eqref{eq:log} in \eqref{eq:xin}, one gets for $n\in\mathbb N$ that
\begin{equation}\label{eq:xin1}
\frac1{\xi_{n+1}} =\frac1{\xi_{n}}
\left[1+2\xi_n^{p+1}\left(\frac{\mu_n}{\xi_n}\right)^p\sum_{k\geq 0}\frac{\mu_n^{2pk}}{2k+1}\right].
\end{equation}
On the other hand, one deduces from \eqref{eq:xin-mun}, \eqref{eq:expl-lim}, and the fact that $\xi_n \sim \mu_n$, that $\mu_n \sim \frac{1}{\alpha_0 n^{1/(p+1)}}$,
\begin{equation}\label{eq:xin-mun2}
\mu_n^a=\xi_n^a\Big(1+O\left(n^{-1}\right)\Big),
\end{equation}
and 
\begin{equation}\label{eq:mun2}
\sum_{k\geq 0}\frac{\mu_n^{2pk}}{2k+1}=1+O\left(n^{-\frac{2p}{p+1}}\right).
\end{equation}
The above equation yields, together with \eqref{eq:xin-mun2}, that \eqref{eq:xin1} can be written, after taking its $(p+1)$-th power, as
\begin{align}
\frac1{\xi_{n+1}^{p+1}}& =\frac1{\xi_{n}^{p+1}}
\Big[1+2\xi_n^{p+1}\Big(1+O\left(n^{-1}\right)\Big)
\Big(1+O\left(n^{-\frac{2p}{p+1}}\right)\Big)
\Big]^{p+1}, \notag\\
& =\frac1{\xi_{n}^{p+1}}\Big[1+\alpha_0^{p+1}\xi_n^{p+1}\Big(1+O\left(n^{-1}\right)+O\left(n^{-\frac{2p}{p+1}}\right)\Big)\Big],\notag\\
& =\frac1{\xi_{n}^{p+1}}+\alpha_0^{p+1}+O\left(n^{-1}\right)+O\left(n^{-\frac{2p}{p+1}}\right).\label{eq:xin-pp1}
\end{align}
By summing up the above equations between $1$ and $n$, one deduces that
\begin{equation}\label{eq:first}
\frac1{\xi_{n}^{p+1}}=\alpha_0^{p+1}n+O\bigl(\ln(n)\bigr)+O\left(n^{1-\frac{2p}{p+1}}\right)=
\alpha_0^{p+1}n\left(1+O\left(\frac{\ln(n)}n\right)+O\left(n^{-\frac{2p}{p+1}}\right)\right),
\end{equation}
which implies that
\[
\frac1{\xi_{n}}=\alpha_0n^{\frac1{p+1}}\left(1+O\left(\frac{\ln(n)}n\right)
+O\left(n^{-\frac{2p}{p+1}}\right)\right)=\alpha_0n^{\frac1{p+1}}
+O\left(n^{\frac{1-2p}{p+1}}\right)+O\left(\frac{\ln(n)}{n^{1-\frac1{p+1}}}\right).
\]
By taking the exponential of the above relation, the theorem is proved in the case $p>\frac12$ .

We next suppose that $p\in (0,\frac12]$ and in that case  $N=\floor*{\frac1{2p}}\geq 1$. By using \eqref{eq:xin-mun2}, one rewrites \eqref{eq:mun2} as
\begin{equation}\label{eq:mun3}
\sum_{k\geq 0}\frac{\mu_n^{2pk}}{2k+1}=1+\sum_{k= 1}^N\frac{\xi_n^{2pk}}{2k+1}+O\left(n^{-\left(1+\frac{2p}{p+1}\right)}\right)
+O\left(n^{-\frac{2p(N+1)}{p+1}}\right).
\end{equation}
One then rewrites \eqref{eq:xin-pp1} as 
\begin{equation}\label{eq:xin3}
\frac1{\xi_{n+1}^{p+1}}=\frac1{\xi_{n}^{p+1}}+\alpha_0^{p+1}+\alpha_0^{p+1}\sum_{k= 1}^N\frac{\xi_n^{2pk}}{2k+1}+O\left(n^{-1}\right)+O\left(n^{-\frac{2p(N+1)}{p+1}}\right).
\end{equation}
We next prove that there exists $N$ real numbers $\gamma_k$ for $k \in \{1, \dotsc, N\}$ such that
\begin{equation}\label{eq:xin4}
\frac1{\xi_{n}^{p+1}}=\alpha_0^{p+1}n+\sum_{k= 1}^N \gamma_kn^{1-\frac{2pk}{p+1}}+O\left(\ln(n)+n^{1-\frac{2p(N+1)}{p+1}}\right).
\end{equation}
To see that, we set $\gamma_0=\alpha_0^{p+1}$ and we will prove by induction on $j \in \{0, \dotsc, N\}$ the following property: there exist $N+1$ real numbers $\gamma_k$ for $k \in \{0, \dotsc, N\}$ so that, for every $j\in\{0, \dotsc, N\}$, setting
\[
f_j(n)=\sum_{\ell = 0}^j \gamma_kn^{1-\frac{2p\ell}{p+1}} \qquad \text{ and } \qquad F_j(n)=\frac1{\xi_{n}^{p+1}}-f_j(n),
\]
one has
\begin{equation}
\label{eq:Fjn-induction}
F_j(n) = O\Big(\ln(n)+n^{1-\frac{2p(j+1)}{p+1}}\Big).
\end{equation}
Note that $f_j(n)\sim f_0(n)=\alpha_0^{p+1} n$ and the property is clearly true for $j=0$ by \eqref{eq:first}. For the inductive step, assume the property holds for some $j \in \{0, \dotsc, N-1\}$ and let us establish it for $j+1$. It amounts to prove that there exists a real number $\gamma_{j+1}$ such that
\begin{equation}\label{eq:inductionF}
F_j(n)=\gamma_{j+1}n^{1-\frac{2p(j+1)}{p+1}}+O\Big(\ln(n)+n^{1-\frac{2p(j+2)}{p+1}}\Big).
\end{equation}
Using the induction assumption \eqref{eq:Fjn-induction} and the definition of $F_j(n)$, we have
\[
\frac{1}{\xi_n^{p+1}} = f_j(n) \left(1 + O\left(\frac{\ln(n)}{n}+n^{-\frac{2p(j+1)}{p+1}}\right)\right)
\]
and, since $j \leq N-1$, we have $\frac{2p(j+1)}{p+1} \leq \frac{1}{p+1} < 1$, showing that, for every $k \in \{1, \dotsc, N\}$,
\begin{equation}
\label{eq:obvious-1}
\xi_n^{2pk}=f_j(n)^{-\frac{2pk}{p+1}}\Big(1+O\left(n^{-\frac{2p(j+1)}{p+1}}\right)\Big).
\end{equation}
Moreover, we also have the estimate
\begin{equation}
\label{eq:obvious-2}
(n+1)^{1-\frac{2pk}{p+1}}-n^{1-\frac{2pk}{p+1}}=
\left(1-\frac{2pk}{p+1}\right)n^{-\frac{2pk}{p+1}}\Big(1+O(n^{-1})\Big).
\end{equation}
We inject the expression $\frac1{\xi_{n}^{p+1}}=f_j(n)+F_j(n)$ into \eqref{eq:xin3} and, after computations using \eqref{eq:obvious-1} and \eqref{eq:obvious-2}, one obtains that 
\begin{align}
F_j(n+1)-F_j(n) = {} & -\sum_{k=1}^j\gamma_k\left(1-\frac{2pk}{p+1}\right)n^{-\frac{2pk}{p+1}} \notag \\
& {} + \alpha_0^{p+1}\sum_{k=1}^N\frac{f_j(n)^{-\frac{2pk}{p+1}}}{2k+1}+O\Big(n^{-\frac{2p(j+2)}{p+1}}+n^{-1}\Big). \label{eq:inductionF1}
\end{align}
Denoting the second sum by $T_j(n)$, one has that
\begin{align*}
T_j(n) &= \sum_{k=1}^N\frac{\alpha_0^{-2pk} n^{-\frac{2pk}{p+1}}}{2k+1}
\Big(1+\sum_{\ell=1}^j\frac{\gamma_\ell}{\alpha_0^{p+1}}n^{-\frac{2p\ell}{p+1}}\Big)^{-\frac{2pk}{p+1}}.
\end{align*}
Let $\varphi_{j, k}(Z) = \left(1 + \sum_{\ell = 1}^j \frac{\gamma_\ell}{\alpha_0^{p+1}} Z^\ell\right)^{-\frac{2pk}{p+1}}$ and write its Taylor expansion around $Z = 0$ as $\varphi_{j, k}(Z) = 1 + \sum_{\ell \geq 1} r_{j, k, \ell} Z^\ell$. Then, letting $Z = n^{-\frac{2 p}{p+1}}$ in the previous expression, one gets
\begin{align}
T_j(n) &= \sum_{k=1}^N\frac{\alpha_0^{-2pk} n^{-\frac{2pk}{p+1}}}{2k+1}\Big(1+\sum_{\ell\geq 1}r_{j,k, \ell}n^{-\frac{2p\ell}{p+1}}\Big)\notag\\
 &= \sum_{k=1}^N\frac{\alpha_0^{-2pk} n^{-\frac{2pk}{p+1}}}{2k+1}+\sum_{1\leq k\leq N,\ell\geq 1}\frac{\alpha_0^{-2pk}}{2k+1}r_{j,k, \ell}n^{-\frac{2p(\ell+k)}{p+1}}\notag\\
 &=\sum_{1\leq \ell \leq j}\tilde{r}_{j,\ell}n^{-\frac{2p\ell}{p+1}}+
 \tilde{r}_{j,j+1}n^{-\frac{2p(j+1)}{p+1}}+O\Big(n^{-\frac{2p(j+2)}{p+1}}\Big),\label{eq:inductionF2}
\end{align}
for suitable coefficients $\tilde r_{j, \ell}$, $\ell \in \{1, \dotsc, j+1\}$. The key point is to notice that, for $\ell \in \{1, \dotsc, j\}$, the coefficients $r_{j,k,\ell}$ only depend on $\gamma_0, \dotsc, \gamma_\ell$ and not on $\gamma_s$ for $s > \ell$ nor on $j$. As a consequence, the coefficients $\tilde{r}_{j,\ell}$ only depend on $\gamma_0, \dotsc, \gamma_{\ell-1}$ and not on $\gamma_j$ nor $j$. Hence, $\gamma_\ell$, for $\ell \in \{0, \dotsc, N\}$, is chosen according to the relation 
\[
\gamma_\ell=\frac{\alpha_0^{p+1} \tilde{r}_{j,\ell}}{1-\frac{2p\ell}{p+1}},
\]
which is possible since the right-hand side only involves $\gamma_0, \dotsc, \gamma_{\ell-1}$ and does not depend on $j>l$. In other words, $\gamma_\ell$ is determined exactly at step $\ell$ of the induction.

Gathering \eqref{eq:inductionF1}
and \eqref{eq:inductionF2}, one obtains
\[
F_j(n+1)-F_j(n)=\alpha_0^{p+1} \tilde{r}_{j,j+1}n^{-\frac{2p(j+1)}{p+1}}+O\Big(n^{-1}+n^{-\frac{2p(j+2)}{p+1}}\Big).
\]
Setting $\gamma_{j+1}=\frac{\alpha_0^{p+1} \tilde{r}_{j,j+1}}{1-\frac{2p(j+1)}{p+1}}$, one gets \eqref{eq:inductionF} after summation of the previous equation between one and $n$ large. The induction step has been established, which concludes the proof of \eqref{eq:xin4}.

One deduces from \eqref{eq:xin4} that
\begin{equation}\label{eq:xin5}
\frac1{\xi_{n}}=\alpha_0n^{\frac1{p+1}}\left[1+
\sum_{k=1}^N\frac{\gamma_k}{\alpha_0^{p+1}}n^{-\frac{2pk}{p+1}}+O\left(\frac{\ln(n)}{n}+n^{-\frac{2p(N+1)}{p+1}}\right)\right]^{\frac1{p+1}}.
\end{equation}
Since the term $\sum_{k=1}^N\frac{\gamma_k}{\alpha_0^{p+1}}n^{-\frac{2pk}{p+1}}$ can be seen as a polynomial in the indeterminate $Z=n^{-\frac{2p}{p+1}}$ with zero constant term, it is clear that there exist $N$ real numbers $\alpha_k$ with $k\in \lbrace 1,\dotsc, N\rbrace$ such that 
\begin{align*}
\left[1+
\sum_{k=1}^N\frac{\gamma_k}{\alpha_0^{p+1}}n^{-\frac{2pk}{p+1}}+O\left(\frac{\ln(n)}{n}+n^{-\frac{2p(N+1)}{p+1}}\right)\right]^{\frac1{p+1}}\notag\\
=1+
\sum_{k=1}^N\frac{\alpha_k}{\alpha_0}n^{-\frac{2pk}{p+1}}+O\left(\frac{\ln(n)}{n}+n^{-\frac{2p(N+1)}{p+1}}\right).
\end{align*}
Plugging the above equation in \eqref{eq:xin5}, one gets that 
\[
\frac1{\xi_{n}}=\sum_{k=0}^N\alpha_kn^{\frac{1-2pk}{p+1}}+O\left(\frac{\ln(n)}{n^{1-\frac1{p+1}}}+n^{\frac{1-2p(N+1)}{p+1}}\right),
\]
for $n$ large enough. Taking the exponential yields \eqref{eq:est-final-xn}.
\end{proof}

\section{Proof of Proposition~\ref{prop:faster-than-exponential-but-not-so-fast}}
\label{AppProofProp}

\begin{proof}
If \eqref{eq:q'=0-x} holds true for some $\varphi$ as in the statement, then it still holds true for any function satisfying the same assumptions and which is larger than $\varphi$ on any interval $[x_0,+\infty)$, $x_0\geq 0$. By using Lemma~\ref{lem:LB}, it is therefore enough to prove the proposition for $\varphi$ which, in addition to the above mentioned hypotheses, is also $\mathcal{C}^2$, with $\varphi^{\prime}>0$, $\varphi^{\prime\prime}\leq 0$, $\varphi(0)>0$, and such that $0 \leq \frac{x \varphi^\prime(x)}{\varphi(x)} \leq 1$ and $0 \leq -\frac{x \varphi^{\prime\prime}(x)}{\varphi(x)} \leq 2$ for every $x \geq 0$, which we assume in the sequel.

We first note that, for every $C > 0$, one has
\begin{equation}\label{eq:varphiC}
\lim_{x\to+\infty}\frac{\varphi(x+C)}{\varphi(x)}=1,
\end{equation}
since, for $x \geq 0$, one has
\[
\abs*{\ln\left(\frac{\varphi(x+C)}{\varphi(x)}\right)}=\abs*{\int_x^{x+C}\frac{s\varphi^{\prime}(s)}{\varphi(s)}\frac{\diff s}s}\leq \int_x^{x+C}\frac{\diff s}s= \ln\left(1+\frac{C}x\right),
\]
which tends to zero as $x$ tends to infinity, yielding \eqref{eq:varphiC}.

Notice that it suffices to construct the functions $q$ and $Q$ in a neighborhood of zero (in $\mathbb R_+$) and to prove \eqref{eq:q'=0-x} for $x_0 > 0$ in a neighborhood of zero (in $\mathbb R_+$). Indeed, if that is done, one can immediately extend $q$ and $Q$ to $\mathbb R_+$ in such a way that the assumptions from the statement are satisfied and, in this case, for any $x_0 > 0$, the sequence $(Q^{[n]}(x_0))_{n \in \mathbb N}$ is decreasing and converging to zero, showing that $Q^{[n]}(x_0)$ is in a neighborhood of zero for every $n$ large enough. Hence, in the sequel, we only construct $q$ and $Q$ in a neighborhood of zero and we only show \eqref{eq:q'=0-x} for $x_0 \in (0, 1)$ belonging to that neighborhood.

If $q$ and $Q$ are defined in a neighborhood of zero as in the statement, $x_0 \in (0, 1)$ belongs to that neighborhood, and we let $y_n=-\frac1{\ln(Q^{[n]}(x_0))}$ for $n\geq 0$, then one verifies from straightforward computations that the sequence $(y_n)_{n \in \mathbb N}$ satisfies the recurrence relation
\begin{equation}
\label{eq:rec-y}
y_{n+1}=y_n-U(y_n),\quad n\in\mathbb N,
\end{equation}
where $U$ is defined in a neighborhood of zero by $U(0) = 0$ and
\begin{equation}
\label{eq:U}
U(y)=-\frac{y^2\ln\bigl(\psi(e^{-1/y})\bigr)}{1-y\ln\bigl(\psi(e^{-1/y})\bigr)}, \quad y>0,
\end{equation}
and $\psi$ is defined in a neighborhood of zero by $\psi(0) = 0$ and $\psi(x) = \frac{Q(x)}{x}$ for $x > 0$. Conversely, given a function $U$, defining $\psi$ in a neighborhood of zero in such a way that \eqref{eq:U} holds, setting $Q(x) = x \psi(x)$ and defining $q$ from $Q$ using \eqref{Relation-Q-q}, any sequence $(y_n)_{n \in \mathbb N}$ starting in a neighborhood of zero and satisfying \eqref{eq:rec-y} is of the form $y_n=-\frac1{\ln(Q^{[n]}(x_0))}$ for some suitable $x_0 \in (0, 1)$ in a neighborhood of zero. Moreover, in terms of the sequence $(y_n)_{n \in \mathbb N}$, \eqref{eq:q'=0-x} reads
\begin{equation}
\label{eq:q'=0-y}
\liminf_{n \to +\infty} n \varphi(n) - \frac{1}{y_n} > -\infty.
\end{equation}
Hence, constructing $q$ and $Q$ as in the statement is equivalent to constructing a function $U$ such that the functions $q$ and $Q$ defined from it as above satisfy the properties of the statement and such that any sequence $(y_n)_{n \in \mathbb N}$ satisfying \eqref{eq:rec-y} and starting in a neighborhood of zero verifies \eqref{eq:q'=0-y}.

Define $\Psi(x)=\frac2{x\varphi(x)}$ for $x > 0$. Then $\Psi$ realizes a $\mathcal{C}^2$ diffeomorphism from $\mathbb R_+^\ast$ to $\mathbb R_+^\ast$, mapping a neighborhood of $+\infty$ to a neighborhood of $0$. Moreover, for $x\geq 0$, one has
\begin{equation}\label{eq:psi-psi'}
\Psi^{\prime}(x)=\frac{-2}{x^2\varphi(x)}\Big(1+\frac{x\varphi^{\prime}(x)}{\varphi(x)}\Big)=-\frac{\Psi(x)}{x}\Big(1+\frac{x\varphi^{\prime}(x)}{\varphi(x)}\Big)
\end{equation}
and 
\[
\Psi^{\prime\prime}(x) = \frac{2 \Psi(x)}{x^2}\left(1 + \frac{x \varphi^\prime(x)}{\varphi(x)} + \frac{x^2 \varphi^\prime(x)^2}{\varphi(x)^2} - \frac{x^2 \varphi^{\prime\prime}(x)}{2 \varphi(x)}\right).
\]
In particular, one has $\Psi^{\prime}<0$ and $\Psi^{\prime\prime}>0$ on $\mathbb R_+^\ast$. Straightforward computations also show that $0 < -\frac{x \Psi^{\prime\prime}(x)}{\Psi^\prime(x)} \leq 4$ for every $x > 0$ and $y \Psi^{-1}(y) = \frac{2 \Psi^{-1}(y)}{\Psi^{-1}(y) \varphi(\Psi^{-1}(y))} \to 0$ as $y \to 0^+$. Using the above bound on $\frac{x \Psi^{\prime\prime}(x)}{\Psi^\prime(x)}$ and reasoning as in the argument to obtain \eqref{eq:varphiC}, one deduces that, for every $C > 0$,
\begin{equation}
\label{eq:PsiC}
\lim_{x \to +\infty} \frac{\Psi^\prime(x + C)}{\Psi^\prime(x)} = 1.
\end{equation}

We claim that the function $U$ defined by $U(0) = 0$ and $U=-\frac12 \Psi^{\prime}\circ\Psi^{-1}$ in $\mathbb R_+^\ast$ meets all the requirements. Indeed, $U$ is of class $\mathcal C^1$ in $\mathbb R_+^\ast$ and, since $U(y) = \frac{y}{2 Z}\Big(1+\frac{Z\varphi^{\prime}(Z)}{\varphi(Z)}\Big)$ with $Z = \Psi^{-1}(y)$, one deduces that $U$ is continuous at $0$, $0 < U(y) < y$ for every $y > 0$, $U^\prime(0) = \lim_{y \to 0^+} \frac{U(y)}{y} = 0$, and, using that $y \Psi^{-1}(y) \to 0$ as $y \to 0^+$, we also deduce that $\lim_{y \to 0^+} \frac{U(y)}{y^2} = +\infty$. Moreover, one has 
\[
U^\prime(y) = -\frac{1}{2} \frac{\Psi^{\prime\prime} \circ \Psi^{-1}(y)}{\Psi^\prime \circ \Psi^{-1}(y)} = \frac{1}{Z} \frac{1 + A + A^2 + B}{1 + A},
\]
where $Z = \Psi^{-1}(y)$, $A = \frac{Z \varphi^\prime(Z)}{\varphi(Z)} \in [0, 1]$, and $B = - \frac{Z^2 \varphi^{\prime\prime}(Z)}{2 \varphi(Z)} \in [0, 1]$,
yielding that $U^\prime(y) \to 0$ as $y \to 0^+$.

Let $\psi$ be defined by $\psi(0) = 0$ and
\[
\psi(x) = \exp\left(-\frac{U\left(-\frac{1}{\ln x}\right) (\ln x)^2}{1 + U\left(-\frac{1}{\ln x}\right)\ln x }\right)
\]
for $x$ in a neighborhood $(0, x_\ast)$ of $0$ with $x_\ast \in (0, 1)$, in such a way that \eqref{eq:U} holds for $y$ in a neighborhood of $0$. Using the above properties on $U$, one deduces that $\psi$ is of class $\mathcal C^1$ in $(0, x_\ast)$, continuous at $0$, and $\psi(x) \in (0, 1)$ for $x \in (0, x_\ast)$.

We claim that, up to reducing $x_\ast$, one has $x \psi^\prime(x) > 0$ for $x \in (0, x_\ast)$ and $x \psi^\prime(x) \to 0$ as $x \to 0^+$. Indeed, notice that $\psi$ can be written for $x \in (0, x_\ast)$ as 
\[
\psi(x) = \exp[V(U \circ L(x), L(x))], \quad L(x):= -\frac{1}{\ln x},\quad 
V(u, y):= -\frac{u/y^2}{1 - u/y} = \frac{1}{y} - \frac{1}{y - u}.
\]
Hence, for $x \in (0, x_\ast)$, we have $x \psi^\prime(x) = x \psi(x) W(x) L^\prime(x)$, where 
\[
W(x) = \partial_u V(U \circ L(x), L(x)) U^\prime \circ L(x) + \partial_y V(U \circ L(x), L(x)).
\]
A straightforward computation yields that 
\[x \psi^\prime(x) = \psi(x) \frac{1 - U^\prime \circ L(x)}{1 - \frac{U\circ L(x)}{L(x)}},
\] 
and the above properties of $U$ show that $x \psi^\prime(x) > 0$ for $x$ small enough and $x \psi^\prime(x) \to 0$ as $x \to 0^+$.

We finally define $Q$ in the neighborhood $[0, x_\ast)$ by $Q(x) = x \psi(x)$. The above properties of $\psi$ immediately yield that $Q(0) = 0$ and $0 < Q(x) < x$ for $x \in (0, x_\ast)$. Moreover, $Q$ is clearly continuous in $[0, x_\ast)$ and of class $\mathcal C^1$ in $(0, x_\ast)$. One has $Q^\prime(0) = \lim_{x \to 0^+} \frac{Q(x)}{x} = 0$ and, using that $Q^\prime(x) = \psi(x) + x \psi^\prime(x)$ for $x > 0$, one also deduces from the above properties of $\psi$ that $Q$ is of class $\mathcal C^1$ in $[0, x_\ast)$ and that $Q^\prime(x) > 0$ for $x \in (0, x_\ast)$. Finally, defining $q$ from $Q$ using \eqref{Relation-Q-q}, one immediately verifies that $q$ satisfies the assumptions from the statement.

We are now left to prove \eqref{eq:q'=0-y} for every sequence $(y_n)_{n \in \mathbb N}$ satisfying \eqref{eq:rec-y} and with $y_0 > 0$. Fix such a sequence $(y_n)_{n \in \mathbb N}$ and notice that, since $U$ is continuous and $0 < U(y) < y$ for every $y \in \mathbb R_+^\ast$, $(y_n)_{n \in \mathbb N}$ is a decreasing sequence of positive numbers converging to $0$. We claim that there exists $n_0 \in \mathbb N$ such that, for every $n \in \mathbb N$, one has
\begin{equation}
\label{eq:bound-yn}
y_n \geq \Psi(n + n_0).
\end{equation}
Indeed, since $U^\prime(0) = 0$, the function $\id - U$ is increasing in $(0, y_\ast)$ for some $y_\ast > 0$, and we take $n_0 \in \mathbb N$ such that $y_n \in (0, y_\ast)$ for every $n \geq n_0$. Using that $\Psi(x) \to 0$ as $x \to +\infty$ and \eqref{eq:PsiC}, increasing $n_0$ if necessary, we also have that $\Psi(n_0) \leq y_0$ and that
\begin{equation}
\label{eq:RatioPsiPrime}
\frac{\Psi^\prime(x + 1)}{\Psi^\prime(x)} \geq \frac{1}{2} \qquad \text{ for every } x \geq n_0.
\end{equation}

We prove \eqref{eq:bound-yn} by induction on $n$. By construction of $n_0$, \eqref{eq:bound-yn} is satisfied for $n = 0$. Assume now that $n \in \mathbb N$ is such that \eqref{eq:bound-yn} holds. Using the fact that $\id - U$ is increasing in $(0, y_\ast)$, the induction assumption, and the definition of $U$, we deduce that 
\[
y_{n+1} = y_n - U(y_n) \geq \Psi(n + n_0) - U(\Psi(n + n_0)) = \Psi(n + n_0) + \frac{1}{2} \Psi^\prime(n + n_0).
\]
Applying the mean value theorem and using \eqref{eq:RatioPsiPrime} and the fact that $\Psi^\prime$ is negative and increasing, we get that $\Psi(n + n_0) - \Psi(n + n_0 + 1) \geq -\Psi^\prime(n + n_0 + 1) \geq -\frac{1}{2}\Psi^\prime(n + n_0)$, yielding that $y_{n+1} \geq \Psi(n + n_0 + 1)$, as required. Hence \eqref{eq:bound-yn} is established for every $n \in \mathbb N$ by induction.

By \eqref{eq:varphiC}, we have that $\frac{(n + n_0) \varphi(n + n_0)}{n \varphi(n)} \to 1$ as $n \to +\infty$, and thus one deduces from \eqref{eq:bound-yn} that, for $n$ large enough, one has $y_n \geq \frac{1}{n \varphi(n)}$, which finally implies \eqref{eq:q'=0-y}, yielding the result.
\end{proof}

\section{Technical lemmas}
\label{AppLemmas}

This appendix provides a series of technical results used in the paper. The first one is useful for establishing existence and uniqueness results for solutions of \eqref{eq:wave} in Section~\ref{SecExistUnique}.

\begin{lemma}
\label{LemmaYpMeasurable}
Let $p \in [1, +\infty]$, $S: \mathbb R \rightrightarrows \mathbb R$, and assume that, for every $g \in \mathsf Y_p$, there exists $h \in \mathsf Y_p$ such that $h(s) \in S(g(s))$ for a.e.\ $s \in [-1, 1]$. Then, for every measurable function $g: [-1, 1] \to \mathbb R$, there exists a measurable function $h: [-1, 1] \to \mathbb R$ such that $h(s) \in S(g(s))$ for a.e.\ $s \in [-1, 1]$.
\end{lemma}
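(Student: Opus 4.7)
The plan is to reduce the measurable (but possibly unbounded) function $g$ to a countable family of bounded functions to which the hypothesis applies, and then patch the resulting solutions together.

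Concretely, I would first partition $[-1, 1]$ into the countably many disjoint measurable sets $A_n = \{s \in [-1, 1] \suchthat n \leq \abs{g(s)} < n+1\}$ for $n \in \mathbb N$, whose union is all of $[-1, 1]$. For each $n$, define $g_n: [-1, 1] \to \mathbb R$ by $g_n = g \chi_{A_n}$. Since $\abs{g_n} \leq n+1$ and $[-1, 1]$ has finite Lebesgue measure, we have $g_n \in \mathsf Y_p$ for every $p \in [1, +\infty]$. The hypothesis therefore yields, for each $n$, a function $h_n \in \mathsf Y_p$ and a null set $N_n \subset [-1, 1]$ such that $h_n(s) \in S(g_n(s))$ for every $s \in [-1, 1] \setminus N_n$.

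Next I would set $h = \sum_{n \in \mathbb N} h_n \chi_{A_n}$, where the sum is in fact well-defined pointwise since the sets $A_n$ are disjoint. This $h$ is measurable as a countable sum of measurable functions. For $s \in A_n \setminus N_n$, we have $g_n(s) = g(s)$, so $h(s) = h_n(s) \in S(g_n(s)) = S(g(s))$. The exceptional set on which $h(s) \in S(g(s))$ may fail is contained in $\bigcup_{n \in \mathbb N} (A_n \cap N_n) \subset \bigcup_{n \in \mathbb N} N_n$, which has Lebesgue measure zero as a countable union of null sets. Hence $h(s) \in S(g(s))$ for a.e.\ $s \in [-1, 1]$, as required.

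There is no genuine obstacle here: the proof is essentially bookkeeping. The only minor subtlety is that the a.e.\ exceptional set in the hypothesis depends on $n$, so one must observe that countably many null sets still have null union. Note that one does not need to require $h$ to belong to $\mathsf Y_p$ in the conclusion, which is why no growth condition on $S$ is invoked.
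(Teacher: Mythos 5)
Your proof is correct and follows essentially the same route as the paper's: partition $[-1,1]$ according to the magnitude of $g$, apply the hypothesis to each bounded (hence $\mathsf Y_p$) truncation $g\chi_{A_n}$, and glue the resulting $h_n$ on the pieces $A_n$, with the countable union of null exceptional sets still null. The only differences are cosmetic (the exact choice of the level sets $A_n$), so there is nothing to add.
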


\begin{proof}
Let $g: [-1, 1] \to \mathbb R$ be measurable. Let $A_0 = g^{-1}([-1, 1])$ and, for $n \in \mathbb N^\ast$, let $A_n = g^{-1}([-n-1, -n) \cup (n, n+1])$. Then clearly $A_n$ is measurable for every $n \in \mathbb N$ and the sequence $(A_n)_{n \in \mathbb N}$ is a partition of $[-1, 1]$. For each $n \in \mathbb N$, define $g_n: [-1, 1] \to \mathbb N$ by $g_n = g \chi_{A_n}$, where $\chi_{A_n}$ denotes the characteristic function of $A_n$. Then $g_n$ is measurable and bounded, and hence $g_n \in \mathsf Y_p$. Hence, there exists a sequence $(h_n)_{n \in \mathbb N}$ in $\mathsf Y_p$ such that $h_n(s) \in S(g_n(s))$ for every $n \in \mathbb N$ and a.e.\ $s \in [-1, 1]$. Let $h = \sum_{n=0}^{\infty} h_n \chi_{A_n}$, which is measurable as the countable sum of measurable functions. For every $n \in \mathbb N$ and a.e.\ $s \in A_n$, one has $h(s) = h_n(s) \in S(g_n(s)) = S(g(s))$, and thus $h(s) \in S(g(s))$ for a.e.\ $s \in [-1, 1]$, as required.
\end{proof}

The definition of uniform global asymptotic stability of \eqref{eq:wave} requires \eqref{eq:UGAS} to be satisfied for some $\mathcal{KL}$ function $\beta$. Our next lemma provides sufficient conditions under which a function can be upper bounded by a $\mathcal{KL}$ function, and it is thus useful in several proofs of UGAS results.

\begin{lemma}\label{lem:KL}
Let $f: \mathbb{R}_+\times \mathbb{R}_+\to \mathbb{R}_+$ be a function so that 
\begin{enumerate}
\item\label{ItemKL-A} $f(0,\cdot) \equiv 0$;
\item\label{ItemKL-B} for every $t\geq 0$, $x\mapsto f(x,t)$ is nondecreasing and tends to zero as $x$ tends to $0$;
\item\label{ItemKL-C} for every $x\geq 0$, $t\mapsto f(x,t)$ is nonincreasing and tends to zero as $t$ tends to infinity. 
\end{enumerate}
Then there exists a ${\mathcal{K}}{\mathcal{L}}$ function $\beta$ 
such that 
\begin{equation}\label{eq:constructionKL}
f(x,t)\leq \beta(x,t),\quad \forall t\geq 0.
\end{equation}
\end{lemma}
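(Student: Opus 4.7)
The plan is to build $\beta$ from two ingredients: a $\mathcal K_\infty$ function $\alpha$ dominating $f(\cdot,t)$ uniformly in $t$, and a decreasing modulation encoding the pointwise convergence $f(x,t)\to 0$ as $t\to+\infty$. By hypothesis \ref{ItemKL-C}, $g(x):=\sup_{t\geq 0}f(x,t)=f(x,0)$, and hypothesis \ref{ItemKL-B} at $t=0$ makes $g$ nondecreasing with $g(0)=0$ and $\lim_{x\to 0^+}g(x)=0$.

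First I would construct a $\mathcal K_\infty$ function $\alpha$ with $\alpha\geq g$ by a smoothing of $g$: for instance
\[
\alpha(x)=x+\tfrac{1}{x}\int_x^{2x}g(s)\,\diff s\quad \text{for }x>0,\qquad \alpha(0)=0.
\]
A direct verification (using $x\,g(x)\leq \int_x^{2x}g(s)\diff s\leq x\,g(2x)$, together with $g(2x)\to 0$ as $x\to 0^+$) shows that $\alpha$ is continuous, strictly increasing with Dini derivative at least one, tends to $+\infty$, and satisfies $\alpha(x)\geq g(x)+x>f(x,t)$ for every $x>0$ and $t\geq 0$.

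Next, I would use hypothesis \ref{ItemKL-C} to extract a strictly increasing sequence $(T_n)_{n\in\mathbb N}$ with $T_0=0$, $T_n\to+\infty$, and $f(n,T_n)\leq 2^{-n}$ for $n\geq 1$. Monotonicity in both variables then yields the staircase bound $f(x,t)\leq 2^{-n}$ on $\{x\leq n,\,t\geq T_n\}$. Let $\tau:\mathbb R_+\to\mathbb R_+$ be the continuous piecewise linear strictly increasing function with $\tau(n)=T_n+n$ and let $\sigma:\mathbb R_+\to(0,1]$ be continuous strictly decreasing with $\sigma(0)=1$, $\sigma(s)\to 0$, tailored so that $2\alpha(n)\sigma\!\left(T_n/(1+\tau(n))\right)\geq 2^{-n}$ at every integer $n$. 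I would then define
\[
\beta(x,t)=2\alpha(x)\,\sigma\!\left(\frac{t}{1+\tau(x)}\right)+\alpha(x)\,e^{-t}.
\]
One checks that $\beta$ is continuous with $\beta(0,t)=0$; that $\beta(x,\cdot)$ is strictly decreasing and tends to $0$ thanks to both terms; and that $\beta(\cdot,t)$ is strictly increasing since both factors in each summand are nondecreasing in $x$ (the rescaled argument $t/(1+\tau(x))$ decreases as $x$ grows, so $\sigma$ of it nondecreases), the strictness being secured by the $\alpha(x)e^{-t}$ term. The domination $\beta\geq f$ splits into two cases according to whether $t\leq \tau(x)$ (where $\beta\geq\alpha(x)e^{-t}$ together with a calibrated lower bound on the $\sigma$-term handles $f\leq \alpha(x)$) or $t\geq\tau(x)$ (where the calibration at the corners $(n,T_n)$ and the monotonicity of $\alpha,\sigma,\tau$ handle the staircase bound $f\leq 2^{-\lceil x\rceil}$).

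The main obstacle is the simultaneous calibration of $\sigma$ and $\tau$ so that $\beta\geq f$ holds uniformly on the transition regions $t\approx T_{\lceil x\rceil}$, while keeping $\beta\in\mathcal{KL}$. The delicate point is that the rate of decay of $f(x,\cdot)$ may depend strongly and non-uniformly on $x$, so the modulation cannot be taken as a pure product $\alpha(x)\gamma(t)$: the $x$-dependent time-rescaling through $\tau$ is essential. If that calibration proves too rigid, a fallback is to upper bound $f$ first by its monotone envelope $F(x,t)=f(x,t)+\alpha(x)/(1+t)$ (which satisfies the same hypotheses as $f$ and has a controlled lower decay rate) and apply the same construction to $F$.
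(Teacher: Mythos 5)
Your construction of the majorant $\alpha$ is fine, but the heart of the lemma --- the inequality $\beta\geq f$ --- is exactly the part you leave open, and the calibration you do write down does not do the job. The conditions ``$2\alpha(n)\sigma\bigl(T_n/(1+\tau(n))\bigr)\geq 2^{-n}$ at every integer $n$'' are essentially vacuous: since $\tau(n)=T_n+n$, the argument $T_n/(1+\tau(n))$ is always $<1$, so these constraints are met by any $\sigma$ with $\sigma(1)\geq 1/4$ and say nothing about the regimes where domination is actually at risk. Concretely, two regions are not covered. First, for $x\in(n-1,n]$ and $\tau(x)\leq t<T_n$: your piecewise linear $\tau$ gives $\tau(x)\approx T_{n-1}+n-1$ near $x=n-1$, which can be vastly smaller than $T_n$; on this strip the staircase bound is unavailable (it requires $t\geq T_{\lceil x\rceil}=T_n$), the only bound is $f(x,t)\leq g(x)$, yet the argument $t/(1+\tau(x))$ of $\sigma$ ranges up to roughly $T_n/(T_{n-1}+n)$, which is completely uncontrolled, so $\sigma$ must be bounded below at arbitrarily large arguments by quantities of order $g(x)/\alpha(x)$. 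Second, for fixed $x$ and $t\to+\infty$: the staircase bound $f\leq 2^{-\lceil x\rceil}$ is constant in $t$ while $\beta(x,\cdot)\to 0$, so you in fact need the whole family $f(x,t)\leq 2^{-m}$ for $t\geq T_m$, $m\geq\lceil x\rceil$, and correspondingly a countable family of lower bounds on $\sigma$ along all pairs $(x,T_m)$, not just the corners $(n,T_n)$. One can show that the envelope of all these constraints does tend to zero (so a sufficiently slowly decaying $\sigma$ exists), but that verification is precisely the missing core of the proof; you flag it yourself as ``the main obstacle'', and the fallback $F(x,t)=f(x,t)+\alpha(x)/(1+t)$ does not address it, since the difficulty is constructing the upper envelope, not improving the decay of $f$.

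Note also that the paper's proof avoids the whole calibration issue: it never imposes a product (or time-rescaled product) structure, but instead interpolates $f$ itself --- first linearly in $t$ at the integers, using the value at $n-1$ on $[n,n+1]$ so that monotonicity in $t$ gives domination, then linearly in $x$ at the dyadic points $2^n$ with the analogous one-step shift --- and finally adds $x e^{-t}$ to enforce strict monotonicity. Because the bound is built from values of $f$, it automatically inherits the correct $x$-dependent decay in $t$, which is exactly what your single function $\sigma$ with the rescaling $t/(1+\tau(x))$ struggles to encode. If you want to salvage your approach, you must either carry out the full envelope argument for $\sigma$ over both regimes above, or switch to an interpolation of $f$ itself as in the paper.
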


\begin{proof}
The issue here arises from the fact that $f$ is not necessarily continuous. Define $\beta_0: \mathbb{R}_+ \times \mathbb{R}_+\to \mathbb{R}_+$ as follows: for $x \in \mathbb R_+$ and $t\in [0, 1]$, $\beta_0(x, t) = f(x, 0)$ and, for $x \in \mathbb R_+$, $n \in \mathbb N^\ast$, and $t \in [n, n+1]$,
\[
\beta_0(x, t) = \alpha_t f(x, n-1) + (1-\alpha_t) f(x, n),
\]
where $\alpha_t \in [0,1]$ is uniquely defined by the relation $t=\alpha_t n + (1-\alpha_t) (n+1)$. Then $\beta_0$ verifies the three items \ref{ItemKL-A}, \ref{ItemKL-B}, and \ref{ItemKL-C}, $f(x,t)\leq \beta_0(x,t)$ for every $(x,t)\in \mathbb{R}_+\times \mathbb{R}_+$, and, by construction, $t\mapsto\beta_0(x,t)$ is continuous for every $x\geq 0$. 

We next define $\beta_1: \mathbb{R}_+\times \mathbb{R}_+\to \mathbb{R}_+$ as follows: for every $t \in \mathbb R_+$, $n \in \mathbb Z$, and $x \in [2^{n-1}, 2^n]$, we set $\beta_1(0, t) = 0$ and
\[
\beta_1(x,t) = \alpha_x \beta_0(2^n, t) + (1-\alpha_x) \beta_0(2^{n+1}, t),
\]
where $\alpha_x \in [0,1]$ is uniquely defined by the relation $x=\alpha_x 2^{n-1} + (1-\alpha_x) 2^n$. It is immediate to check that $\beta_1$ is continuous, satisfies \ref{ItemKL-A}, \ref{ItemKL-B}, and \ref{ItemKL-C}, and $f(x, t) \leq \beta_1(x, t)$ for every $(x, t) \in \mathbb R_+ \times \mathbb R_+$. We conclude by taking $\beta(x, t) = \beta_1(x, t) + x e^{-t}$ for $(x, t) \in \mathbb R_+ \times \mathbb R_+$.
\end{proof}

The next result, used in Remark~\ref{Remk-ep} and in the proof of Theorem~\ref{thm:ISS}, is a generalization of Jensen's inequality to $L^p$ norms. Its proof follows closely the classical proof of Jensen's inequality and is provided here for sake of completeness.

\begin{lemma}
\label{LemmaJensen}
Let $(\Omega, \mathfrak A, \mu)$ be a measure space with $0 < \mu(\Omega) < +\infty$. Let $f: \mathbb R_+ \to \mathbb R_+$ be a nondecreasing concave function. Then, for every $p \in [1, +\infty)$ and every real-valued function $g \in L^p(\Omega, \mu)$, one has
\[
\norm{f \circ \abs{g}}_{L^p(\Omega, \mu)} \leq \mu(\Omega)^{1/p} f(\mu(\Omega)^{-1/p} \norm{g}_{L^p(\Omega, \mu)}).
\]
\end{lemma}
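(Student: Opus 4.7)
My plan is to reduce the inequality, by an elementary normalization and a substitution, to the classical Jensen inequality applied to a suitable auxiliary function, and then to verify that this auxiliary function is concave.

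First I would replace $\mu$ by the probability measure $\nu := \mu/\mu(\Omega)$. A direct rescaling shows that the desired inequality is equivalent to
\[
\norm{f \circ \abs{g}}_{L^p(\nu)} \leq f\bigl(\norm{g}_{L^p(\nu)}\bigr).
\]
Raising both sides to the power $p$ and setting $h := \abs{g}^p \in L^1(\nu)$ together with $\psi : \mathbb R_+ \to \mathbb R_+$ defined by $\psi(t) = f(t^{1/p})^p$, this becomes
\[
\int_\Omega \psi(h) \, d\nu \leq \psi\biggl(\int_\Omega h \, d\nu\biggr),
\]
which is exactly Jensen's inequality applied to $\psi$ on the probability space $(\Omega, \nu)$. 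Hence the whole statement reduces to establishing that $\psi$ is concave on $\mathbb R_+$.

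The main obstacle is precisely this concavity: a priori $\psi$ is the composition of the concave function $t \mapsto t^{1/p}$, the concave function $f$, and the convex function $v \mapsto v^p$, so neither concavity nor convexity of $\psi$ is automatic. To handle it I would use the following duality argument. Since $f$ is nondecreasing, nonnegative, and concave on $\mathbb R_+$, every supporting affine functional of $f$ has the form $\ell_{a,b}(v) := a v + b$ with $a, b \geq 0$ (the slope $a$ is nonnegative by monotonicity, and $b = \ell_{a,b}(0) \geq f(0) \geq 0$), and $f = \inf_{(a,b) \in \mathcal S} \ell_{a,b}$, where $\mathcal S \subset \mathbb R_+^2$ denotes the set of such affine majorants. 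Since $x \mapsto x^p$ is nondecreasing on $\mathbb R_+$ and all quantities are nonnegative, this yields
\[
\psi(t) = \inf_{(a,b) \in \mathcal S} \bigl(a t^{1/p} + b\bigr)^p.
\]
As the infimum of a family of concave functions is concave, it then suffices to prove that, for every fixed $a, b \geq 0$, the function $\Phi_{a,b}(t) := (a t^{1/p} + b)^p$ is concave on $\mathbb R_+$.

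This last step is a routine single-variable check. The function $\Phi_{a,b}$ is continuous on $\mathbb R_+$, and a direct differentiation for $t > 0$ yields
\[
\Phi_{a,b}''(t) = -\frac{a b (p - 1) \bigl(a t^{1/p} + b\bigr)^{p-2}}{p \, t^{2 - 1/p}},
\]
which is nonpositive whenever $a, b \geq 0$ and $p \geq 1$. Hence $\Phi_{a,b}$ is concave on $(0, +\infty)$ and, by continuity at $0$, on all of $\mathbb R_+$, which closes the argument.
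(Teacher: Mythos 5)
Your proof is correct, but it takes a genuinely different route from the paper. The paper normalizes $\mu$ to a probability measure, picks a \emph{single} supporting line $t \mapsto A t + B$ of $f$ at the specific point $\norm{g}_{L^p}$ (noting $A, B \geq 0$ from monotonicity and $f(0) \geq 0$), and concludes in one line from the pointwise bound $f \circ \abs{g} \leq A \abs{g} + B$ via Minkowski's inequality. You instead reduce the statement to the classical Jensen inequality for the transformed function $\psi(t) = f(t^{1/p})^p$ and prove concavity of $\psi$ by representing $f$ as the infimum of \emph{all} its affine majorants $a t + b$ with $a, b \geq 0$ and checking $\Phi_{a,b}'' \leq 0$ by hand. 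It is worth noticing that your last step is essentially Minkowski in disguise: concavity of $(a t^{1/p} + b)^p$ plus Jensen is equivalent to the special case $\norm{a \abs{g} + b}_{L^p(\nu)} \leq a \norm{g}_{L^p(\nu)} + b$, which is exactly what the paper gets for free by citing Minkowski; so the paper's proof is shorter, while yours isolates the reusable fact that $t \mapsto f(t^{1/p})^p$ is concave. One minor caveat in your duality step: the identity $f = \inf_{(a,b)} \ell_{a,b}$ is guaranteed only on $(0, +\infty)$, since a nondecreasing concave $f$ on $\mathbb R_+$ may be discontinuous at $0$ (e.g.\ $f(0) = 0$, $f \equiv 1$ on $(0, +\infty)$), in which case the infimum at $t = 0$ exceeds $f(0)$. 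This is harmless—$\psi$ then coincides on $(0,+\infty)$ with a concave function and satisfies $\psi(0) \leq \lim_{t \to 0^+} \psi(t)$, so it is still concave (or one treats the trivial case $g = 0$ a.e.\ separately)—but a complete write-up should say a word about it; the paper's choice of a supporting line at the single point $\norm{g}_{L^p}$ sidesteps this issue entirely.
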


\begin{proof}
We assume, with no loss of generality, that $\mu(\Omega) = 1$, since, once the result is proved in this case, one retrieves the general case by applying it to the measure $\nu(\cdot) = \frac{\mu(\cdot)}{\mu(\Omega)}$. 

Fix $p \in [1, +\infty)$ and $g \in L^p(\Omega, \mu)$. Since $f$ is concave, there exist $A, B \in \mathbb R$ such that $f(t) \leq A t + B$ for every $t \in \mathbb R_+$ and $f(\norm{g}_{L^p(\Omega, \mu)}) = A \norm{g}_{L^p(\Omega, \mu)} + B$. In particular, $f \circ \abs{g} \leq A \abs{g} + B$. Moreover, since $f$ is nondecreasing, one has $A \geq 0$ and, since $f(0) \geq 0$, one has $B \geq 0$. Then, using Minkowski inequality, one deduces that
\[
\norm{f \circ \abs{g}}_{L^p(\Omega, \mu)} \leq \norm{A \abs{g} + B}_{L^p(\Omega, \mu)} \leq A \norm{g}_{L^p(\Omega, \mu)} + B = f(\norm{g}_{L^p(\Omega, \mu)}),
\]
as required.
\end{proof}

The next two lemmas provide suitable constructions of functions and are used, respectively, in the proofs of Proposition~\ref{prop:faster-than-exponential-but-not-so-fast} and Theorem~\ref{thm:ISS}.

\begin{lemma}\label{lem:LB}
Let $\varphi: \mathbb R_+ \to \mathbb R_+$ be an increasing function such that $\lim_{x\rightarrow +\infty} \varphi(x) = +\infty$. Then there exists a $\mathcal{C}^2$ function $\psi:\mathbb R_+ \to \mathbb R_+$ satisfying $\lim_{x\rightarrow +\infty} \psi(x) = +\infty$ such that $\psi^\prime > 0$, $\psi^{\prime\prime}\leq 0$, $\psi(0) > 0$, $\psi(x) \leq \varphi(x)$ for $x$ large enough, and such that $0 \leq \frac{x\psi^{\prime}(x)}{\psi(x)} \leq 1$ and $0 \leq -\frac{x\psi^{\prime\prime}(x)}{\psi(x)} \leq 2$ for every $x \in \mathbb R_+$.
\end{lemma}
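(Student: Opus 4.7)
The strategy is to construct $\psi$ by smoothing a carefully chosen piecewise linear minorant of $\varphi$. Since $\varphi\to+\infty$, one first inductively selects a sequence $0=x_0<x_1<x_2<\cdots$ tending to infinity with strictly increasing gaps $x_{n+1}-x_n$ (this is what will give concavity) and such that $\varphi(x_{n-1})\geq n+3$ for every $n\geq 1$. Define the continuous piecewise linear function $\ell$ by $\ell(x_n)=n+2$, interpolated linearly. Its slopes $s_n:=1/(x_{n+1}-x_n)$ are then positive and strictly decreasing, so $\ell$ is concave, strictly increasing, with $\ell(0)=2>0$, $\ell\to+\infty$, and, for $x\in[x_{n-1},x_n]$ with $n\geq 1$,
\[
\ell(x)\leq \ell(x_n)=n+2<n+3\leq \varphi(x_{n-1})\leq\varphi(x).
\]

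Second, smoothen $\ell$ near each breakpoint $x_n$, $n\geq 1$, on a symmetric interval $I_n=[x_n-\delta_n,x_n+\delta_n]$, with the $\delta_n$'s chosen small enough that the $I_n$'s are pairwise disjoint and do not accumulate near $0$. Fix once and for all a nondecreasing function $\Theta \in \mathcal C^2([0,1],[0,1])$ with $\Theta(0)=0$, $\Theta(1)=1$ and $\Theta'(0)=\Theta'(1)=0$, and on each $I_n$ set
\[
\psi'(t)=s_{n-1}+(s_n-s_{n-1})\,\Theta\!\bigl(\tfrac{t-x_n+\delta_n}{2\delta_n}\bigr),
\]
recovering $\psi$ as the antiderivative of $\psi'$ that agrees with $\ell$ outside $\bigcup_n I_n$. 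This yields a function in $\mathcal C^2(\R_+,\R_+)$ which is concave, strictly increasing, with $\psi(0)=\ell(0)=2$, $\psi\to+\infty$, and $\lvert\psi-\ell\rvert\leq \delta_n(s_{n-1}-s_n)$ on $I_n$; imposing $\delta_n(s_{n-1}-s_n)\leq 1$ ensures $\psi\leq\ell+1<\varphi$ for $x$ large.

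Third, the two ratio bounds are verified as follows. Concavity of $\psi$ with $\psi(0)>0$ gives $\psi(x)\geq \psi(0)+\int_0^x \psi'(t)\,dt\geq \psi(0)+x\,\psi'(x)>x\,\psi'(x)$, whence $x\psi'(x)/\psi(x)\leq 1$. Outside $\bigcup_n I_n$ one has $\psi''=0$; on $I_n$, $\psi''\leq 0$ and $\lvert\psi''\rvert\leq (s_{n-1}-s_n)\|\Theta'\|_\infty/(2\delta_n)$, hence
\[
-x\,\psi''(x)\leq \frac{(x_n+\delta_n)(s_{n-1}-s_n)\|\Theta'\|_\infty}{2\delta_n},
\]
which is bounded by $2\psi(x)\geq 4$ provided $\delta_n\geq x_n(s_{n-1}-s_n)\|\Theta'\|_\infty/8$.

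The main obstacle is the simultaneous satisfaction of the two constraints on $\delta_n$: a lower bound needed to tame $-x\psi''/\psi$, and upper bounds coming from $\delta_n(s_{n-1}-s_n)\leq 1$ and from the disjointness of the $I_n$'s. This is handled by making the initial gaps grow geometrically, e.g.\ by choosing $x_{n+1}-x_n=2(x_n-x_{n-1})$: then $s_{n-1}-s_n\asymp 1/(x_n-x_{n-1})$ and the ratio $x_n(s_{n-1}-s_n)$ stays bounded independently of $n$, so all the requirements on $\delta_n$ can be satisfied simultaneously by a uniform prescription of the form $\delta_n=c(x_n-x_{n-1})$ for a sufficiently small constant $c>0$.
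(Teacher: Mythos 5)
Your overall architecture (a concave, piecewise-linear minorant $\ell$ with nodes $x_n$ and values $n+2$, followed by a $\mathcal C^2$ smoothing of the corners on small windows $I_n$, and then the two homogeneous ratio bounds) is sound, but the step that is supposed to resolve what you yourself call the main obstacle is wrong as stated. The nodes are subject to two requirements that you never reconcile: $\varphi(x_{n-1})\geq n+3$, which is what makes $\ell\leq\varphi$ and which, for a $\varphi$ tending to $+\infty$ arbitrarily slowly (say slower than any logarithm), forces the $x_n$ to be chosen adaptively and possibly super-exponentially sparse; and the exact recursion $x_{n+1}-x_n=2(x_n-x_{n-1})$ of your last paragraph, which pins the nodes down to $x_n=(2^n-1)x_1$ and hence makes $\ell(x)\sim\log_2 x$. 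For such slow $\varphi$ these are incompatible: with exactly doubling gaps the minorant $\ell$ (hence $\psi$) eventually exceeds $\varphi$, and the conclusion $\psi\leq\varphi$ for large $x$ is lost. So the prescription "gaps exactly geometric" silently sacrifices the minorization, which is the whole point of the lemma.

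The repair is easy and keeps your route intact: require only $x_{n+1}-x_n\geq 2(x_n-x_{n-1})$ while still taking $x_{n+1}$ large enough that $\varphi(x_n)\geq n+4$. Then $x_n\leq x_1+2(x_n-x_{n-1})$, so $x_n(s_{n-1}-s_n)\leq x_n/(x_n-x_{n-1})\leq 3$ remains bounded, which is all your $\delta_n$-compatibility argument actually uses (your "$\asymp$" is only needed as an upper bound); with, e.g., $\delta_n=c\,(x_n-x_{n-1})$ for a small fixed $c$ and $x_1$ chosen large at the outset, disjointness of the $I_n$, the lower bound on $\delta_n$ taming $-x\psi''/\psi$ (note it should read $\delta_n\geq (x_n+\delta_n)(s_{n-1}-s_n)\norm{\Theta'}_\infty/8$), and $x_1-\delta_1>0$ all hold simultaneously. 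Two further small points: you need $\int_0^1\Theta(u)\diff u=\tfrac12$ (e.g.\ a symmetric profile) for $\psi$ to rejoin $\ell$ at the right end of each $I_n$; and once that holds, $\psi\leq\ell$ on each $I_n$ (smoothing a concave corner only lowers the function), so the constraint $\delta_n(s_{n-1}-s_n)\leq 1$ is superfluous. With these corrections your argument is a valid alternative to the paper's proof, which resolves the same tension differently: there the smoothing windows have unit length at the integers and the slope drop is forced never to exceed the current slope, so that $-x\psi''(x)\leq x f_{n-1}\leq 2\psi(x)$ follows directly from the already-established bound $x\psi'/\psi\leq 1$, with no sparseness condition on the nodes; your version, once repaired, buys a cleaner genuinely $\mathcal C^2$ smoothing at the price of the adaptive, at-least-geometric spacing.
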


\begin{proof}
It suffices to prove the result with the additional assumptions that $\varphi$ is also continuous and piecewise affine with positive constant derivative on every interval of the form $(n, n+1)$. Indeed, when this is not the case, one can easily construct a function $\phi$ using a procedure similar to that of Lemma~\ref{lem:KL} in such a way that $\phi(x) \leq \varphi(x)$ for $x$ large enough and $\phi$ satisfies the assumptions of the theorem as well as the previous additional assumptions. We then assume these additional assumptions in the sequel.

For $n\in \mathbb N$, let $\varphi_n = \varphi(n)$ and denote by $\tilde\varphi_n > 0$ the constant value of the derivative of $\varphi$ in the interval $(n, n+1)$. One has that $\varphi_n=\varphi_0+\sum_{k = 0}^{n-1}\tilde{\varphi}_k$, which is an increasing sequence tending to infinity.
Also note that we can assume with no loss of generality that $\varphi_0$ and $\tilde{\varphi}_0$ are both positive.

We first construct a continuous increasing piecewise affine function $F: \mathbb R_+ \to \mathbb R_+$ with $F \leq \psi$, $\displaystyle\lim_{x \to +\infty} F(x)\allowbreak = +\infty$, and such that its derivative $f = F^\prime$ is nonincreasing and constant at every interval of the form $(n, n+1)$. For that purpose, it is sufficient to construct the sequence $(F_n)_{n \in \mathbb N}$ of the values of $F(x)$ at the points $x = n$ and the sequence $(f_n)_{n \in \mathbb N}$ of the constant values of $f$ on the intervals $(n, n+1)$.

We define $(F_n)_{n\in \mathbb N}$ recursively as follows: $F_0=\varphi_0>0$, $f_0=\tilde{\varphi}_0>0$, and, for $n \in \mathbb N^\ast$, $F_n=F_0+\sum_{k = 0}^{n-1} f_k$ and $f_{n}=f_{n-1}$ if $F_n+f_{n-1}\leq \varphi_{n+1}$ or $f_{n}=\tilde{\varphi}_{n}$ otherwise. We easily show by induction that $F_n\leq \varphi_n$ for $n\geq 0$ and $(f_n)_{n \in \mathbb N}$ is a nonincreasing positive sequence. Indeed, $F_0 = \varphi_0$, $f_0 = \tilde\varphi_0$, $F_1 = \varphi_1$, and one has either $F_1 + f_0 \leq \varphi_2$, in which case $f_1 = f_0$, or $F_1 + f_0 > \varphi_2$, in which case $f_0 > \varphi_2 - F_1 = \varphi_2 - \varphi_1 = \tilde\varphi_1$, yielding that $f_1 = \tilde\varphi_1 < f_0$, so that $f_0 \geq f_1 > 0$ in both cases. Assume now that $n \in \mathbb N^\ast$ is such that $F_n\leq \varphi_n$ and $f_{n-1}\geq f_n>0$. If $F_{n}+f_{n-1}\leq \varphi_{n+1}$, then $f_{n}=f_{n-1}$, implying that $F_{n+1}=F_n+f_n\leq  \varphi_{n+1}$. If now $F_n+f_{n-1}> \varphi_{n+1}$, one deduces from that equation that $f_{n-1}>\tilde{\varphi}_n=f_n$, yielding that $F_{n+1}=F_n+f_n\leq \varphi_n + f_n = \varphi_n + \tilde\varphi_n = \varphi_{n+1}$ by the induction hypothesis. If $F_{n+1}+f_{n}>\varphi_{n+2}$ one deduces that $f_n>\tilde{\varphi}_{n+1}=f_{n+1}>0$ and otherwise $f_n=f_{n+1}>0$. This concludes the induction argument.

It remains to prove that $F_n$ tends to infinity as $n$ tends to infinity. Arguing by contradiction yields that both sequences $(F_n)_{n\in \mathbb N}$ and $(f_n)_{n\in \mathbb N}$ are bounded. Hence there exists an integer $n_0$ so that $F_n+f_{n-1}\leq \varphi_{n+1}$ for every $n\geq n_0$, since $(\varphi_n)_{n \in \mathbb N}$ tends to infinity. Therefore, by definition, $(f_n)_{n \geq n_0}$ is constant and equal to some $f > 0$, yielding that $F_n=F_{n_0}+f(n-n_0)$ for $n\geq n_0$, which contradicts the fact that $(F_n)_{n \in \mathbb N}$ is bounded, establishing this the result.

To obtain the required function $\psi$, we define $\psi(x) = F(x)$ for $x \in [0, \frac{1}{2}]$ and we regularize the function $F$ in a neighborhood of each positive integer as follows: for $n \in \mathbb N^\ast$ and $s \in [0, 1]$, we set
\[
\psi\left(n - \tfrac{1}{2} + s\right) = \frac{f_n - f_{n-1}}{2} s^2 + f_{n-1} s + F_n - \frac{f_{n-1}}{2}.
\]
Note that $\psi$ and $\psi^\prime$ coincide with $F$ and $f$, respectively, at all points of the form $n + \frac{1}{2}$ for $n \in \mathbb N$, and that $\psi^\prime$ is positive, continuous, and nonincreasing. In particular, from the latter fact, we get
\[
\psi(x)=\psi(0)+\int_0^x\psi^{\prime}(s)\diff s\geq \psi(0)+x\psi^{\prime}(x), \qquad x\geq 0,
\]
which implies that $0 \leq \frac{x \psi^\prime(x)}{\psi(x)} \leq 1$. Moreover, for every $x\geq \frac{1}{2}$, there exist $n \in \mathbb N^\ast$ and $s \in [0, 1]$ such that $x = n - \frac{1}{2} + s$, and one has
\[
0 \leq - \frac{x \psi^{\prime\prime}(x)}{\psi(x)} = \frac{x (f_{n-1} - f_n)}{\psi(x)} \leq \frac{x f_{n-1}}{\psi(x)} \leq \frac{2(n - \frac{1}{2}) \psi^\prime(n - \frac{1}{2})}{\psi(n - \frac{1}{2})} \leq 2,
\]
which concludes the proof.
\end{proof}

\begin{lemma}
\label{LemmKInfty}
Let $\mu: \mathbb R_+ \to \mathbb R_+$ be an upper semi-continuous nondecreasing function with $\mu(0) = 0$, $0 < \mu(r) < r$ for $r > 0$, and such that $r - \mu(r)$ tends to $+\infty$ as $r \to +\infty$. Then there exists a $\mathcal K_\infty$ function $\varphi$ such that $\id - \varphi$ is nondecreasing and $\mu \leq \id - \varphi$. Moreover, if there exist $a \in (0, 1)$ and $M \geq 0$ such that $\mu(r) \leq a r$ for every $r \geq M$, then $\varphi$ can in addition be chosen convex.
\end{lemma}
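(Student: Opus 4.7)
The plan is to set $\psi := \id - \mu$, which is lower semi-continuous on $\mathbb R_+$, satisfies $\psi(0) = 0$, $0 < \psi(r) \leq r$ for $r > 0$, $\psi(r) \to +\infty$, and is continuous at $0$ (since upper semi-continuity of $\mu$ with $\mu(0) = 0$ forces $\mu(r) \to 0$ as $r \to 0^+$). Producing $\varphi$ then amounts to constructing a $\mathcal K_\infty$ function $\varphi \leq \psi$ that is nondecreasing and $1$-Lipschitz (equivalent to $\id - \varphi$ nondecreasing), with the additional requirement that $\varphi$ be convex in the second part.

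For the first part, I first regularize $\psi$ into a nondecreasing function by setting $\tilde\Psi(r) := \inf_{s \geq r/2} \psi(s)$, which is nondecreasing, bounded above by $\psi$, diverges at infinity, and is strictly positive on $(0, +\infty)$: since $\psi$ is lsc, strictly positive on $[r/2, +\infty)$, and diverges, its infimum is attained at some $s \geq r/2$ where $\psi(s) > 0$. The function $\tilde\Psi$ need not be continuous, so I then smooth it by Ces\`aro averaging,
\[
\varphi(r) := \frac{1}{r}\int_0^r \tilde\Psi(s)\,ds \text{ for } r > 0, \qquad \varphi(0) := 0,
\]
which is absolutely continuous with the key identity $\varphi'(r) = (\tilde\Psi(r) - \varphi(r))/r$ for a.e.\ $r > 0$. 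Monotonicity of $\tilde\Psi$ yields $\varphi \leq \tilde\Psi$, hence $\varphi' \geq 0$, while $\tilde\Psi(r) \leq \psi(r) \leq r$ gives $\varphi' \leq 1$, so $\id - \varphi$ is nondecreasing. Strict monotonicity follows because $\tilde\Psi(0) = 0 < \tilde\Psi(s)$ for $s > 0$ forces $\varphi(s) < \tilde\Psi(s)$ strictly, and hence $\varphi'(s) > 0$ a.e. Divergence $\varphi(r) \to +\infty$ uses the lower bound $\varphi(r) \geq \tilde\Psi(r/2)/2$, and continuity at $0$ follows from $\varphi \leq \tilde\Psi \to 0$.

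For the second part, under $\mu \leq a\cdot\id$ on $[M, +\infty)$, I take $\varphi$ to be the convex envelope $\hat\psi$ of $\psi$, i.e., the largest convex minorant of $\psi$ on $\mathbb R_+$; equivalently $\hat\psi = \id - \check\mu$ where $\check\mu$ is the smallest concave majorant of $\mu$. Since $f = 0$ is a convex minorant one has $\hat\psi \geq 0$, and $\hat\psi(0) \leq \psi(0) = 0$ gives $\hat\psi(0) = 0$. The affine function $\max(0, (1-a)(r - M))$ is convex and $\leq \psi$ (trivially on $[0, M]$, and via $\mu \leq a\cdot\id$ on $[M, +\infty)$), which forces $\hat\psi \to +\infty$. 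Nonnegativity, convexity together with $\hat\psi(0) = 0$ and $\hat\psi \leq \id$ automatically yield that $\hat\psi$ is nondecreasing and $1$-Lipschitz, by standard chord/slope arguments for convex functions. The decisive step is the strict positivity $\hat\psi(r_0) > 0$ for every $r_0 > 0$, which I obtain by exhibiting, for each $r_0 > 0$, a concave majorant $h$ of $\mu$ with $h(r_0) < r_0$: take the piecewise linear $h$ equal to $\id$ on $[0, r_1]$ and to $(1-\varepsilon)\id + \varepsilon r_1$ on $[r_1, +\infty)$ for some $r_1 \in (0, r_0)$ and $\varepsilon > 0$ chosen small enough ($\varepsilon \leq 1 - a$ handles $[M, +\infty)$, while $\varepsilon(M - r_1) \leq \min_{[r_1, M]}\psi$ handles the compact middle interval, the minimum being positive by lsc positivity of $\psi$ there). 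Positivity of $\hat\psi$ on $(0, +\infty)$ combined with convexity and $\hat\psi(0) = 0$ then forces strict monotonicity, since a flat interval would propagate the zero value back to the origin.

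The hard part will be the positivity of the convex envelope in the second part: even though $\psi > 0$ on $(0, +\infty)$, a priori every convex minorant of $\psi$ could vanish on an initial interval $[0, c]$ with $c > 0$, which would preclude $\hat\psi$ from being $\mathcal K_\infty$. The concave-dual formulation $\hat\psi = \id - \check\mu$ is what makes the positivity tractable, since it reduces the question to constructing concave majorants of $\mu$ strictly dominated by $\id$ at any prescribed point, and this is exactly what the sector condition $\mu \leq a\cdot\id$ at infinity together with lsc positivity of $\psi$ on compact intermediate intervals allow.
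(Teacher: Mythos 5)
Your proof is correct, but it takes a genuinely different route from the paper's at both stages. For the first part, the paper also starts from the running infimum $\varphi_0(s)=\inf_{r\geq s}\left(r-\mu(r)\right)$ (your $\tilde\Psi$, up to the harmless $r/2$), but then builds the $\mathcal K_\infty$ minorant by an explicit piecewise-linear interpolation on a grid $(x_n)_{n\in\mathbb Z}$, checking the slope bound and the divergence by hand; your Cesàro average $\varphi(r)=\frac1r\int_0^r\tilde\Psi(s)\diff s$ achieves the same in one stroke, since the identity $r\varphi'(r)=\tilde\Psi(r)-\varphi(r)$ (a.e.) gives $0<\varphi'\leq 1$, strict positivity of $\varphi'$ follows from $\varphi<\tilde\Psi$ on $(0,+\infty)$, and divergence from $\varphi(r)\geq\tfrac12\tilde\Psi(r/2)$ — a cleaner smoothing of the same object. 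The divergence is larger for the convex refinement: the paper modifies its already-constructed $\varphi$ into a function $\varphi_1$ that is affine with slope $1-a$ beyond $M$ and then convexifies it by integrating the monotone rearrangement $h(x)=\essinf_{r\geq x}\varphi_1'(r)$, whereas you take the convex envelope of $\id-\mu$ directly and exploit the duality with concave majorants of $\mu$, supplying the crucial strict positivity through the explicit concave barriers $h=\min\bigl(\id,(1-\varepsilon)\id+\varepsilon r_1\bigr)$, which are majorants of $\mu$ thanks to the sector bound at infinity and the positive minimum of the lower semicontinuous function $\id-\mu$ on the compact middle interval. Your route buys the canonical (largest admissible) convex choice of $\varphi$ at the price of that positivity argument, which you handle correctly, together with the standard facts that a nonnegative convex function vanishing at $0$ and dominated by $\id$ is nondecreasing and $1$-Lipschitz and that a flat chord would force it to be nonpositive near the origin; the paper's route is more constructive and avoids envelopes altogether. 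I see no gap in your argument.
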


\begin{proof}
Let us first consider the function $\varphi_0: \mathbb R_+ \to \mathbb R_+$ defined by
\[
\varphi_0(s) = \inf_{r \geq s} r - \mu(r).
\]
Then $\varphi_0$ is nondecreasing, $\varphi_0(0) = 0$, $0 < \varphi_0(r) < r$ for every $r > 0$, $\mu \leq \id - \varphi_0$, and $\varphi_0(r) \to +\infty$ as $r \to +\infty$.

We construct the required function $\varphi$ by an argument similar to that of Lemma~\ref{lem:LB}. Choose an increasing sequence $(x_n)_{n \in \mathbb Z}$ in $\mathbb R_+$ with $\lim_{n \to -\infty} x_n = 0$ and $\lim_{n \to +\infty} x_n = +\infty$ such that the sequence $(\varphi_0(x_n))_{n \in \mathbb Z}$ is increasing, and, for $n \in \mathbb Z$, let $\delta_n = x_{n+1} - x_n > 0$ and $f_n = \frac{\varphi_0(x_n) - \varphi_0(x_{n-1})}{\delta_n} > 0$. We now define a sequence $(F_n)_{n \in \mathbb Z}$ as follows: for $n \leq 0$, we set
\[
F_n = \sum_{k = -\infty}^{n-1} \min(1, f_k) \delta_k,
\]
and we remark that, for every $n \leq 0$, one has $0 < F_n \leq \sum_{k = -\infty}^{n-1} f_k \delta_k \leq \varphi_0(x_{n-1})$, $F_n > F_{n-1}$, and $\frac{F_n - F_{n-1}}{\delta_{n-1}} = \min(1, f_{n-1}) \leq 1$. For $n > 0$, we define $F_n$ inductively by $F_n = \min(F_{n-1} + \delta_{n-1}, \varphi_0(x_{n-1}))$. Clearly, for every $n > 0$, we have $F_n \leq \varphi_0(x_{n-1})$ by construction, and one easily shows by induction that $F_n > F_{n-1}$ and $\frac{F_n - F_{n-1}}{\delta_{n-1}} \leq 1$.

Define $\varphi: \mathbb R_+ \to \mathbb R_+$ by setting $\varphi(0) = 0$ and $\varphi(x) = \alpha_x F_n + (1 - \alpha_x) F_{n+1}$ for $n \in \mathbb Z$ and $x \in [x_{n}, x_{n+1}]$, where $\alpha_x \in [0, 1]$ is the unique value such that $x = \alpha_x x_{n} + (1 - \alpha_x) x_{n+1}$. Clearly, $\varphi$ is continuous and increasing. For $n \in \mathbb Z$, $\varphi$ is affine in $[x_n, x_{n+1}]$ with (constant) derivative $\frac{F_{n+1} - F_n}{\delta_n} \in (0, 1]$ in $(x_n, x_{n+1})$. In particular, $\varphi^\prime(x) \in (0, 1]$ for almost every $x > 0$, and thus $\id - \varphi$ is nondecreasing. For $n \in \mathbb Z$ and $x \in [x_n, x_{n+1}]$, one has $\varphi(x) \leq F_{n+1} \leq \varphi_0(x_n) \leq \varphi_0(x)$, which implies that $\mu \leq \id - \varphi$.

We are only left to prove that $\varphi(x) \to +\infty$ as $x \to +\infty$ or, equivalently, that $F_n \to +\infty$ as $n \to +\infty$. Reasoning by contradiction yields that, since $(F_n)_{n \in \mathbb Z}$ is increasing, there exists $F_\ast > 0$ such that $F_n \to F_\ast$ as $n \to +\infty$ and $F_n < F_\ast$ for every $n \in \mathbb Z$. Let $n_0 \in \mathbb N$ be such that $\varphi_0(x_n) \geq F_\ast$ for every $n \geq n_0$, which exists since $\varphi_0(r) \to +\infty$ as $r \to +\infty$. Hence, for every $n > n_0$, we have $F_n < \varphi_0(x_{n-1})$, and the inductive definition of $F_n$ implies that $F_n = F_{n-1} + \delta_{n-1}$. Thus $F_n = F_{n_0} + \sum_{k = n_0}^{n-1} \delta_k = F_{n_0} + x_n - x_{n_0}$ and, as $n \to +\infty$, one has $x_n \to +\infty$, implying that $F_n \to +\infty$ and yielding the required contradiction.

Finally, we turn to the second part of the statement, namely that, under the extra assumption of the existence of $a \in (0, 1)$ and $M \geq 0$ such that $\mu(r) \leq a r$ for every $r \geq M$, one may construct $\varphi$ to be convex. With no loss of generality, we assume that $M > 0$. Let $\varphi$ be constructed from $\mu$ as above, $\lambda = \min\left(1, \frac{(1-a) M}{\varphi(M)}\right)$, and define $\varphi_1: \mathbb R_+ \to \mathbb R_+$ by $\varphi_1(x) = \lambda \varphi(x)$ for $x \in [0, M]$ and $\varphi_1(x) = \varphi_1(M) + (1-a)(x - M)$ for $x > M$. Then, by construction, $\varphi_1$ is a $\mathcal K_\infty$ function such that $\id - \varphi_1$ is nondecreasing and $\varphi_1 \leq \id - \mu$. Let $h: \mathbb R_+^\ast \to \mathbb R_+$ be the nondecreasing function given by $h(x) = \essinf_{r \geq x} \varphi_1^\prime(r)$ for every $x > 0$. Since $\varphi_1$ is piecewise affine with finitely many affine pieces on every interval of the form $[x, +\infty)$ with $x > 0$, $\varphi_1^\prime$ takes a finite number of values in each such interval, and thus $h(x) > 0$ for every $x > 0$. Moreover, $h(x) \leq \varphi_1^\prime(x)$ for a.e.\ $x \in \mathbb R_+$ and $h(x) = 1-a$ for $x > M$. We define
\[
\varphi_\ast(x) = \int_0^x h(r) \diff r,
\]
which is clearly a convex $\mathcal K_\infty$ function such that $\id - \varphi_\ast$ is nondecreasing and $\varphi_\ast \leq \varphi_1 \leq \id - \mu$, as required.
\end{proof}

Our final technical result in this appendix is the following lemma, used in the proof of Theorem~\ref{TheoremSloooooooow}.

\begin{lemma}
\label{LemmaSequences}
Let $\phi: \mathbb R_+ \to \mathbb R_+^\ast$ be a decreasing function such that $\lim_{t\rightarrow +\infty} \phi(t) = 0$. Define the sequence $(b_n)_{n \in \mathbb N}$ inductively by $b_0 = \phi(0)$, $b_1 = \max(b_0 - 1, \phi(1))$, and, for $n \geq 2$,
\[
b_n = \max(2 b_{n-1} - b_{n-2}, \phi(n)),
\]
and let $(a_n)_{n \in \mathbb N}$ be given by $a_0 = 1$ and $a_n = b_{n-1} - b_n$ for $n \geq 1$. Then the sequence $(a_n)_{n \in \mathbb N}$ is nonincreasing, the sequence $(b_n)_{n \in \mathbb N}$ is decreasing, and both are sequences of positive numbers converging to $0$.
\end{lemma}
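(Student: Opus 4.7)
\medskip

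The plan is to verify the four conclusions in the natural order: nonincreasingness of $(a_n)$, positivity and strict decrease of $(b_n)$, and then joint convergence to $0$.

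First, I would read off nonincreasingness of $(a_n)$ directly from the recursion. For $n \geq 2$, the inequality $b_n \geq 2b_{n-1} - b_{n-2}$ built into the $\max$ rearranges to $b_{n-1} - b_n \leq b_{n-2} - b_{n-1}$, i.e.\ $a_n \leq a_{n-1}$. So $(a_n)_{n \geq 1}$ is automatically nonincreasing, regardless of which branch of the $\max$ is active. Positivity of $(b_n)$ is immediate from $b_n \geq \phi(n) > 0$.

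Next, I would prove by induction on $n$ that $b_n < b_{n-1}$, which is equivalent to $a_n > 0$. The base case $n = 1$ is direct: since $\phi$ is decreasing one has $\phi(1) < \phi(0) = b_0$ and obviously $b_0 - 1 < b_0$, so $b_1 = \max(b_0 - 1, \phi(1)) < b_0$. For the inductive step with $n \geq 2$, the recursion gives $b_n = \max(2b_{n-1} - b_{n-2},\, \phi(n))$; the first term satisfies $2b_{n-1} - b_{n-2} < b_{n-1}$ by the induction hypothesis, and the second satisfies $\phi(n) < \phi(n-1) \leq b_{n-1}$, where the last inequality uses the construction $b_{n-1} \geq \phi(n-1)$. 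Hence $b_n < b_{n-1}$, closing the induction.

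For convergence, note $(b_n)$ is decreasing and bounded below by $0$, hence converges to some $b_\infty \geq 0$; correspondingly $a_n = b_{n-1} - b_n \to 0$, handling $(a_n)$. The main step is to rule out $b_\infty > 0$, which I expect to be the most delicate (though still short) point of the argument. Suppose $b_\infty > 0$. Since $\phi(n) \to 0$, for all $n$ large enough we have $\phi(n) < b_\infty \leq b_n$, so the $\max$ defining $b_n$ is realized by the other branch: $b_n = 2b_{n-1} - b_{n-2}$. This forces $a_n = a_{n-1}$ for all large $n$, so $(a_n)$ is eventually constant, equal to some $c \geq 0$. Since $a_n > 0$ for every $n \geq 1$ by the previous step, in fact $c > 0$. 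But then $b_n = b_{N} - (n-N) c \to -\infty$ for some large $N$, contradicting $b_n > 0$. Therefore $b_\infty = 0$, which completes the proof.
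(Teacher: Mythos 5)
Your proposal is correct and follows essentially the same route as the paper's proof: the built-in inequality $b_n \geq 2b_{n-1}-b_{n-2}$ gives $a_n \leq a_{n-1}$, the same induction (bounding both branches of the $\max$) shows $(b_n)$ is decreasing hence $a_n>0$, and the limit $b_\infty>0$ is excluded by noting the $\max$ is eventually realized by the affine branch, so $(a_n)$ is eventually constant — the paper then contradicts $a_n\to 0$ versus $a_n>0$, while you contradict $b_n>0$ via $b_n\to-\infty$; both variants are fine. The only point to add is the comparison $a_1 \leq a_0 = 1$ required by the statement, which follows at once from $b_1 = \max(b_0-1,\phi(1)) \geq b_0-1$, i.e.\ the same ``regardless of which branch'' observation applied at $n=1$.
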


\begin{proof}
Notice first that $b_n > 0$ for every $n \in \mathbb N$ since $b_n \geq \phi(n) > 0$.

To prove that $(b_n)_{n \in \mathbb N}$ is decreasing, we prove that $b_{n} < b_{n-1}$ for every $n \geq 1$ by induction. One has either $b_1 = b_0 - 1 < b_0$ or $b_1 = \phi(1) < \phi(0) = b_0$, and hence $b_1 < b_0$. Now, let $n \geq 2$ be such that $b_{n-1} < b_{n-2}$. If $b_{n} = 2 b_{n-1} - b_{n-2}$, then $b_{n} - b_{n-1} = b_{n-1} - b_{n-2} < 0$, and thus $b_{n} < b_{n-1}$. Otherwise, one has $b_{n} = \phi(n) < \phi(n-1) \leq b_{n-1}$. Hence, in all cases, $b_{n} < b_{n-1}$. Thus, by induction, $(b_n)_{n \in \mathbb N}$ is decreasing.

It now follows that $a_n > 0$ for every $n \in \mathbb N$, since $a_0 = 1$ and, for $n \geq 1$, $a_n = b_{n-1} - b_n$ and $(b_n)_{n \in \mathbb N}$ is decreasing.

Let us now show that $(a_n)_{n \in \mathbb N}$ is nonincreasing. One has $b_1 \geq b_0 - 1$, and thus $a_1 = b_0 - b_1 \leq 1 = a_0$. For $n \geq 2$, one has $b_n \geq 2 b_{n-1} - b_{n-2}$, which implies that $b_n - b_{n-1} \geq b_{n-1} - b_{n-2}$, and hence $a_n \leq a_{n-1}$, as required.

Since $(b_n)_{n \in \mathbb N}$ is decreasing, this sequence admits a limit $b_\ast \in [0, b_0)$. Then 
\[\lim_{n \to +\infty} a_n = \lim_{n \to +\infty} (b_{n-1} - b_n) = b_\ast - b_\ast = 0.
\]

Assume, to obtain a contradiction, that $b_\ast > 0$. Using also the fact that $\phi(t) \to 0$ as $t \to +\infty$, one deduces that there exists $N \geq 2$ such that, for every $n \geq N$, one has $b_n > \frac{b_\ast}{2}$ and $\phi(n) < \frac{b_\ast}{2}$. Hence, one has necessarily $b_n = 2 b_{n-1} - b_{n-2}$ for every $n \geq N$, which implies that $b_{n-1} - b_n = b_{n-2} - b_{n-1}$, and thus $a_n = a_{n-1}$ for every $n \geq N$. Since $a_n \to 0$ as $n \to +\infty$, this implies that $a_n = 0$ for every $n \geq N$, which contradicts the fact that $a_n > 0$ for every $n \in \mathbb N$. This contradiction establishes that $b_\ast = 0$, as required.
\end{proof}

\section*{Acknowledgments}

The authors would like to thank Mateusz Kwaśnicki and Ludovic Rifford for their insightful comments on universally measurable functions.

\bibliographystyle{abbrv}
\bibliography{bibsm}
\end{document}